%
%
%
%

\documentclass[11pt,reqno]{amsart}
\usepackage{amssymb,latexsym,graphicx,amscd}
\usepackage{lscape}
\usepackage[all]{xy}
\usepackage{color}
\usepackage{epic,eepic}
\usepackage{xspace}
\usepackage{mathrsfs}
\usepackage{setspace}
\usepackage{cases}
\usepackage{bbm}         
\textheight 230mm
\textwidth 150mm
\hoffset -16mm
\voffset -16mm

\newtheorem{Thm}{Theorem}[section]
\newtheorem{Lem}[Thm]{Lemma}
\newtheorem{Cor}[Thm]{Corollary}
\newtheorem{Prop}[Thm]{Proposition}
\newtheorem{Conj}[Thm]{Conjecture}

\theoremstyle{definition}

\newcommand{\Z}{\mathbb{Z}}

\newcommand{\N}{\mathbb{N}}
\newcommand{\C}{\mathbb{C}}

\newcommand{\df}{\colon}

\newcommand{\cB}{{\mathcal B}}

\newcommand{\cF}{{\mathcal F}}
\newcommand{\cG}{{\mathcal G}}

\newcommand{\cM}{{\mathcal M}}

\newcommand{\cO}{{\mathcal O}}
\newcommand{\cP}{{\mathcal P}}

\newcommand{\cS}{{\mathcal S}}

\newcommand{\cU}{{\mathcal U}}

\newcommand{\cW}{{\mathcal W}}

\newcommand{\g}{\mathfrak{g}}
\newcommand{\n}{\mathfrak{n}}
\newcommand{\h}{\mathfrak{h}}

\newcommand{\bM}{{\mathbf M}}

\newcommand{\bp}{{\mathbf p}}
\newcommand{\bq}{{\mathbf q}}
\newcommand{\br}{{\mathbf r}}
\newcommand{\bs}{{\mathbf s}}

\newcommand{\bU}{{\mathbf U}}

\newcommand{\bv}{{\mathbf v}} 

\newcommand{\Serre}{{\mathcal I}}

\newcommand{\vpi}{\varpi}
\newcommand{\vph}{\varphi}
\newcommand{\vep}{\varepsilon}
\newcommand{\la}{\lambda}

\newcommand{\ad}{\operatorname{ad}}
\newcommand{\rk}{\operatorname{rank}}
\newcommand{\wt}{\operatorname{wt}}
\newcommand{\mx}{{\rm max}}

\newcommand{\nil}{{\rm nil}}

\newcommand{\rep}{\operatorname{rep}}

\newcommand{\dimv}{\underline{\dim}}

\newcommand{\sgn}{\operatorname{sgn}} 

\newcommand{\tp}{\operatorname{top}}
\newcommand{\sub}{\operatorname{sub}}
\newcommand{\fac}{\operatorname{fac}}
\newcommand{\Inj}{\operatorname{Inj}} 

\newcommand{\Hom}{\operatorname{Hom}}
\newcommand{\Ext}{\operatorname{Ext}}
\newcommand{\End}{\operatorname{End}}

\newcommand{\Ima}{\operatorname{Im}}
\newcommand{\Ker}{\operatorname{Ker}}
\newcommand{\Coker}{\operatorname{Cok}}

\newcommand{\length}{\operatorname{length}}

\newcommand{\ov}{\overline}

\newcommand{\Span}{\operatorname{Span}}

\newcommand{\bil}[1]{\langle #1\rangle}

\newcommand{\out}{\rm out}
\newcommand{\inn}{\rm in}

\newcommand{\Irr}{\operatorname{Irr}}
\newcommand{\supp}{\operatorname{supp}}

\newcommand{\bsm}{\begin{smallmatrix}}
\newcommand{\esm}{\end{smallmatrix}}

\newcommand{\bbsm}{\left[\begin{smallmatrix}}
\newcommand{\besm}{\end{smallmatrix}\right]}

\newcommand{\bbm}{\begin{matrix}}
\newcommand{\ebm}{\end{matrix}}

\newcommand{\diag}{{\rm diag}}

\newcommand{\wti}{\widetilde}

\newcommand{\GL}{\operatorname{GL}}
\newcommand{\Aut}{\operatorname{Aut}}
\newcommand{\Gr}{\operatorname{Gr}} 

\newcommand{\vp}{{\rm l.f.}}

\newcommand{\cry}{{\rm cr}}

\newcommand{\ra}{\rightarrow}

\newcommand{\ttheta}{\wti{\theta}}
\newcommand{\tM}{\wti{\cM}}
\newcommand{\tF}{\wti{\cF}}

\begin{document}

\date{24.02.2017}

\title[Quivers with relations for symmetrizable Cartan matrices IV]
{Quivers with relations for symmetrizable Cartan matrices IV:
Crystal graphs and semicanonical functions}

\author{Christof Gei{\ss}}
\address{Christof Gei{\ss}\newline
Instituto de Matem\'aticas\newline
Universidad Nacional Aut{\'o}noma de M{\'e}xico\newline
Ciudad Universitaria\newline
04510 M{\'e}xico D.F.\newline
M{\'e}xico}
\email{christof.geiss@im.unam.mx}

\author{Bernard Leclerc}
\address{Bernard Leclerc\newline
LMNO, Univ. de Caen\newline
CNRS, UMR 6139\newline
F-14032 Caen Cedex\newline
France}
\email{bernard.leclerc@unicaen.fr}

\author{Jan Schr\"oer}
\address{Jan Schr\"oer\newline
Mathematisches Institut\newline
Universit\"at Bonn\newline
Endenicher Allee 60\newline
53115 Bonn\newline
Germany}
\email{schroer@math.uni-bonn.de}

\subjclass[2010]{Primary 16G20; Secondary 14M99, 17B67}


\begin{abstract}
We generalize Lusztig's nilpotent varieties, 
and 
Kashiwara and Saito's geometric construction of crystal graphs from
the symmetric to the symmetrizable case.
We also construct semicanonical functions in the 
convolution algebras of generalized preprojective algebras.
Conjecturally these functions 
yield semicanonical bases of the
enveloping algebras of the positive part of symmetrizable Kac-Moody algebras. 
\end{abstract}

\maketitle

\setcounter{tocdepth}{1}
\numberwithin{equation}{section}
\tableofcontents

\parskip2mm


\section{Introduction and main results}\label{sec:intro}


\subsection{Introduction}
There is a remarkable geometric universe relating the representation theory
of quivers and preprojective algebras with the representation theory of
symmetric Kac-Moody algebras. 
This includes the realization of the enveloping algebra $U(\n)$ of the positive part $\n$ of
a symmetric Kac-Moody algebra $\g$ as an algebra of constructible
functions on varieties of modules over path algebras \cite{S} and
over preprojective algebras \cite{L1,L2}.
The latter leads to the construction of a semicanonical basis $\cS$ of $U(\n)$
due to Lusztig \cite{L2}.
The elements of $\cS$ are parametrized by the irreducible
components of varieties of modules over preprojective algebras.
Furthermore, closely linked with varieties of modules over preprojective
algebras, there is a 
geometric realization of the crystal graph $B(-\infty)$ of the quantized enveloping algebra $U_q(\n)$ due to
Kashiwara and Saito \cite{KS}.
This crystal graph controls the decompositions of tensor products
of irreducible integrable highest weight $\g$-modules, and it
encodes all crystals graphs and characters of these modules.

Many geometric constructions for symmetric Kac-Moody algebras, 
especially the construction of Lusztig's
semicanonical basis, do not exist for non-symmetric Kac-Moody algebras.
Nandakumar and Tingley \cite{NT} recently
realized $B(-\infty)$ in the symmetrizable case via varieties of modules over 
preprojective algebras associated with species.
In the non-symmetric cases, their construction cannot be carried out over algebraically closed fields, especially not over $\C$.
There exists also a folding technique, which sometimes allows
to transfer results from the symmetric cases to the non-symmetric ones.

In our setting, symmetric and symmetrizable cases are dealt with uniformly.
We generalize Lusztig's nilpotent varieties, 
and 
Kashiwara and Saito's geometric construction of the crystal graph $B(-\infty)$ from
the symmetric to the symmetrizable case.
We also construct semicanonical functions in the 
convolution algebras of generalized preprojective algebras.
Conjecturally these functions 
yield semicanonical bases of the enveloping algebras
$U(\n)$.
In the symmetric cases with minimal symmetrizer, we recover as a special
case Lusztig's semicanonical basis, and Kashiwara and Saito's  construction of $B(-\infty)$.

\subsection{Main results}
We now describe our results in more detail.
Let $C \in M_n(\Z)$ be a symmetrizable generalized Cartan matrix,
and let $D$ be a symmetrizer of $C$. 
Let $\Pi = \Pi(C,D)$ be the associated preprojective algebra as defined
in \cite{GLS3}.
We assume throughout that our ground field $K$ is algebraically closed.
For $d \in \N^n$,
let $\nil_E(\Pi,d)$ be the variety of $E$-filtered $\Pi$-modules with dimension 
vector $d$.

Let $G(d)$ be the product of linear groups, which acts on $\nil_E(\Pi,d)$
by conjugation.
For $d = (d_1,\ldots,d_n)$ and $D = \diag(c_1,\ldots,c_n)$ define
$d/D := (d_1/c_1,\ldots,d_n/c_n)$.
Let $q_{DC}$ be the quadratic form associated with $1/2DC$.

\begin{Thm}\label{introthm:dim}
For each irreducible component $Z$ of $\nil_E(\Pi,d)$ we have
$$
\dim(Z) \le \dim G(d) - q_{DC}(d/D).
$$
\end{Thm}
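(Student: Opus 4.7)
The plan is to combine a deformation-theoretic upper bound on the dimension of an irreducible component with the homological machinery for locally free $\Pi$-modules developed in the earlier papers of the series. This mimics the classical argument for Lusztig's nilpotent variety but replaces the symplectic/Lagrangian input, which is unavailable in the symmetrizable setting, with an Ext-symmetry coming from the $1$-Iwanaga-Gorenstein property of $\Pi=\Pi(C,D)$.

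Step 1 (tangent-space bound). Pick a generic point $M\in Z$. Since $\nil_E(\Pi,d)$ is cut out inside a larger $G(d)$-stable variety by the explicit preprojective relations of $\Pi$ from \cite{GLS3}, Voigt's lemma applied to this presentation yields
$$
\dim Z \;\le\; \dim G(d)\cdot M \;+\; \dim \Ext^1_\Pi(M,M)
\;=\; \dim G(d) \;-\; \dim \End_\Pi(M) \;+\; \dim \Ext^1_\Pi(M,M).
$$
So it suffices to prove $\dim \End_\Pi(M) - \dim \Ext^1_\Pi(M,M) \ge q_{DC}(d/D)$ whenever $M$ is $E$-filtered of dimension vector $d$.

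Step 2 (homological identity). Because $M\in \nil_E(\Pi,d)$ is $E$-filtered, $M$ is locally free as an $H$-module with rank vector $\rk(M)=d/D$. The $1$-Iwanaga-Gorenstein structure of $\Pi$ and the Calabi-Yau-type duality for locally free modules established in the earlier papers give a Ringel-type Euler formula on the locally free component of the module category, of the shape
$$
\dim \Hom_\Pi(M,N) - \dim \Ext^1_\Pi(M,N) \;=\; \bil{\rk(M),\rk(N)}_{\Pi}
$$
together with an Ext-symmetry $\dim \Ext^1_\Pi(M,N)=\dim \Ext^1_\Pi(N,M)$. Symmetrizing and setting $M=N$ identifies $\dim \End_\Pi(M)-\dim \Ext^1_\Pi(M,M)$ with the symmetric quadratic form, which by a direct linear-algebra check on $DC$ equals $q_{DC}(\rk(M))=q_{DC}(d/D)$. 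Plugging back into Step 1 gives the asserted inequality.

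Step 3 (conclusion). Substituting Step 2 into Step 1 yields $\dim Z \le \dim G(d) - q_{DC}(d/D)$, as required.

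The main obstacle I expect is Step 2: in the symmetric case ($D=I$), $M$ being $E$-filtered is automatic (every module is locally free over the semisimple part), and the Euler form/$2$-CY symmetry are standard features of the classical preprojective algebra. In the symmetrizable case both inputs must be developed for the generalized preprojective algebra, carefully respecting the rank-vector rescaling $d\mapsto d/D$; verifying the precise form of the Ringel identity and Ext-symmetry for locally free $\Pi$-modules, and confirming that a generic point of each irreducible component of $\nil_E(\Pi,d)$ is genuinely locally free (so that these identities apply to it), is the technical heart of the argument. A secondary difficulty is the Voigt step: one must verify that the tangent-space bound still goes through when the ambient variety is $\rep_E(\overline{H},d)$ rather than an unrestricted representation variety, which in particular requires knowing that deformations of $M$ inside $\nil_E$ are controlled by $\Ext^1_\Pi(M,M)$ and not by a smaller subspace.
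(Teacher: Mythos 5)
Your Step~2 rests on an incorrect Hom--Ext identity, and once the identity is corrected the tangent-space bound from Step~1 no longer yields the theorem. The generalized preprojective algebra $\Pi=\Pi(C,D)$ does not satisfy the bilinear Euler-form relation
$\dim\Hom_\Pi(M,N)-\dim\Ext^1_\Pi(M,N)=\bil{\rk M,\rk N}$ you posit (that shape belongs to algebras of global dimension one). The correct formula for locally free $M,N$ is the one recorded in Lemma~\ref{lem:3.7a}(ii), generalizing Crawley-Boevey:
$$
\dim\Ext^1_\Pi(M,N)=\dim\Hom_\Pi(M,N)+\dim\Hom_\Pi(N,M)-(\rk M,\rk N).
$$
Setting $M=N$ gives $\dim\Ext^1_\Pi(M,M)=2\dim\End_\Pi(M)-2q_{DC}(\rk M)$, hence
$\dim\End_\Pi(M)-\dim\Ext^1_\Pi(M,M)=2q_{DC}(\rk M)-\dim\End_\Pi(M)$, which is \emph{not} $q_{DC}(\rk M)$. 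Plugging this into the Voigt-type bound of Step~1 yields only
$$
\dim Z \le \dim G(d) + \dim\End_\Pi(M) - 2\,q_{DC}(d/D),
$$
and since the same formula forces $\dim\End_\Pi(M)\ge q_{DC}(d/D)$ (positivity of $\Ext^1$), this is \emph{weaker} than the claimed inequality, never stronger. In the classical symmetric case the gap between the tangent-space bound and the true equality $\dim Z = \dim G(d)-q(d)$ is closed by the Lagrangian/moment-map structure, which is exactly the ingredient that has no direct analogue here; so an argument built only on deformation theory plus the Hom--Ext identity cannot succeed.

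The paper's actual proof (Theorem~\ref{thm:dimbound}) is entirely different: it is an induction on the rank vector driven by the generalized Lusztig bundle construction of Section~\ref{sec:bundles}. Given $Z$, one finds $k$ with $\vph_k^*(Z)>0$, restricts to the dense locally closed stratum $Z^{k,\bp}$, and uses Lemma~\ref{lusztig2} to pass to a corresponding component $Z'$ for the smaller rank vector $\rk(\bU)$, with the dimension shift $\dim Z - \dim Z' = \dim H(\bM) - \dim H(\bU)$. The inductive hypothesis $\dim Z'\le\dim H(\bU)$ then transports to $\dim Z\le\dim H(\bM)$. This is the surrogate for the Lagrangian input, and it is precisely what your proposal is missing. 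If you want to salvage a deformation-theoretic viewpoint, you would in addition need a genuine upper bound on $\dim\End_\Pi(M)$ for generic $M$ in each component, which is not available a priori and is in effect what the bundle induction establishes.
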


Let $\Irr(\nil_E(\Pi,d))^\mx$ be the set of irreducible components 
of $\nil_E(\Pi,d)$ of maximal dimension $\dim G(d) - q_{DC}(d/D)$. 

Assume that $C$ is symmetric and $D$ is the identity matrix.
Then $\Pi$ is a classical
preprojective algebra associated with a quiver $Q$, the $\nil_E(\Pi,d)$
are Lusztig's nilpotent varieties, $\dim G(d) - q_{DC}(d/D)$ is the dimension 
of the affine space of
representations of the quiver $Q$ with dimension vector $d$,
and all irreducible components of $\nil_E(\Pi,d)$ have the same dimension
$\dim G(d) - q_{DC}(d/D)$.

Let $\n(C)$ be the positive part of the symmetrizable Kac-Moody algebra
$\g(C)$ associated with $C$.
Let $B(-\infty)$ be the crystal graph of the quantized enveloping algebra
$U_q(\n(C))$.

The following theorem is our first main result.

\begin{Thm}\label{introthm:crystal}
Let $\Pi = \Pi(C,D)$, and set
$$
\cB := \bigsqcup_{d \in \N^n} \Irr(\nil_E(\Pi,d))^\mx.
$$
Then there are isomorphisms of crystals
$$
(\cB,\wt,\tilde{e}_i,\tilde{f}_i,\vph_i,\vep_i) \cong
(\cB,\wt,\tilde{e}_i^*,\tilde{f}_i^*,\vph_i^*,\vep_i^*) \cong B(-\infty).
$$
\end{Thm}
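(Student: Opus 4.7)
The plan is to construct the crystal structure on $\cB$ via geometric analogues of the operators $\tilde e_i, \tilde f_i$ (and their starred variants), and then invoke Kashiwara--Saito's axiomatic characterization of $B(-\infty)$. The highest-weight vertex will be the unique component $\{0\} \in \Irr(\nil_E(\Pi,0))^\mx$, and the weight function is $\wt(Z) = -\sum_i (d_i/c_i)\alpha_i$ for $Z \in \Irr(\nil_E(\Pi,d))^\mx$.

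First, for each component $Z$ and each vertex $i$, I would define
$$
\vep_i(Z) := \min\bigl\{\tfrac{1}{c_i}\dim_K\Hom_\Pi(E_i, M) : M \in Z\bigr\},
$$
the generic $E_i$-socle multiplicity of $M \in Z$, and symmetrically $\vep_i^*(Z)$ using $\Hom_\Pi(M, E_i)$ (the generic $E_i$-top multiplicity). Each component is then stratified by the locally closed subsets on which these invariants are constant, with a unique dense open stratum $Z^\circ$ computing $\vep_i(Z)$ and $\vep_i^*(Z)$ respectively.

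Next, I would set up a correspondence
$$
\nil_E(\Pi, d - c_i e_i) \;\xleftarrow{\pi_1}\; \cZ_i \;\xrightarrow{\pi_2}\; \nil_E(\Pi, d)
$$
whose points parametrize short exact sequences $0 \to N \to M \to E_i \to 0$ in the category of $E$-filtered $\Pi$-modules, and an analogous correspondence for the starred operators using sub-modules $E_i \hookrightarrow M$. The dimension bound in Theorem~\ref{introthm:dim}, combined with its sharpness along the appropriate open $\vep_i$-stratum (to be verified via an explicit calculation with the $E$-filtration), should show that $\pi_1$ and $\pi_2$ restrict to smooth morphisms of predictable relative dimension over these strata. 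Taking generic fibres yields well-defined bijections
$$
\tilde f_i \df \{Z \in \cB : \vep_i(Z)=r\} \to \{Z' \in \cB : \vep_i(Z')=r+1\},
$$
with $\tilde e_i$ as partial inverse. The starred operators $\tilde e_i^*, \tilde f_i^*$ are constructed identically using the $E_i$-top.

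The third step is to verify the axioms of a lowest-weight normal crystal for $(\wt, \tilde e_i, \tilde f_i, \vep_i, \vph_i)$, then the axioms for the $*$-version, and finally the bicrystal compatibility of Kashiwara--Saito: that $(\tilde e_i, \tilde e_j^*)$ satisfy the prescribed commutation rules and that $\cB$ is connected. Kashiwara--Saito's uniqueness theorem then identifies $\cB$ with $B(-\infty)$ for both crystal structures simultaneously, yielding the claimed pair of isomorphisms.

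The main obstacle, I expect, is the bicrystal axiom. In the symmetric case with $D = I$, $\nil_E(\Pi,d)$ is Lusztig's nilpotent variety $\Lambda(d)$, of pure dimension $\dim G(d) - q_{DC}(d/D)$, and the required comparison of the two stratifications (by $\vep_i$ and $\vep_j^*$) reduces to dimension counts on explicit Grassmannian strata, as carried out by Kashiwara--Saito. In the symmetrizable setting, $\nil_E(\Pi,d)$ is generically singular and possibly reducible, and the dimension estimate in Theorem~\ref{introthm:dim} need not be attained uniformly on the whole variety. The critical ``rank one'' compatibility for $i \ne j$ will therefore require a delicate direct analysis of simultaneous $E_i$-socle and $E_j$-top extensions on the top-dimensional components; I expect this step to lean on the homological rigidity properties of $E$-filtered $\Pi$-modules and on the reflection-functor machinery developed in earlier installments of this series.
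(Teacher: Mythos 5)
Your overall route matches the paper's: construct geometric crystal operators via a bundle/correspondence connecting $\nil_E(\Pi,d-c_ie_i)$ to $\nil_E(\Pi,d)$, then invoke Kashiwara--Saito's axiomatic characterization of $B(-\infty)$ (the paper uses the Tingley--Webster reformulation, Proposition~\ref{prop:2.4}). However, you have correctly isolated the hard step --- verifying the bicrystal compatibility axioms, especially the commutation $\tilde e_i^*\tilde e_j = \tilde e_j\tilde e_i^*$ and the behavior of $\varphi_i, \varphi_i^*$ under $\tilde e_i^*, \tilde e_i$ --- without supplying the tool that makes it work. The paper's key idea here is the notion of a \emph{crystal module} (Section~\ref{subsec:crystalmodules}): an $E$-filtered $M$ for which $\sub_i(M)$ and $\fac_i(M)$ are locally free for all $i$, recursively. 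Proposition~\ref{prop:strictly1} shows that the maximal irreducible components are exactly those with a dense open subset of crystal modules, and Lemma~\ref{lem:strictly0} controls the canonical map $\sub_j(M)\to\fac_i(M)$ for crystal modules. These two statements, together with the $\Hom$-$\Ext$ formulas of Lemma~\ref{lem:3.7} and~\ref{lem:3.7a}, produce the clean formula $\dim\Ext^1_\Pi(M,E_i) = c_i(\varphi_i(Z)+\varphi_i^*(Z)-\bil{\wt(Z),\alpha_i})$ (Lemma~\ref{lem:3.14}) on which all six Tingley--Webster conditions hinge (Lemma~\ref{lem:3.15}). Your proposal gestures at ``homological rigidity of $E$-filtered modules,'' but $E$-filtered alone is not enough: general $E$-filtered modules do not have locally free $\sub_i$ and $\fac_i$ (indeed the non-maximal components are full of such modules). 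Without crystal modules, your planned ``explicit calculation with the $E$-filtration'' on the open strata would fail, because the socle/top strata of $\nil_E(\Pi,d)$ do not behave as uniformly as in the symmetric case.

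Two smaller points. First, the paper's bundle construction (Section~\ref{sec:bundles}) is a genuine refinement of Lusztig's: it works with partitions $\bp\in\cP_{c_k}$ rather than just multiples of $c_k$, which is needed precisely because the $H_k$-modules $\sub_k(M)$, $\fac_k(M)$ need not be free on arbitrary components. Your correspondence $\cZ_i$ is the special case $\bp=(c_i^p)$; you would still have to handle the general strata to prove (via descending induction) that the generic behavior is the free one. Second, you have conflated the paper's conventions: what you call $\vep_i(Z)$ is the paper's $\varphi_i(Z)$ (the generic rank of $\sub_i$, which for crystal modules is $[\sub_i(M):E_i]=\dim\sub_i(M)/c_i$), and the paper makes $Z_-$ of weight $0$ the \emph{lowest} weight element with $\wt(Z)=\br\in R^+$; you wrote $\wt(Z)=-\sum(d_i/c_i)\alpha_i$ and call it highest weight. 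These sign and naming choices do not break the argument in principle, but they must be kept consistent with the version of the Kashiwara--Saito criterion you cite.
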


The operators and maps $\wt,\tilde{e}_i,\tilde{f}_i,\vph_i,\vep_i$ (and their $*$-versions)
appearing in Theorem~\ref{introthm:crystal}
are defined in a module theoretic way in the fashion of
Kashiwara and Saito's \cite{KS}
geometric realization of $B(-\infty)$, see also Nandakumar and Tingley \cite{NT}.
Kashiwara and Saito only work with symmetric Kac-Moody algebras ($C$
symmetric and $D$ the identity matrix), and Nandakumar and Tingley need
to work over fields which are not algebraically closed in case $C$ is
non-symmetric.
For $C$ symmetric and $D$ the identity matrix, Theorem~\ref{introthm:crystal}
coincides with Kashiwara and Saito's result.

For $K = \C$ the field of complex numbers,
let $\tF(\Pi)$ be the convolution algebra of constructible functions
on the representation varieties $\rep(\Pi,d)$, and let
$$
\tM(\Pi) = \bigoplus_{d \in \N^n} \tM(\Pi)_d
$$ 
be the subalgebra generated by the characteristic
functions $\{ \ttheta_i := 1_{E_i} \mid 1 \le i \le n \}$.
(Here $E_i$ is a free $K[X]/(X^{c_i})$-module of rank $1$, which
can be seen as a $\Pi$-module in a natural way.)
We assume that all constructible functions are constant on $G(d)$-orbits.
The elements in $\tM(\Pi)_d$ are constructible functions
$\nil_E(\Pi,d) \to \C$.
In general, the functions $\ttheta_i$ do not satisfy the Serre relations.
For a constructible function $f\df \nil_E(\Pi,d) \to \C$ and
an irreducible component $Z$ of $\nil_E(\Pi,d)$ let $\rho_Z(f)$
be the generic value of $f$ on $Z$.

\begin{Thm}\label{introthm:semican}
For $K = \C$ and $\Pi = \Pi(C,D)$,
the convolution algebra $\tM(\Pi)$ contains 
a set 
$$
\wti{\cS} := \{ \wti{f}_Z \mid Z \in \cB \}
$$ 
of constructible functions 
such that for each $Z' \in \cB$ we have
$$
\rho_{Z'}(\wti{f}_Z) =
\begin{cases}
1 & \text{if $Z = Z'$},\\
0 & \text{otherwise}.
\end{cases}
$$
\end{Thm}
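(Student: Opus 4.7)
The plan is to imitate Lusztig's construction of semicanonical functions from the symmetric, minimal-symmetrizer case, but guided by the crystal realization of Theorem~\ref{introthm:crystal}. For each $Z \in \cB$, I would build $\wti{f}_Z$ as an iterated convolution product of divided powers of the generators $\ttheta_i$, with the sequence of factors dictated by a path in $B(-\infty)$ from the zero element to $Z$ through the Kashiwara operators. The starting observation is that for any word $\bi = (i_1^{a_1},\dots,i_r^{a_r})$ and any $M \in \nil_E(\Pi,d)$ with $d = \sum_k a_k c_{i_k} e_{i_k}$, the definition of convolution unpacks to
\[
(\ttheta_{i_1}^{(a_1)} * \cdots * \ttheta_{i_r}^{(a_r)})(M) \;=\; \chi(\cF_\bi(M)),
\]
where $\cF_\bi(M)$ is the projective variety of $\Pi$-flags $0 = M_0 \subset M_1 \subset \cdots \subset M_r = M$ with $M_k/M_{k-1} \cong E_{i_k}^{a_k}$. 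Any $M$ admitting such a flag automatically lies in $\nil_E(\Pi,d)$, so every iterated convolution of the $\ttheta_i$'s gives a $G(d)$-invariant constructible function on $\nil_E(\Pi,d)$ and the $\wti{f}_Z$ constructed below will lie in $\tM(\Pi)_d$.

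Next I would define $\wti{f}_Z$ by recursion on $|\wt(Z)|$, using the $*$-crystal structure of Theorem~\ref{introthm:crystal}. Set $\wti{f}_{Z_0} := \un$ for the unique zero-dimensional component $Z_0$. For $Z \in \cB$ with $\wt(Z) \ne 0$, pick any $i$ with $a := \vep_i^*(Z) > 0$, set $Z' := (\tilde{e}_i^*)^a(Z) \in \cB$, and put
\[
\wti{f}_Z \;:=\; \wti{f}_{Z'} \,*\, \ttheta_i^{(a)}.
\]
The $*$-crystal is the relevant side here because $\tilde{f}_i^*$ is realized module-theoretically by embedding $E_i^a$ as a \emph{submodule} of a generic point, which is precisely the operation picked out by right-convolution with $\ttheta_i^{(a)}$ in the flag formula above.

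The main argument is the verification of $\rho_{Z''}(\wti{f}_Z) = \delta_{Z,Z''}$ by induction on $|\wt(Z)|$. For a generic $M \in Z''$ with $\wt(Z'') = \wt(Z)$, the flag formula reduces the computation to a sum of Euler characteristics of the strata of the submodule Grassmannian $\{N \subset M \mid M/N \cong E_i^a\}$, weighted by the inductively known values $\rho_{Z'''}(\wti{f}_{Z'})$ on the component $Z'''$ containing $N$. By the inductive hypothesis only the stratum with $N$ generic in $Z'$ contributes. The module-theoretic description of the $*$-operators in Theorem~\ref{introthm:crystal} then says that for generic $M \in Z''$ this stratum is empty when $\vep_i^*(Z'') < a$; consists of a single generic point (up to a connected, affine group action, so of Euler characteristic $1$) precisely when $\vep_i^*(Z'') = a$ and $(\tilde{e}_i^*)^a(Z'') = Z'$; and in all other cases admits a free $K^\times$-action. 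Since $\vep_i^*(Z'') = a$ and $(\tilde{e}_i^*)^a(Z'') = Z'$ force $Z'' = (\tilde{f}_i^*)^a(Z') = Z$, the desired equality follows.

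The main obstacle is the last vanishing, namely showing that the non-generic strata of the Grassmannian of $E_i^a$-top quotients of a generic $M$ carry a free $K^\times$-action on their intersection with $Z'$. In Lusztig's setting with $D = \id$, where $E_i = S_i$ is simple, one argues by rescaling on the simple socle. In the symmetrizable case $E_i$ is the length-$c_i$ uniserial $K[X_i]/(X_i^{c_i})$-module and one must exhibit a suitable $K^\times$-subgroup of the $\Pi$-module automorphism group of $E_i^a$ compatible with the rank stratification of the Grassmannian; verifying that this action is free on every non-generic stratum and fibres correctly over components of smaller dimension vector is where the module-theoretic input from the earlier parts of this series (classification of $E$-filtered modules, structure of submodule varieties, generic behaviour of the Kashiwara operators) becomes essential.
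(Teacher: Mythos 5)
Your construction starts from the same germ as the paper's: build $\wti{f}_Z$ by iterated convolution with divided powers $1_{E_i^p}=\ttheta_i^{(p)}$ along a path in the crystal, using the bundle maps $p',p''$ from Section~\ref{subsec:bundlemain} to identify generic values across a bundle. But there is a genuine gap at the point you yourself flag as ``the main obstacle.'' Having set $\wti{f}:=\wti{f}_{Z'}*\ttheta_i^{(p)}$ with $p=\vph_i^*(Z)$ (your $\vep_i^*$ should be $\vph_i^*$, and $Z'$ should be $(\tilde f_i^*)^p(Z)$, but these are slips, not the issue), you must argue $\rho_{Z''}(\wti{f})=0$ for every maximal component $Z''\neq Z$ of the same rank vector. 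For $\vph_i^*(Z'')=p$ this follows from the bundle bijection (a generic $M$ has a \emph{unique} submodule $U$ with $M/U\cong E_i^p$, namely $K_i(M)$, and $U$ lands generically in the wrong component). For $\vph_i^*(Z'')<p$ it follows from the support restriction (condition (c) in the paper's Lemma~\ref{lem:Lu2.4}). But for $\vph_i^*(Z'')=q>p$ a generic $M$ has a \emph{positive-dimensional} Grassmannian $\Gr^{E_i^p}(E_i^q)$ of admissible submodules, with Euler characteristic $\binom{q}{p}>0$, and the weighted sum $\sum_c c\,\chi\{U\subset M:\ M/U\cong E_i^p,\ \wti{f}_{Z'}(U)=c\}$ has no a priori reason to vanish. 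You assert that each non-generic stratum admits a free $K^\times$-action and hence has zero Euler characteristic, but you neither exhibit such an action nor reduce it to anything proved in the series, and this claim is not needed in, and not implied by, the paper's development.

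The paper avoids this entirely: it does \emph{not} claim the ``raw'' product $\wti{g}*1_{E_i^p}$ already satisfies the delta property. Instead it runs a \emph{second}, descending induction on $\vph_i^*$ and \emph{subtracts a correction}: $\wti{f}_Z:=\wti{f}-\sum_{Z'}\rho_{Z'}(\wti{f})\,\wti{f}_{Z'}$, the sum over maximal $Z'$ with $\vph_i^*(Z')>p$, whose $\wti{f}_{Z'}$ are already constructed by the inner induction. This Gaussian-elimination step is exactly Lusztig's device in [L2], and it makes the unproven vanishing you rely on unnecessary. To repair your argument you would either have to supply the $K^\times$-action vanishing lemma for the $q>p$ strata (a statement that does not appear in the paper and would need real work), or adopt the paper's correction term, in which case your path-in-crystal bookkeeping is a slightly less efficient repackaging of the paper's two nested inductions on $\br$ and on $\vph_i^*(Z)$.

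A secondary imprecision: in your inductive step you say ``by the inductive hypothesis only the stratum with $N$ generic in $Z'$ contributes.'' The inductive hypothesis only controls $\rho_{Z'''}(\wti{f}_{Z'})$ for $Z'''$ maximal; it says nothing about the values of $\wti{f}_{Z'}$ on non-generic loci or on non-maximal components, and those can contribute non-trivially to the Euler characteristic. The paper sidesteps this by phrasing everything through the open strata $\Pi(\br)^{i,p}$ and the support condition (c), rather than through pointwise values of $\wti{f}_{Z'}$.
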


Define 
$$
\cM(\Pi) := \tM(\Pi)/\Serre
$$
where $\Serre$ is the ideal generated by the Serre relations
$\{ \ttheta_{ij} \mid 1 \le i,j \le n \text{ with } c_{ij} \le 0 \}$
where
$$
\ttheta_{ij} := \ad(\ttheta_i)^{1-c_{ij}}(\ttheta_j).
$$
Let 
$$
\theta_i := \ttheta_i + \Serre
\text{\;\;\; and \;\;\;}
f_Z := \wti{f}_Z + \Serre
$$
be the residue classes of $\ttheta_i$ and $\wti{f}_Z$ in $\cM(\Pi)$.

For a constructible function $f\df \nil_E(\Pi,d) \to \C$ let 
$$
\supp(f) := \{ M \in \nil_E(\Pi,d) \mid f(M) \not= 0 \}
$$
be the \emph{support} of $f$.
By Theorem~\ref{introthm:dim} we have 
$\dim \supp(f) \le \dim G(d) - q_{DC}(d/D)$.

\begin{Conj}\label{mainconj}
Let $K = \C$ and $\Pi = \Pi(C,D)$.
For $0 \not= f \in \tM(\Pi)_d \cap \Serre$ we have 
$$
\dim \supp(f) < \dim G(d) - q_{DC}(d/D).
$$
\end{Conj}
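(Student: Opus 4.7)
Since each element of $\cB$ has dimension exactly $\dim G(d) - q_{DC}(d/D)$, a $G(d)$-invariant constructible function $f \in \tM(\Pi)_d$ satisfies $\rho_Z(f) \neq 0$ precisely when $\supp(f)$ contains a Zariski-dense open subset of $Z$, in which case $\dim \supp(f) = \dim G(d) - q_{DC}(d/D)$. Hence Conjecture~\ref{mainconj} is equivalent to the vanishing of $\rho_Z$ on $\Serre \cap \tM(\Pi)_d$ for every $Z \in \Irr(\nil_E(\Pi,d))^\mx$. The plan is to prove this vanishing; Theorem~\ref{introthm:semican} already supplies candidate representatives $\wti{f}_Z$ whose classes in $\cM(\Pi)$ we want to remain linearly independent after passing to the quotient.

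\textbf{Strategy.} I would proceed in three steps. First, reduce to generators: since $\Serre$ is the two-sided ideal generated by the $\ttheta_{ij}$, every element of $\Serre \cap \tM(\Pi)_d$ is a finite $\C$-linear combination of convolutions $g * \ttheta_{ij} * h$ with $g, h \in \tM(\Pi)$ and $c_{ij} \le 0$, so it suffices to treat each such convolution. Second, bound $\supp(\ttheta_{ij})$: expanding $\ad(\ttheta_i)^{1-c_{ij}}(\ttheta_j)$ as a signed alternating sum of ordered convolutions of the characteristic functions $\ttheta_i = 1_{E_i}$ and $\ttheta_j = 1_{E_j}$, argue that the cancellations force $\supp(\ttheta_{ij})$ to miss a dense open subset of every component in $\Irr(\nil_E(\Pi,d_{ij}))^\mx$, where $d_{ij} = (1-c_{ij}) c_i e_i + c_j e_j$. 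Third, propagate the strict codimension through the outer convolutions: the value $(g * \ttheta_{ij} * h)(M)$ is an Euler characteristic over a variety of two-step $\Pi$-submodule flags $0 \subseteq N_1 \subseteq N_2 \subseteq M$ with prescribed subquotients, and combining Theorem~\ref{introthm:dim} applied to these flag varieties with the codimension bound from Step~2 shows that for a generic $M \in Z$ with $Z \in \cB$ the corresponding flag variety is either empty or of strictly smaller dimension than expected, whence $\rho_Z(g * \ttheta_{ij} * h) = 0$.

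\textbf{Main obstacle.} The crux is Step~2, the codimension bound on $\supp(\ttheta_{ij})$. In the symmetric case with minimal symmetrizer, $\ttheta_{ij}$ vanishes identically in $\tM(\Pi)$ by Lusztig's verification of the Serre relations, so the question is empty. In the symmetrizable case with nontrivial symmetrizer, $\ttheta_{ij}$ is genuinely nonzero as a constructible function, and a substantive analysis is needed. I expect the right tool to be a reduction to the rank-two sub-preprojective algebra associated with the $2 \times 2$ submatrix on vertices $\{i,j\}$, since every module in $\nil_E(\Pi,d_{ij})$ has support only on these two vertices and hence factors through the smaller algebra. Combined with the module-theoretic crystal operators of Theorem~\ref{introthm:crystal} and an explicit generic analysis of $E$-filtered modules over the rank-two preprojective algebras, one should be able to verify the required codimension estimate case-by-case on the finitely many rank-two sub-Cartan matrices, and then transport it back to $\Pi(C,D)$.
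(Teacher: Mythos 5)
The statement you are attempting to prove is Conjecture~\ref{mainconj} of the paper, and the authors do \emph{not} prove it. The only evidence they offer is Corollary~\ref{cor:strictly4}, which handles the generator case, together with the examples of Section~\ref{subsub:nonmaxsupp}, which are presented precisely to illustrate why a full proof is delicate. So there is no paper proof for your attempt to be compared against; what can be compared is the strategy.

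Your Steps~1 and~2 are sound. Step~1 is routine. Step~2 is exactly what the paper establishes via Lemma~\ref{lem:strictly2} and Corollaries~\ref{cor:strictly3}--\ref{cor:strictly4}: $\ttheta_{ij}$ is a primitive element of the Hopf algebra $\tM(\Pi)$, hence supported on indecomposables, and since no indecomposable crystal module has rank vector $(1-c_{ij})\alpha_i+\alpha_j$, one gets $\dim\supp(\ttheta_{ij}) < \dim H(\br)$ for the relevant $\br$.

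Step~3 is where your argument breaks down, and Section~\ref{subsub:nonmaxsupp} documents exactly this failure mode. You claim that for generic $M$ in a maximal component $Z$, the variety of two-step flags controlling $(g*\ttheta_{ij}*h)(M)$ is either empty or of strictly smaller dimension than expected, hence $\rho_Z(g*\ttheta_{ij}*h)=0$. This is false. In type $B_2$ with minimal symmetrizer, for $M=P_1\oplus E_1$ (whose orbit closure \emph{is} a maximal irreducible component), the subvariety of submodules $U\subset M$ with $M/U\cong E_1$ on which $\ttheta_{12}*\ttheta_2$ takes its only nonzero value $-2$ is
$$
\{\, U \subset M \mid M/U \cong E_1,\ U \cong Y_1 \,\} \cong \C^*,
$$
which is nonempty and of dimension one; the value $(\ttheta_{12}*\ttheta_2*\ttheta_1)(M)$ vanishes solely because $\chi(\C^*)=0$, a cancellation of Euler characteristics, not because the flag variety is small. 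The paper further observes that $1_{X_1}*1_{E_2}=1_{T_4}+2\cdot 1_{X_1\oplus E_2}$ has maximal support although $1_{X_1}$ does not, so in the ambient algebra $\tF(\Pi)$ the functions of non-maximal support do \emph{not} form an ideal, and no purely dimension-theoretic propagation argument of the kind you propose can succeed. A genuine proof would have to exploit that $g,h$ lie in the subalgebra $\tM(\Pi)$ and control Euler characteristics, not just dimensions, of the flag varieties; finding such a mechanism is exactly the open problem that the conjecture encapsulates.
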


The conjecture above is supported
by Corollary~\ref{cor:strictly4}.
The examples discussed in
Section~\ref{subsub:nonmaxsupp} illustrate certain subtleties.

The next theorem is our second main result.

\begin{Thm}\label{introthm:main}
Let $K = \C$, $\Pi = \Pi(C,D)$ and $\n = \n(C)$.
Assume that Conjecture~\ref{mainconj} is true.
Then the following hold:
\begin{itemize}

\item[(i)]
There is a Hopf algebra isomorphism
$$
\eta_\Pi\df U(\n) \to \cM(\Pi)
$$
defined by $e_i \mapsto \theta_i$.

\item[(ii)]
Via the isomorphism $\eta_\Pi$, the set
$$
\cS := \{ f_Z \mid Z \in \cB \}
$$
is a $\C$-basis of $U(\n)$.

\item[(iii)]
For $0 \not= f \in \tM(\Pi)_d$ the following are equivalent:
\begin{itemize}

\item[(a)]
$f \in \Serre$;

\item[(b)]
$\dim \supp(f) < \dim G(d) - q_{DC}(d/D)$.

\end{itemize}

\end{itemize}
\end{Thm}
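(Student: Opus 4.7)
The plan is to show that $\eta_\Pi$ is a well-defined surjective Hopf algebra homomorphism, then sandwich $\dim_{\C} \cM(\Pi)_d$ between $\dim U(\n)_d$ from above (via surjectivity) and $|\cB_d|$ from below (via the semicanonical functions of Theorem~\ref{introthm:semican}). Theorem~\ref{introthm:crystal} identifies $\cB$ with $B(-\infty)$, so $|\cB_d| = \dim U(\n)_d$ after specialization at $q = 1$, and the sandwich collapses. Conjecture~\ref{mainconj} enters only at the key point where the classes $f_Z$ must remain linearly independent in $\cM(\Pi)_d$ — equivalently, that elements of $\Serre$ cannot interfere with the evaluation ``generic value on a maximal component''.

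For (i) and (ii), I would first equip $\tM(\Pi)$ with a Hopf algebra structure, using convolution as multiplication and Lusztig's restriction-style comultiplication coming from flags of $\Pi$-submodules with prescribed sub-/quotient dimension vectors; a direct check on $E$-filtered modules shows that the generators $\ttheta_i$ are primitive, so $\Serre$ is a Hopf ideal and $\cM(\Pi)$ inherits a Hopf algebra structure. The universal property of $U(\n)$ then produces a surjective Hopf algebra morphism $\eta_\Pi \df U(\n) \to \cM(\Pi)$ sending $e_i \mapsto \theta_i$. Next I would introduce, for each $d$, the evaluation map
$$
\rho_d \df \tM(\Pi)_d \to \C^{\cB_d}, \qquad f \mapsto (\rho_Z(f))_{Z \in \cB_d},
$$
where $\cB_d := \Irr(\nil_E(\Pi,d))^{\mx}$. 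Theorem~\ref{introthm:semican} shows $\rho_d$ is surjective, since the $\wti{f}_Z$ map to the standard basis. Conjecture~\ref{mainconj} implies $\Serre \cap \tM(\Pi)_d \subseteq \Ker(\rho_d)$ (a function whose support has dimension strictly smaller than the generic dimension bound cannot be dense on any maximal component), so $\rho_d$ descends to $\cM(\Pi)_d$ and the classes $f_Z$ are linearly independent there. Combining with $\dim U(\n)_d = |B(-\infty)_d| = |\cB_d|$ gives
$$
\dim U(\n)_d \ge \dim \cM(\Pi)_d \ge |\cB_d| = \dim U(\n)_d,
$$
so $\eta_\Pi$ is an isomorphism and $\{f_Z \mid Z \in \cB_d\}$ is a basis of $\cM(\Pi)_d$.

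For (iii), the implication (a) $\Rightarrow$ (b) is exactly Conjecture~\ref{mainconj}. For the converse, if $\dim \supp(f) < \dim G(d) - q_{DC}(d/D)$, then $\supp(f) \cap Z$ cannot be dense in any $Z \in \cB_d$ for dimension reasons, so $\rho_Z(f) = 0$ for all such $Z$ and $f \in \Ker(\rho_d)$. The dimension count established in the previous step shows that the induced map $\cM(\Pi)_d \hookrightarrow \C^{\cB_d}$ is in fact an isomorphism, so $\Ker(\rho_d) = \Serre \cap \tM(\Pi)_d$, forcing $f \in \Serre$. The hard part of the whole argument is not the deductions above, which amount to bookkeeping once Conjecture~\ref{mainconj} is granted, but rather the two deep inputs on which the sandwich depends — the existence of semicanonical functions with prescribed generic values (Theorem~\ref{introthm:semican}) and the crystal identification $\cB \cong B(-\infty)$ (Theorem~\ref{introthm:crystal}) — together with the verification that $\tM(\Pi) \subseteq \tF(\Pi)$ is stable under Lusztig's comultiplication and that the $\ttheta_i$ are primitive, which is what upgrades $\eta_\Pi$ from an algebra to a Hopf algebra isomorphism.
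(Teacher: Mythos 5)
Your proof is correct and follows essentially the same route as the paper's: surjectivity of $\eta_\Pi$, linear independence of the semicanonical classes in $\cM(\Pi)_d$ forced by Conjecture~\ref{mainconj}, and the dimension equality $|\cB_d|=\dim U(\n)_d$ coming from the crystal isomorphism. The only difference is presentational — you package the argument via the evaluation map $\rho_d\df\tM(\Pi)_d\to\C^{\cB_d}$ and identify $\Serre_d=\Ker(\rho_d)$, whereas the paper works with the equivalent direct-sum decomposition $\tM(\Pi)_d=\cU_d\oplus\Serre_d$ with $\cU_d=\Span(\wti{\cS}_d)$; both encode exactly the same use of the conjecture.
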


We have $\Serre \not= 0$ if and only if 
$c_{ij} < 0$ and $c_i \ge 2$ for some $1 \le i,j \le n$.

One should expect that $\cS$ (seen as a subset of $U(\n)$ via
$\eta_\Pi$) does not depend on the symmetrizer $D$.

Suppose that $C$ is symmetric and that $D$ is the identity matrix.
Then $\tM(\Pi) = \cM(\Pi)$ and the Hopf algebra isomorphism $U(\n) \to \cM(\Pi)$
can be obtained by combining \cite[Lemma~12.11]{L1} with either \cite{KS} 
or \cite{S}, see \cite{L2}.
Furthermore, $\wti{\cS} = \cS$ is exactly Lusztig's \cite{L2}
semicanonical basis.

\subsection{}
The paper is organized as follows.
In 
Section~\ref{sec:recallpreproj}
we recall definitions and results on
preprojective algebras and their representation varieties.
In 
Section~\ref{sec:bundles}
we generalize Lusztig's construction of certain fibre bundles
from the classical nilpotent varieties to our more general setup.
The proof of Theorem~\ref{introthm:dim} is contained in
Section~\ref{sec:maxcomp}.
We also show that generically the modules in maximal irreducible components
are crystal modules.
(These modules are defined in Section~\ref{subsec:crystalmodules}.)
Section~\ref{sec:crystal} contains the proof of Theorem~\ref{introthm:crystal}.
The convolution algebra $\cM(\Pi)$ is defined in 
Section~\ref{sec:convolution}.
Section~\ref{sec:semican} contains the proof of 
Theorems~\ref{introthm:semican} and \ref{introthm:main}.
Assuming that Conjecture~\ref{mainconj} is true,
we also show that 
the semicanonical bases of the enveloping algebras $U(\n)$ induce semicanonical bases of all irreducible integrable highest weight modules.
Section~\ref{sec:examples} contains the classification
of maximal irreducible components for the Dynkin cases, and
also
examples of Dynkin type $A_2$, $B_2$ and $G_2$.

\subsection{Notation}
By a \emph{module} we mean a finite-dimensional left module, unless mentioned otherwise.
For maps $f\df X \to Y$ and $g\df Y \to Z$ the composition is denoted 
by $gf\df X \to Z$.
A module $M$ over an algebra $A$ is \emph{rigid} if $\Ext_A^1(M,M) = 0$.
For a module $M$, let $M^m$ be the direct sum of $m$ copies of $M$.

For a constructible subset $X$ of a quasi-projective variety, 
let $\Irr(X)$ be the set of irreducible components of $X$.

Let $\N$ be the natural numbers, including $0$.


\section{Quivers with relations associated with symmetrizable Cartan matrices}\label{sec:recallpreproj}


In this section, we recall some definitions and results from \cite{GLS3}.

\subsection{The preprojective algebras $\Pi(C,D)$}\label{subsec:DefPi}
A matrix $C = (c_{ij}) \in M_n(\Z)$ is a  
\emph{symmetrizable generalized Cartan matrix}
provided the following hold:
\begin{itemize}

\item[(C1)]
$c_{ii} = 2$ for all $i$;

\item[(C2)] 
$c_{ij} \le 0$ for all $i \not= j$;

\item[(C3)]
$c_{ij} \not= 0$ if and only if $c_{ji} \not= 0$;

\item[(C4)]
There is a diagonal integer matrix $D = \diag(c_1,\ldots,c_n)$ with
$c_i \ge 1$ for all $i$ such that
$DC$ is symmetric. 

\end{itemize}
The matrix $D$ appearing in (C4) is called a \emph{symmetrizer} of $C$.
The symmetrizer $D$ is \emph{minimal} if $c_1 + \cdots + c_n$ is
minimal.

From now on, let $C = (c_{ij}) \in M_n(\Z)$ be a symmetrizable generalized Cartan matrix.
Throughout, let
$$
I := \{ 1,\ldots,n \}.
$$

An \emph{orientation of} $C$ is a subset 
$\Omega \subset  I \times I$
such that for all $(i,j) \in I \times I$ the following are equivalent:
\begin{itemize}

\item[(i)]
$\{ (i,j),(j,i) \} \cap \Omega \not= \varnothing$;

\item[(ii)]
$|\{ (i,j),(j,i) \} \cap \Omega| = 1$;

\item[(iii)]
$c_{ij}<0$.

\end{itemize}
The \emph{opposite orientation} of an orientation $\Omega$ is defined as
$\Omega^* := \{ (j,i) \mid (i,j) \in \Omega \}$.
Let
$\overline{\Omega} := \Omega \cup \Omega^*$.
Define
$$
\ov{\Omega}(i) := \{ j \in I \mid (i,j) \in \ov{\Omega} \}
= \{ j \in I \mid (j,i) \in \overline{\Omega} \}
= \{ j \in I \mid c_{ij} < 0 \}.
$$
For $(i,j) \in \overline{\Omega}$ set
$$
\sgn(i,j) :=
\begin{cases}
1 & \text{if $(i,j) \in \Omega$},\\
-1 & \text{if $(i,j) \in \Omega^*$}.
\end{cases}
$$

For all $c_{ij} < 0$ define
\[
g_{ij} := |\gcd(c_{ij},c_{ji})|,\qquad
f_{ij} := |c_{ij}|/g_{ij}.
\]
Let
$\ov{Q} := \ov{Q}(C) := (I,\ov{Q}_1,s,t)$ be the quiver with the
set of vertices $I = \{ 1,\ldots,n \}$ and 
with the set of arrows 
$$
\ov{Q}_1 := \{ \alpha_{ij}^{(g)}\df j \to i \mid (i,j) \in \ov{\Omega}, 1 \le g \le g_{ij} \}
\cup \{ \vep_i\df i \to i \mid i \in I \}.
$$
(Thus we have $s(\alpha_{ij}^{(g)}) = j$ and $t(\alpha_{ij}^{(g)}) = i$ and
$s(\vep_i) = t(\vep_i) = i$, where $s(a)$ and $t(a)$ denote the
starting and terminal vertex of an arrow $a$, respectively.)
If $g_{ij}=1$, we also write $\alpha_{ij}$ instead of $\alpha_{ij}^{(1)}$.

For $\ov{Q} = \ov{Q}(C)$ and a symmetrizer $D = \diag(c_1,\ldots,c_n)$ of $C$, we define an algebra
$$
\Pi := \Pi(C,D,\Omega) := K\overline{Q}/\overline{I}
$$ 
where $K\ov{Q}$ is the path algebra of $\ov{Q}$ and
$\overline{I}$ is the ideal defined by the following
relations:
\begin{itemize}

\item[(P1)]
For each $i$ we have
\[
\vep_i^{c_i} = 0.
\]

\item[(P2)]
For each $(i,j) \in \overline{\Omega}$ and each $1 \le g \le g_{ij}$ we have
\[
\vep_i^{f_{ji}}\alpha_{ij}^{(g)} = \alpha_{ij}^{(g)}\vep_j^{f_{ij}}.
\]

\item[(P3)]
For each $i$ we have
\[
\sum_{j\in \ov{\Omega}(i)} \sum_{g=1}^{g_{ji}} \sum_{f=0}^{f_{ji}-1} \sgn(i,j)
\vep_i^f\alpha_{ij}^{(g)} \alpha_{ji}^{(g)} \vep_i^{f_{ji}-1-f} = 0.
\]

\end{itemize}
We call $\Pi$ a \emph{preprojective algebra} of type $C$.
These algebras generalize the classical preprojective algebras associated
with quivers, see \cite{GLS3} for details.
Up to isomorphism,
the algebra $\Pi := \Pi(C,D) := \Pi(C,D,\Omega)$ does not depend on
the orientation $\Omega$ of $C$.
Let $\rep(\Pi)$ be the category of finite-dimensional $\Pi$-modules.

Define bilinear forms 
$$
\bil{-,-}\df \Z^n \times \Z^n \to \Z
$$ 
by $\bil{\alpha_i,\alpha_j} := c_{ji}$, 
and 
$$
(-,-)\df \Z^n \times \Z^n \to \Z
$$ 
by $(\alpha_i,\alpha_j) := c_ic_{ij}$.
(Here $\alpha_1,\cdots,\alpha_n$ denotes the standard basis of $\Z^n$.)
Let 
$$
q_{DC}\df \Z^n \to \Z
$$
be the associated quadratic form defined by
$q_{DC}(x) := (x,x)/2$.

For $i \in I$ let
$S_i$ be the $1$-dimensional simple $\Pi$-module
associated with the vertex $i$, and let
$E_i$ be the $c_i$-dimensional uniserial $\Pi$-module 
associated with $i$.
Let 
$$
H_i := K[\vep_i]/(\vep_i^{c_i}),
$$ 
and let
$e_i \in \Pi$ be the idempotent associated with $i$.
For each $\Pi$-module $M$ the space $e_iM$ is naturally an
$H_i$-module.
We have $E_i = e_iE_i$, and $e_iE_i$ is free of rank $1$ as an $H_i$-module.
A $\Pi$-module $M$ is \emph{locally free} if $e_iM$ is a free
$H_i$-module for all $i$.
The rank of a free $H_i$-module $M_i$ is denoted by $\rk(M_i)$.
For a locally free $\Pi$-module $M$ let $\rk(M) := (\rk(e_1M),\ldots,\rk(e_nM))$
be the \emph{rank vector} of $M$.
A $\Pi$-module $M$ is \emph{$E$-filtered} (resp. $S$-filtered)
if there exists a chain
$$
0 = U_0 \subseteq U_1 \subseteq \cdots \subseteq U_t = M
$$
of submodules $U_i$ of $M$ such that for each $1 \le k \le t$ we have 
$U_k/U_{k-1} \cong E_{i_k}$ (resp. $U_k/U_{k-1} \cong S_{i_k}$) for some 
$i_k \in I$.
Let $\nil_E(\Pi) \subseteq \rep(\Pi)$ be the subcategory of
$E$-filtered $\Pi$-modules. 
Note that each $E$-filtered $\Pi$-module is locally free. 
The converse of this statement is in general wrong.
We refer to \cite{GLS3} for further details.

\subsection{Representation varieties (quiver version)}
Let $\Pi = \Pi(C,D)$.
For a dimension vector $d = (d_1,\ldots,d_n)$  
let
$$
\ov{H}(d) :=
\prod_{a \in \ov{Q}_1} 
\Hom_K(K^{d_{s(a)}},K^{d_{t(a)}}),
$$
and let $\rep(\Pi,d)$ be the varieties of
$\Pi$-modules with dimension vector $d$.
By definition, the points in $\rep(\Pi,d)$ are
tuples
$$
(M(a))_a \in \ov{H}(d)
$$
satisfying the equations
\begin{align*}
M(\vep_i)^{c_i} &= 0, &
M(\vep_i)^{f_{ji}}M(\alpha_{ij}^{(g)}) &= M(\alpha_{ij}^{(g)})M(\vep_j)^{f_{ij}},
\end{align*}
\begin{align*}
\sum_{j\in \ov{\Omega}(i)} \sum_{g=1}^{g_{ji}} \sum_{f=0}^{f_{ji}-1} \sgn(i,j)
M(\vep_i)^fM(\alpha_{ij}^{(g)})M(\alpha_{ji}^{(g)})M(\vep_i)^{f_{ji}-1-f} &= 0
\end{align*}
for all $i \in I$, $(i,j) \in \ov{\Omega}$ and $1 \le g \le g_{ij}$.
The group
$$
G(d) := \prod_{i \in I} \GL_K(d_i)
$$
acts on $\rep(\Pi,d)$ by conjugation.
For a module $M \in \rep(\Pi,d)$ 
let $\cO(M) := G(d)M$ be its $G(d)$-orbit.
The $G(d)$-orbits are in bijection with the isomorphism classes of modules
in $\rep(\Pi,d)$.
For $M \in \rep(\Pi,d)$ we have
$$
\dim \cO(M) = \dim G(d) - \dim \End_\Pi(M).
$$
Let $\rep_\vp(\Pi,d) \subseteq \rep(\Pi,d)$ be the subvarieties of 
locally free modules, and let
$\nil_E(\Pi,d) \subseteq \rep_\vp(\Pi,d)$ be the subset of $E$-filtered $\Pi$-modules.
Using the same technique as in the proof of \cite[Theorem~1.3(i)]{CBS},
one shows that $\nil_E(\Pi,d)$ is a constructible subset of
$\rep_\vp(\Pi,d)$.

\subsection{Representation varieties (species version)}\label{subsec:species}
Let $\Pi = \Pi(C,D) = \Pi(C,D,\Omega)$.
For a tuple $\bM = (M_1,\ldots,M_n)$ with $M_i \in \rep(H_i)$
let
$$
H(\bM) := \prod_{(i,j) \in \Omega} 
\Hom_{H_i}({_i}H_j \otimes_j M_j,M_i)
$$
and
$$
\ov{H}(\bM) := \prod_{(i,j) \in \ov{\Omega}} 
\Hom_{H_i}({_i}H_j \otimes_j M_j,M_i).
$$
Here ${_i}H_j$ are the $H_i$-$H_j$-bimodules defined in \cite{GLS3}.
Using the results in \cite{GLS3} we see that $(\dim \ov{H}(\bM))/2 = 
\dim H(\bM)$.

For $M \in \ov{H}(\bM)$ let 
$$
M_{ij}\df {_iH_j} \otimes_j M_j \to M_i
$$ 
be the corresponding homomorphisms in $\Hom_{H_i}({_iH_j} \otimes_j M_j,M_i)$.

Let 
$$
G(\bM) := \prod_{i \in I} \GL_{H_i}(M_i)
$$
where $\GL_{H_i}(M_i)$ is the group of $H_i$-linear
automorphisms of $M_i$.
The group $G(\bM)$ acts by conjugation on $\ov{H}(\bM)$.
We call $\bM$ \emph{locally free} if each $M_i$ is a free $H_i$-module.
In this case, let
$$
\rk(\bM) := (\rk(M_1),\ldots,\rk(M_n))
$$
be the \emph{rank vector} of $\bM$.
The \emph{total rank} of $\bM$ is defined as $\rk(M_1)+ \cdots + \rk(M_n)$.

For $M \in \ov{H}(\bM)$ let
$$
M_{i,\inn} := (\sgn(i,j)M_{ij})_j\df \bigoplus_{j \in \ov{\Omega}(i)}
{_i}H_j \otimes_j M_j \to M_i
$$
and
$$
M_{i,\out} := (M_{ji}^\vee)_j\df 
M_i \to \bigoplus_{j \in \ov{\Omega}(i)} {_i}H_j \otimes_j M_j
$$
be defined as in \cite[Section~5]{GLS3}.
Let
$$
\rep(\Pi,\bM) := \{ M \in \ov{H}(\bM) \mid 
M_{i,\inn} \circ M_{i,\out} = 0 \}.
$$ 
We can see $\rep(\Pi,\bM)$ as the affine variety of $\Pi$-modules
$M$ with $e_iM = M_i$ for $i \in I$.
The $G(\bM)$-action on $\ov{H}(\bM)$ restricts to $\rep(\Pi,\bM)$.
The isomorphism classes of $\Pi$-modules $M$ with $e_iM = M_i$ for all
$i$ are
in bijection with the $G(\bM)$-orbits in $\rep(\Pi,\bM)$.
For a module $M \in \rep(\Pi,\bM)$ 
let 
$\cO(M) := G(\bM)M$ 
be its $G(\bM)$-orbit.
For $M \in \rep(\Pi,\bM)$ we have
$$
\dim \cO(M) = \dim G(\bM) - \dim \End_\Pi(M).
$$
For $\bM$ locally free, let
$$
\nil_E(\Pi,\bM) := \Pi(\bM)
$$ 
be the subset of $E$-filtered modules in $\rep(\Pi,\bM)$.
This is a constructible subset of $\rep(\Pi,\bM)$.

For a rank vector $\br = (r_1,\ldots,r_n)$ define
$\bM(\br) := (H_1^{r_1},\ldots,H_n^{r_n})$ and set
\begin{align*}
H(\br) &:= H(\bM(\br)),
\\
\ov{H}(\br) &:= \ov{H}(\bM(\br)).
\\
\rep(\Pi,\br) &:= \rep(\Pi,\bM(\br)),
\\
\nil_E(\Pi,\br) &:= \Pi(\br) := \Pi(\bM(\br)).
\end{align*}
Set $G(\br) := G(\bM(\br))$.
(We always denote rank vectors in bold letters, like $\br$, and
dimension vectors in ordinary letters, like $d$.)

Obviously, each variety $\Pi(\bM)$ is isomorphic to
$\Pi(\br)$ where $\br = \rk(\bM)$.
We sometimes just identify $\Pi(\bM)$ and
$\Pi(\br)$.

\subsection{Relating the quiver version and the species version}
We have the obvious projection
$$
\rep(\Pi,d) \xrightarrow{\varepsilon_\Pi} \prod_{i \in I} \rep(H_i,d_i). 
$$
For $\bM = (M_1,\ldots,M_n) \in \prod_{i \in I} \rep(H_i,d_i)$
we have
$$
\varepsilon_\Pi^{-1}(\bM) \cong \rep(\Pi,\bM).
$$
This follows from the considerations in \cite[Section~5]{GLS3}.
We see that $\vep_\Pi$ is a fibre bundle.
We identify the fibre $\varepsilon_\Pi^{-1}(\bM)$ with $\rep(\Pi,\bM)$.
For $M \in \rep(\Pi,\bM)$ we have
$$
G(\bM)M = G(d)M \cap \rep(\Pi,\bM).
$$
Assume now that $\bM$ is locally free.
Recall that $d/D = (d_1/c_1,\ldots,d_n/c_n)$, and note 
that $\rk(\bM) = d/D$.
An easy calculation shows that
$$
\dim H(\bM) = \dim G(\bM) - q_{DC}(d/D).
$$
For a closed $G(\bM)$-stable subset $Z$ of $\rep(\Pi,\bM)$ of
dimension $\dim G(\bM) + m$ for some $m \in \Z$, the correspond
subset $G(d)Z$ of $\rep(\Pi,d)$ has dimension 
$\dim G(d) +m$.

\subsection{Convolution algebras}\label{subsec:convolution}
In this section, assume that $K = \C$.
Let 
$$
\tF(\Pi) := \bigoplus_{d \in \N^n} \tF(\Pi)_d
$$ 
be the convolution algebra associated with $\Pi$, where
$\tF(\Pi)_d$ is the $\C$-vector space of constructible
functions $\rep(\Pi,d) \to \C$.
Recall that a map 
$$
f\df \rep(\Pi,d) \to \C
$$ 
is a \emph{constructible function} if the following hold:
\begin{itemize}

\item[(i)]
$\Ima(f)$ is finite;

\item[(ii)]
For each $m \in \C$,
the preimage
$f^{-1}(m)$ is a constructible subset of $\rep(\Pi,d)$;

\item[(iii)]
$f$ is constant on $G(d)$-orbits.

\end{itemize}
For $M \in \rep(\Pi)$ define $1_M \in \tF(\Pi)$ by
$$
1_M(N) :=
\begin{cases}
1 & \text{if $M \cong N$},
\\
0 & \text{otherwise}.
\end{cases}
$$

For $f,g \in \tF(\Pi)$ the product $f * g$ is defined by
$$
(f*g)(M) := \sum_{m \in \C} m \chi(\{ U \subseteq M \mid f(U)g(M/U) = m \})
$$
where $M \in \rep(\Pi)$ and $\chi$ denotes the topological Euler characteristic. 

For $i \in I$ let 
$$
\ttheta_i := 1_{E_i}.
$$
Let 
$$
\tM(\Pi) = \bigoplus_{d \in \N^n} \tM(\Pi)_d
$$
be the subalgebra of $\tF(\Pi)$ generated by $\{ \ttheta_i \mid i \in I \}$,
where
$$
\tM(\Pi)_d := \tF(\Pi)_d \cap \tM(\Pi).
$$
For $f \in \tM(\Pi)_d$ let 
$$
\supp(f) := \{ M \in \rep(\Pi,d) \mid f(M) \not= 0 \}
$$
be the \emph{support} of $f$.
We have $\supp(f) \subseteq \rep_\vp(\Pi,d)$.
Using the same arguments as in the proof of 
\cite[Proposition~4.7]{GLS4}, we get that $\tM(\Pi)$
is a Hopf algebra, which is isomorphic to the
enveloping algebra $U(\cP(\tM(\Pi)))$ of the Lie algebra
$\cP(\tM(\Pi))$ of primitive elements in $\tM(\Pi)$.
A constructible function $f \in \tM(\Pi)_d$ is in $\cP(\tM(\Pi))$
if and only if $\supp(f)$ consists just of indecomposable modules.
The comultiplication in $\tM(\Pi)$ is given by
$\ttheta_i \mapsto \ttheta_i \otimes 1 + 1 \otimes \ttheta_i$.

For a dimension vector $d$ with $\rep_\vp(\Pi,d) \not= \varnothing$ let
$\br := d/D$ be the associated rank vector.
Alternatively, we can define $\tM(\Pi)$ 
using constructible
functions 
$\rep(\Pi,\br) \to \C$.
(Condition (iii) in the definition of a constructible function is
replaced by demanding that $f$ is constant on $G(\br)$-orbits.)
It is straightforward to check that the two definitions yield canonically isomorphic algebras.
In this article, 
we mainly work with the varieties $\rep(\Pi,\br)$ (the species version) instead of the varieties $\rep(\Pi,d)$ (the quiver version).
Our main results in the introduction 
and also the examples collection in Section~\ref{sec:examples}
are formulated using the quiver version, whereas the rest of the article
(especially the proofs) are based on the more convenient species 
version.

\subsection{Hom-Ext formulas}\label{subsec:3.1}
The following result is proved in \cite[Theorem~12.6]{GLS3}.
It generalizes \cite[Lemma~1]{CB2}.

\begin{Lem}\label{lem:3.7a}
For $M,N\in \rep_\vp(\Pi)$ the following hold:
\begin{itemize}

\item[(i)]
$\dim \Ext_\Pi^1(M,N) = \dim \Ext_\Pi^1(N,M)$;

\item[(ii)]
$
\dim \Ext_\Pi^1(M,N) = \dim \Hom_\Pi(M,N) + \dim \Hom_\Pi(N,M)
- (\rk(M),\rk(N))$.

\end{itemize}
\end{Lem}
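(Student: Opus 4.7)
The right-hand side of (ii) is visibly symmetric in $M$ and $N$: the sum $\dim\Hom_\Pi(M,N) + \dim\Hom_\Pi(N,M)$ is symmetric by inspection and $(-,-)$ is a symmetric bilinear form. Hence (i) follows formally from (ii) applied to both ordered pairs $(M,N)$ and $(N,M)$, and the entire task reduces to establishing the Hom-Ext formula (ii).

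For (ii), my plan is to invoke the functorial projective resolution of a locally free $\Pi$-module that arises from the projective bimodule resolution of $\Pi(C,D)$ established in [GLS3]. For $M \in \rep_\vp(\Pi)$ it takes the form
$$0 \to \Pi \otimes_H M \to \Pi \otimes_H \bigoplus_{(i,j)\in\ov\Omega}\bigl({_iH_j}\otimes_j e_jM\bigr) \to \Pi \otimes_H M \to M \to 0,$$
with $H = \prod_i H_i$, showing in particular that $\pdim_\Pi M \le 2$. Applying $\Hom_\Pi(-,N)$ together with the adjunction $\Hom_\Pi(\Pi\otimes_H X,N)\cong \Hom_H(X,N)$ converts this into a three-term complex of $K$-vector spaces whose cohomologies in degrees $0$, $1$, $2$ compute $\Hom_\Pi(M,N)$, $\Ext^1_\Pi(M,N)$, and $\Ext^2_\Pi(M,N)$ respectively.

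The second ingredient I need is a Calabi-Yau type duality
$$\Ext^2_\Pi(M,N) \cong D\Hom_\Pi(N,M)\qquad (M,N\in\rep_\vp(\Pi)),$$
which is the analogue for $\Pi(C,D)$ of the $2$-Calabi-Yau property of classical preprojective algebras and should be derivable from the same bimodule resolution via a self-duality of the arrow bimodule $\bigoplus_{(i,j)\in\ov\Omega}{_iH_j}$ encoded in relations (P2) and (P3). I expect this duality to be the main technical obstacle, since it is precisely where the symmetrizability assumption (C4) and the precise form of the preprojective relations enter; locally freeness of $M$ and $N$ is essential to control the projective resolutions on both sides.

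Once both tools are in place, (ii) reduces to an alternating-sum computation along the resolution. The two outer terms each contribute $\dim\Hom_H(M,N) = \sum_i c_i r_i s_i$ with $\br = \rk(M)$, $\bs = \rk(N)$, while the middle term, using the explicit $H_i$-$H_j$-bimodule structure of ${_iH_j}$ recalled in Section~\ref{subsec:DefPi}, contributes a bilinear quantity in $\br$ and $\bs$ involving $|c_{ij}|$ and the $c_i$. A direct bookkeeping exercise, together with the identity $c_ic_{ij} = c_jc_{ji}$ and $(\alpha_i,\alpha_j) = c_ic_{ij}$, identifies the combination with $(\br,\bs)$, yielding (ii); then (i) follows from the symmetry noted at the outset.
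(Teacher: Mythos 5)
The paper does not prove Lemma~\ref{lem:3.7a}; it imports it as a citation to \cite[Theorem~12.6]{GLS3}, noted there to generalize Crawley--Boevey's Lemma~1 in \cite{CB2}. Your sketch --- the length-two projective resolution of a locally free $\Pi$-module obtained by tensoring down the bimodule resolution of $\Pi$, the Calabi--Yau type duality $\Ext_\Pi^2(M,N)\cong {\rm D}\Hom_\Pi(N,M)$ valid on $\rep_\vp(\Pi)$, and the Euler characteristic bookkeeping using $\dim\Hom_H(M,N)=\sum_i c_ir_is_i$ and $\dim\Hom_{H_i}({_iH_j}\otimes_j e_jM,e_iN)=c_i|c_{ij}|r_js_i$ --- is exactly the strategy of the cited proof, so your approach is correct and matches the reference; the two ingredients you flag as black boxes (exactness of the resolution in the locally free case and the self-duality encoding the $2$-CY property) are indeed where all the technical work lives in \cite{GLS3}.
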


\begin{Cor}\label{cor:3.8}
For $M \in \rep_\vp(\Pi)$ and $i \in I$ we have
$$
\dim \Ext_\Pi^1(E_i,M) = 
\dim \Hom_\Pi(E_i,M) + \dim \Hom_\Pi(M,E_i) - 
c_i\bil{\rk(M),\alpha_i}. 
$$
\end{Cor}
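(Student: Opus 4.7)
The plan is to apply Lemma~\ref{lem:3.7a}(ii) directly with the roles of $M$ and $N$ taken by $E_i$ and $M$ respectively, and then to reduce the resulting symmetric bilinear form term $(\rk(E_i),\rk(M))$ to the asymmetric form $c_i\bil{\rk(M),\alpha_i}$ appearing on the right-hand side. Note that $E_i \in \rep_\vp(\Pi)$ with $\rk(E_i)=\alpha_i$ by the definition of $E_i$ in Section~\ref{subsec:DefPi}, so the hypotheses of Lemma~\ref{lem:3.7a} are satisfied.

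The first step is to invoke Lemma~\ref{lem:3.7a}(ii), which yields
\[
\dim \Ext_\Pi^1(E_i,M) = \dim \Hom_\Pi(E_i,M) + \dim \Hom_\Pi(M,E_i) - (\rk(E_i),\rk(M)).
\]
The second step is the bilinear form computation. Writing $\rk(M)=\sum_j r_j \alpha_j$, the definitions $(\alpha_k,\alpha_l) = c_k c_{kl}$ and $\bil{\alpha_k,\alpha_l} = c_{lk}$ give
\[
(\alpha_i,\rk(M)) = \sum_j r_j\, c_i c_{ij} = c_i \sum_j r_j \bil{\alpha_j,\alpha_i} = c_i \bil{\rk(M),\alpha_i}.
\]
Combining these two steps gives the claim.

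There is no real obstacle here; the corollary is essentially just a specialization of Lemma~\ref{lem:3.7a}(ii) together with the unpacking of the normalization that relates the symmetric form $(-,-)$ (coming from $DC$) and the non-symmetric form $\bil{-,-}$ (coming from $C$) via the factor $c_i$ from the symmetrizer. The only thing to double-check is that $E_i$ is locally free so that Lemma~\ref{lem:3.7a} applies, which is immediate from the definition of $E_i$ as a free $H_i$-module of rank one concentrated at vertex $i$.
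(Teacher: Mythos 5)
Your proposal is correct and follows essentially the same route as the paper: both apply Lemma~\ref{lem:3.7a}(ii) with $(E_i,M)$ and then verify the identity $(\rk(E_i),\rk(M)) = c_i\bil{\rk(M),\alpha_i}$ by unpacking the definitions of the two bilinear forms. The only cosmetic difference is that the paper expands both sides as explicit sums over $\ov{\Omega}(i)$, whereas you phrase the computation directly via $(\alpha_i,\alpha_j)=c_ic_{ij}$ and $\bil{\alpha_j,\alpha_i}=c_{ij}$.
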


\begin{proof}
Let $\rk(M) = (m_1,\ldots,m_n)$.
We have 
$$
\bil{\rk(M),\alpha_i} = 2m_i + \sum_{j \in \ov{\Omega}(i)} c_{ij}m_j
$$
and
$$
(\rk(M),\alpha_i) = 2c_im_i + \sum_{j \in \ov{\Omega}(i)} c_ic_{ij}m_j.
$$
Now the result follows from Lemma~\ref{lem:3.7a}(ii).
\end{proof}

Let $M \in \rep(\Pi)$.
For each $i \in I$ let
$$
\wti{M}_i := \bigoplus_{j \in \ov{\Omega}(i)} {_i}H_j \otimes_j M_j.
$$
As before, let
$$
M_{i,\inn} := (\sgn(i,j)M_{ij})_j\df \bigoplus_{j \in \ov{\Omega}(i)}
{_i}H_j \otimes_j M_j \to M_i
$$
and
$$
M_{i,\out} := (M_{ji}^\vee)_j\df 
M_i \to \bigoplus_{j \in \ov{\Omega}(i)} {_i}H_j \otimes_j M_j.
$$

For $M \in \rep(\Pi)$ and $i \in I$ let $\sub_i(M)$ (resp.
$\fac_i(M)$) be the largest submodule (resp. factor module) of
$M$ such that each composition factor of $\sub_i(M)$ (resp. 
$\fac_i(M)$) is isomorphic to $S_i$.

Let $\tp(M)$ be the largest semisimple factor module of $M$,
and let $\tp_i(M)$ be the largest semisimple factor module of $M$
such that each composition factor of $\tp_i(M)$ is isomorphic to $S_i$.

\begin{Lem}\label{lem:3.7}
For $M \in \rep(\Pi)$ the following hold:
\begin{itemize}

\item[(i)]
$\dim \Hom_\Pi(E_i,M) = \dim \Ker(M_{i,\out}) = \dim \sub_i(M)$;

\item[(ii)]
$\dim \Hom_\Pi(M,E_i) = \dim(\Coker(M_{i,\inn})) = \dim \fac_i(M)$;

\item[(iii)]
If $M$ is locally free, then
\begin{align*}
\dim \Ext_\Pi^1(M,E_i) &= \dim(\wti{M}_i) - 
\dim \Ima(M_{i,\inn}) - \dim \Ima(M_{i,\out})\\
&= 
\dim(\Ker(M_{i,\inn})/\Ima(M_{i,\out})).
\end{align*}

\end{itemize}
\end{Lem}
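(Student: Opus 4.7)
For (i), my plan is to work in the species version. Since $E_i$ is concentrated at vertex $i$ with $(E_i)_i = H_i$ free of rank one and $(E_i)_j = 0$ for $j \ne i$, any $\Pi$-homomorphism $\phi\df E_i \to M$ reduces to a single $H_i$-linear map $\phi_i\df H_i \to M_i$, and hence is determined by $m := \phi_i(1) \in M_i$. The compatibility conditions at all edges $(k,j) \in \ov{\Omega}$ are automatic except those asking that the outgoing-arrow maps at $i$ vanish on $m$, which is exactly $M_{i,\out}(m) = 0$. This gives $\Hom_\Pi(E_i, M) \cong \Ker(M_{i,\out})$. The second equality follows because $\Ker(M_{i,\out})$ is an $H_i$-submodule of $M_i$ whose corresponding $\Pi$-submodule of $M$ (with zero entries at all other vertices) has all composition factors isomorphic to $S_i$, while any $\Pi$-submodule of $M$ with only $S_i$-composition factors must lie in $M_i$ and be annihilated by every outgoing arrow at $i$. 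Part (ii) follows by a formally dual argument: a homomorphism $M \to E_i$ automatically vanishes on $M_j$ for $j \ne i$ and on $\Ima(M_{i,\inn}) \subseteq M_i$, so it factors through $\Coker(M_{i,\inn})$; dually, $\fac_i(M) = \Coker(M_{i,\inn})$.

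For (iii), my plan is to combine Corollary~\ref{cor:3.8} with the symmetry in Lemma~\ref{lem:3.7a}(i) to get
\[
\dim \Ext_\Pi^1(M, E_i) = \dim \Hom_\Pi(E_i, M) + \dim \Hom_\Pi(M, E_i) - c_i \bil{\rk(M), \alpha_i},
\]
substitute the formulas from (i) and (ii), and apply $\dim \Ker(M_{i,\out}) = \dim M_i - \dim \Ima(M_{i,\out})$ together with the analogous expression for $\dim \Coker(M_{i,\inn})$. The locally free hypothesis ensures $\dim M_i = c_i \rk(e_iM)$, and combined with the bimodule dimension $\dim_K {_iH_j} = |c_i c_{ij}|$ from \cite{GLS3}, a short calculation yields the identity
\[
2\dim M_i - c_i \bil{\rk(M), \alpha_i} = \dim \wti{M}_i,
\]
producing the first claimed formula. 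For the second equality, the defining relation $M_{i,\inn} \circ M_{i,\out} = 0$ forces $\Ima(M_{i,\out}) \subseteq \Ker(M_{i,\inn})$, after which the desired identity follows from $\dim(\Ker(M_{i,\inn})/\Ima(M_{i,\out})) = \dim \Ker(M_{i,\inn}) - \dim \Ima(M_{i,\out})$ together with $\dim \Ker(M_{i,\inn}) = \dim \wti{M}_i - \dim \Ima(M_{i,\inn})$.

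The main obstacle I anticipate lies in (i) and (ii): one must verify carefully that the sign conventions $\sgn(i,j)$ and the duality encoded in $M_{ji}^\vee$ are compatible so that the vanishing of homomorphism compatibility conditions translates cleanly into membership in $\Ker(M_{i,\out})$ and factorization through $\Coker(M_{i,\inn})$. Once these identifications are settled, part (iii) is essentially a dimension count relying on Corollary~\ref{cor:3.8}, the preprojective relation $M_{i,\inn} \circ M_{i,\out} = 0$, and the bimodule dimensions from \cite{GLS3}.
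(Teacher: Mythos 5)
Your approach matches the paper's in all essentials: for (i)--(ii) you translate $\Pi$-homomorphisms to and from $E_i$ into $H_i$-linear data at vertex $i$ and identify the relevant subspaces with $\Ker(M_{i,\out})$ and $\Coker(M_{i,\inn})$, and for (iii) you combine the $\Ext$-symmetry from Lemma~\ref{lem:3.7a}(i), the Hom--Ext formula, and the rank computation of $\dim\wti{M}_i$ exactly as the paper does.

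There is, however, a small but real gap in your argument for (ii). What you actually establish is the isomorphism $\Hom_\Pi(M,E_i)\cong\Hom_{H_i}(\Coker(M_{i,\inn}),H_i)$; to conclude that this has dimension $\dim\Coker(M_{i,\inn})$ you need the fact that $H_i=K[\vep_i]/(\vep_i^{c_i})$ is self-injective (Frobenius), so that $\dim\Hom_{H_i}(N,H_i)=\dim N$ for every finite-dimensional $H_i$-module $N$. This step is \emph{not} formally dual to (i): part (i) only uses that $H_i$ is free (hence projective) over itself, whereas (ii) requires injectivity of $H_i$. The paper discharges this in one clause by observing that $E_i=H_i$ is indecomposable projective-injective in $\rep(H_i)$. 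As for the sign/duality issue you flag, that is indeed where one must be careful, but it is settled by the adjunction defining $M_{ji}^\vee$ in \cite{GLS3}: $\Ker(M_{i,\out})$ is by construction the set of $m\in M_i$ killed by all $M_{ji}(a\otimes m)$, which is exactly the condition forced on $\phi_i(1)$. So with the Frobenius observation added, your proof closes and follows the same route as the paper's.
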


\begin{proof}
We have $\Ker(M_{i,\out}) = \sub_i(M)$ and $\Coker(M_{i,\out}) = \fac_i(M)$.
The $H_i$-module $H_i = E_i$ is indecomposable projective-injective
in $\rep(H_i)$.
Thus $\dim \Hom_\Pi(E_i,M)$ and $\dim \Hom_\Pi(M,E_i)$ are the dimensions
of $\sub_i(M)$ and $\fac_i(M)$, respectively.
This proves (i) and (ii).

For $M \in \rep(\Pi)$ we have
a sequence 
$$
0 \to \sub_i(M) \xrightarrow{\iota} M_i \xrightarrow{M_{i,\out}}
\wti{M}_i \xrightarrow{M_{i,\inn}} M_i \xrightarrow{\pi} \fac_i(M) \to 0
$$
of $H_i$-linear maps, where
$\iota$ is a monomorphism, $\pi$ is an epimorphism, 
$\Ima(\iota) = \Ker(M_{i,\out})$,
$\Ima(M_{i,\inn}) = \Ker(\pi)$ and $\Ima(M_{i,\out}) \subseteq 
\Ker(M_{i,\inn})$.
Observe that
$$
\dim(\wti{M}_i) - 
\dim \Ima(M_{i,\inn}) - \dim \Ima(M_{i,\out})
= \dim(\Ker(M_{i,\inn})/\Ima(M_{i,\out})).
$$
We have 
\begin{align*}
\dim \Ima(M_{i,\out}) &:= \dim(M_i) - \dim(\sub_i(M)),
\\
\dim \Ima(M_{i,\inn}) &:= \dim(M_i) - \dim(\fac_i(M)).
\end{align*}
It follows that 
$$
\dim(\Ker(M_{i,\inn})/\Ima(M_{i,\out}))  = 
\dim(\wti{M}_i) - 2 \dim(M_i) + \dim \sub_i(M) + \dim \fac_i(M).
$$
We know that 
$$
\dim \wti{M}_i = \sum_{j \in \ov{\Omega}(i)} c_j|c_{ji}|m_j
$$
where $(m_1,\ldots,m_n) = \rk(M)$.
Here we used that 
$$
{_i}H_j \otimes_j M_j \cong H_j^{|c_{ji}|m_j}.
$$
We have
$$
(\rk(M),\rk(E_i)) = (\rk(M),\alpha_i) = 2\dim(M_i) - \dim(\wti{M}_i).
$$
Combining the above equalities with (i) and (ii) and with
Lemma~\ref{lem:3.7a}(ii) we get the formula (iii).
􏰭\end{proof}


\section{Lusztig's bundle construction}\label{sec:bundles}


\subsection{Partitions and $H_k$-modules}
For $m \ge 0$ let $\cP_m$ be the set of partitions 
with entries bounded by $m$.
(These are tuples $\bp = (p_1,\ldots,p_t)$ of integers with
$m \ge p_1 \ge \cdots \ge p_t \ge 0$. 
We identify $(p_1,\ldots,p_t,0,\ldots,0)$ with $(p_1,\ldots,p_t)$.)
For a partition $\bp = (p_1,\ldots,p_t)$ and $k \ge 0$ let 
$\bp(k) := |\{ 1 \le i \le t \mid p_i = k \}|$
be the number of entries equal to $k$, and let
$\length(\bp) :=  |\{ 1 \le i \le t \mid p_i \not= 0 \}|$.
For $p,m \ge 0$ we also write $(p^m) = (p,\ldots,p)$ for
the partition with $m$ entries equal to $p$.
Similarly, for a partition $(p_1,\ldots,p_t)$ and $m_1,\ldots,m_t \ge 0$
we define
$(p_1^{m_1},\ldots,p_t^{m_t})$ in the obvious way.

For $k \in I$ the isomorphism classes of finite-dimensional 
$H_k$-modules can be parametrized by $\cP_{c_k}$ in the obvious way.
For $\bp \in \cP_{c_k}$,
let $H_k^\bp$ be an $H_k$-module corresponding to $\bp$.
Vice versa, for $M \in \rep(H_k)$ let $\bp(M) \in \cP_{c_k}$ be the partition
associated with $M$.

\subsection{Stratifications of $\Pi(\bM)$}\label{subsec:strata}
Let $\Pi = \Pi(C,D)$ and let 
$\bM = (M_1,\ldots,M_n)$ be locally free.

Recall that for
$M \in \rep(\Pi)$, $\fac_k(M)$ 
is the largest factor module $M/U$ 
such that each composition factor of $M/U$  
is isomorphic to $S_k$. 
Similarly, $\sub_k(M)$ is the largest submodule $U \subseteq M$ 
such that each composition factor of $U$ 
is isomorphic to $S_k$. 

Recall that
$$
\Pi(\bM) = \nil_E(\Pi,\bM).
$$
For $\bp \in \cP_{c_k}$ let 
$$
\Pi(\bM)^{k,\bp} := \{ M \in \Pi(\bM) \mid \fac_k(M) \cong H_k^\bp \}
$$
and
$$
\Pi(\bM)_{k,\bp} := \{ M \in \Pi(\bM) \mid \sub_k(M) \cong H_k^\bp \}.
$$
For the special case $\bp = (c_k^p)$ we define
$$
\Pi(\bM)^{k,p} := \Pi(\bM)^{k,\bp}
\text{\;\;\; and \;\;\;}
\Pi(\bM)_{k,p} := \Pi(\bM)_{k,\bp}.
$$

In the following, we prove some results involving the varieties 
$\Pi(\bM)^{k,\bp}$.
We leave it as an easy exercise to formulate and prove the 
corresponding dual results for $\Pi(\bM)_{k,\bp}$.

\begin{Lem}\label{lusztig0}
The following hold:
\begin{itemize}

\item[(a)]
$\Pi(\bM)^{k,\bp}$ is a locally closed $G(\bM)$-stable subvariety of $\Pi(\bM)$. 

\item[(b)]
$\Pi(\bM)^{k,0}$ is open in $\Pi(\bM)$.

\item[(c)]
For $\bM \not= 0$ we have
$$
\Pi(\bM) = \bigcup_{\substack{k \in I \\\bp \in \cP_{c_k}\\\bp(c_k) \ge 1}} 
\Pi(\bM)^{k,\bp}.
$$

\end{itemize}
\end{Lem}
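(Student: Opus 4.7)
The plan is to reduce parts (a) and (b) to standard semi-continuity statements about ranks of matrices built from $M_{k,\inn}$, using the identification of $\fac_k(M)$ with $\Coker(M_{k,\inn})$ in Lemma~\ref{lem:3.7}, and to prove (c) by extracting the top step of an $E$-filtration and invoking the self-injectivity of $H_k$.

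For part (a), $G(\bM)$-stability is immediate because $g\cdot M\cong M$ forces $\fac_k(g\cdot M)\cong\fac_k(M)$. For local closedness, Lemma~\ref{lem:3.7}(ii) identifies $\fac_k(M)\cong \Coker(M_{k,\inn})$ as $H_k$-modules; since the $H_k$-action on the fixed space $M_k$ does not vary in $\rep(\Pi,\bM)$, the isomorphism class of $\Coker(M_{k,\inn})$ is determined by the Jordan-type invariants $\dim\Ker\bigl(\vep_k^l|_{\Coker(M_{k,\inn})}\bigr)$ for $l=1,\ldots,c_k$. A direct computation with $V:=\Ima(M_{k,\inn})$ gives $\dim\Ker(\vep_k^l|_{\Coker(M_{k,\inn})}) = \dim M_k - \dim(V + \Ima(\vep_k^l)) = \dim M_k - \rk(\psi_l(M))$, where $\psi_l(M)\colon \bigoplus_{j\in\ov{\Omega}(k)}{_k}H_j\otimes_j M_j \oplus M_k \to M_k$ is defined by $(x,y)\mapsto M_{k,\inn}(x)+\vep_k^l(y)$. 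The rank of $\psi_l(M)$ is lower semi-continuous in $M$, so each kernel dimension is upper semi-continuous. Prescribing all of them to the values dictated by $\bp$ then cuts out a locally closed subset of $\Pi(\bM)$. Part (b) is the special case $\bp = 0$, in which the condition reduces to $M_{k,\inn}$ being surjective; this is the open condition $\rk(M_{k,\inn})=\dim M_k$.

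For part (c), let $M\in\Pi(\bM)$; since $\bM\ne 0$, we have $M\ne 0$. Choose an $E$-filtration $0=U_0\subset U_1\subset\cdots\subset U_t=M$ with $U_j/U_{j-1}\cong E_{i_j}$, and set $k:=i_t$. Then $M\twoheadrightarrow M/U_{t-1}\cong E_k$, and because $E_k$ is $S_k$-isotypic this surjection factors through $\fac_k(M)\twoheadrightarrow E_k$. Now $E_k = H_k$ is free of rank one, hence projective, in $\rep(H_k)$, so the surjection $\fac_k(M)\twoheadrightarrow E_k$ of $H_k$-modules splits and $E_k$ is a direct summand of $\fac_k(M)$. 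Equivalently, the partition $\bp:=\bp(\fac_k(M))$ has at least one part equal to $c_k$, i.e.\ $\bp(c_k)\ge 1$, and $M\in\Pi(\bM)^{k,\bp}$.

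The only genuinely delicate step is setting up the semi-continuity argument in (a): one has to realise the variable cokernel $\Coker(M_{k,\inn})$ as the target of the map $\psi_l(M)$, defined on a fixed vector space, whose rank detects the requisite Jordan invariant. Once this is in place, the rest of the argument is a routine application of rank semi-continuity together with the projectivity of $H_k$ as an $H_k$-module.
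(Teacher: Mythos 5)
Your proof is correct. For part (c) it follows the paper's argument exactly (extract the top quotient $E_k$ of an $E$-filtration and put $k := i_t$), and you fill in the one step the paper leaves implicit: since $E_k = H_k$ is projective over $H_k$, the induced epimorphism $\fac_k(M) \twoheadrightarrow E_k$ splits, which is what guarantees $\bp(c_k)\ge 1$. (Your preamble says ``self-injectivity,'' your body says projectivity; both work for $H_k$, but the splitting you actually use is the projectivity one.)

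For parts (a) and (b) you take a more explicit, self-contained route. The paper observes that the dimensions $\dim\Hom_\Pi(M,H_k^{(i)})$ for $1\le i\le c_k$ determine the $H_k$-isomorphism class of $\fac_k(M)$ and cites upper semicontinuity of $\dim\Hom$ on module varieties; the locally closed and open statements then fall out at once. You instead pass through $\fac_k(M)\cong\Coker(M_{k,\inn})$, record its Jordan type by the kernel dimensions of the $\vep_k^l$, and reduce each of these to a concrete rank function $\rk(\psi_l(M))$ on the fixed ambient space, whose lower semicontinuity is elementary. The two sets of invariants are linearly equivalent data, so the end result is the same; the paper's version is shorter and appeals to a standard semicontinuity fact, yours makes that semicontinuity visible as an explicit rank condition and needs nothing beyond linear algebra. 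Your identity $\dim\Ker(\vep_k^l|_{\Coker(M_{k,\inn})}) = \dim M_k - \dim(V+\Ima(\vep_k^l))$ with $V=\Ima(M_{k,\inn})$ checks out, and $G(\bM)$-stability and the $\bp=0$ openness argument are fine.
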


\begin{proof}
(a): 
For $1 \le i \le c_k$, recall that $H_k^{(i)}$ denotes the uniserial $H_k$-module
of length $i$. 
For $M \in \Pi(\bM)$ the numbers
$$
\dim \Hom_\Pi(M,H_k^{(i)})
$$
with $1 \le i \le c_k$ 
determine $\fac_k(M)$.
It follows that $\Pi(\bM)^{k,\bp}$ is a finite intersection of locally closed
sets.
This yields the result.

(b): This follows directly from the upper semicontinuity of
the map $\dim \Hom_H(M,-)$.

(c): By definition each non-zero $M \in \Pi(\bM)$ has a chain
$$
0 = U_0 \subset U_1 \subset \cdots \subset U_t = M
$$
such that for $1 \le k \le t$ we have 
$U_j/U_{j-1} \cong E_{i_j}$ for some $i_j \in I$.
Wit $k := i_t$ we get
$$
M \in \Pi(\bM)^{k,\bp}
$$
where $\bp$ is a partitition of the form $\bp = (c_k,\ldots)$.
This proves (c).
\end{proof}

By upper semicontinuity, for each $Z \in \Irr(\Pi(\bM))$
there exists a dense open subset $U_Z \subseteq Z$ such that
for all $k \in I$ and all $M,N \in U$ we have
$\sub_k(M) \cong \sub_k(N)$ and $\fac_k(M) \cong \fac_k(N)$.
Let $\sub_k(Z) := \sub_k(M)$ and $\fac_k(Z) := \fac_k(M)$ for 
some $M \in U$. 
(This is well defined up to isomorphism.)

Again, by upper semicontinuity it follows that
for each $Z \in \Irr(\Pi(\bM))$ and $k \in I$ there exists a unique
$\bp,\bq \in \cP_{c_k}$ such that 
$$
Z^{k,\bp} := Z \cap \Pi(\bM)^{k,\bp} 
\text{\;\;\; and \;\;\;}
Z_{k,\bq} := Z \cap \Pi(\bM)_{k,\bq} 
$$ 
are open and dense in $Z$.

We say that
a $\Pi$-module $M$ is \emph{generic in} $Z$, if $M$ is contained in 
a sufficiently small dense open subset of $Z$ defined by a finite set
of suitable open conditions.
The context will always imply which conditions are meant.
For example, we often demand that
$M \in Z$ with
$\sub_k(M) \cong \sub_k(Z)$ and $\fac_k(M) \cong \fac_k(Z)$ for all $k$.

\subsection{Fibre bundles and principal $G$-bundles} 
\label{ssec:bdl-basic}
All varieties considered are algebraic varieties over the 
algebraically closed field $K$, and our topology is the Zarisky topology. 
In particular, we use freely elementary concepts from algebraic geometry like
dimension, irreducible components and morphisms between varieties.
We recall some classical concepts from topology in our setting. 

A morphism between varieties 
$$
\pi\df B \ra V
$$ 
is a \emph{fibre bundle} with 
\emph{fibre} $F$, if $V$ has an open covering $(V_i)_{i\in I}$ together
with isomorphisms 
$$
\tau_i\df V_i\times F \ra \pi^{-1}(V_i)
$$ 
such that $\pi\tau_i(v,f) = v$ for all $(v,f) \in V_i \times F$. 
In particular, we have $\pi^{-1}(v) \cong F$ for all $v \in V$, and our fibre bundles are always locally trivial in the Zarisky topology. 
Thus, if $F$ is irreducible, there is
a natural bijection between the irreducible components of $V$ and the 
irreducible components of $B$, and we have 
$$
\dim(B) = \dim(V) + \dim(F).
$$

Let $\phi\df U \ra V$ be another morphism of varieties, then the \emph{pullback}
\[
\phi^*(B) := \{ (u,b) \in U\times B\mid \phi(u)=\pi(b) \},
\]
together with the projection 
$$
\phi^*(\pi)\df \phi^*(B) \ra U
$$ 
defined by $(u,b) \mapsto u$ is again a fibre bundle. 
In particular, it is easy to see how to trivialize
$(\phi^*(B),\phi^*(\pi))$ over the open subsets 
$\phi^{-1}(V_i) \subseteq U$ with fibre $F$.

Let now $G$ be an algebraic group which acts (algebraically) on 
$B$ from 
the right, such that $\pi(b\cdot g) = \pi(b)$ for all $b \in B$ and $g \in G$.
We say that the fibre bundle $\pi\df B \ra V$ is a 
\emph{principal $G$-bundle} if $G$ acts 
freely and transitively on the fibres of $\pi$. 
In this case, all fibres $\pi^{-1}(v)$ are isomorphic to $G$ as a variety. 
Again, it is easy to see that the pullback of a principal $G$-bundle is again a principal $G$-bundle.

\subsection{Grassmannians of submodules of fixed type} 
\label{ssec:GrassFT}
In this section, we fix some $k \in I$, and set  $c := c_k$.
Let 
$$
A := H_k = K[\vep_k]/(\vep_k^c).
$$
For a partition $\bp \in \cP_c$ with
we define the $A$-module
\[
A^\bp := H_k^\bp.
\]

For $A$-modules $M$ and $U$ let
$$
\Gr_U(M) := \{ V \subseteq M \mid V \cong U \}
$$
be the quasi-projective variety of $A$-submodules $V$
of $M$ which are isomorphic to $U$.
Similarly, let
$$
\Gr^U(M) := \{ V \subseteq M \mid M/V \cong U \}
$$
be the quasi-projective variety of $A$-factor modules
$M/V$ of $M$ which are isomorphic to $U$.
(Factor modules $M/V$ are defined via submodules $V$,
so we can think of $\Gr^U(M)$ as a variety of factor modules.)

Consider the open subset 
\[
\Inj_A(U,M) := \{ f \in \Hom_A(U,M) \mid f 
\text{ is injective} \}
\subset \Hom_A(U,M).
\]

Following Haupt \cite[Section~3.1]{H}, we consider
\[
\Inj_A(U,M) \to \Gr_U(M),\qquad 
f \mapsto \Ima(f).
\]
It is easy to see that this is a principal $\Aut_A(U)$-bundle with
$\Aut_A(U)$ acting on $\Inj_A(U,M)$ by precomposition. 
Now, $\Aut_A(U)$ and $\Inj_A(U,M)$ are, as open subsets in a vector space, smooth and irreducible. 
If $\Gr_U(M)$ is non-empty, then
$\Gr_U(M)$ is smooth and irreducible,
and we have
$$
\dim \Gr_U(M) = \dim \Hom_A(U,M) - \dim \End_A(U),
$$
see \cite[Theorem~3.1.1]{H}.
Similarly, if $\Gr^U(M)$ is non-empty, then $\Gr^U(M)$ is smooth and irreducible with
$$
\dim \Gr^U(M)= \dim \Hom_A(M,U) - \dim \End_A(U).
$$

For the special case $U = A^\bp$ and $M = A^b$ we have
$\Gr_U(M) \not= \varnothing$ if and only if $b \ge \length(\bp)$.
In this case, we get
\begin{equation}\label{eq:DimGr}
d(\bp,b) := \dim \Gr_U(M) 
= \sum_{i=1}^t p_i(b+1-2i)
\end{equation}
where $\bp = (p_1,\ldots,p_t)$.

\subsection{Two-step flags of submodules as fibre bundles}
For $A$-modules $U_1,U_2,M$ let
$$
\Gr_{U_1}^{U_2}(M) := \{ (V_1,V_2) \in \Gr_{U_1}(M) \times \Gr^{U_2}(M) \mid
V_1 \subseteq V_2
\}
$$
be the variety of 2-step chains 
$(0 \subseteq V_1 \subseteq V_2 \subseteq M)$
of submodules of $M$ with 
$V_1 \cong U_1$ and $M/V_2 \cong U_2$.
This is a closed subset of $\Gr_{U_1}(M) \times \Gr^{U_2}(M)$.

Let $\bp = (p_1,\ldots,p_t)$ with $p_t \ge 1$, and let $b \ge t$.
Let $U \in \Gr_{A^\bp}(A^b)$.
We obviously get $A^b/U \cong A^{\bq}$ for 
\begin{equation}
\bq := (c^{b-t}, c-p_t,c-p_{t-1},\ldots,c-p_d) \text{ with }
d := \min\{ 1\le i \le t \mid p_i < c \}.
\label{eq:def-bq}
\end{equation}
If $\bp = (c^t)$, we have just $\bq = (c^{b-t})$. 
We have an obvious isomorphism
$$
\Gr^{A^r}(A^\bq) 
\cong 
\{ V \subseteq A^b \mid U \subseteq V \text{ and } A^b/V \cong A^r \}.
$$
Clearly, 
$\Gr^{A^r}(A^\bq)$ is non-empty if and only if $r \le b-t$. 
In this case, 
it is smooth and irreducible of dimension
\[
\dim \Gr^{A^r}(A^\bq) = \dim \Hom_A(A^\bq,A^r) - \dim \End_A(A^r) = \dim \Hom_A(V,A^r)
\]
for any $V \in \Gr^{A^r}(A^\bq)$.
In view of~\cite[Theorem 3.1.1]{H} we only need to show the last equality. 
For each $V \in \Gr^{A^r}(A^\bq)$ there is a short exact sequence
\[
0 \to V \to A^\bq \to A^r \to 0
\]
of $A$-modules.
Since $A^r$ is a projective $A$-module, this sequence splits.
Applying the functor $\Hom_A(-,A^r)$ to this sequence yields the result.

\begin{Lem} \label{prp:flag-bdl}
The restriction of the projection
$$
\Gr_{A^\bp}(A^b) \times
\Gr^{A^r}(A^b) \ra \Gr_{A^\bp}(A^b) 
$$
defined by $(U,V) \mapsto U$ to $\Gr_{A^\bp}^{A^r}(A^b)$
yields a fibre bundle
\[
\pi\df \Gr_{A^\bp}^{A^r}(A^b) \ra \Gr_{A^\bp}(A^b) 
\]
with fibre $\Gr^{A^r}(A^\bq)$ with $\bq$ as 
in \eqref{eq:def-bq}. 
In particular, the fibre is smooth and irreducible.
\end{Lem}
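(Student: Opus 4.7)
My plan is to first identify the fibres of $\pi$ using the correspondence theorem, and then to establish local triviality by combining local sections of the principal $\Aut_A(A^\bp)$-bundle $p\df \Inj_A(A^\bp, A^b) \to \Gr_{A^\bp}(A^b)$ with the self-injectivity of $A = H_k$. For the fibres: given $U \in \Gr_{A^\bp}(A^b)$, the submodules $V \subseteq A^b$ containing $U$ with $A^b/V \cong A^r$ correspond bijectively via $V \mapsto V/U$ to elements of $\Gr^{A^r}(A^b/U)$. Since $A^b/U \cong A^\bq$ by \eqref{eq:def-bq}, this identifies $\pi^{-1}(U)$ with $\Gr^{A^r}(A^\bq)$, which is smooth and irreducible by the discussion preceding the lemma.

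For local triviality around a fixed $U_0 \in \Gr_{A^\bp}(A^b)$, I would pick a Zariski-open neighborhood $W$ of $U_0$ together with a section $s\df W \to \Inj_A(A^\bp, A^b)$ of $p$ (which exists since $p$ is locally trivial), and fix an isomorphism $\alpha\df A^\bp \xrightarrow{\sim} U_0$. Then $U \mapsto s(U) \circ \alpha^{-1}$ defines a morphism $W \to \Hom_A(U_0, A^b)$ sending $U_0$ to the inclusion $\iota_0\df U_0 \hookrightarrow A^b$. Since $A = K[\vep_k]/(\vep_k^{c_k})$ is Frobenius and hence self-injective, the restriction map $\End_A(A^b) \to \Hom_A(U_0, A^b)$ is surjective, so there exists a $K$-linear section $\sigma$ of it with $\sigma(\iota_0) = \id_{A^b}$ (obtained by choosing a $K$-linear complement of $K \cdot \iota_0$ in $\Hom_A(U_0, A^b)$). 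The composition
$$
\Phi\df W \to \End_A(A^b),\qquad U \mapsto \sigma(s(U)\circ \alpha^{-1}),
$$
sends $U_0$ to $\id_{A^b}$, and since $\Aut_A(A^b)$ is Zariski-open in $\End_A(A^b)$, we may shrink $W$ so that $\Phi$ takes values in $\Aut_A(A^b)$. In particular $\Phi(U)(U_0) = U$ for every $U \in W$.

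With $\Phi$ at hand, the morphism
$$
\pi^{-1}(W) \longrightarrow W \times \Gr^{A^r}(A^b/U_0),\qquad (U, V) \longmapsto (U,\; \Phi(U)^{-1}(V)/U_0)
$$
is an isomorphism over $W$ (its inverse sends $(U, \bar V)$ to $(U, \Phi(U)(\tilde V))$, where $\tilde V \subseteq A^b$ is the preimage of $\bar V$ under $A^b \twoheadrightarrow A^b/U_0$), producing the desired local trivialization with fibre $\Gr^{A^r}(A^b/U_0) \cong \Gr^{A^r}(A^\bq)$. Covering $\Gr_{A^\bp}(A^b)$ by such neighborhoods $W$ then gives the fibre bundle structure on $\pi$. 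I expect the main difficulty to be the construction of $\sigma$: it requires both the Frobenius property of $A$ (for surjectivity of the restriction map) and a direct linear splitting argument to ensure $\sigma(\iota_0) = \id_{A^b}$. Once $\Phi$ is in place, $\Phi(U_0) = \id$ together with the openness of $\Aut_A(A^b) \subseteq \End_A(A^b)$ makes the rest of the argument purely formal.
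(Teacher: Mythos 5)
Your proof is correct, but it reaches the key technical point by a genuinely different route than the paper. Both arguments reduce to constructing, on a Zariski-open neighbourhood $W$ of a base point $U_0 \in \Gr_{A^\bp}(A^b)$, a \emph{morphism} $U \mapsto \Phi(U) \in \Aut_A(A^b)$ with $\Phi(U_0) = \id$ and $\Phi(U)(U_0) = U$; once such a $\Phi$ is in hand, the trivialization $(U,V) \mapsto (U,\Phi(U)^{-1}(V)/U_0)$ is formal, exactly as you say. The paper obtains $\Phi$ by building explicit Schubert-cell-like affine charts for $\Gr_{A^\bp}(A^b)$: it fixes an adapted $A$-basis $\bv$ of $A^b$, shows that every $U'$ in an open chart $\cO^\bv_U$ admits a unique set of generators in a normal form $v_i(U') = v_i + \sum a_{ji}(U') v_j$ depending polynomially on $U'$, and defines $g_{U'}$ directly from these coordinates. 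You instead pull a local section $s$ of the previously-asserted principal $\Aut_A(A^\bp)$-bundle $\Inj_A(A^\bp,A^b) \to \Gr_{A^\bp}(A^b)$, extend $s(U) \circ \alpha^{-1} \in \Hom_A(U_0,A^b)$ to an endomorphism of $A^b$ via a $K$-linear splitting of the restriction map — surjectivity of which is exactly the self-injectivity of $A = K[\vep_k]/(\vep_k^{c_k})$ — and shrink $W$ so that openness of $\Aut_A(A^b) \subset \End_A(A^b)$ forces invertibility. Your argument is shorter and more conceptual; its only real cost is that it leans on the Zariski-local triviality of the $\Inj$-bundle, which the paper asserts in Section~\ref{ssec:GrassFT} (citing Haupt) but does not prove there — indeed the paper's remark after the lemma statement (``local triviality seems not to be so obvious'') suggests the authors regarded an explicit chart construction as the cleanest way to discharge that burden. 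So the two proofs are roughly equivalent in logical content, but yours trades the hands-on coordinate computation for the injectivity of $A$ as the workhorse, and is cleaner provided one is willing to take the local triviality of $\Inj_A(A^\bp,A^b) \to \Gr_{A^\bp}(A^b)$ as an external input.
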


Note, that our claim about the type of the fibre is clear, however the 
local triviality seems not to be so obvious. We will see this in the
next section.

\subsection{Proof of Lemma~\ref{prp:flag-bdl}}

\subsubsection{Notation} 
Let us write the partition $\bp$ as
\[
\bp=(c^{p'_0}, (c-1)^{p'_1}, \ldots, 1^{p'_{c-1}}) \text{ and set } p'_c := b-l. 
\]
 With this notation we can rewrite \eqref{eq:DimGr} as
\[
d(\bp,b) = \sum_{1\leq i< j\leq b} (p_i-p_j) = 
\sum_{0\leq i<j\leq c} (j-i)p'_i p'_j,
\] 
Recall, that this is the dimension of $\Gr_{A^\bp}(A^b)$.

Next, we define
\[
p''_j :=\sum_{i=0}^{j-1} p'_i\ \text{ for } 0 \le j \le c.
\]
Thus in particular $p''_0 = 0$ and $p''_c = t$, and we have 
\[
p''_{c-p_j}< j\leq p''_{c-p_j+1} \text{ for all } 1 \le j \le t.
\]
Finally we set $j_+ := p''_{c-p_j+1}+1$ for all $1 \le j \le t$.

\subsubsection{Affine charts for $\Gr_{A^\bp}(A^b)$}
Let $U \in \Gr_{A^\bp}(A^b)$. 
For an appropriate $A$-basis $\bv := (v_1,\ldots,v_b)$ of $A^b$ we have  
$$
U = \bigoplus_{j=1}^t A\vep^{c-p_j} v_j.
$$
We may set
\[
V_i := \bigoplus_{j=p_i''+1}^b A v_j \text{ for } 1 \le i \le c, 
\]
and consider the open subset
\[
\cO^\bv_U := \{ U' \in \Gr_{A^\bp}(A^b) \mid \vep^{c-i}U' \cap V_i = 0
\text{ for } 1 \le i \le c \}
\]
of $\Gr_{A^\bp}(A^b)$, which clearly contains $U$. 

Imitating the description of the open Schubert cells in ordinary Grassmannians
we see that each element $U'\in\cO^\bv_U$ has a unique set of generators 
in normal form with respect to the chosen basis:
\begin{align*}
g'_i    &=\vep^{c-p_i} v_i(U') \text{ for } 1 \le i \le t, \text{ where}
\\
v_i(U') &= v_i +  \sum_{j=i_+}^b
\underbrace{\left(\sum_{k=0}^{p_i-p_j-1} a_{ji}^{(k)}(U') \vep^k\right)}_{:=a_{ji}(U')\in A} v_j 
\end{align*}
Altogether we showed:

\begin{Lem}
With 
$$
I(\bp,b) := \{ (i,j,k) \in \Z^3 \mid
1\leq i\leq l, \ i_+\leq j\leq b,\ 0\leq k\leq p_i-p_j-1 \}
$$ 
we have an isomorphism of varieties
\[
\cO^\bv_U\ra K^{I(\bp,b)}
\]
defined by
\[
U' \mapsto (a_{ji}^{(k)}(U'))_{(i,j,k)\in I(\bp, b)}.
\]
\end{Lem}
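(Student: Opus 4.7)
The plan is to verify that the map $\Phi \colon \cO^\bv_U \to K^{I(\bp,b)}$ given by $U' \mapsto (a_{ji}^{(k)}(U'))$ is a biregular isomorphism. The preceding normal-form discussion has already produced, for each $U' \in \cO^\bv_U$, a unique generating system $g'_i = \vep^{c-p_i} v_i(U')$ of the prescribed shape, so $\Phi$ is a well-defined set-theoretic bijection onto its image.

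For regularity of $\Phi$: starting from any $A$-basis of $\vep^{c-p_i} U'$ (which varies regularly with $U'$), the normal-form generator $g'_i$ is singled out by demanding that its $v_i$-coefficient be $\vep^{c-p_i}$ and that its $v_j$-coefficient for $j \ge i_+$ be a polynomial in $\vep$ of degree at most $p_i - p_j - 1$. The open condition $\vep^{c-i} U' \cap V_i = 0$ defining $\cO^\bv_U$ is exactly what is needed for this linear reduction to have a unique solution; Cramer's rule then exhibits each $a_{ji}^{(k)}(U')$ as a regular function on $\cO^\bv_U$.

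For the inverse $\Psi$: given a tuple $(a_{ji}^{(k)}) \in K^{I(\bp,b)}$, define $v_i(U')$ by the prescribed formula, set $g'_i := \vep^{c-p_i} v_i(U')$, and put $U' := \sum_{i=1}^t A g'_i$. Polynomial dependence of $U'$ on the parameters is immediate. Two verifications remain: (i) $U' \cong A^\bp$, and (ii) $U' \in \cO^\bv_U$. Both are consequences of the triangular shape of the $v_i(U')$ relative to the basis $\bv$: each $v_i(U')$ equals $v_i$ modulo higher-indexed basis vectors $v_j$ with $j \ge i_+$, while the exponent bound $k \le p_i - p_j - 1$ keeps every off-diagonal correction term $\vep^{c-p_i+k} v_j$ strictly below the threshold $\vep^{c-p_j} v_j$ in the $v_j$-coordinate. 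A leading-term argument then produces a $K$-basis of $U'$ of size $\sum_i p_i = \dim_K A^\bp$ in which each $g'_i$ has annihilator exactly $(\vep^{p_i})$, proving (i); the same triangularity shows that $\vep^{c-i} U' \cap V_i = 0$ for $1 \le i \le c$, proving (ii).

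Finally, $\Phi \circ \Psi = \id$ holds by construction, and $\Psi \circ \Phi = \id$ by the already-cited uniqueness of the normal form, so $\Phi$ is a biregular isomorphism onto $K^{I(\bp,b)}$. The main technical content is the triangularity bookkeeping in the inverse direction, but this is entirely analogous to the standard parametrization of the open Schubert cell in an ordinary Grassmannian, so I expect no essential obstacle.
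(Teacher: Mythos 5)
Your proposal is correct and follows essentially the same route as the paper, which simply asserts the lemma after observing that the open Schubert-cell parametrization carries over to give a unique normal-form generating set for each $U'\in\cO^\bv_U$. You are fleshing out exactly that: the normal form gives the set-theoretic bijection, regularity follows by a pivot/row-reduction argument on the open locus, the triangular (upper-unipotent) shape of the $v_i(U')$ shows the inverse polynomial map lands in $\cO^\bv_U$ with image of the correct isomorphism type, and the two compositions are the identity by uniqueness of the normal form.
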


We leave it as an exercise to verify directly that $I(\bp,b)$ has exactly 
$d(\bp,b)$ elements.

\subsubsection{Local trivialization}
For $U'\in\cO_U^\bv$ we define $g_{U'}\in\Aut_A(A^b)$ by 
\[
g_{U'}(v_i) := 
\begin{cases} 
v_i(U') &\text{ if } 1\leq i\leq l,
\\
v_i     &\text{ if } l< i\leq b.
\end{cases}
\]
Note that $\Aut_A(A^b)$ acts naturally on $\Gr_{A^\bp}(A^b)$ and on
$\Gr_{A^\bp}^{A^r}(A^b)$ as an algebraic group,
and we trivially have $g_{U'}(U) = U'$ for all $U' \in \cO_U^\bv$. 
Thus, we obtain the required local trivialization of 
$$
\pi\df \Gr_{A^\bp}^{A^r}(A^b) \ra \Gr_{A^\bp}(A^b)
$$ 
on the open neighbourhood $\cO_U^\bv$ by
\[
\cO_U^\bv \times \pi^{-1}(U) \ra \pi^{-1}(\cO_U^\bv),
\qquad (U',V)\mapsto g_{U'}(V).
\]
Here it is clear, that the map $\cO_U^\bv \ra \Aut_A(A^b)$
defined by $U' \mapsto g_{U'}$ 
is a morphism of varieties.

\subsection{Bundle construction}\label{subsec:bundlemain}
Let $\Pi = \Pi(C,D)$, and let 
$\bM = (M_1,\ldots,M_n)$ with $M_i$ a free $H_i$-module for all $i$.

We fix now some $k \in I$ and $\bU = (U_1,\ldots,U_n)$
with $U_i \subseteq M_i$ a free $H_i$-submodule of $M_i$ with
$U_i = M_i$ for all $i \not= k$.
Let 
$$
J := \prod_{i \in I} \Hom_{H_i}(U_i,M_i)
\text{\;\;\; and \;\;\;}
J_0 := \{ (f_i)_i \in J \mid f_i \text{ is injective for all } i \in I \}.
$$

With $\bM$ and $\bU$ defined as above, let $\bp$ and $\bq$ be partitions
in $\cP_{c_k}$.
We assume that $\bp = (c_k^r,q_1,\ldots,q_t)$ and
$\bq = (q_1,\ldots,q_t)$ with $r \ge 1$.
Assume that $M_k/U_k \cong E_k^{r}$.

We fix a
direct sum decomposition 
$$
M_k = U_k \oplus T_k
$$ 
of $H_k$-modules. 
Such a decomposition exists, since $U_k$ is by assumption free.
Note that $T_k$ is also a free $H_k$-module.

Let
$$
Y := Y^{k,\bq,\bp}
$$ 
be the variety of all triples 
$$
(U,M,f) \in \Pi(\bU)^{k,\bq} \times \ov{H}(\bM) \times J_0
$$
such that for all $(i,j) \in \ov{\Omega}$ the diagram
$$
\xymatrix{
{_iH_j} \otimes_j U_j \ar[r]^>>>>>{U_{ij}} \ar[d]^{1 \otimes f_j} & U_i \ar[d]^{f_i}
\\
{_iH_j} \otimes_j M_j \ar[r]^>>>>>{M_{ij}} & M_i
}
$$
commutes and such that for all $i \in I$ we have
$M_{i,\inn} \circ M_{i,\out} = 0$.
Note that for $(U,M,f) \in Y$ we have $M \in \Pi(\bM)$.

On $Y$ we have a free $G(\bU)$-action
defined by
$$
g \cdot (U,M,f) := ((g_iU_{ij}(1 \otimes g_j^{-1}))_{(i,j) \in \ov{\Omega}},
M,(f_i \circ g_i^{-1})_i).
$$
We define a diagram
$$
\xymatrix{
& Y \ar[dl]_{p'} \ar[dr]^{p''}\\
\Pi(\bU)^{k,\bq} \times J_0 && \Pi(\bM)^{k,\bp}
}
$$
by
$p'(U,M,f) := (U,f)$ and $p''(U,M,f) := M$.
The maps $p'$ and $p''$ are of central importance. 
We apply now the findings of the previous sections to describe them
in more detail.

\begin{Lem}\label{lusztig1}
With the notation above,
$p'$ is a vector bundle with fibres isomorphic to
$K^m$ with
$$
m = \sum_{(j,k) \in \ov{\Omega}} \dim \Hom_{H_k}(T_k,{_kH_j} \otimes_j M_j) - \dim \Hom_{H_k}(T_k,U_k').
$$
\end{Lem}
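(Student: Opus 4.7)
The plan is to fix a point $(U,f) \in \Pi(\bU)^{k,\bq} \times J_0$, describe the fibre $p'^{-1}(U,f)$ as the kernel of an explicit linear map, and then upgrade this to a local trivialization. First, since $U_i = M_i$ for $i \neq k$ and each $f_i \in J_0$ is an injective $H_i$-linear endomorphism of a finite-dimensional free module, $f_i$ is automatically an isomorphism for $i \neq k$. The compatibility equation $M_{ij}(1 \otimes f_j) = f_iU_{ij}$ therefore uniquely determines $M_{ij}$ whenever $j \neq k$. For the remaining arrows $(i,k) \in \ov{\Omega}$ with $i \neq k$, the fixed splitting $M_k = U_k \oplus T_k$ forces $M_{ik}|_{{_iH_k}\otimes_k U_k} = f_iU_{ik}$, while $h_i := M_{ik}|_{{_iH_k}\otimes_k T_k} \in \Hom_{H_i}({_iH_k}\otimes_k T_k,M_i)$ remains a free parameter.

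Via Hom-tensor adjunction together with the bimodule duality between ${_iH_k}$ and ${_kH_i}$ from~\cite{GLS3} (which underlies the operation $\vee$), the tuple $(h_i)_{i\in\ov{\Omega}(k)}$ assembles into a single element
\[
\phi \in \Hom_{H_k}(T_k,\wti{M}_k) = \bigoplus_{j\in\ov{\Omega}(k)} \Hom_{H_k}(T_k,{_kH_j}\otimes_j M_j),
\]
namely the $T_k$-component of $M_{k,\out}$. Before relations, the parameter space thus has dimension equal to the first summand of $m$. Next I would impose the preprojective relations $M_{i,\inn}\circ M_{i,\out} = 0$ vertex by vertex. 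For $i \neq k$, the map $M_{i,\out}$ is already determined; by naturality of $\vee$, $\Ima(M_{ki}) \subseteq f_k(U_k)$ implies $\Ima(M_{ki}^\vee) \subseteq {_iH_k}\otimes_k f_k(U_k)$, so only the determined piece of $M_{ik}$ enters the composition and the relation reduces to the corresponding relation for $U$, which holds by hypothesis. At vertex $k$, using $\wti{M}_k = \wti{U}_k$ (since $U_j = M_j$ for $j\in\ov{\Omega}(k)$) and $M_{k,\inn}|_{\wti{U}_k} = f_kU_{k,\inn}$, the relation splits along $M_k = U_k\oplus T_k$ into the automatic $U_k$-part plus the single homogeneous condition $U_{k,\inn}\circ\phi = 0$ on $T_k$.

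Consequently $p'^{-1}(U,f)$ is identified with the kernel of the $K$-linear map $\Hom_{H_k}(T_k,\wti{M}_k)\to\Hom_{H_k}(T_k,U_k)$ induced by $U_{k,\inn}$. Since $T_k$ is free (hence projective), $\Hom_{H_k}(T_k,-)$ is exact, so this kernel has dimension
\[
\dim\Hom_{H_k}(T_k,\wti{M}_k) - \dim\Hom_{H_k}(T_k,\Ima(U_{k,\inn})) = m,
\]
with $U_k' := \Ima(U_{k,\inn})$. Because $U_k$ is a fixed free $H_k$-module and the quotient $U_k/\Ima(U_{k,\inn}) \cong H_k^\bq$ is of fixed type, a standard projective-cover argument over the local ring $H_k$ shows that the isomorphism type of $\Ima(U_{k,\inn})$ is constant over $\Pi(\bU)^{k,\bq}$; hence the fibre dimension is globally equal to $m$. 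Local triviality then follows from the standard fact that the family of kernels of an algebraic family of linear maps of locally constant rank forms a vector bundle, trivialisable on a suitable open cover. I expect the main technical obstacle to be the book-keeping around the species duality $\vee$, in particular the naturality claim that $\Ima(M_{ki}^\vee) \subseteq {_iH_k}\otimes_k f_k(U_k)$, which is precisely what prevents the relations at vertices $i \neq k$ from imposing further constraints on $\phi$.
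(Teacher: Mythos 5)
Your proof is correct and follows essentially the same approach as the paper's: fixing $(U,f)$, identifying the free part of $M$ via the species duality $\vee$ with an element $\phi \in \Hom_{H_k}(T_k,\wti{M}_k)$, observing that the preprojective relation at $k$ reduces to $U_{k,\inn}\circ\phi = 0$, and computing the kernel dimension using projectivity of $T_k$. You supply details the paper leaves implicit (the naturality argument showing the relations at $i\neq k$ impose no new conditions, the constancy of the isomorphism type of $\Ima(U_{k,\inn})$ over $\Pi(\bU)^{k,\bq}$, and the local-triviality check), though ``projective-cover argument'' is a slight misnomer for what is really the structure theory of modules over the uniserial local ring $H_k$.
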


\begin{proof}
The canonical projection
$$
\Pi(\bU)^{k,\bq} \times \ov{H}(\bM) \times J_0 \to \Pi(\bU)^{k,\bq} \times J_0
$$
is obviously a vector bundle.
One also checks easily that $Y$ is a closed subset of 
$\Pi(\bU)^{k,\bq} \times \ov{H}(\bM) \times J_0$.
We fix $(U,f) \in \Pi(\bU)^{k,\bq} \times J_0$.
Let
$$
\cF := \{
M \in \ov{H}(\bM) \mid (U,M,f) \in Y 
\} = p''((p')^{-1}((U,f))).
$$
We have to show that $\cF \cong K^m$ for some $m$ which is
independent of $(U,f)$.
Set $U_k' := \Ima(U_{k,\inn})$.
Note that $\bp(U_k/U_k') = \bq$ and $\bp(M_k/U_k') = \bp$.
In other words, we have $M_k/U_k' \cong H_k^\bp$. 

Define 
$$
\eta\df \bigoplus_{j \in \ov{\Omega}(k)} 
\Hom_{H_k}(T_k,{_kH_j} \otimes_j M_j) \to
\Hom_{H_k}(T_k,f_k(U_k'))
$$
by 
$$
(T_{jk}^\vee)_j \mapsto 
\sum_{(k,j) \in \ov{\Omega}} \sgn(k,j) U_{kj}T_{jk}^\vee
$$
and let 
$$
\cF' := \Ker(\eta).
$$
Recall that $U_j = M_j$ for all $j \not= k$.
Since $f_k$ is a monomorphism, we have 
$$
\dim \Hom_{H_k}(T_k,f_k(U_k')) = \dim \Hom_{H_k}(T_k,U_k').
$$ 
Clearly, $\eta$ is $K$-linear.
Since $T_k$ is a free $H_k$-module (and therefore projective as an
$H_k$-module) we get that $\eta$ is surjective.
Thus we get $\cF' \cong K^m$ with 
$$
m := \sum_{(j,k) \in \ov{\Omega}} \dim \Hom_{H_k}(T_k,{_kH_j} \otimes_j M_j) - \dim \Hom_{H_k}(T_k,U_k').
$$
Let
$$
\mu\df \cF \to \cF'
$$
be defined by
$$
M \mapsto (M_{jk}^\vee|_{T_k})_{(j,k) \in \ov{\Omega}}.
$$
This is obviously an isomorphism of $K$-vectorspaces.
\end{proof}

We need one more auxiliary variety
\[
Y'' := \{ (M_k',M) \in \Gr^{T_k}(M_k) \times \Pi(\bM)^{k,\bp} \mid 
M'_k\supseteq \Ima(M_{k,\inn}) \}
\]
where we recall that $\Gr^{T_k}(M_k)$ is the Grassmannian of 
$H_k$-submodules $U$ of $M_k$ such that 
$M_k/U \cong T_k \cong H_k^r$.
We have two natural morphisms
\begin{alignat*}{4}
p''_1\df &Y  &&\ra Y'',           & \qquad (U,M,f) &\mapsto (\Ima(f_k), M),
\\
p''_2\df & Y'' &&\ra \Pi(\bM)^{k,\bp},&\qquad (M'_k,M)&\mapsto M.
\end{alignat*}
Obviously, we have $p'' = p''_2 \circ p''_1$.

\begin{Lem}\label{lem:aux1} 
With the above notation we have:
\begin{itemize}

\item[(a)]
$p''_1\df Y \ra Y''$ is a $G(\bU)$-principal bundle.

\item[(b)]
$p''_2\df Y'' \ra \Pi(\bM)^{k,\bp}$ is a fibre bundle with fibre
$\Gr^{T_k}(H_k^\bp)$. 
In particular, this fibre is smooth, irreducible and of dimension 
$$
\dim \Hom_{H_k}(H_k^\bp,T_k) - \dim_{H_k} \End_{H_k}(T_k).
$$

\item[(c)]
$p''$ is a fibre bundle with fibres isomorphic to 
$$
G(\bU) \times \Gr^{T_k}(H_k^\bp).
$$

\end{itemize}
\end{Lem}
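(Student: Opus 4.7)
The plan is to prove (a) by identifying the fibres of $p''_1$ with $G(\bU)$-torsors and producing local sections, to prove (b) by identifying the fibres of $p''_2$ with the Grassmannian $\Gr^{T_k}(H_k^\bp)$ via the quotient map, and to derive (c) by composing the two fibre bundle structures. Throughout I use that $M_k$ is a free $H_k$-module, so any submodule $M'_k \subseteq M_k$ with $M_k/M'_k \cong T_k$ is automatically free and isomorphic to $U_k$, giving $\Gr^{T_k}(M_k) = \Gr_{U_k}(M_k)$.

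For (a), the map $p''_1$ is visibly $G(\bU)$-equivariant for the trivial action on $Y''$: the action sends $\Ima(f_k)$ to $\Ima(f_k \circ g_k^{-1}) = \Ima(f_k)$ and fixes $M$. The action is free since each $f_i$ is injective. For a fixed $(M'_k,M) \in Y''$, the commutativity of the square in the definition of $Y$ forces $U_{ij} = f_i^{-1} \circ M_{ij} \circ (1 \otimes f_j)$; the hypothesis $M'_k \supseteq \Ima(M_{k,\inn})$ guarantees $\Ima(M_{kj}(1\otimes f_j)) \subseteq M'_k = \Ima(f_k)$, so this inverse is well-defined. Thus the fibre is parametrized by $f = (f_i)_i$, with $f_i$ an arbitrary $H_i$-automorphism of $M_i$ for $i \neq k$ (since $U_i = M_i$) and $f_k$ an arbitrary $H_k$-isomorphism $U_k \xrightarrow{\sim} M'_k$. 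Each factor is a torsor under the corresponding factor $\Aut_{H_i}(U_i)$ of $G(\bU)$, so the full fibre is a single free $G(\bU)$-orbit. Local triviality follows from the principal $\Aut_{H_k}(U_k)$-bundle structure on $\Inj_{H_k}(U_k,M_k) \to \Gr_{U_k}(M_k)$ from Section~\ref{ssec:GrassFT}: over a Zariski neighbourhood of any basepoint in $Y''$, one pulls back a local section $M'_k \mapsto f_k$ and sets $f_i = \id_{M_i}$ for $i \neq k$, which determines $U$ uniquely.

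For (b), given $M \in \Pi(\bM)^{k,\bp}$, set $M''_k := \Ima(M_{k,\inn})$; then by definition of the stratum we have $M_k/M''_k \cong \fac_k(M) \cong H_k^\bp$. The fibre of $p''_2$ over $M$ is $\{M'_k \in \Gr^{T_k}(M_k) : M'_k \supseteq M''_k\}$, and passage to the quotient by $M''_k$ identifies it with $\Gr^{T_k}(H_k^\bp) = \Gr^{H_k^r}(H_k^\bp)$. This Grassmannian is nonempty (since $r \le \length(\bp)$ for $\bp = (c_k^r, q_1, \ldots, q_t)$ with $r \ge 1$), and by Section~\ref{ssec:GrassFT} it is smooth, irreducible, of dimension $\dim \Hom_{H_k}(H_k^\bp, T_k) - \dim \End_{H_k}(T_k)$. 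For local triviality of $p''_2$, on $\Pi(\bM)^{k,\bp}$ the quotient $M_k/M''_k$ has constant isomorphism type $H_k^\bp$, so a local lift of a fixed isomorphism $M_k/M''_k \xrightarrow{\sim} H_k^\bp$ trivializes $p''_2$ on a neighbourhood $\cN$ as the projection $\cN \times \Gr^{T_k}(H_k^\bp) \to \cN$.

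Finally for (c), composing (a) and (b) shows that $p'' = p''_2 \circ p''_1$ is a fibre bundle with fibre $G(\bU) \times \Gr^{T_k}(H_k^\bp)$: given a trivializing chart $V$ for $p''_2$, pulling back the local sections of (a) over $V \times \Gr^{T_k}(H_k^\bp)$ along an affine cover of the Grassmannian in the spirit of Section~\ref{ssec:GrassFT} produces compatible trivializations of $p''$. The main technical obstacle I foresee lies in this last step: verifying that the $G(\bU)$-principal bundle $p''_1$ restricted to an individual fibre of $p''_2$ is indeed isomorphic as a variety to the product requires combining the local sections constructed in (a) with an explicit affine covering of $\Gr^{T_k}(H_k^\bp)$, and carefully checking that the transition data matches up.
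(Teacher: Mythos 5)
Your argument for (a) is sound and parallels the paper's: the paper realizes $Y$ as the pullback $\phi_1^*(J_0)$ of the principal $G(\bU)$-bundle $\Ima_k\df J_0 \to \Gr^{T_k}(M_k)$ along $\phi_1\df Y''\to \Gr^{T_k}(M_k)$, whereas you describe the fibres and local sections explicitly; both hinge on the principal $\Aut_{H_k}(U_k)$-bundle $\Inj_{H_k}(U_k,M_k)\to \Gr_{U_k}(M_k)$ from Section~\ref{ssec:GrassFT}, and your verification that commutativity together with $M'_k\supseteq \Ima(M_{k,\inn})$ determines $U$ is exactly the observation the paper makes about $Y'$.

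There is, however, a genuine gap in (b). You write that since $M_k/\Ima(M_{k,\inn})$ has constant isomorphism type $H_k^\bp$ on the stratum, ``a local lift of a fixed isomorphism $M_k/M''_k\xrightarrow{\sim}H_k^\bp$ trivializes $p''_2$.'' But constancy of the isomorphism type does not by itself yield a Zariski-local algebraic family of $H_k$-isomorphisms $M_k/\Ima(M_{k,\inn})\xrightarrow{\sim}H_k^\bp$ — that existence is precisely the nontrivial content here, especially since $H_k^\bp$ and $\Ima(M_{k,\inn})\cong H_k^{\bp^*}$ are generically non-free $H_k$-modules. The paper is explicit that ``local triviality seems not to be so obvious'' and devotes Lemma~\ref{prp:flag-bdl} together with the explicit affine charts $\cO^\bv_U$ and the automorphisms $g_{U'}\in\Aut_{H_k}(M_k)$ with $g_{U'}(U)=U'$ to establishing it; $p''_2$ is then obtained as the pullback $\phi_2^*$ of that flag bundle along $M\mapsto\Ima(M_{k,\inn})$, inheriting local triviality automatically. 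To close your proof of (b) you must either route through Lemma~\ref{prp:flag-bdl} as the paper does, or reproduce the chart construction that produces the required local family of isomorphisms. Your caveat on (c) is appropriate; the paper is equally terse there, deducing (c) directly from (a), (b) and $p''=p''_2\circ p''_1$.
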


\begin{proof}
(a): 
It is easy to see that 
\[
\Ima_k\df J_0 \ra \Gr^{T_k}(M_k),\qquad 
(f_i)_{i\in I}\mapsto \Ima(f_k)
\]
is a principal $G(\bU)$-bundle since 
$\Gr^{T_k}(M_k) \cong \Gr_{U_k}(M_k)$,
see also Section~\ref{ssec:GrassFT}.
Now, consider the morphism
\[
\phi_1\df Y'' \ra \Gr^{T_k}(M_k),\qquad (M'_k,M)\mapsto M'_k.
\]
We observe that $Y\cong \phi_1^*(J_0)$, the pullback of a 
$G(\bU)$-principal bundle, see Section~\ref{ssec:bdl-basic}. 
In fact, it follows directly from the 
definitions that $\phi_1^*(J_0)$ can be identified with
\[
Y' := \{ (M,f) \in \Pi(\bM)^{k,\bp} \times J_0 \mid \Ima(f_k) \supset 
\Ima(M_{k,\inn}) \}.
\]
Clearly, for each $(M,f) \in Y'$ there exists a unique $U \in \Pi(\bU)^{k,\bq}$
with $f \in \Hom_\Pi(U,M)$.

(b): 
There exists a unique partition $\bp^*$ such that 
$\Ima(M_{k,\inn})\cong H_k^{\bp^*}$ for all $M \in \Pi(\bM)^{k,\bp}$. 
With this notation we  consider the fibre bundle
\[
\pi\df \Gr_{H_k^{\bp^*}}^{T_k}(M_k) \ra \Gr_{H_k^{\bp^*}}(M_k)
\]
with fibre $\Gr^{T_k}(H_k^\bp)$, see Lemma~\ref{prp:flag-bdl}.
By construction, we have the natural morphism
\[
\phi_2\df \Pi(\bM) \ra \Gr_{H_k^{\bp^*}}^{H_k}(M_k),\qquad 
M \mapsto \Ima(M_{k,\inn}).
\]
It follows directly from the definitions that 
$$
\phi_2^*(\Gr_{H_k^{\bp^*}}^{T_k}(M_k)) = Y''.
$$
Thus 
$$
p''_2\df Y'' \ra \Pi(\bM)^{k,\bp}
$$ 
is a fibre bundle with the requested type of fibre.
This proves (b).

Part (c) is a direct consequence of (a) and (b) and the fact that
$p'' = p''_2 \circ p''_1$.
\end{proof}

\begin{Lem}\label{lusztig2}
The following hold:
\begin{itemize}

\item[(a)]
If $Z'$ is an irreducible component of $\Pi(\bU)^{k,\bq}$, then 
$$
Z := p''((p')^{-1}(Z' \times J_0))
$$
is an irreducible component of $\Pi(\bM)^{k,\bp}$.

\item[(b)]
The map $Z' \mapsto Z$ defines a bijection
$$
\Irr(\Pi(\bU)^{k,\bq}) \to \Irr(\Pi(\bM)^{k,\bp}).
$$

\item[(c)]
We have
$$
\dim(Z) - \dim(Z') = \dim H(\bM) - \dim H(\bU).
$$

\end{itemize}
\end{Lem}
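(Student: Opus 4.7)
The plan is to read off the whole statement from the diagram
$$
\Pi(\bU)^{k,\bq} \times J_0 \xleftarrow{p'} Y \xrightarrow{p''} \Pi(\bM)^{k,\bp}
$$
using Lemmas~\ref{lusztig1} and~\ref{lem:aux1}, together with the general fact that a fibre bundle with irreducible fibre induces a bijection on irreducible components (obtained by pullback in one direction and by image in the other), raising dimension by the fibre dimension.

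For parts (a) and (b) I proceed in three steps. First, since $J_0$ is open in the vector space $J = \prod_i \Hom_{H_i}(U_i,M_i)$, it is smooth and irreducible, so the projection $\Pi(\bU)^{k,\bq}\times J_0 \to \Pi(\bU)^{k,\bq}$ gives a bijection $\Irr(\Pi(\bU)^{k,\bq}) \to \Irr(\Pi(\bU)^{k,\bq}\times J_0)$ via $Z' \mapsto Z'\times J_0$. Second, by Lemma~\ref{lusztig1} the map $p'$ is a vector bundle with irreducible fibre $K^m$, and hence pullback along $p'$ gives a bijection $\Irr(\Pi(\bU)^{k,\bq}\times J_0) \to \Irr(Y)$, $W \mapsto (p')^{-1}(W)$. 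Third, by Lemma~\ref{lem:aux1}(c), $p''$ is a fibre bundle whose fibre $G(\bU) \times \Gr^{T_k}(H_k^\bp)$ is irreducible (each factor being a product of $\GL$'s or a smooth irreducible Grassmannian of submodules of fixed isomorphism type, cf.\ Section~\ref{ssec:GrassFT}); therefore the direct image $W \mapsto p''(W)$ defines a bijection $\Irr(Y) \to \Irr(\Pi(\bM)^{k,\bp})$. Composing these three bijections gives exactly the map described in (b), and (a) is a tautological consequence.

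For part (c), each of the three bijections above shifts dimension additively by the (constant) fibre dimension, so
$$
\dim Z - \dim Z' \;=\; \dim J_0 + m - \dim G(\bU) - \dim \Gr^{T_k}(H_k^\bp).
$$
Since $U_i = M_i$ for $i \neq k$, both $\dim J$ and $\dim G(\bU)$ split as a common piece $\sum_{i\neq k}\dim \End_{H_i}(M_i)$ plus a term involving only $H_k$-modules, so that common piece cancels. The required identity
$$
\dim J + m - \dim G(\bU) - \dim \Gr^{T_k}(H_k^\bp) \;=\; \dim H(\bM) - \dim H(\bU)
$$
thus reduces to a purely local computation in $\rep(H_k)$, using the splitting $M_k = U_k \oplus T_k$, the explicit form of $m$ from Lemma~\ref{lusztig1}, and the dimension formula for $\Gr^{T_k}(H_k^\bp)$ from Section~\ref{ssec:GrassFT}.

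The main obstacle is this final bookkeeping in (c): the expression for $m$ involves $U_k' = \Ima(U_{k,\inn})$ rather than $U_k$ itself, so one has to use projectivity of $T_k$ as an $H_k$-module (so that $\Hom_{H_k}(T_k,-)$ is exact) together with the direct-sum splitting $M_k = U_k \oplus T_k$ to rewrite $\dim \Hom_{H_k}(T_k,U_k')$ in terms that combine cleanly with $\dim \Hom_{H_k}(H_k^\bp,T_k) - \dim \End_{H_k}(T_k)$ and with the summands of $\dim H(\bM) - \dim H(\bU)$ corresponding to arrows incident to $k$. Everything else in the proof is formal transfer of irreducible components along fibre bundles.
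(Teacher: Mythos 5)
Your argument is essentially the paper's: parts~(a) and~(b) are read off by chaining the fibre-bundle bijections on irreducible components coming from Lemma~\ref{lusztig1} ($p'$ a vector bundle with fibre $K^m$) and Lemma~\ref{lem:aux1}(c) ($p''$ a bundle with irreducible fibre $G(\bU)\times\Gr^{T_k}(H_k^\bp)$), and (c) is the additive dimension bookkeeping along the same diagram. You have set up (c) correctly and identified the right ingredients (the splitting $M_k=U_k\oplus T_k$, projectivity of $T_k$, the cancellation of the $i\ne k$ terms, and the fact that $U_k'$ rather than $U_k$ appears in $m$), but you stop short of actually balancing the identity; the paper does carry this out, via $\dim J_0-\dim G(\bU)=\dim\Hom_{H_k}(U_k,T_k)=\dim\Hom_{H_k}(T_k,U_k')+\dim\Hom_{H_k}(T_k,U_k/U_k')$, the observation $\dim\Gr^{T_k}(H_k^\bp)=\dim\Hom_{H_k}(T_k,U_k/U_k')$, and $\dim H(\bM)-\dim H(\bU)=\sum_{j\in\ov{\Omega}(k)}\dim\Hom_{H_k}(T_k,{_kH_j}\otimes_j M_j)$, which exactly matches the surviving term in $m$. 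So the route is the same; you should just finish the explicit arithmetic rather than assert that it ``reduces to a purely local computation.''
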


\begin{proof}
Recall that we have two maps
$$
\xymatrix{
& Y \ar[dl]_{p'} \ar[dr]^{p''}\\
\Pi(\bU)^{k,\bq} \times J_0 && \Pi(\bM)^{k,\bp}
}
$$
defined by
$p'(U,M,f) := (U,f)$ and $p''(U,M,f) := M$.
The statements (a) and (b) follow immediately from combining 
Lemma~\ref{lusztig1} and Lemma~\ref{lem:aux1}(c).
Also from these lemmas we get that
$$
\dim(Z) + \dim G(\bU) + \dim \Hom_{H_k}(T_k,U_k/U_k')
= \dim(Z') + \dim(J_0) + m
$$ 
with 
$$
m = \sum_{(j,k) \in \ov{\Omega}} \dim \Hom_{H_k}(T_k,{_kH_j} \otimes_j M_j) - \dim \Hom_{H_k}(T_k,U_k').
$$
One easily checks that
\begin{align*}
\dim(J_0) - \dim G(\bU) &= \dim \Hom_{H_k}(U_k,T_k)\\ 
&= \dim \Hom_{H_k}(T_k,U_k)\\
&= \dim \Hom_{H_k}(T_k,U_k') +\dim \Hom_{H_k}(T_k,U_k/U_k').
\end{align*}
Furthermore, we have
\begin{align*}
\dim H(\bM) &= \sum_{(j,i) \in \Omega} \dim \Hom_{H_j}({_jH_i} \otimes_j 
M_i,M_j) = 
\sum_{(j,i) \in \Omega} \dim \Hom_{H_i}(M_i,{_iH_j} \otimes_j M_j),
\\
\dim H(\bU) &= \sum_{(j,i) \in \Omega} \dim \Hom_{H_j}({_jH_i} \otimes_j 
U_i,U_j) = 
\sum_{(j,i) \in \Omega} \dim \Hom_{H_i}(U_i,{_iH_j} \otimes_j U_j).
\end{align*}
Thus we have
and
$$
\dim H(\bM) - \dim H(\bU) = \sum_{(j,k) \in \ov{\Omega}} \dim \Hom_{H_k}(T_k,{_kH_j} \otimes_j M_j).
$$
Combining the above equalities we obtain
Thus we get
\begin{align*}
\dim(Z) - \dim(Z') &= \dim(J_0) - \dim G(\bU) - 
\dim \Hom_{H_k}(T_k,U_k/U_k') + m
\\
&=
\sum_{j \in \ov{\Omega}(k)} 
\dim \Hom_{H_k}(T_k,{_kH_j} \otimes_j M_j)
\\
&= \dim H(\bM) - \dim H(\bU).
\end{align*}
This proves (c).
\end{proof}

\subsection{Comparision to Lusztig's bundle construction}
\label{subsec:comparison}
In the classical case ($C$ symmetric and $D$ the identity matrix),
Lusztig constucted bundles
$$
\xymatrix{
& Y \ar[dl]_{p'}\ar[dr]^{p''}
\\
\Pi(\bU)^{k,0} \times J_0 && \Pi(\bM)^{k,p}
}
$$
with $p \ge 1$ and $\rk(\bM/\bU) = p\alpha_k$.
(Here $p$ stands for the partition $(c_k^p)$.)
Lusztig does not consider the situation
$$
\xymatrix{
& Y \ar[dl]_{p'}\ar[dr]^{p''}
\\
\Pi(\bU)^{k,q} \times J_0 && \Pi(\bM)^{k,p}
}
$$
with $p > q \ge 1$ and $\rk(\bM/\bU) = (p-q)\alpha_k$.
Thus for the classical case, one can see our construction as a 
refinement of Lusztig's bundle construction.
Another important difference is that in our setup
$$
\xymatrix{
& Y \ar[dl]_{p'}\ar[dr]^{p''}
\\
\Pi(\bU)^{k,\bq} \times J_0 && \Pi(\bM)^{k,\bp}
}
$$
from Section~\ref{subsec:bundlemain},
the closures (in $\Pi(\bM)$) of the irreducible
components of $\Pi(\bM)^{k,\bp}$ are in general not irreducible components of $\Pi(\bM)$.
In general, this will only be the case for maximal components of $\Pi(\bM)^{k,\bp}$.
Some examples of this kind can be found in Section~\ref{subsub:bundle}.

\subsection{The maps $e_{k,r},f_{k,r},e_{k,r}^*,f_{k,r}^*$}\label{subsec:3.3} 
Let 
$\bp,\bq \in \cP_{c_k}$ be partitions of the form
$$
\bp = (c_k^r,q_1,\ldots,q_t)
\text{\;\;\; and \;\;\;}
\bq = (q_1,\ldots,q_t).
$$ 
with $r \ge 1$.
Then Lemma~\ref{lusztig2}(b) and its dual yield bijections
$$
\xymatrix{
\bigsqcup_{\br \in \N^n} \Irr(\Pi(\br)^{k,\bp}) \ar@<1ex>[rr]^{f_{k,r}^*} &&
\bigsqcup_{\br \in \N^n} \Irr(\Pi(\br-r\alpha_k)^{k,\bq}) \ar@<1ex>[ll]^{e_{k,r}^*} 
}
$$
and
$$
\xymatrix{
\bigsqcup_{\br \in \N^n} \Irr(\Pi(\br)_{k,\bp}) \ar@<1ex>[rr]^{f_{k,r}} &&
\bigsqcup_{\br \in \N^n} \Irr(\Pi(\br-r\alpha_k)_{k,\bq}) \ar@<1ex>[ll]^{e_{k,r}} 
}
$$
with $f_{k,r}^* = (e_{k,r}^*)^{-1}$ and $f_{k,r} = (e_{k,r})^{-1}$.

The following lemma is a straightforward consequence of
Lemma~\ref{lusztig2}.

\begin{Lem}\label{lem:updown}
We have 
$$
(f_{k,1}^*)^r = f_{k,r}^*, \qquad (e_{k,1}^*)^r = e_{k,r}^*, \qquad
(f_{k,1})^r = f_{k,r}, \qquad (e_{k,1})^r = e_{k,r}.
$$
\end{Lem}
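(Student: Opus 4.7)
The plan is to reduce everything to the identity $(e_{k,1}^*)^r = e_{k,r}^*$: once this is established, $(f_{k,1}^*)^r = f_{k,r}^*$ follows by inversion, since Lemma~\ref{lusztig2}(b) gives $f_{k,s}^* = (e_{k,s}^*)^{-1}$ for all $s$, and the non-starred versions come from running the same argument on the dual stratification $\Pi(\bM)_{k,\bp}$ using the dual of Lemma~\ref{lusztig2} promised in Section~\ref{subsec:strata}.

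We proceed by induction on $r$, with $r=1$ tautological. Assume the identity for $r-1$. Fix $Z_0 \in \Irr(\Pi(\bU_0)^{k,\bq})$ with $\bq = (q_1,\ldots,q_t)$, and for $0 \le i \le r$ set $\bp_i := (c_k^i, q_1, \ldots, q_t)$ and choose $\bU_i$ with $\rk(\bU_i) = \rk(\bU_0) + i \alpha_k$, so that $\bp_0 = \bq$, $\bp_r = \bp$, and $\bU_r = \bM$. Denote by $Y_{r-1}$, $Y_1$, $Y_r$ the bundle varieties from Section~\ref{subsec:bundlemain} built on the triples $(k, \bp_0, \bp_{r-1})$, $(k, \bp_{r-1}, \bp_r)$, and $(k, \bp_0, \bp_r)$ respectively. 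The main construction is the auxiliary variety
\[
\tilde Y := \{(U_0, U_{r-1}, M, f_1, f_2) : (U_0, U_{r-1}, f_1) \in Y_{r-1},\ (U_{r-1}, M, f_2) \in Y_1\},
\]
which telescopes the two one-step bundles. Iterating Lemma~\ref{lusztig2}(a), the restriction of $\tilde Y$ to $\{U_0 \in Z_0\}$ forces $U_{r-1}$ to lie in the irreducible component $e_{k,r-1}^*(Z_0)$ of $\Pi(\bU_{r-1})^{k,\bp_{r-1}}$, and then $M$ to lie in $e_{k,1}^*(e_{k,r-1}^*(Z_0))$, which is thus the image of this restricted subvariety under projection to $M$.

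Now consider the composition morphism
\[
\sigma\df \tilde Y \to Y_r, \qquad (U_0, U_{r-1}, M, f_1, f_2) \mapsto (U_0, M, f_2 \circ f_1).
\]
Its fibre over $(U_0, M, f) \in Y_r$ parameterizes intermediate $\Pi$-submodules $V \subseteq M$ with $f(U_0) \subseteq V$, $V/f(U_0) \cong E_k^{r-1}$, $M/V \cong E_k$, and $V \in \Pi(\bU_{r-1})^{k,\bp_{r-1}}$. Since $V_i = M_i$ for $i \neq k$ and $M_k/f_k((U_0)_k) \cong H_k^r$ is a free $H_k$-module, such a $V$ is determined by a free $H_k$-submodule of rank $r-1$ inside $H_k^r$; by Section~\ref{ssec:GrassFT} (applied with $\bp = (c_k^{r-1})$ and $b = r$) the set of such is the smooth irreducible Grassmannian $\Gr_{H_k^{r-1}}(H_k^r)$, and the requested quotient type $M/V \cong E_k$ is automatic from formula \eqref{eq:def-bq}, while the extra stratum condition $V \in \Pi(\bU_{r-1})^{k,\bp_{r-1}}$ is open and satisfied generically. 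Hence the fibre of $\sigma$ is irreducible and non-empty, so the composite $\tilde Y \xrightarrow{\sigma} Y_r \xrightarrow{p''} \Pi(\bM)^{k,\bp}$ surjects onto $e_{k,r}^*(Z_0)$. Comparing with the iterated image computed above gives $e_{k,1}^*(e_{k,r-1}^*(Z_0)) = e_{k,r}^*(Z_0)$, closing the induction. The main obstacle is the fibre-irreducibility check just performed; everything else is formal bookkeeping of the bundle structures from Section~\ref{subsec:bundlemain}.
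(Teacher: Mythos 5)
The paper itself gives no argument here: it simply declares the lemma to be ``a straightforward consequence of Lemma~\ref{lusztig2}.''  Your proof is a reasonable and correct way to make that assertion precise, and your reduction to $(e_{k,1}^*)^r=e_{k,r}^*$ (with inversion for the $f$'s and the dual stratification for the unstarred operators) is fine.  The telescope $\tilde Y$ and the composition map $\sigma$ do carry the argument: both routes to $M$ factor through the same projection, so comparing the two images of $\tilde Y_{Z_0}$ is exactly what is needed.

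There is, however, one point where your write-up is too weak, and it is precisely the one non-formal ingredient.  You assert that the constraint $V\in\Pi(\bU_{r-1})^{k,\bp_{r-1}}$ on the intermediate submodule is ``open and satisfied generically.''  If it were only generic, the fibre of $\sigma$ could be empty at some points of $Y_r$, and then your statement that $p''\circ\sigma$ \emph{surjects} onto $e_{k,r}^*(Z_0)$ would no longer follow (one would only get density, which in fact suffices, but that is a different argument than the one you wrote).  What is actually true, and easy to prove, is that the constraint is \emph{always} satisfied.  Indeed, for any such $V$ we have $V_j=M_j$ for $j\neq k$ and $\Ima(M_{k,\inn})\subseteq f_k((U_0)_k)\subseteq V_k$, so $\Ima(V_{k,\inn})=\Ima(M_{k,\inn})$; hence $K_k(V)=K_k(M)$, and $\fac_k(V)=V_k/\Ima(M_{k,\inn})$ is a submodule of $\fac_k(M)\cong H_k^{\bp}$ with quotient $M_k/V_k\cong H_k$, which splits because $H_k$ is projective, giving $\fac_k(V)\cong H_k^{\bp_{r-1}}$.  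This is the same implicit observation the paper uses in the proof of Lemma~\ref{lem:aux1}(a) to identify $Y$ with $Y'$.  Once you replace ``generically'' by this statement, the fibre of $\sigma$ is the whole $\Aut$-bundle over $\Gr_{H_k^{r-1}}(H_k^r)$, hence irreducible and non-empty at every point, and your surjectivity claim is airtight.

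As a side remark, a shorter route to the same identity, more in the spirit of ``straightforward consequence,'' is to use the characterization of $e_{k,r}^*$ coming from the equality of components $(p'_r)^{-1}(Z_0^{k,\bq}\times J_0)=(p''_r)^{-1}\bigl(e_{k,r}^*(Z_0)^{k,\bp}\bigr)$ of $Y_r$: take $M$ generic in $(e_{k,1}^*)^r(Z_0)$, peel off $E_k$'s one at a time to produce $U_0\subset V_1\subset\cdots\subset M$ with $U_0$ lying in $Z_0$, note that $\Ext^1_\Pi(E_k,E_k)=0$ forces $M/U_0\cong E_k^r$, and conclude $M\in e_{k,r}^*(Z_0)$; genericity of $M$ then gives equality of the two components.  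This avoids the auxiliary variety entirely, but your argument is also valid once the gap above is closed.
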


\subsection{The functions $\vph_i$ and $\vph_i^*$}\label{subsec:3.2} 
For $M \in \rep(H_i)$ let $[M:E_i]$ be the maximal number $p \ge 0$
such that there exists a direct sum decomposition $M = U \oplus V$ with $U \cong E_i^p$.
Define functions
$$
\vph_i,\vph_i^*\df \Pi(\br) \to \Z
$$ 
by
$$
\vph_i(M) := [\sub_i(M):E_i]
\text{\;\;\; and \;\;\;}
\vph_i^*(M) := [\fac_i(M):E_i].
$$
We obviously get $\vph_i(M) = p$ for all $M \in \Pi(\br)_{i,p}$, and
$\vph_i^*(M) = p$ for all $M \in \Pi(\br)^{i,p}$.


\section{Maximal irreducible components and crystal modules}
\label{sec:maxcomp}


\subsection{Maximal irreducible components}

\begin{Thm}\label{thm:dimbound}
For each $Z \in \Irr(\Pi(\bM))$ we have
$$
\dim(Z) \le \dim H(\bM).
$$
\end{Thm}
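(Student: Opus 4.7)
The plan is induction on the total rank $|\rk(\bM)| = \sum_i \rk(M_i)$. The base case $\bM=0$ is trivial, since then $\Pi(\bM)$ is a point and $\dim H(\bM) = 0$.

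For the inductive step, fix $\bM \neq 0$ and an irreducible component $Z \in \Irr(\Pi(\bM))$. By the upper semicontinuity discussion in Section~\ref{subsec:strata}, there is a unique pair $(k,\bp)$ with $k \in I$ and $\bp \in \cP_{c_k}$ such that $Z^{k,\bp} := Z \cap \Pi(\bM)^{k,\bp}$ is open and dense in $Z$. By Lemma~\ref{lusztig0}(c), together with the fact that $Z$ is not contained in the strict union of smaller strata, we may moreover assume $\bp(c_k) \ge 1$, so $\bp = (c_k^r,q_1,\ldots,q_t)$ with $r \ge 1$. Set $\bq := (q_1,\ldots,q_t)$ and choose $\bU = (U_1,\ldots,U_n)$ with $U_i = M_i$ for $i \neq k$ and $U_k \subseteq M_k$ a free $H_k$-submodule with $M_k/U_k \cong E_k^r$, as in Section~\ref{subsec:bundlemain}.

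Next I invoke Lemma~\ref{lusztig2}: the diagram $\Pi(\bU)^{k,\bq} \times J_0 \xleftarrow{p'} Y \xrightarrow{p''} \Pi(\bM)^{k,\bp}$ induces a bijection $\Irr(\Pi(\bU)^{k,\bq}) \to \Irr(\Pi(\bM)^{k,\bp})$ under which dimensions shift by exactly $\dim H(\bM) - \dim H(\bU)$. Let $W \in \Irr(\Pi(\bM)^{k,\bp})$ be the (unique) component containing the irreducible set $Z^{k,\bp}$, and let $Z' \in \Irr(\Pi(\bU)^{k,\bq})$ be its preimage under this bijection. Then
\begin{align*}
\dim Z \;=\; \dim Z^{k,\bp} \;\le\; \dim W \;=\; \dim Z' + \dim H(\bM) - \dim H(\bU).
\end{align*}

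Finally, since $\Pi(\bU)^{k,\bq}$ is a locally closed subset of $\Pi(\bU)$, the closure $\overline{Z'}$ in $\Pi(\bU)$ is irreducible and hence contained in some $W' \in \Irr(\Pi(\bU))$. Because $|\rk(\bU)| = |\rk(\bM)| - r < |\rk(\bM)|$, the induction hypothesis gives $\dim W' \le \dim H(\bU)$, and thus $\dim Z' \le \dim H(\bU)$. Plugging this back yields $\dim Z \le \dim H(\bM)$, as desired.

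The only point requiring care is the bookkeeping at the stratification step: one needs to confirm that a dense open piece of an arbitrary component $Z$ of $\Pi(\bM)$ really lands in a stratum $\Pi(\bM)^{k,\bp}$ to which the bundle construction of Section~\ref{subsec:bundlemain} applies (i.e.\ with $\bp(c_k) \ge 1$). Once this is granted, the rest is an immediate combination of Lemma~\ref{lusztig2}(b),(c) with the inductive hypothesis.
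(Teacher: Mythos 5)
Your proposal is correct and follows essentially the same route as the paper: induct on total rank, locate a dense stratum $Z^{k,\bp}$ of $Z$ with $\bp(c_k)\ge 1$, transfer to a component of $\Pi(\bU)^{k,\bq}$ via the bundle construction of Lemma~\ref{lusztig2}, and apply the induction hypothesis to a component of $\Pi(\bU)$ containing its closure. The only cosmetic differences are that you pass through a possibly larger component $W\supseteq Z^{k,\bp}$ and use an inequality, whereas the paper asserts directly that $Z^{k,\bp}\in\Irr(\Pi(\bM)^{k,\bp})$ (which holds by maximality of $Z$) and uses the resulting equality; both routes are valid, and you also make explicit the passage from a component of $\Pi(\bU)^{k,\bq}$ to one of $\Pi(\bU)$, which the paper leaves implicit.
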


\begin{proof}
Let $Z \in \Irr(\Pi(\bM))$.
There exists some $k \in I$ with $\vph_k^*(Z) > 0$.
Thus there is a partition $\bp = (c_k^r,q_1,\ldots,q_t)$ 
with $r \ge 1$ such that $Z^{k,\bp} = Z \cap \Pi(\bM)^{k,\bp}$ is
dense in $Z$. 
Furthermore, we have
$Z^{k,\bp} \in \Irr(\Pi(\bM)^{k,\bp})$.
Let $Z'$ be the corresponding component of 
$\Pi(\bU)^{k,\bq}$, where $\bq := (q_1,\ldots,q_t)$ and
$\bU$ is defined as in Section~\ref{subsec:bundlemain}.
By induction we know that $\dim(Z') \le \dim H(\bU)$.
By Lemma~\ref{lusztig2}(c) we know that
$$
\dim(Z) - \dim(Z') = \dim H(\bM) - \dim H(\bU).
$$
This implies 
$$
\dim(Z)-\dim H(\bM) = \dim(Z') - \dim H(\bU) \le 0.
$$
This finishes the proof.
\end{proof}

An irreducible component $Z$ of $\Pi(\bM)$ is
\emph{maximal} if $\dim(Z) = \dim H(\bM)$.
We denote
the set of maximal irreducible components of 
$\Pi(\bM)$ by $\Irr(\Pi(\bM))^\mx$. 
Similarly, let
$\Irr(\Pi(\bM)^{k,\bp})^\mx$ and $\Irr(\Pi(\bM)_{k,\bp})^\mx$
be the sets of irreducible components of 
$\Pi(\bM)^{k,\bp}$ and $\Pi(\bM)_{k,\bp}$ of dimension 
$\dim H(\bM)$, respectively.

We can embed $H(\bM)$ into $\Pi(\bM)$ in the obvious way. 
By Theorem~\ref{thm:dimbound}, $H(\bM)$ is then a maximal
irreducible component of $\Pi(\bM)$.
Thus $\Irr(\Pi(\bM))^\mx$ is non-empty.
However, the sets
$\Irr(\Pi(\bM)^{k,\bp})^\mx$ and $\Irr(\Pi(\bM)_{k,\bp})^\mx$ 
can be empty, depending on the partition $\bp$.

\subsection{Crystal modules}\label{subsec:crystalmodules}
For $M \in \rep(\Pi)$ and $i \in I$ there are
canonical short exact sequences
$$
0 \to K_i(M) \to M \to \fac_i(M) \to 0
$$
and
$$
0 \to \sub_i(M) \to M \to C_i(M) \to 0.
$$
Here $K_i(M)$ is the unique submodule of $M$ with $M/K_i(M) \cong \fac_i(M)$, and
$C_i(M) = M/U$ is the unique factor module of $M$ with 
$U \cong \sub_i(M)$.

We say that $M \in \nil_E(\Pi)$ is a \emph{crystal module}
if $\fac_i(M)$ and $\sub_i(M)$ are locally free for all $i$, 
and if $K_i(M)$ and $C_i(M)$ are crystal modules for all $i \in I$.
By definition the trivial module $0$ is a crystal module.
 
Clearly, if $M \in \nil_E(\Pi)$ is a crystal module, then we have
$\dim \sub_i(M) = c_i\vph_i(M)$ and $\dim \fac_i(M) = c_i\vph_i^*(M)$.

For $i,j \in I$
there is a canonical homomorphism $f\df \sub_j(M) \to \fac_i(M)$
defined by $u \mapsto u + K_i(M)$.

\begin{Lem}\label{lem:strictlya}
Let $M \in \rep(\Pi)$.
For $i \in I$ and each submodule $U \subseteq M$ we have
$$
\fac_i(M/U) \cong \fac_i(M)/(U+K_i(M)).
$$
\end{Lem}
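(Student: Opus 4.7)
The plan is to unravel the definitions: by construction $\fac_i(N) = N/K_i(N)$, where $K_i(N)$ is the \emph{smallest} submodule of $N$ such that every composition factor of the quotient is isomorphic to $S_i$ (equivalently, the intersection of all such submodules). Once this universal property is in hand, the lemma reduces to identifying $K_i(M/U)$ with $(U+K_i(M))/U$ and then invoking the third isomorphism theorem to rewrite
\[
(M/U)\big/\bigl((U+K_i(M))/U\bigr) \;\cong\; M/(U+K_i(M)) \;\cong\; \fac_i(M)\big/\bigl((U+K_i(M))/K_i(M)\bigr),
\]
which is precisely the abbreviated right-hand side $\fac_i(M)/(U+K_i(M))$ of the statement.

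To identify $K_i(M/U)$, I would argue by two inclusions. For $\supseteq$, note that $M/(U+K_i(M))$ is a factor module of $\fac_i(M) = M/K_i(M)$, hence all its composition factors are $S_i$; since $U \subseteq U+K_i(M)$, it is also a factor of $M/U$, and therefore by the universal property of $K_i(M/U)$ the submodule $(U+K_i(M))/U \subseteq M/U$ contains $K_i(M/U)$. For $\subseteq$, let $V \subseteq M$ be the preimage of $K_i(M/U)$ under the canonical surjection $M \to M/U$; then $V \supseteq U$ and $M/V \cong (M/U)/K_i(M/U) = \fac_i(M/U)$ has all composition factors equal to $S_i$, so by the defining property of $K_i(M)$ we get $V \supseteq K_i(M)$, whence $V \supseteq U+K_i(M)$, i.e.\ $K_i(M/U) \supseteq (U+K_i(M))/U$.

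Combining the two inclusions gives $K_i(M/U) = (U+K_i(M))/U$, and the desired isomorphism follows from the third isomorphism theorem as displayed above. There is no serious obstacle here; the only thing to be careful about is the notational convention that $\fac_i(M)/(U+K_i(M))$ really means $\fac_i(M)$ modulo the image of $U+K_i(M)$ in $\fac_i(M) = M/K_i(M)$, which is $(U+K_i(M))/K_i(M)$.
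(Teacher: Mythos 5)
Your proof is correct and follows essentially the same strategy as the paper's: identify $K_i(M/U)$ with $(U+K_i(M))/U$ and conclude by an isomorphism theorem. The only cosmetic difference is that the paper proves the inclusion $U+K_i(M)\subseteq K_i(M/U)$ and pairs it with the observation that the canonical map $M/U\to \fac_i(M)/(U+K_i(M))$ factors through $\fac_i(M/U)$ as an epimorphism, finishing by a dimension count, whereas you prove both inclusions directly from the universal property of $K_i$; the two are logically equivalent and equally short.
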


\begin{proof}
We have canonical short exact sequences
$$
0 \to K_i(M) \to M \to \fac_i(M) \to 0
$$
and
$$
0 \to K_i(M/U) \to M/U \to \fac_i(M/U) \to 0.
$$
We use that submodules of a factor module $M/V$ are in
bijection with submodules $W$ of $M$ with $V \subseteq W \subseteq M$.
In this way, we can interpret $U+K_i(M)$ as a submodule of $\fac_i(M)$,
and $K_i(M/U)$ as a submodule of $M$.

We get the obvious inclusions displayed in the following diagram:
$$
\xymatrix{
& M \ar@{-}[dr]\ar@{-}[dl]
\\
K_i(M/U) \ar@{-}[dr] && U + K_i(M)  \ar@{-}[dr]\ar@{-}[dl]
\\
& U  && K_i(M) 
}
$$
There is an epimorphism
$$
\pi\df M/U \to \fac_i(M)/(U+K_i(M))
$$
defined by $m+U \mapsto m+ (U+K_i(M))$.
Since all composition factors of the image of $\pi$ are
isomorphic to $S_i$, the epimorphism $\pi$ factors through
$\fac_i(M/U)$.
This yields an epimorphism 
$$
\pi'\df \fac_i(M/U) \to \fac_i(M)/(U+K_i(M)).
$$
We obviously have $U \subseteq K_i(M/U)$.
We also have $K_i(M) \subseteq K_i(M/U)$, since all
composition factors of 
$M/K_i(M/U)$ are isomorphic to $S_i$, and 
$M/K_i(M)$ is the unique maximal factor module
with this property.
It follows that $U+K_i(M) \subseteq K_i(M/U)$.
Thus for dimension reasons, $\pi'$ has to be an isomorphism.
\end{proof}

\begin{Lem}\label{lem:strictly0}
For $i,j \in I$ and $M \in \nil_E(\Pi)$ a crystal module the following 
hold:
\begin{itemize}

\item[(i)]
If the canonical homomorphism
$$
f\df \sub_j(M) \to \fac_i(M)
$$
is non-zero, then $i = j$ and $M$ has a direct summand isomorphic to 
$E_i$.

\item[(ii)]
If the canonical homomorphism
$$
f\df \sub_j(M) \to \fac_i(M)
$$
is zero, then 
$$
\fac_i(M) \cong \fac_i(C_j(M)) 
\text{\;\;\; and \;\;\;}
\sub_j(M) = \sub_j(K_i(M)).
$$

\end{itemize}
\end{Lem}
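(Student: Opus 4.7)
The plan is to dispose of (ii) quickly using Lemma~\ref{lem:strictlya}, and then invest the real work in (i), where the non-trivial point is to control the image of $f$ inside $\fac_i(M)$ using the crystal hypothesis on $K_i(M)$.

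For (ii), the condition $f = 0$ says exactly $\sub_j(M) \subseteq K_i(M)$. The inclusion $\sub_j(M) \subseteq \sub_j(K_i(M))$ is then immediate, while the reverse $\sub_j(K_i(M)) \subseteq \sub_j(M)$ holds because $\sub_j(K_i(M))$ is a submodule of $M$ with all composition factors $\cong S_j$, hence sits inside the maximal such $\sub_j(M)$. Applying Lemma~\ref{lem:strictlya} with $U = \sub_j(M)$ gives $\fac_i(C_j(M)) = \fac_i(M/\sub_j(M)) \cong \fac_i(M)/(\sub_j(M) + K_i(M))$; under $f = 0$ the subgroup $\sub_j(M) + K_i(M)$ collapses to $K_i(M)$, and this quotient recovers $\fac_i(M)$.

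For (i), I first note that $\Ima(f) \ne 0$ is simultaneously a submodule of $\fac_i(M)$ (with composition factors $\cong S_i$) and a quotient of $\sub_j(M)$ (with composition factors $\cong S_j$), forcing $i = j$. The core claim is that $\Ima(f) \not\subseteq \rad(\fac_i(M)) = \vep_i \fac_i(M)$. I will establish this by contradiction: assuming $\Ima(f) \subseteq \vep_i \fac_i(M)$, every $v \in \sub_i(M)$ satisfies $v \in \vep_i M + K_i(M)$, and writing $v = \vep_i m + k$ with $k \in K_i(M)$ and using $\vep_i^{c_i} = 0$ yields
\[
\vep_i^{c_i-1} v \;=\; \vep_i^{c_i-1}k \;\in\; K_i(M).
\]
Hence $\soc_{H_i}\sub_i(M) = \vep_i^{c_i-1}\sub_i(M) \subseteq K_i(M) \cap \sub_i(M) = \sub_i(K_i(M))$. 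Writing $\sub_i(M) \cong E_i^a$ and $\sub_i(K_i(M)) \cong E_i^{a'}$ (both are locally free by the crystal hypothesis, applied to $M$ and to the crystal module $K_i(M)$), the socle inclusion forces $a \le a'$, hence $a = a'$ and $\sub_i(K_i(M)) = \sub_i(M) \subseteq K_i(M)$, so $f = 0$, contradiction.

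Granted the non-containment, I pick $v \in \sub_i(M)$ with $f(v) \notin \vep_i\fac_i(M)$. Then $f(v)$ has full $H_i$-order $c_i$, and $H_i$-linearity of $f$ propagates this to $\vep_i^{c_i-1}v \ne 0$, so $H_i v \subseteq \sub_i(M) \subseteq M$ is isomorphic to $E_i$ and $H_i f(v)$ is an $H_i$-summand of $\fac_i(M) \cong E_i^b$. Composing the surjection $M \twoheadrightarrow \fac_i(M)$ with the projection onto $H_i f(v)$ yields a surjection $M \to H_i f(v)$ whose restriction to $H_i v$ is an isomorphism, giving a splitting $M \cong E_i \oplus N$. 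The main obstacle is precisely the radical-vs-socle analysis above: one cannot split $f$ directly, and it is the crystal assumption on $K_i(M)$ — specifically local freeness of $\sub_i(K_i(M))$, which makes its $H_i$-rank visible from its socle — that underpins the contradiction.
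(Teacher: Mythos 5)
Your proof is correct, and part (i) takes a genuinely different — in fact dual — route to the same key claim. Both arguments aim to show $\Ima(f) \not\subseteq \rad\bigl(\fac_i(M)\bigr)$, and once that is established the final splitting step (find $v$ with $f(v)$ of full $H_i$-order, observe $H_iv \cong E_i$ is a summand) is the same as in the paper. The difference lies in how the non-containment is derived. The paper assumes $pf = 0$ (i.e.\ $f(\sub_i(M)) \subseteq \rad(\fac_i(M))$) and uses Lemma~\ref{lem:strictlya} directly to see that $\fac_i(C_i(M)) \cong \fac_i(M)/f(\sub_i(M))$ then has the same top as $\fac_i(M)$ but smaller dimension, hence cannot be free — contradicting the crystal property of $C_i(M)$. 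You instead push elements: from $\Ima(f) \subseteq \vep_i\fac_i(M)$ you deduce $\soc_{H_i}(\sub_i(M)) \subseteq \sub_i(K_i(M))$, and comparing the free $H_i$-ranks via their socles forces $\sub_i(M) = \sub_i(K_i(M)) \subseteq K_i(M)$, i.e.\ $f = 0$ — contradicting the crystal property of $K_i(M)$. So the paper exploits freeness of $\fac_i(C_i(M))$ through a top-dimension count, while you exploit freeness of $\sub_i(K_i(M))$ through a socle-dimension count; these are two dual faces of the crystal hypothesis, and each appeals to a different one of the two recursively guaranteed crystal modules $C_i(M)$ and $K_i(M)$. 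The paper's version avoids element-level manipulation by citing Lemma~\ref{lem:strictlya}, but your version is equally rigorous. Your treatment of (ii) agrees with the paper, which leaves it as ``straightforward.''
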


\begin{proof}
We first prove (i).
If $i \not= j$, then the canonical homomorphism $f\df \sub_j(M) \to \fac_i(M)$
is obviously zero.
Thus assume that $i=j$ and that $f\df \sub_i(M) \to \fac_i(M)$
is non-zero.
We know that $\sub_i(M)$ and $\fac_i(M)$ are free $H_i$-modules.
Let $p\df \fac_i(M) \to \tp(\fac_i(M))$ be the canonical projection
of $\fac_i(M)$ onto its top. 
Note that $\tp(\fac_i(M)) = \tp_i(M)$.
By Lemma~\ref{lem:strictlya} we have 
$$
\fac_i(M/\sub_i(M)) \cong \fac_i(M)/f(\sub_i(M)).
$$
Now suppose that $pf = 0$.
Then we get 
$$
\tp_i(\fac_i(M)) \cong \tp_i(\fac_i(M)/f(\sub_i(M))). 
$$
Together with our assumption that $f(\sub_i(M)) \not= 0$, this implies
that $\fac_i(M/\sub_i(M))$ is not free, a contradiction to our assumption that
$M$ is a crystal module.
Thus we proved that $pf \not= 0$.
This implies that 
there is a submodule $U$ of $\sub_i(M)$
with $U \cong E_i$ and 
$f(U) \cong E_i$.
This yields a homomorphism $g\df \fac_i(M) \to U$ with $gf\iota_U = 1_U$,
where $\iota_U\df U \to \sub_i(M)$ denotes the inclusion.
We have $f = f_2f_1$ with the obvious homomorphisms
$f_1\df \sub_i(M) \to M$ and $M \to \fac_i(M)$.
We get $(gf_2)(f_1\iota_U) = 1_U$.
This shows that $f_1\iota_U\df U \to M$ is a split monomorphism.
It follows that
$M$ has a direct summand isomorphic to $E_i$.
This finishes the proof of (i).
Part (ii) is straightforward.
\end{proof}

Let 
$$
\nil_E^\cry(\Pi,\bM) = \Pi(\bM)^\cry
$$ 
be the subset of crystal modules in $\nil_E(\Pi,\bM) = \Pi(\bM)$.
 
An irreducible component $Z \in \Irr(\Pi(\bM))$ is a
\emph{crystal component} if it contains a dense open subset 
of crystal modules.

\begin{Prop}\label{prop:strictly1}
For $Z \in \Irr(\Pi(\bM))$ the following are equivalent:
\begin{itemize}

\item[(i)]
$Z$ is maximal.

\item[(ii)]
$Z$ is a crystal component.

\end{itemize}
\end{Prop}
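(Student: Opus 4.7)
The plan is to proceed by induction on the total rank $|\rk(\bM)| = \rk(M_1) + \cdots + \rk(M_n)$. The base case $\bM = 0$ is trivial: $\Pi(\bM) = \{0\}$ has one component, which is both maximal and a crystal component (the zero module is crystal by definition). For $\bM \neq 0$ and $Z \in \Irr(\Pi(\bM))$, Lemma~\ref{lusztig0}(c) yields some $k \in I$ with $\vph_k^*(Z) = r \geq 1$. Writing $\bp := \fac_k(Z) = (c_k^r, q_1, \ldots, q_t)$ with $c_k > q_1 \geq \cdots \geq q_t > 0$ and setting $\bq := (q_1, \ldots, q_t)$, I choose $\bU$ as in Section~\ref{subsec:bundlemain} with $\rk(\bU) = \rk(\bM) - r\alpha_k$. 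Lemma~\ref{lusztig2} then produces $Z' \in \Irr(\Pi(\bU)^{k,\bq})$ with
$$
\dim Z - \dim Z' = \dim H(\bM) - \dim H(\bU),
$$
so $Z$ is maximal if and only if $\dim Z' = \dim H(\bU)$.

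For (ii) $\Rightarrow$ (i), I take a generic crystal module $M \in Z$ and pick $k$ with $r := \vph_k^*(M) \geq 1$. Since $M$ is crystal, $\fac_k(M)$ is a locally free $H_k$-module and therefore isomorphic to $E_k^r$, which forces $\bp = (c_k^r)$ and $\bq = 0$. Lemma~\ref{lusztig0}(b) gives that $\Pi(\bU)^{k,0}$ is open in $\Pi(\bU)$, so the closure $\wti{Z'} \in \Irr(\Pi(\bU))$ of $Z'$ satisfies $\dim \wti{Z'} = \dim Z'$. The generic point of $Z'$ corresponds under the bundle to $K_k(M)$ for generic $M \in Z$, which is a crystal module by definition; hence $\wti{Z'}$ is a crystal component. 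By the inductive hypothesis $\wti{Z'}$ is maximal, so $\dim Z' = \dim H(\bU)$ and $Z$ is maximal.

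For (i) $\Rightarrow$ (ii), assume $Z$ is maximal, so $\dim Z' = \dim H(\bU)$. Any $\wti{Z'} \in \Irr(\Pi(\bU))$ containing $Z'$ has dimension $\dim H(\bU)$ by Theorem~\ref{thm:dimbound}, hence is maximal; by induction it is a crystal component. Since $Z'$ is irreducible of the same dimension as $\wti{Z'}$, it is dense in $\wti{Z'}$, and therefore $\fac_k(\wti{Z'}) = \bq$. But by the crystal property, a generic $U \in \wti{Z'}$ has $\fac_k(U) \cong H_k^\bq$ free over $H_k$, forcing $\bq$ to be rectangular with all parts equal to $c_k$; combined with $q_j < c_k$, this yields $\bq = 0$ and $\bp = (c_k^r)$, so $\fac_k(M) \cong E_k^r$ is free for generic $M \in Z$, and $K_k(M)$ is a generic element of $\wti{Z'}$, hence a crystal module. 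Running this argument for every $i \in I$ with $\vph_i^*(Z) > 0$, and dually (via the $\sub$-stratification of Section~\ref{subsec:strata}) for every $i$ with $\vph_i(Z) > 0$, and intersecting the resulting finitely many dense open subsets of the irreducible variety $Z$, yields a dense open set of crystal modules in $Z$.

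The main obstacle is showing $\bq = 0$, equivalently that the partition $\fac_k(Z)$ is rectangular at every vertex $k$. This is exactly where the inductive hypothesis is indispensable: maximality of $\wti{Z'}$ together with the fact that it is (by induction) a crystal component forces $\fac_k(U)$ to be free generically in $\wti{Z'}$, and the combinatorial observation that a partition which is both rectangular of height $c_k$ and has all parts strictly less than $c_k$ must be empty closes the loop.
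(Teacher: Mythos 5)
The (ii) $\Rightarrow$ (i) direction of your proposal is essentially the same as the paper's argument and looks correct: a crystal module $M$ generic in $Z$ has $\fac_k(M) \cong E_k^r$ for some $k$ with $r = \vph_k^*(M) \ge 1$, the bundle of Lemma~\ref{lusztig2} with $\bq = 0$ sends $Z$ to an irreducible component $\wti{Z'}$ of $\Pi(\bU)$ whose generic module is $K_k(M)$, hence crystal, and maximality of $\wti{Z'}$ by induction yields maximality of $Z$.

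The (i) $\Rightarrow$ (ii) direction, however, has a genuine gap. Your argument proves that for every $i$ with $\vph_i^*(Z) > 0$ the generic $\fac_i(M)$ is free and $K_i(M)$ is a crystal module, and dually for $\sub_i$ at vertices with $\vph_i(Z) > 0$. But the definition of a crystal module (Section~\ref{subsec:crystalmodules}) requires $\fac_i(M)$ and $\sub_i(M)$ to be locally free for \emph{all} $i \in I$. Your argument is silent about vertices $i$ with $\vph_i^*(Z) = 0$: at such a vertex it could a priori happen that $\fac_i(Z) \ne 0$, i.e.\ $\fac_i(M)$ is non-zero with no direct summand isomorphic to $E_i$, hence not free, in which case $Z$ is not a crystal component. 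The bundle construction of Lemma~\ref{lusztig2} is simply inapplicable here, because it requires the partition $\bp = \fac_k(Z)$ to have at least one part equal to $c_k$. Your closing paragraph identifies "showing $\bq = 0$" as the main obstacle, but that is actually the easy step; the real difficulty is ruling out the configuration $\vph_i^*(Z) = 0$, $\fac_i(Z) \ne 0$, where $\bq$ never enters.

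For contrast, the paper handles exactly this case by a delicate contradiction argument. Taking $\br$ minimal with a maximal non-crystal $Z$, one reduces (using your argument, in effect) to the situation $\fac_1(Z)$ not free and $\vph_1^*(Z) = 0$. Choosing $s$ with $\vph_s(Z) > 0$ and running the dual (socle-side) bundle gives a non-split short exact sequence $0 \to E_s \to M \to M' \to 0$ with $M'$ generic in a smaller-rank crystal component; one shows $s = 1$ and $\fac_1(M') = 0$, then decomposes $\wti{M}_1 = \Ima(M_{1,\out}') \oplus V \oplus W$ and writes $M_{1,\inn}$ as a $2\times 3$ matrix over $H_1$. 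Replacing the block $f_{22}\df V \to E_1$ by a full-rank map produces a module $\ov{M}$ still lying in $Z$ (since $\ov{M}/E_1 \cong M'$) but with $\fac_1(\ov{M}) = 0$, contradicting the genericity of $\fac_1(M) \ne 0$. None of this appears, or is replaceable by, the bundle-theoretic step in your proposal.
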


\begin{proof}
(ii) $\implies$ (i):
Let $\bM = (M_1,\ldots,M_n)$ be locally free.
Suppose $Z \in \Irr(\Pi(\bM))$ is
a crystal component.
By Lemma~\ref{lusztig0}(c) there exists some $k \in I$ and some $p > 0$
such that $\vph_k^*(Z) = p$.
Now choose $\bU = (U_1,\ldots,U_n)$ such that
$U_i = M_i$ for all $i \not= k$, and $U_k$ is a free $H_k$-submodule of $M_k$ such that $M_k/U_k \cong E_k^p$.
Let $(Z^{k,p})' \in \Irr(\Pi(\bU)^{k,0})$ be the irreducible component corresponding to 
$Z^{k,p} := Z \cap \Pi(\bM)^{k,p}$ under
the bijection $\Irr(\Pi(\bM)^{k,p}) \to \Irr(\Pi(\bU)^{k,0})$ from Lemma~\ref{lusztig2}(b).
Finally, let $Z'$ be the closure of $(Z^{k,p})'$ in $\Pi(\bU)$.
It follows that $Z' \in \Irr(\Pi(\bU))$, since $\Pi(\bU)^{k,0}$ is
non-empty and open in $\Pi(\bU)$.
It is straightforward that the component $Z'$ is again a crystal component.
By induction, $Z'$ is maximal, i.e. $\dim(Z') = \dim H(\bU)$.
Now Lemma~\ref{lusztig2} implies that $\dim(Z) = \dim H(\bM)$.
In other words, $Z$ is maximal.

(i) $\implies$ (ii):
Let $\bM = (M_1,\ldots,M_n)$ be locally free.
Assume that $Z \in \Irr(\Pi(\bM))$ is maximal, and
that $Z$ is not a crystal component.
Let $\br := \rk(\bM)$ be minimal such that such a $Z$ exists.

By minimality, it follows that $\fac_k(Z)$ or $\sub_k(Z)$ 
is not free for some $k$.
Without loss of generality we assume that $\fac_1(Z)$ is not free.
Again by minimality, we know that $\vph_1^*(Z) = 0$, i.e.
$\fac_1(Z)$ does not have a direct summand isomorphic to $E_1$.

There exists some $s \in I$ such that $\vph_s(Z) > 0$, i.e.
$\sub_s(Z)$ contains 
a direct summand isomorphic to $E_s$.
Now choose $\bU = (U_1,\ldots,U_n)$ such that
$U_i = M_i$ for all $i \not= s$, and $U_s = M_s/U$ is a free $H_s$-factor module of $M_s$ with $U \cong E_s$.

There is a partition $\bp = (c_s,q_1,\ldots,q_t)$ such that
$Z_{s,\bp} := Z \cap \Pi(\bM)_{s,\bp}$ is open and dense in $Z$.
We have $Z_{s,\bp} \in \Irr(\Pi(\bM)_{s,\bp})$.
Set $\bq := (q_1,\ldots,q_t)$.

Under the bijection  $\Irr(\Pi(\bM)_{s,\bp}) \to \Irr(\Pi(\bU)_{s,\bq})$ from
the dual of Lemma~\ref{lusztig2}(b), let
$Z_{s,\bp}' \in \Irr(\Pi(\bU)_{s,\bq})$ be the irreducible component
corresponding to $Z_{s,\bp}$.
Let $Z'$ be the closure of $Z_{s,\bp}'$ in $\Pi(\bU)$.

The dual of Lemma~\ref{lusztig2} yields $\bU$ and an irreducible
component $Z_{s,\bq}'$ of $\Pi(\bU)_{s,\bq}$ corresponding to $Z$.
Let $Z'$ be the closure of $Z_{s,\bq}$ in $\Pi(\bU)$.
By the dual of Lemma~\ref{lusztig2}(c) we know that $Z'$ is a maximal
irreducible component of $\Pi(\bU)$.
Furthermore, by induction $Z'$ is a crystal component.
In particular, this implies that $\fac_1(Z')$ is free.

Let $M$ be generic in $Z$.
There is a short exact sequence
$$
0 \to E_s \xrightarrow{f} M \to M' \to 0
$$
with $M'$ generic in $Z'$.
This implies that $s=1$. 
(Otherwise $\fac_1(M) \cong \fac_1(M')$ and
therefore $\fac_1(Z) = \fac_1(Z')$, a contradiction.)
The short exact sequence above is non-split.
(Otherwise $\fac_1(M) = \fac_1(Z)$ would contain a direct summand
isomorphic to $E_1$, a contradiction.)
In other words, we have $\Ext_\Pi^1(M',E_1) \not= 0$.

Without loss of generality we assume that $f\df E_1 \to M$ is just an
inclusion map and that
$$
M_1 = U_1 \oplus E_1.
$$

By Lemma~\ref{lem:strictlya} we have
$$
\fac_1(M') \cong \fac_1(M/E_1) \cong \fac_1(M)/(E_1+K_1(M)).
$$
We have $E_1 + K_1(M) = p(E_1)$, where
$$
p\df M \to \fac_1(M)
$$
is the obvious canonical epimorphism.
Since $\fac_1(M')$ is free, and $\fac_1(M)$ is not, this implies 
$p(E_1) \not= 0$.
Since $\fac_1(M)$ does not contain a free direct summand,
and $\fac_1(M')$ is free, we even get
$p(E_1) = \fac_1(M)$ and therefore $\fac_1(M') = 0$.
In particular, $\fac_1(M)$ is isomorphic to a proper factor module
of $E_1$.

We have $M = (M_{ij}) \in \Pi(\bM)$ and
$M' = (M_{ij}') \in \Pi(\bU)$ with
$M_{ij}' = M_{ij}$ for all $(i,j)$ with $i \not=1$ and $j \not= 1$.
Furthermore, we have $M_{1,\out}|_{U_1} = M_{1,\out}'$ and $M_{1,\out}|_{E_1} = 0$.
(For the last equality we used that $E_1$ is a submodule of $M$.)
In particular, we have $\Ima(M_{1,\out}) = \Ima(M_{1,\out}')$.

By induction we know that $M'$ is a crystal module.
This implies that $\Ima(M_{1,\out}')$, $\Ker(M_{1,\inn}')$
and therefore also $\Ker(M_{1,\out}')/\Ima(M_{1,\out}')$ are
free $H_1$-modules.

We now describe the $H_1$-linear maps 
$$
M_{1,\inn}\df \wti{M}_1 \to M_1
\text{\;\;\; and \;\;\;}
M_{1,\inn}'\df \wti{M}_1 \to U_1
$$
where 
$$
\wti{M}_1 = \bigoplus_{j \in \ov{\Omega}(1)} {_1}H_j \otimes_j M_j.
$$
We have a decomposition 
$$
\wti{M}_1 = \Ima(M_{1,\out}') \oplus V \oplus W
$$
into a direct sum of $H_1$-modules, where $\Ima(M_{1,\out}') \oplus V = \Ker(M_{1,\inn}')$.
(Here we used that $\Ima(M_{1,\out}')$, $\Ker(M_{1,\inn}')$ and 
$\Ima(M_{1,\inn}') \cong W$ are free $H_1$-modules. 
It follows also that $V$ is free.)
We have 
$$
V \cong \Ker(M_{1,\inn}')/\Ima(M_{1,\out}') \cong
\Ext_\Pi^1(M',E_i) \not= 0.
$$
(For the last isomorphism we used Lemma~\ref{lem:3.7}(iii).)
Using both decompositions 
$\wti{M}_1 = \Ima(M_{1,\out}') \oplus V \oplus W$ and
$M_1 = U_1 \oplus E_1$ we can
write $M_{1,\inn}\df \wti{M}_1 \to M_1$ as a matrix
$$
M_{1,\inn} = \left(\bbm 0&0&f_{13}\\0&f_{22}&f_{23}\ebm\right)\df 
\Ima(M_{1,\out}') \oplus V \oplus W \to U_1 \oplus E_1
$$
where the $f_{ij}$ are $H_1$-module homomorphisms, and
$M_{1,\inn}'\df \wti{M}_1 \to U_1$ is given by the matrix
$$
M_{1,\inn}' = \left(\bbm 0&0&f_{23}\ebm\right)\df 
\Ima(M_{1,\out}') \oplus V \oplus W \to U_1.
$$
Since $\fac_1(M') = 0$, 
we get that $f_{13}\df W \to U_1$ is an isomorphism. 

We now define a new $\Pi$-module $\ov{M}$
by replacing $f_{22}\df V \to E_1$ by an $H_1$-linear map $\ov{f}_{22}\df V \to E_1$
of maximal rank. 
Thus $\ov{f}_{22}$ is an epimorphism, since $V$ is non-zero and free.
It is clear that $\ov{M}$ is indeed a $\Pi$-module.
(Using that $M_{1,\out}|_{E_1} = 0$ and that $M_{i,\inn} \circ M_{i,\out} = 0$ for all $i$,
we get that $\ov{M}_{i,\inn} \circ \ov{M}_{i,\out} = 0$ for all $i$.) 
Since $f_{13}$ and $\ov{f}_{22}$ are both epimorphisms, we get that
$\ov{M}_{1,\inn}$ is an epimorphism.
This means that $\fac_1(\ov{M}) = 0$.

We have $\ov{M}/E_1 = M'$.
Since $M'$ is generic in $Z'$, we get that
$\ov{M}$ is also contained in $Z$.
(Here we used again Lemma~\ref{lusztig2}.)
This is a contradiction to $M$ being generic in $Z$, since
$\fac_1(M) \not= 0$ and $\fac_1(\ov{M}) = 0$.
Thus we got a contradiction to our assumption that $\fac_1(M)$ is not free.

So we proved that $\fac_i(M)$ is free for all $i$.
Dually one shows that $\sub_i(M)$ is free for all $i$.
Thus by induction, $Z$ is a crystal component.
\end{proof}

\begin{Cor}\label{cor:3.12}
$\Pi(\bM)^\cry$ is equidimensional of dimension 
$\dim H(\bM)$.
\end{Cor}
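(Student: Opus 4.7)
The plan is to combine Proposition~\ref{prop:strictly1} with the auxiliary claim, call it $(\star)$, that \emph{every crystal module lies in some maximal irreducible component of} $\Pi(\bM)$. Granting $(\star)$, the Zariski closure of $\Pi(\bM)^{\cry}$ in $\Pi(\bM)$ equals $\bigcup_{Z \in \Irr(\Pi(\bM))^{\mx}} Z$: one inclusion is $(\star)$, and the other follows since by Proposition~\ref{prop:strictly1} each maximal $Z$ contains a dense open subset of crystal modules. For the same reason, $Z \cap \Pi(\bM)^{\cry}$ is irreducible with closure $Z$ for every maximal $Z$, so the irreducible components of $\Pi(\bM)^{\cry}$ are precisely these intersections, each of dimension $\dim H(\bM)$.

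To prove $(\star)$ I would induct on the total rank of $\bM$, running the argument of Proposition~\ref{prop:strictly1} in reverse. Given a non-zero crystal module $M$, pick $k$ with $\vph_k^*(M) = r \ge 1$ using Lemma~\ref{lusztig0}(c); since $\fac_k(M)$ is free we have $\fac_k(M) \cong E_k^r$ and $M \in \Pi(\bM)^{k,(c_k^r)}$. Choose $\bU$ as in Section~\ref{subsec:bundlemain} with $\rk(\bU) = \rk(\bM) - r\alpha_k$. The submodule $K_k(M)$ naturally defines a point of $\Pi(\bU)^{k,0}$: for $j \ne k$ one has $e_j K_k(M) = M_j$, while the sequence
\[
0 \to e_k K_k(M) \to M_k \to E_k^r \to 0
\]
splits (since $H_k$ is self-injective and $E_k^r$ is free), so $e_k K_k(M)$ is free of the correct rank. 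By the recursive definition, $K_k(M)$ is itself a crystal module; by induction it lies in a maximal $W \in \Irr(\Pi(\bU))$. As $\Pi(\bU)^{k,0}$ is open in $\Pi(\bU)$ by Lemma~\ref{lusztig0}(b), $W \cap \Pi(\bU)^{k,0}$ is a dense open subset of $W$ and an irreducible component of $\Pi(\bU)^{k,0}$; the bijection of Lemma~\ref{lusztig2}(b) carries it to an irreducible component $Y$ of $\Pi(\bM)^{k,(c_k^r)}$ with $M \in Y$, and Lemma~\ref{lusztig2}(c) gives $\dim Y = \dim H(\bM)$. By Theorem~\ref{thm:dimbound} the closure of $Y$ in $\Pi(\bM)$ is then a maximal irreducible component containing $M$.

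The main obstacle will be the inductive step $(\star)$, and within it the clean identification of $K_k(M)$ as a point of $\Pi(\bU)^{k,0}$; once this is in place everything reduces to a direct application of the bundle construction from Section~\ref{sec:bundles}. A minor technical point to be recorded is that the irreducible components of the constructible set $\Pi(\bM)^{\cry}$ should be understood as the intersections of $\Pi(\bM)^{\cry}$ with the irreducible components of its Zariski closure, which makes the comparison in the first paragraph well-defined.
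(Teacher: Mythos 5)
Your proposal is correct, and it fills in a step the paper leaves unstated. The paper gives no proof of Corollary~\ref{cor:3.12}, evidently regarding it as immediate from Proposition~\ref{prop:strictly1}; but the proposition only identifies the crystal \emph{components} with the maximal ones, and by itself it does not rule out a stray crystal module sitting inside a non-maximal component and contributing a low-dimensional piece to $\Pi(\bM)^\cry$. Your claim $(\star)$ — that every crystal module lies in some maximal irreducible component — is exactly what is needed to close this gap, and your inductive proof of it, essentially running the (ii)$\implies$(i) direction of Proposition~\ref{prop:strictly1} at the level of individual modules rather than components, is sound: the identification of $K_k(M)$ with a point of $\Pi(\bU)^{k,0}$ works because $H_k$ is self-injective (so $0 \to e_kK_k(M) \to M_k \to E_k^r \to 0$ splits), and Theorem~\ref{thm:dimbound} then upgrades the $\dim H(\bM)$-dimensional irreducible subset obtained from Lemma~\ref{lusztig2} into containment in a maximal component. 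The bookkeeping in your final paragraph about what ``irreducible component of a constructible set'' means is appropriate. In short, your argument supplies the missing lemma the paper implicitly assumes and derives the corollary correctly from it.
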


\begin{Cor}\label{cor:strata}
For a partition $\bp \in \cP_{c_k}$ which is not of the form $\bp = (c_k^p)$
for some $p$, we have
$\dim \Pi(\bM)^{k,\bp} < \dim H(\bM)$ and
$\dim \Pi(\bM)_{k,\bp} < \dim H(\bM)$.
\end{Cor}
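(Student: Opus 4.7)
The plan is to argue by contradiction, using Theorem~\ref{thm:dimbound} together with Proposition~\ref{prop:strictly1} to force any top-dimensional stratum to consist generically of crystal modules, which then forces the partition to be of the excluded form. I will only describe the argument for $\Pi(\bM)^{k,\bp}$; the statement for $\Pi(\bM)_{k,\bp}$ follows by the dual reasoning (replacing $\fac_k$ by $\sub_k$ and using the dual of Proposition~\ref{prop:strictly1}).

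Assume for contradiction that $\bp \in \cP_{c_k}$ is not of the form $(c_k^p)$ and that $\dim \Pi(\bM)^{k,\bp} = \dim H(\bM)$. Choose an irreducible component $Z$ of the locally closed set $\Pi(\bM)^{k,\bp}$ with $\dim Z = \dim H(\bM)$, and let $\overline{Z}$ denote its closure in $\Pi(\bM)$. Then $\overline{Z}$ is irreducible of dimension $\dim H(\bM)$. By Theorem~\ref{thm:dimbound}, every irreducible component of $\Pi(\bM)$ has dimension at most $\dim H(\bM)$, so $\overline{Z}$ is itself an irreducible component of $\Pi(\bM)$, and is maximal in the sense of Section~\ref{sec:maxcomp}.

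Now I invoke Proposition~\ref{prop:strictly1}: since $\overline{Z}$ is maximal, it is a crystal component, so there is a dense open subset $U \subseteq \overline{Z}$ whose points are crystal modules. Because $Z$ is dense in $\overline{Z}$ and $U$ is open in $\overline{Z}$, the standard topological fact (open $\cap$ dense is dense in the open) gives $Z \cap U \neq \varnothing$. Pick any $M \in Z \cap U$. On the one hand, $M \in Z \subseteq \Pi(\bM)^{k,\bp}$ forces $\fac_k(M) \cong H_k^{\bp}$. On the other hand, $M$ is a crystal module, and by the definition of crystal module $\fac_k(M)$ is a free $H_k$-module, i.e.\ isomorphic to $H_k^p$ for some $p \ge 0$. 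Comparing, we must have $\bp = (c_k^p)$, contradicting our hypothesis on $\bp$.

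The most delicate point is really the transfer from the locally closed stratum $Z$ to its closure $\overline{Z}$: one must know that $\overline{Z}$ is not only irreducible of top dimension but actually a full irreducible component of $\Pi(\bM)$ (so that the crystal component criterion applies), and one must know that a generic crystal module of $\overline{Z}$ can be found inside $Z$ rather than only in boundary strata. Both points are guaranteed by Theorem~\ref{thm:dimbound} and by the density of $Z$ in $\overline{Z}$; once these are secured, the definition of crystal module closes the argument immediately.
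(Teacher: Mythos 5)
Your proof is correct and follows the route the paper clearly intends: pass from a top-dimensional irreducible component of the stratum $\Pi(\bM)^{k,\bp}$ to its closure, recognize via Theorem~\ref{thm:dimbound} that this closure must be a maximal component, invoke Proposition~\ref{prop:strictly1} to get a dense open set of crystal modules, intersect with the (open dense) stratum, and observe that a crystal module has $\fac_k$ free, forcing $\bp=(c_k^p)$. This is essentially the argument the paper leaves implicit when it lists the statement as a corollary of Proposition~\ref{prop:strictly1}; the only cosmetic remark is that no ``dual'' of Proposition~\ref{prop:strictly1} is needed for the $\Pi(\bM)_{k,\bp}$ case, since a crystal module already has $\sub_k$ free as well.
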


Examples of non-maximal irreducible components can be found in Section~\ref{sec:examples}.


\section{Geometric construction of crystal graphs}\label{sec:crystal}


This section follows very closely \cite{NT}, which on the other hand is
based on \cite{KS}.

\subsection{Kac-Moody algebras}
\label{subsec:recallkacmoody}
Let $C = (c_{ij}) \in M_n(\Z)$ be a symmetrizable generalized Cartan
matrix.
Recall that $I = \{ 1,\ldots,n \}$.

Let $\h$ be a $\C$-vector space of dimension $2n-\operatorname{rank}(C)$,
and let $\{ \alpha_1,\ldots,\alpha_n \} \subset \h^*$ and 
$\{ \alpha_1^\vee,\ldots,\alpha_n^\vee \} \subset \h$
be linearly independent subsets of the vector spaces $\h^*$ and $\h$,
respectively, such that
$$
\alpha_i(\alpha_j^\vee) = c_{ji}
$$
for all $i,j$.
(Here $\h^* = \Hom_\C(\h,\C)$ is the dual space of $\h$.)

Now $\g = (\g,[-,-])$ is the Lie algebra over $\C$
generated by $\h$ and the symbols $e_i$
and $f_i\ (i \in I)$ satisfying the following 
defining relations:
\begin{itemize}

\item[(i)]
$[h,h'] = 0$ for all $h,h' \in \h$;

\item[(ii)]
$[h,e_i] = \alpha_i(h)e_i$ and
$[h,f_i] = -\alpha_i(h)f_i$ for all $i$ and all $h \in \h$;

\item[(iii)]
$[e_i,f_j] = \delta_{ij}\alpha_i^\vee$ for all $i,j$;

\item[(iv)]
$(\ad(e_i)^{1-c_{ij}})(e_j) = 0$ for all $i \not= j$;

\item[(v)]
$(\ad (f_i)^{1-c_{ij}})(f_j) = 0$ for all $i \not= j$.
\end{itemize}
(For $x,y \in \g$ and $m \ge 1$
we set $\ad(x)(y) := \ad(x)^1(y) := [x,y]$
and
$\ad(x)^{m+1}(y) := \ad(x)^m([x,y])$.)
The Lie algebra $\g$ is the \emph{Kac-Moody algebra} associated with $C$.
As a general reference on Kac-Mody algebras, we refer to Kac's book \cite{Ka}.

Let $\n = \n(C)$ be the Lie subalgebra of $\g$ 
generated by $e_i\ (i \in I)$.
Then $U(\n)$ is the associative $\C$-algebra with generators $e_i\ (i \in I)$ subject to the 
relations
\[
 (\ad e_i)^{1-c_{ij}}(e_j) = 0
\]
for all $i \not= j$.
(Here we interpret $[x,y]$ as a commutator $xy-yx$.)

Let $\h^* = \h_1^* \oplus \h_2^*$ be a vector space
decomposition, where $\h_1^*$ is just the subspace with 
basis $\{ \alpha_1,\ldots,\alpha_n \}$, and $\h_2^*$ is any
direct complement of $\h_1^*$ in $\h^*$.
Let 
$$
\bil{-,-}\df \h^* \times \h^* \to \C
$$
be the standard bilinear form, defined by
$\bil{\alpha_i,\alpha_j} := \alpha_i(\alpha_j^\vee) = c_{ji}$,
$\bil{\alpha_i,x} := \bil{x,\alpha_i} := x(\alpha_i^\vee)$,
and $\bil{x,y} := 0$ for all $x,y \in \h_2^*$ and $i,j \in I$.
(Identifying the $\alpha_i$ with the standard basis of $\Z^n$, this
definition of $\bil{-,-}$ is compatible with the bilinear form defined in 
Section~\ref{subsec:DefPi}.)

Finally, let us fix a basis 
$\{\vpi_j\mid 1\le j \le 2n - \operatorname{rank}(C) \}$ of
$\mathfrak{h}^*$ such that
\[
\vpi_j(\alpha^\vee_i) = \delta_{ij},\qquad
(i \in I,\ 1\le j\le 2n-\operatorname{rank}(C)).  
\]
The $\vpi_j$ are the {\it fundamental weights}. 
Note that for $i \in I$ we have
$$
\alpha_i = \sum_{j \in I} c_{ji}\vpi_j. 
$$
We denote by 
$$
P := \{ \nu \in \h^* \mid \bil{\nu,\alpha_i} \in \Z \text{ for all }
i \in I \}
$$
the {\it integral weight lattice}, and we set
$$
P^+ := \{ \nu\in P \mid \bil{\nu,\alpha_i} \ge 0 \text{ for all } 
i \in I \}.
$$
The elements in $P^+$ are called {\it dominant integral weights}.
We have 
$$
P = \bigoplus_{j \in I} \Z \vpi_j \oplus
\bigoplus_{j=n+1}^{2n-\operatorname{rank}(C)} \C\vpi_j
\text{\;\;\; and \;\;\;}
P^+ = \bigoplus_{j \in I} \N \vpi_j \oplus
\bigoplus_{j=n+1}^{2n-\operatorname{rank}(C)} \C\vpi_j.
$$
For 
$$
\lambda = \sum_{j \in I} a_j \vpi_j +
\sum_{j=n+1}^{2n-\operatorname{rank}(C)} a_j \vpi_j
$$
in $P$,
we have 
$$
a_j = \bil{\lambda,\alpha_j}
$$
for $1 \le j \le n$.
Let
$$
R := \bigoplus_{i \in I} \Z\alpha_i
$$
be the \emph{root lattice},
and set
$$
R^+ := \bigoplus_{i \in I} \N\alpha_i.
$$

\subsection{Crystals}\label{subsec:2.2}
As before,
let $C$ be a symmetrizable generalized Cartan matrix with symmetrizer $D$,
and let $P$ be the associated integral weight lattice.

Following \cite[Section~7.2]{K1},
a \emph{crystal} is a tuple $(B,\wt,\tilde{e}_i,\tilde{f}_i,\vep_i,\vph_i)$ 
where $B$ is a set and 
$$
\wt\df B \to P,
\quad\quad\quad
\tilde{e}_i,\tilde{f}_i\df B \to B \cup \{ \varnothing \},
\quad\quad\quad
\varepsilon_i,\varphi_i\df B \to \Z
$$
with $i \in I$ 
are maps
such that for all $i \in I$ and all $b \in B$ the following hold:
\begin{itemize}

\item[(cr1)]
$\varphi_i(b) = \varepsilon_i(b) + \bil{\wt(b),\alpha_i}$;
￼

\item[(cr2)] 
$\vph_i(\tilde{e}_i(b)) = \vph_i(b)+1$,\qquad
$\vep_i(\tilde{e}_i(b)) = \vep_i(b)-1$,\qquad
$\wt(\tilde{e}_i(b)) = \wt(b)+\alpha_i$;

\item[(cr3)] 
For all $b,b' \in B$ the following are equivalent:
\begin{itemize}
\item[(a)]
$\tilde{f}_i(b) = b'$;
\item[(b)]
$\tilde{e}_i(b') =b$.
\end{itemize}
\end{itemize}

Kashiwara \cite{K1} also allows the values of $\varepsilon_i$ and $\varphi_i$ to be $-\infty$.
This assumption is not needed here.

A \emph{lowest weight crystal} is a crystal with a distinguished element $b_- \in B$ (the \emph{lowest weight element}) such that the following hold:
\begin{itemize}

\item[(cr4)]
For each $b \in B$ there exists a sequence $(i_1,\ldots,i_t)$ with $i_k \in I$
for all $1 \le k \le t$ such that
$$
b_- = \tilde{f}_{i_1} \cdots \tilde{f}_{i_t}(b).
$$

\item[(cr5)] 
For each $b \in B$ and $i \in I$ we have
$$ 
\varphi_i(b) = \max\{ m \mid \tilde{f}_i^m(b)  \not= \varnothing \}.
$$

\end{itemize}
For lowest weight crystals, the functions
$\wt$, $\tilde{f}_i$, $\vep_i$ and $\vph_i$
are determined by the $\tilde{e}_i$ and the weight 
of $b_-$.
Here we are mainly interested in the \emph{infinity crystal} 
$B(-\infty)$ of $U_q(\n)$. 
Kashiwara and Saito \cite[Proposition~3.2.3]{KS} gave a criterion
when a lowest weight crystal is isomorphic to the crystal
$B(-\infty)$.
The following is a reformulation of this criterion due to Tingley and Webster
\cite[Proposition~1.4]{TW}.
We use the criterion as a definition of $B(-\infty)$.

\begin{Prop}\label{prop:2.4}
Fix a set $B$ with operators 
$$
\tilde{e}_i,\tilde{f}_i,\tilde{e}_i^*,\tilde{f}_i^*\df B \to B \cup \{ \varnothing \}.
$$ 
Assume $(B,\tilde{e}_i,\tilde{f}_i)$ and $(B,\tilde{e}_i^*,\tilde{f}_i^*)$ are both lowest weight crystals with the same lowest weight element $b_-$, 
where the other data is determined by setting $\wt(b_-) = 0$. 
Assume further that for all $i,j \in I$ and all $b \in B$
the following hold:
\begin{itemize}

\item[(i)]
$\tilde{e}_i(b)$, $\tilde{e}_i^*(b) \not= \varnothing$. 

\item[(ii)]
If $i \not= j$, then
$\tilde{e}_i^*\tilde{e}_j(b) = \tilde{e}_j\tilde{e}_i^*(b)$.

\item[(iii)]
For all $b \in B$ we have $\varphi_i(b) + \varphi_i^*(b) - 
\bil{\wt(b),\alpha_i} \ge 0$.

\item[(iv)]
If $\varphi_i(b) + \varphi_i^*(b) - \bil{\wt(b),\alpha_i} = 0$, 
then $\tilde{e}_i(b) = \tilde{e}_i^*(b)$.

\item[(v)]
If $\varphi_i(b) + \varphi_i^*(b) - \bil{\wt(b),\alpha_i} \ge 1$, 
then $\varphi_i(\tilde{e}_i^*(b)) = \varphi_i(b)$ and 
$\varphi_i^*(\tilde{e}_i(b)) = \varphi_i^*(b)$.

\item[(vi)]
If $\varphi_i(b) + \varphi_i^*(b) - \bil{\wt(b),\alpha_i} \ge 2$, 
then $\tilde{e}_i\tilde{e}_i^*(b) = \tilde{e}_i^*\tilde{e}_i(b)$.

\end{itemize}
Then $(B,\tilde{e}_i,\tilde{f}_i) \cong (B,\tilde{e}_i^*,\tilde{f}_i^*) \cong B(-\infty)$.
\end{Prop}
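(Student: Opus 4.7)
The plan is to recognize this as a reformulation of the Kashiwara--Saito characterization of $B(-\infty)$ and reduce it to their uniqueness theorem \cite[Proposition~3.2.3]{KS}. First I would verify that $B(-\infty)$ itself, equipped with its standard bi-crystal structure, does satisfy conditions (i)--(vi); this makes the criterion non-vacuous and exhibits $B(-\infty)$ as a distinguished candidate object.

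Next, I would build a candidate isomorphism $\Phi\df B \to B(-\infty)$. Using condition (cr4), every element $b \in B$ admits a presentation $b_- = \tilde{f}_{i_1}\cdots \tilde{f}_{i_t}(b)$. I send $b_-$ to the lowest weight element of $B(-\infty)$ and then declare
\[
\Phi(b) := \tilde{e}_{i_t}\cdots \tilde{e}_{i_1}(\Phi(b_-)).
\]
Condition (i) (applied on both sides) ensures that each $\tilde{e}_{i_k}$ is defined on the right-hand side, so the formula makes sense as soon as independence of the chosen sequence is established. The data $\wt,\vep_i,\vph_i$ can then be read off from $\Phi$, since for a lowest weight crystal all of these are determined by $\tilde{e}_i$, $\tilde{f}_i$ and $\wt(b_-)$ via (cr1)--(cr3), (cr5).

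The heart of the argument, and where the main obstacle lies, is well-definedness of $\Phi$. By a zig-zag lemma on words in the $\tilde{f}_i$, this reduces to showing that any two $\tilde{f}$-factorizations of $b$ over $b_-$ can be connected by a chain of local moves that are valid both in $B$ and in $B(-\infty)$. For $i \neq j$ this is precisely condition (ii). For $i = j$ one uses (iii)--(vi) as a trichotomy controlled by the non-negative integer $\vph_i(b) + \vph_i^*(b) - \bil{\wt(b),\alpha_i}$: the equality case (iv) collapses $\tilde{e}_i$ and $\tilde{e}_i^*$, the strict case (v) preserves the $\vph$-values under the opposite operator, and the strongly strict case (vi) gives the commutation $\tilde{e}_i\tilde{e}_i^* = \tilde{e}_i^*\tilde{e}_i$. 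These are exactly the bi-crystal moves used in \cite{KS} to pin down $B(-\infty)$.

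Finally, bijectivity follows by running the same construction with the roles of $B$ and $B(-\infty)$ reversed (using (cr4) on $B(-\infty)$) to produce a two-sided inverse $\Phi^{-1}$; compatibility with $\wt,\vep_i,\vph_i$ is automatic from (cr1)--(cr2) and the matching of lowest weight elements. The second isomorphism, $(B,\tilde{e}_i^*,\tilde{f}_i^*) \cong B(-\infty)$, follows from the full symmetry of hypotheses (i)--(vi) in the exchange $(\tilde{e}_i,\tilde{f}_i) \leftrightarrow (\tilde{e}_i^*,\tilde{f}_i^*)$. I expect no difficulty beyond the well-definedness step just sketched; the rest is bookkeeping along paths in the crystal.
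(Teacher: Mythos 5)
The paper does not prove Proposition~\ref{prop:2.4} at all: it is cited verbatim as a reformulation, due to Tingley and Webster \cite[Proposition~1.4]{TW}, of Kashiwara and Saito's uniqueness criterion \cite[Proposition~3.2.3]{KS}, and the paper explicitly adopts it as a \emph{definition} of $B(-\infty)$. So there is nothing in the paper to compare your argument against; what matters is whether your sketch would stand as a proof of the cited result.

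It would not, and the gap sits exactly where you flag the ``heart of the argument.'' Well-definedness of your map $\Phi$ reduces to: if $\tilde{e}_{i_t}\cdots\tilde{e}_{i_1}(b_-) = \tilde{e}_{j_s}\cdots\tilde{e}_{j_1}(b_-)$ in $B$, then the same identity holds in $B(-\infty)$. That is a statement purely about the unstarred operators $\tilde{e}_i$. But none of (ii)--(vi) gives a relation among the $\tilde{e}_i$ alone: (ii) commutes $\tilde{e}_j$ with $\tilde{e}_i^*$, and (iv)--(vi) compare $\tilde{e}_i$ with $\tilde{e}_i^*$. There is simply no ``local move'' among $\tilde{f}$-words provided by the hypotheses, so the asserted zig-zag reduction does not get off the ground. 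This is not a cosmetic issue: the whole point of the $*$-bi-crystal conditions is to give constraints that cannot be formulated in one family of operators, and the actual Kashiwara--Saito/Tingley--Webster argument proceeds quite differently. Roughly, one uses $\varphi_i^*$ and $\tilde{f}_i^*$ to construct, for each $i$, a strict crystal embedding $\Psi_i\df B \to B \otimes B_i$ into a tensor product with an elementary crystal $B_i$; conditions (i)--(vi) are precisely what guarantees $\Psi_i$ is a strict morphism with the right image, and $B \cong B(-\infty)$ then follows from Kashiwara's recursive characterization of $B(-\infty)$ via such embeddings. Your step (1), verifying that $B(-\infty)$ itself satisfies (i)--(vi), is also not something you can do ``from scratch'' -- it is Saito's theorem about the $*$-involution on $B(-\infty)$, which is essentially as deep as the proposition itself. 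If you want a proof rather than a citation, you should reproduce the embedding argument, not a word-rewriting argument.
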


\subsection{Geometric crystal operators}\label{subsec3.5a}
As before, let
$$
\cB = \bigsqcup_{\br \in \N^n} \Irr(\Pi(\br))^\mx.
$$
We set 
$$
\cB_\br := \Irr(\Pi(\br))^\mx.
$$
We know that $Z \cap \Pi(\br)^\cry$ is dense in $Z$
for each $Z \in \cB_\br$.
The operators $e_{i,r},f_{i,r},e_{i,r}^*,f_{i,r}^*$ defined in Section~\ref{subsec:3.3}
yield bijections 
$$
f_{i,r}\df \bigsqcup_{\br \in \N^n} \Irr(\Pi(\br)_{i,p})^\mx 
\to \bigsqcup_{\br \in \N^n} \Irr(\Pi(\br)_{i,q})^\mx
$$
and
$$
f_{i,r}^*\df \bigsqcup_{\br \in \N^n} \Irr(\Pi(\br)^{i,p})^\mx 
\to \bigsqcup_{\br \in \N^n} \Irr(\Pi(\br)^{i,q})^\mx. 
$$
where $r := p-q \ge 1$.
For $Z \in \cB$ we set
$$
\tilde{f}_i(Z) := 
\begin{cases}
\ov{f_{i,1}(Z)} & \text{if $\vph_i(Z) \ge 1$},
\\
\varnothing & \text{otherwise},
\end{cases}
\text{\;\;\;\; and \;\;\;\;}
\tilde{f}_i^*(Z) := 
\begin{cases}
\ov{f_{i,1}^*(Z)} & \text{if $\vph_i^*(Z) \ge 1$},
\\
\varnothing & \text{otherwise}.
\end{cases}
$$
Similarly, we have
bijections
$$
e_{i,r}\df \bigsqcup_{\br \in \N^n} \Irr(\Pi(\br)_{i,q})^\mx 
\to \bigsqcup_{\br \in \N^n} \Irr(\Pi(\br)_{i,p})^\mx
$$
and
$$
e_{i,r}^*\df \bigsqcup_{\br \in \N^n} \Irr(\Pi(\br)^{i,q})^\mx 
\to \bigsqcup_{\br \in \N^n} \Irr(\Pi(\br)^{i,p})^\mx 
$$
where $r := p-q \ge 1$.
For $Z \in \cB$ we set
$$
\tilde{e}_i(Z) := \ov{e_{i,1}(Z)} 
\text{\;\;\; and \;\;\;}
\tilde{e}_i^*(Z) := \ov{e_{i,1}^*(Z)}.
$$
Thus, we defined maps
$$
\tilde{f}_i,\tilde{f}_i^*\df \cB \to \cB \cup \{ \varnothing \}
\text{\;\;\; and \;\;\;}
\tilde{e}_i,\tilde{e}_i^*\df \cB \to \cB.
$$

Note that our definition of the crystal operators is slightly different from
the one used in \cite{KS}, see also \cite{NT}.
The reason is that we are working with a refined version of Lusztig's
bundle construction, see our discussion in Section~\ref{subsec:comparison}.

For $Z \in \Irr(\Pi(\br))^\mx$ define 
\begin{align*}
\wt(Z) &:= \br,
\\
\vph_i(Z) &:= \min \{ \vph_i(M) \mid M \in Z \},
\\
\vep_i(Z) &:= \vph_i(Z) - \bil{\wt(Z),\alpha_i},
\\
\vph_i^*(Z) &:= \min \{ \vph_i^*(M) \mid M \in Z \}, 
\\
\vep_i^*(Z) &:= \vph_i^*(Z) - \bil{\wt(Z),\alpha_i}.
\end{align*}
(In the definition of $\wt(Z)$, we identify the rank vector 
$\br = (r_1,\ldots,r_n)$ with $r_1\alpha_1 + \cdots + r_n\alpha_n 
\in R^+ \subset P$.)

\subsection{The $*$-operator}\label{subsec3.5b}
For a matrix $A$ let ${^t}A$ denote its transpose.
Let $\Pi$ and $\cB$ be defined as before.
For a representation $M = (M(\vep_i),M(\alpha_{ij}^{(g)})) \in \nil_E(\Pi,d)$ let
$$
S(M) := (S(M(\vep_i)), S(M(\alpha_{ij}^{g}))) \in \nil_E(\Pi,d)
$$ 
where
$$
S(M(\vep_i)) := {^t}M(\vep_i)
\text{\;\;\; and \;\;\;}
S(M(\alpha_{ij}^{(g)})) := {^t}M(\alpha_{ji}^{(g)}).
$$
For each dimension vector $d$, we get an automorphism $S_d$ of the variety $\nil_E(\Pi,d)$ defined by $S_d(M) := S(M)$.
This construction yields an automorphism $S_\br$ of $\Pi(\br)$ for each rank vector $\br$.
The automorphism $S_\br$ induces a permutation 
$$
*_\br\df \cB_\br \to \cB_\br.
$$
This yields a permutation 
$$
* \df \cB \to \cB.
$$
For all $i \in I$
we get
$$
*\tilde{e}_i* = \tilde{e}_i^*,\qquad *\tilde{e}_i^** = \tilde{e}_i,\qquad *\tilde{f}_i* = \tilde{f}_i^*, \qquad *\tilde{f}_i^** = \tilde{f}_i.
$$

\subsection{Examples}
Let $\Pi = \Pi(C,D)$ with
$$
C = \left(\bbm 2&-1\\-2&2\ebm\right)
\text{\;\;\; and \;\;\;}
D = \left(\bbm 2&0\\0&1\ebm\right).
$$
Thus $C$ is of Dynkin type $B_2$ and $D$ is minimal.
Let $Z \in \Irr(\Pi((2,1))^\mx$ 
be the maximal irreducible component with  generic $\Pi$-module
$$
M= 
{\bsm 1\\1\esm \oplus \bsm 1\\1\\2\esm}.
$$
(Each number stands for a basis vector of $M$, with $i$
belonging to $e_iM$. At the same time, $i$ represents a composition
factor isomorphic to $S_i$.
The module $M$ is a direct sum of two serial modules, whose composition
series look as indicated.)
The following picture illustrates how the various operators
$\tilde{e}_k,\tilde{f}_k,\tilde{e}_k^*,\tilde{f}_k^*$ 
act on $Z$.
$$
\xymatrix{
& *+[F]{\bsm 1\\1\\2\esm} \ar@<1ex>[d]^{\tilde{e}_1^*} &&
*+[F]{\bsm 1\\1\\2\esm} \ar@<1ex>[dr]^{\tilde{e}_1}
&&
*+[F]{\bsm 1\\1\esm \oplus \bsm 1\\1\esm} \ar@<1ex>[dl(0.7)]^{\tilde{e}_2}
\\
& *+[F]{\bsm 1\\1\esm \oplus \bsm 1\\1\\2\esm} \ar@<1ex>[dl(0.6)]^{\tilde{e}_1^*}
\ar@<1ex>[dr]^{\tilde{e}_2^*}\ar@<1ex>[u]^{\tilde{f}_1^*} &&
& *+[F]{\bsm 1\\1\esm \oplus \bsm 1\\1\\2\esm} \ar@<1ex>[dl(0.6)]^{\tilde{e}_1}
\ar@<1ex>[dr(0.8)]^{\tilde{e}_2}\ar@<1ex>[ul(0.8)]^{\tilde{f}_1}\ar@<1ex>[ur(0.7)]^{\tilde{f}_2}
\\
*+[F]{\bsm 1\\1\esm \oplus \bsm 1\\1\esm \oplus \bsm 1\\1\\2\esm} \ar@<1ex>[ur]^{\tilde{f}_1^*} &&
*+[F]{\bsm 1\\1\esm \oplus \bsm 2\\1\\1\\2\esm} \ar@<1ex>[ul]^{\tilde{f}_2^*} &
*+[F]{\bsm 1\\1\esm \oplus \bsm 1\\1\esm \oplus \bsm 1\\1\\2\esm} \ar@<1ex>[ur]^{\tilde{f}_1} &&
*+[F]{\bsm 1\\1\\2\esm \oplus \bsm 1\\1\\2\esm} \ar@<1ex>[ul]^{\tilde{f}_2}
}
$$
We also have $\tilde{f}_2^*(Z) = \varnothing$.
We have
\begin{align*}
\vph_1^*(Z) &= 2, & \vph_2^*(Z) &= 0,
&
 \vph_1(Z) &= 1, & \vph_2(Z) &= 1,
\\
\wt(Z) &= (2,1), &
\bil{\wt(Z),\alpha_1} &= 3, &  \bil{\wt(Z),\alpha_2} &= -2.
\end{align*}
Thus we get
\begin{align*}
\dim \Ext_\Pi^1(M,E_1) &= c_1(\vph_1(Z) + \vph_1^*(Z) - 
\bil{\wt(Z),\alpha_1}) = 2(1+2-3) = 0,
\\
\dim \Ext_\Pi^1(M,E_2) &= c_2(\vph_2(Z) + \vph_2^*(Z) - 
\bil{\wt(Z),\alpha_2}) = 1(1+0-(-2)) = 3.
\end{align*}
Let $Z' \in \Irr(\Pi((2,2))^\mx$ be the maximal 
irreducible component with generic $\Pi$-module
$$
M' = 
{\bsm 1\\1\esm \oplus \bsm 2\\1\\1\\2\esm}.
$$
We get
\begin{align*}
\dim \Ext_\Pi^1(M',E_1) &= c_1(\vph_1(Z') + \vph_1^*(Z') - 
\bil{\wt(Z'),\alpha_1}) = 2(1+1-2) = 0,
\\
\dim \Ext_\Pi^1(M',E_2) &= c_2(\vph_2(Z') + \vph_2^*(Z') - 
\bil{\wt(Z'),\alpha_2}) = 1(1+1-0) = 2.
\end{align*}
%

\subsection{Realization of $B(-\infty)$}\label{subsec3.5c}
The formula in the following lemma is an analogue of the formula
in \cite[Lemma~3.16]{NT}.

\begin{Lem}\label{lem:3.14}
Let $Z \in \cB$, and let $M$ be generic in $Z$.
Then we have
$$
\dim \Ext_\Pi^1(M,E_i) = c_i(\vph_i(Z) + \vph_i^*(Z) - \bil{\wt(Z),\alpha_i}).
$$
\end{Lem}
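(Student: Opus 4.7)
The plan is to reduce the identity to the Hom--Ext formula already established in Corollary~\ref{cor:3.8}, after translating the dimensions of $\sub_i(M)$ and $\fac_i(M)$ into the crystal invariants $\vph_i(Z)$ and $\vph_i^*(Z)$. The translation is available precisely because $Z$ is maximal.

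More concretely, the first step is to invoke Proposition~\ref{prop:strictly1}: since $Z \in \cB$ is maximal, $Z$ contains a dense open subset of crystal modules. By the definition of a crystal module in Section~\ref{subsec:crystalmodules}, for a generic $M \in Z$ and each $i \in I$ both $\sub_i(M)$ and $\fac_i(M)$ are free $H_i$-modules. Hence
\[
\dim \sub_i(M) = c_i \vph_i(M) \quad\text{and}\quad \dim \fac_i(M) = c_i \vph_i^*(M).
\]
Next, I use upper semicontinuity of the functions $M \mapsto \dim \sub_i(M)$ and $M \mapsto \dim \fac_i(M)$ on $\Pi(\bM)$ (as recalled in Section~\ref{subsec:strata}), which ensures that $\vph_i(M)$ and $\vph_i^*(M)$ attain their generic values $\vph_i(Z)$ and $\vph_i^*(Z)$ on a dense open subset of $Z$. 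Shrinking the generic locus if necessary, I assume $M$ realizes both generic values simultaneously.

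For the final step I combine Lemma~\ref{lem:3.7a}(i), which gives $\dim \Ext_\Pi^1(M,E_i) = \dim \Ext_\Pi^1(E_i,M)$ since $M$ and $E_i$ are locally free, with Corollary~\ref{cor:3.8}, which yields
\[
\dim \Ext_\Pi^1(E_i,M) = \dim \Hom_\Pi(E_i,M) + \dim \Hom_\Pi(M,E_i) - c_i \bil{\rk(M),\alpha_i}.
\]
Substituting $\dim \Hom_\Pi(E_i,M) = \dim \sub_i(M) = c_i \vph_i(Z)$ and $\dim \Hom_\Pi(M,E_i) = \dim \fac_i(M) = c_i \vph_i^*(Z)$ via Lemma~\ref{lem:3.7}(i),(ii), and noting $\rk(M) = \br = \wt(Z)$, the required identity
\[
\dim \Ext_\Pi^1(M,E_i) = c_i \bigl(\vph_i(Z) + \vph_i^*(Z) - \bil{\wt(Z),\alpha_i}\bigr)
\]
falls out after factoring $c_i$.

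No serious obstacle is expected; the only point requiring care is that Proposition~\ref{prop:strictly1} is genuinely needed here, because without the crystal property one only has $\dim \sub_i(M) \ge c_i \vph_i(M)$ in general, and the identities $\dim \sub_i(M) = c_i \vph_i(Z)$ and $\dim \fac_i(M) = c_i \vph_i^*(Z)$ for generic $M$ could otherwise fail. The maximality hypothesis on $Z$ enters exactly at this point.
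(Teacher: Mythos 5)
Your proposal is correct and follows essentially the same route as the paper's one-line proof, which simply cites Corollary~\ref{cor:3.8}, Lemma~\ref{lem:3.7}, and the definitions of $\vph_i(Z)$ and $\vph_i^*(Z)$. You helpfully make explicit the two inputs the paper leaves implicit: Proposition~\ref{prop:strictly1} (so that a generic $M$ is a crystal module, hence $\sub_i(M)$ and $\fac_i(M)$ are free and $\dim\sub_i(M) = c_i\vph_i(Z)$, $\dim\fac_i(M) = c_i\vph_i^*(Z)$), and the Ext-symmetry of Lemma~\ref{lem:3.7a}(i) needed to pass from $\Ext^1_\Pi(M,E_i)$ to the $\Ext^1_\Pi(E_i,M)$ appearing in Corollary~\ref{cor:3.8}.
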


\begin{proof}
This follows from Corollary~\ref{cor:3.8}, Lemma~\ref{lem:3.7} and the definitions of 
$\vph_i(Z)$ and $\vph_i^*(Z)$.
\end{proof}

The next lemma is an analogue of \cite[Proposition~3.17]{NT}.

\begin{Lem}\label{lem:3.15}
For $Z \in \cB$ and $i,j \in I$ the following hold:
\begin{itemize}

\item[(i)]
$\tilde{e}_i(Z)$, $\tilde{e}_i^*(Z) \not= \varnothing$. 

\item[(ii)]
If $i \not= j$, then
$\tilde{e}_i^*\tilde{e}_j(Z) = \tilde{e}_j\tilde{e}_i^*(Z)$.

\item[(iii)]
For all $Z \in \cB$ we have $\varphi_i(Z) + \varphi_i^*(Z) - 
\bil{\wt(Z),\alpha_i} \ge 0$.

\item[(iv)]
If $\varphi_i(Z) + \varphi_i^*(Z) - \bil{\wt(Z),\alpha_i} = 0$, 
then $\tilde{e}_i(Z) = \tilde{e}_i^*(Z)$.

\item[(v)]
If $\varphi_i(Z) + \varphi_i^*(Z) - \bil{\wt(Z),\alpha_i} \ge 1$, 
then $\varphi_i(\tilde{e}_i^*(Z)) = \varphi_i(Z)$ and 
$\varphi_i^*(\tilde{e}_i(Z)) = \varphi_i^*(Z)$.

\item[(vi)]
If $\varphi_i(Z) + \varphi_i^*(Z) - \bil{\wt(Z),\alpha_i} \ge 2$, 
then $\tilde{e}_i\tilde{e}_i^*(Z) = \tilde{e}_i^*\tilde{e}_i(Z)$.

\end{itemize}
\end{Lem}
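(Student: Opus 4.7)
The plan is to verify the six axioms of Proposition~\ref{prop:2.4} in turn, following the blueprint of Nandakumar--Tingley~\cite[Proposition~3.17]{NT} and Kashiwara--Saito~\cite{KS}, but executed in our symmetrizable setting via the species model of Section~\ref{subsec:species} and the refined bundle construction of Section~\ref{subsec:bundlemain}.

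Parts (i) and (iii) I expect to be immediate. For (i), the operators $e_{i,1}$ and $e_{i,1}^*$ of Section~\ref{subsec:3.3} are honest bijections between non-empty sets of maximal components, so $\tilde{e}_i(Z), \tilde{e}_i^*(Z) \ne \varnothing$ for every $Z \in \cB$. For (iii), Lemma~\ref{lem:3.14} yields
$$
c_i\bigl(\vph_i(Z) + \vph_i^*(Z) - \bil{\wt(Z),\alpha_i}\bigr) = \dim \Ext_\Pi^1(M,E_i) \ge 0
$$
for generic $M \in Z$, and $c_i \ge 1$.

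For part (iv), the vanishing $\dim \Ext_\Pi^1(M,E_i)=0$ forces every short exact sequence $0 \to E_i \to M' \to M \to 0$ and $0 \to M \to M' \to E_i \to 0$ to split. Hence generic modules in both $\tilde{e}_i(Z)$ and $\tilde{e}_i^*(Z)$ are isomorphic to $M \oplus E_i$, and by Lemma~\ref{lusztig2} (combined with its dual) the two components coincide. The bulk of the work is in parts (ii), (v), (vi), which I would attack by studying joint strata such as $\Pi(\bM)^{i,\bp} \cap \Pi(\bM)_{j,\bq}$ and comparing the two bundle constructions of Section~\ref{subsec:bundlemain} (applied on top and on socle). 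For (ii) with $i \ne j$, Lemma~\ref{lem:strictly0} implies that on a generic crystal module $\fac_i$ and $\sub_j$ interact trivially; consequently the bundle maps commute fibrewise and deliver the same component. For (v), the hypothesis $\dim \Ext_\Pi^1(M,E_i) \ge c_i$ supplies enough non-split extensions so that a generic $M'$ in $\tilde{e}_i^*(Z)$ inherits the $\sub_i$-structure of $M$ (dually for $\vph_i^*$); here one chooses the decomposition $\wti{M}_i = \Ima(M_{i,\out}) \oplus V \oplus W$ used in the proof of Proposition~\ref{prop:strictly1} coherently along a dense open subset.

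The main obstacle I anticipate is (vi). When $\dim \Ext_\Pi^1(M,E_i) \ge 2c_i$, one must produce a generic module in $\tilde{e}_i\tilde{e}_i^*(Z)$ in which an $E_i$-summand appears simultaneously in top and socle, and then match the iterated bundle constructions on both sides. This requires careful book-keeping of ranks, partitions, and generic fibres through the refined strata $\Pi(\bM)^{i,\bp}_{i,\bq}$, and is where the refinement of Lusztig's construction in Section~\ref{subsec:comparison} plays a decisive role. The symmetry of the species bundle construction in top and socle is what ultimately makes the identification $\tilde{e}_i\tilde{e}_i^*(Z) = \tilde{e}_i^*\tilde{e}_i(Z)$ possible, but the precise argument must track the $H_i$-module structure of $\wti{M}_i$ through both compositions and invoke Lemma~\ref{lusztig2}(c) to compare dimensions.
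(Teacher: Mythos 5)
Your treatment of (i), (iii) and (iv) matches the paper: (i) from the bijections of Lemma~\ref{lusztig2}, (iii) from Lemma~\ref{lem:3.14} and $c_i \ge 1$, and (iv) from $\Ext_\Pi^1(M,E_i)=\Ext_\Pi^1(E_i,M)=0$ forcing both $\tilde{e}_i(Z)$ and $\tilde{e}_i^*(Z)$ to equal $\ov{Z \oplus \cO(E_i)}$. For (ii) your intuition (use Lemma~\ref{lem:strictly0}) is right, but the paper's mechanism is cleaner than ``comparing bundle maps fibrewise'': it sets $Z_1 = \tilde{e}_i^*\tilde{e}_j(Z)$, $Z_2 = \tilde{e}_j\tilde{e}_i^*(Z)$, uses Lemma~\ref{lem:strictly0}(ii) to deduce that $\tilde{f}_i^*\tilde{f}_j(Z_k) = \tilde{f}_j\tilde{f}_i^*(Z_k) = Z$ for $k=1,2$, and concludes $Z_1 = Z_2$ because $\tilde{f}_p\tilde{e}_p$ and $\tilde{f}_p^*\tilde{e}_p^*$ are the identity on $\cB$. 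Trying to match the two bundle constructions directly on the mixed strata $\Pi(\bM)^{i,\bp}_{j,\bq}$ would be considerably more painful.

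Where your proposal has genuine gaps is (v) and (vi). For (v), invoking ``a coherent choice of decomposition of $\wti{M}_i$ along a dense open subset'' does not by itself show $\varphi_i^*(\tilde{e}_i(Z)) = \varphi_i^*(Z)$. The actual argument is a short exact sequence computation: take a non-split $0 \to E_i \to M' \to M \to 0$ with $M'$ generic in $\tilde{e}_i(Z)$, apply $\Hom_\Pi(-,E_i)$ to obtain $\dim\fac_i(M') - \dim\fac_i(M) < \dim E_i = c_i$, and then use that both $\fac_i(M)$ and $\fac_i(M')$ are free $H_i$-modules (because generic modules in maximal components are crystal modules, Proposition~\ref{prop:strictly1}); a strict inequality between two multiples of $c_i$ that is less than $c_i$ forces equality of ranks. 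This step is the crux and your sketch omits it.

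For (vi), you correctly identify it as the hard part but your sketch (``produce a generic module in which an $E_i$-summand appears simultaneously in top and socle'') points in the wrong direction: the point is exactly the opposite, namely that $E_i$ is \emph{not} a direct summand of the relevant generic modules. The paper reduces (vi) to showing that $E_i$ is not a summand of a generic $M''$ in $\tilde{e}_i^*\tilde{e}_i(Z)$, using the chain: $E_i$ is not a summand of $M$ (else $\Ext_\Pi^1(M,E_i)=0$, contradicting the hypothesis), hence $M'$ generic in $\tilde{e}_i(Z)$ sits in a non-split $0 \to E_i \to M' \to M \to 0$ and does not contain $E_i$ as a summand either; applying $\Hom_\Pi(-,E_i)$ and $\Ext_\Pi^1(E_i,E_i)=0$ gives $\dim\Ext_\Pi^1(M',E_i) \ge \dim\Ext_\Pi^1(E_i,M) - c_i > 0$ (this is where $\ge 2$ enters), so the same reasoning excludes $E_i$ from $M''$. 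Then Lemma~\ref{lem:strictly0}(i) says the canonical map $\sub_i(M'') \to \fac_i(M'')$ vanishes, and one finishes exactly as in (ii). No refined book-keeping of partitions or the $H_i$-module structure of $\wti{M}_i$ is needed once this chain of non-split extensions is in hand.
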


\begin{proof}
Throughout, let $Z \in \cB$, and let $M \in Z$ be generic.
In particular, we assume that the maps $\vph_i$ and $\vph_i^*$
take minimal values on $M$.

(i):
This follows from the definition of $\tilde{e}_i$ and $\tilde{e}_i^*$ combined with
Lemma~\ref{lusztig2}.

(ii):
Let
$Z_1 := \tilde{e}_i^*\tilde{e}_j(Z)$ and $Z_2 := \tilde{e}_j\tilde{e}_i^*(Z)$.
Since $i \not= j$, the canonical homomorphisms
$\sub_j(Z_k) \to \fac_i(Z_k)$ with $k=1,2$
are both zero.
This implies 
$$
\tilde{f}_i^*\tilde{f}_j(Z_k) = \tilde{f}_j\tilde{f}_i^*(Z_k) = Z
$$
for $k=1,2$.
Here we used Lemma~\ref{lem:strictly0}(ii).
Since $\tilde{f}_p\tilde{e}_p = 1_{\cB}$ and
$\tilde{f}_p^*\tilde{e}_p^* = 1_{\cB}$ for all $p \in I$ we get that
$Z_1 = Z_2$.

(iii):
This follows directly from Lemma~\ref{lem:3.14}.

(iv):
Assume that $\varphi_i(Z) + \varphi_i^*(Z) - \bil{\wt(Z),\alpha_i} = 0$.
Then
Lemma~\ref{lem:3.14} yields that 
$$
\Ext_\Pi^1(M,E_i) = \Ext_\Pi^1(E_i,M) = 0.
$$
This implies that 
$$
\tilde{e}_i(Z) = \tilde{e}_i^*(Z) = \ov{Z \oplus \cO(E_i)}.
$$
(Here we used the notion of direct sums of irreducible components from
\cite{CBS}.)

(v):
Assume that $\varphi_i(Z) + \varphi_i^*(Z) - \bil{\wt(Z),\alpha_i} \ge 1$.
Then
Lemma~\ref{lem:3.14} implies that 
$\dim \Ext_\Pi^1(M,E_i) > 0$.
Let $Z' := \tilde{e}_i(Z)$.
There is a short exact sequence
$$
0 \to E_i \to M' \to M \to 0
$$
with $M'$ generic in $Z'$.
This sequence is non-split, since $\Ext_\Pi^1(M,E_i) \not= 0$.
Applying $\Hom_\Pi(-,E_i)$ we get
$$
\dim \fac_i(M') - \dim \fac_i(M) < \dim(E_i).
$$
Since both $\fac_i(M')$ and $\fac_i(M)$ are free (using that $M$ and $M'$ are crystal modules), this inequality implies that
$\fac_i(M') \cong \fac_i(M)$ and therefore $\fac_i(Z') \cong \fac_i(Z)$.
This implies 
$$
\vph_i^*(\tilde{e}_i(Z)) = \vph_i^*(Z). 
$$
The other equality in (ii) is proved dually, working with
$Z' = \tilde{e}_i^*(Z)$ instead of $Z' = \tilde{e}_i(Z)$.

(vi):
Assume that $\varphi_i(Z) + \varphi_i^*(Z) - \bil{\wt(Z),\alpha_i} \ge 2$.
Consider a generic $M'$ in $\tilde{e}_i(Z)$ and a generic $M''$ in 
$\tilde{e}_i^*\tilde{e}_i(Z)$. 
We claim that the canonical homomorphism from 
$f''\df \sub_i(M'') \to \fac_i(M'')$ is trivial. 
By Lemma~\ref{lem:strictly0}(i)
it is enough to show that
$E_i$ is not a direct summand of $M''$. 
First, note that $E_i$ cannot be a summand of $M$.
Namely, if $M = E_i \oplus N$, then, since $M$ is generic, this would imply $\Ext_\Pi^1(M,E_i) = 0$, 
which is false by Lemma~\ref{lem:3.14}. 
Consequently, since $\Ext_\Pi^1(E_i,M) > 0$, a generic $M' \in \tilde{e}_i(Z)$ also doesn't contain $E_i$ as a direct summand. 
Thus we get a non-split short exact sequence
$$
0 \to E_i \to M' \to M \to 0.
$$
Applying $\Hom_\Pi(-,E_i)$ and keeping in mind that
$\Ext_\Pi^1(E_i,E_i) = 0$ we get
$$
\dim \Ext_\Pi^1(M',E_i) \ge \dim \Ext_\Pi^1(E_i,M) - c_i > 0.
$$
For the second inequality we used that $\vph_i(Z) + \vph_i^*(Z) -
\bil{\wt(Z),\alpha_i} \ge 2$.
Now the same argument as before shows that $M''$ does not contain $E_i$ as a direct summand.
Thus we proved that $f'' = 0$.
Now we can proceed as in the proof of part (ii).
This finishes the proof. 
\end{proof}

Finally, the following theorem is an analogue of \cite[Theorem~3.18]{NT}.

\begin{Thm}\label{thm:3.16}
We have
$$
(\cB,\wt,\tilde{e}_i,\tilde{f}_i,\vep_i,\vph_i) \cong (\cB,\wt,\tilde{e}_i^*,\tilde{f}_i^*,\vep_i^*,\vph_i^*)
\cong B(-\infty).
$$
\end{Thm}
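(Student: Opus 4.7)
The plan is to invoke Proposition~\ref{prop:2.4} (the Tingley--Webster criterion). Lemma~\ref{lem:3.15} has already done the bulk of the work: its items (i)--(vi) match axioms (i)--(vi) of Proposition~\ref{prop:2.4} verbatim. What remains is to check that $(\cB,\wt,\tilde{e}_i,\tilde{f}_i)$ and $(\cB,\wt,\tilde{e}_i^*,\tilde{f}_i^*)$ are both lowest weight crystals sharing the common lowest weight element $b_- \in \Irr(\Pi(0))^\mx$, which corresponds to the zero module and has $\wt(b_-)=0$.

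First I would verify the generic crystal axioms (cr1)--(cr3) of Section~\ref{subsec:2.2}. Axiom (cr1) is literally the definition $\vep_i(Z) := \vph_i(Z) - \bil{\wt(Z),\alpha_i}$ given in Section~\ref{subsec3.5a} (and analogously for the starred version). Axiom (cr2) follows from the bundle construction: applying $\tilde{e}_i$ corresponds via Lemma~\ref{lusztig2} to increasing $\rk$ by $\alpha_i$, while $\vph_i$ increases by exactly $1$ by Lemma~\ref{lem:updown} (which asserts $(e_{i,1})^r = e_{i,r}$), and $\vep_i$ then decreases by $1$ by (cr1). Axiom (cr3) is the statement $f_{i,1} = (e_{i,1})^{-1}$ and its starred analogue, which is part of the bijections set up in Section~\ref{subsec:3.3}.

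Next I would verify the lowest weight axioms (cr4) and (cr5). For (cr5), the definition gives $\tilde{f}_i(Z) = \varnothing$ iff $\vph_i(Z)=0$, and Lemma~\ref{lem:updown} together with the bijection $f_{i,r}=(f_{i,1})^r$ ensures that $\tilde{f}_i^m(Z)\neq\varnothing$ exactly when $m\le \vph_i(Z)$. For (cr4), I need to show that iterating the $\tilde{f}_i$ eventually lands at $b_-$. By Lemma~\ref{lusztig0}(c), for any $Z \in \cB$ of positive rank there exists $k \in I$ with $\vph_k^*(Z)\geq 1$, so $\tilde{f}_k^*(Z)\neq\varnothing$; and each application of $\tilde{f}_k^*$ strictly decreases the total rank. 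Hence after finitely many steps we reach $b_-$. The same argument using $\sub_k(Z)$ instead of $\fac_k(Z)$ (via the $*$-involution $S_\br$ of Section~\ref{subsec3.5b}) handles the non-starred version.

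With both structures verified to be lowest weight crystals with the same lowest weight element, Proposition~\ref{prop:2.4} applies directly once we quote Lemma~\ref{lem:3.15} for the six structural conditions, yielding
\[
(\cB,\wt,\tilde{e}_i,\tilde{f}_i,\vep_i,\vph_i) \cong (\cB,\wt,\tilde{e}_i^*,\tilde{f}_i^*,\vep_i^*,\vph_i^*) \cong B(-\infty).
\]
I do not expect a real obstacle here: all substantive geometric input (dimension estimates, the bundle refinement, crystal-module genericity, and the $\Ext^1$--$\vph$--$\vph^*$ formula of Lemma~\ref{lem:3.14}) was already absorbed into Lemma~\ref{lem:3.15}. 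The mildest subtlety is making sure that $\tilde{f}_i^*$ defined via closures of bundle images still decreases rank and terminates at $b_-$; this is immediate from the fact that $f_{i,1}^*$ lowers $\br$ by $\alpha_i$, so well-foundedness of (cr4) is automatic.
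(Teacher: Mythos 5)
Your proposal follows exactly the paper's strategy: verify the generic crystal axioms (cr1)--(cr3) for both tuples via the bundle construction, verify (cr4)--(cr5) by rank-lowering and the definitions of $\vph_i,\vph_i^*$, and then apply Proposition~\ref{prop:2.4} with Lemma~\ref{lem:3.15} supplying conditions (i)--(vi). The few details you spell out more explicitly than the paper (Lemma~\ref{lem:updown} for (cr5), Lemma~\ref{lusztig0}(c) for termination in (cr4)) are sound and consistent with what the paper leaves implicit.
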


\begin{proof}
The set $\cB$ of maximal irreducible components together with either set of operators 
$(\wt,\tilde{e}_i,\tilde{f}_i,\vep_i,\vph_i)$ or $(\wt,\tilde{e}_i^*,\tilde{f}_i^*,\vep_i^*,\vph_i^*)$
defined in Section~\ref{subsec3.5a} is a crystal.
(In (cr1) we just define 
$\vep_i(Z) := \vph_i(Z) - \bil{\wt(Z),\alpha_i}$.
The first and third equalities in (cr2) are clearly satisfied for
$\cB$.
These together with (cr1) imply the second equality of (cr2).
To check (cr3) is straightforward with the help of Lemma~\ref{lusztig2}.)

For any $0 \not=Z \in \cB$, there exist $i$ and $j$ such that
$\tilde{f}_i(Z) \not= 0$ and $\tilde{f}_j^*(Z) \not= 0$.
We also know that in these cases we have $\wt(\tilde{f}_i(Z)) = \wt(Z) - \alpha_i$
and $\wt(\tilde{f}_j^*(Z)) = \wt(Z) - \alpha_j$.
For $b_-$ we take the (unique) irreducible component
$Z_-$ of $\Pi(0)$. 
(The variety $\Pi(0)$ is just a point.)
Together with the definitions of $\vph_i$ and $\vph_i^*$, this implies 
that the crystals $(\cB,\wt,\tilde{e}_i,\tilde{f}_i,\vep_i,\vph_i)$ and 
$(\cB,\wt,\tilde{e}_i^*,\tilde{f}_i^*,\vep_i^*,\vph_i^*)$ are both
lowest weight crystals. 

The conditions of Proposition~\ref{prop:2.4} are all satisfied by
Lemma~\ref{lem:3.15}. 
This yields isomorphisms of crystals
$
B(-\infty) \cong (\cB,\wt,\tilde{e}_i,\tilde{f}_i,\vep_i,\vph_i) \cong (\cB,\wt,\tilde{e}_i^*,\tilde{f}_i^*,\vep_i^*,\vph_i^*)$.
\end{proof}
􏰭

\subsection{Littlewood-Richardson coefficients}
Let $\Pi = \Pi(C,D)$, $\g = \g(C)$ and $\cB$ be defined as before.

For $\la \in P^+$ a dominant integral weight, let 
$V(\lambda)$ be the associated irreducible integrable highest weight $\g$-module with highest weight $\la$. 

One of the main applications of crystal graphs is the calculation of tensor product multiplicities.
More precisely, it is well known that the tensor product multiplicities
\[
c_{\la,\mu}^\nu := [V(\la)\otimes V(\mu) : V(\nu)]
\]
can be expressed in terms of crystal graphs. 
The numbers $c_{\la,\mu}^\nu$ are called
\emph{Littlewood-Richardson coefficients}.

For $\la \in P^+$ define
\begin{align*}
\cB_\la &:= \{Z \in \cB \mid \varphi_i(Z) \le a_i \mbox{ for every } i\in I\},\\[2mm]
\cB_\la^* &:= \{Z \in \cB \mid \varphi_i^*(Z) \le a_i  \mbox{ for every } i\in I\},
\end{align*}
where $a_i := \bil{\la,\alpha_i}$.

The permutation $*\df \cB \to \cB$ yields equalities
$$
*(\cB_\la) = \cB_\la^* 
\text{\;\;\; and \;\;\;}
*(\cB_\la^*) = \cB_\la.
$$

Define
$$
\cB_{\la,\mu}^\nu := 
\{Z\in \cB_\la^* \cap \cB_\mu \mid \wt(Z) = \la + \mu - \nu\}.
$$
An example can be found in Section~\ref{subsec:LRcoeff}.

Using our description of $B(-\infty)$, this gives the following result.

\begin{Prop}
\[
c_{\la,\mu}^{\nu} = |\cB_{\la,\mu}^\nu|.
\]
\end{Prop}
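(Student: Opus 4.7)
The plan is to reduce the statement to a known Littlewood--Richardson rule expressed on $B(-\infty)$, and then to transport that rule to $\cB$ via the crystal isomorphism of Theorem~\ref{thm:3.16}. Concretely, I will invoke the tensor product multiplicity formula of Kashiwara (or its later reformulations due to Nakashima and Zelevinsky) which, for a symmetrizable Kac--Moody algebra and dominant integral weights $\la,\mu,\nu$, expresses
\[
c_{\la,\mu}^\nu \;=\; \#\bigl\{ b \in B(-\infty) \;\bigm|\; \wt(b) = \la+\mu-\nu,\ \vph_i^*(b) \le \bil{\la,\alpha_i},\ \vph_i(b) \le \bil{\mu,\alpha_i}\text{ for all } i\in I\bigr\}.
\]
(The condition is precisely that the images of $b$ under the two natural embeddings $B(-\infty)\hookrightarrow B(\la)\otimes T_\la$ and $B(-\infty)\hookrightarrow T_\mu\otimes B(\mu)$ land in the respective sub-crystals, which is the usual mechanism producing tensor product multiplicities.)

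Next I would combine Theorem~\ref{thm:3.16}, which provides two crystal isomorphisms $(\cB,\wt,\tilde{e}_i,\tilde{f}_i,\vep_i,\vph_i)\cong B(-\infty)$ and $(\cB,\wt,\tilde{e}_i^*,\tilde{f}_i^*,\vep_i^*,\vph_i^*)\cong B(-\infty)$, with the fact that the involution $*\df \cB\to\cB$ constructed in Section~\ref{subsec3.5b} intertwines the two structures (so in particular it realizes Kashiwara's $*$-involution on $B(-\infty)$). Under either isomorphism, the condition $\vph_i(Z)\le\bil{\mu,\alpha_i}$ from $Z\in\cB_\mu$ matches the inequality $\vph_i(b)\le\bil{\mu,\alpha_i}$ on $B(-\infty)$, and the condition $\vph_i^*(Z)\le\bil{\la,\alpha_i}$ from $Z\in\cB_\la^*$ matches $\vph_i^*(b)\le\bil{\la,\alpha_i}$. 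Finally, $\wt(Z)=\la+\mu-\nu$ matches the weight condition on $B(-\infty)$. This yields a bijection between $\cB_{\la,\mu}^\nu$ and the indexing set for $c_{\la,\mu}^\nu$, proving the claim.

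The main verification, and the only non-trivial step, is to ensure that the functions $\vph_i$ and $\vph_i^*$ defined module-theoretically in Section~\ref{subsec:3.2} via the modules $\sub_i$ and $\fac_i$ really coincide, under the isomorphism of Theorem~\ref{thm:3.16}, with the crystal-theoretic $\vph_i$ and $\vph_i^*$ on $B(-\infty)$. For $\vph_i$ this is immediate from (cr5) together with the construction of $\tilde{f}_i$ in Section~\ref{subsec3.5a} (recall that $\tilde{f}_i(Z)=\varnothing$ exactly when $\vph_i(Z)=0$, and iteration of $\tilde{f}_i$ strictly decreases the free $E_i$-summands of $\sub_i$). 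The statement for $\vph_i^*$ follows by the same argument using the starred operators, or equivalently by applying $*$. Once this identification is in place, the proposition is simply a translation of the known formula.
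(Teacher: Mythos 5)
Your proposal is correct and follows essentially the same route as the paper: both invoke a known tensor-product multiplicity formula phrased on $B(-\infty)$ and then transport it to $\cB$ via the crystal isomorphism of Theorem~\ref{thm:3.16}. The only cosmetic difference is that the paper arrives at the $B(-\infty)$-formula by first citing \cite[Proposition 4.2]{K1} for a count over $B(\la)$ and then explicitly constructing the embedding $\iota_\la\colon B(\la)\hookrightarrow B(-\infty)$ (with image $\cB_\la^*$, converting $\vep_i$ to $\vph_i$ and $\wt$ to $\la-\wt$), whereas you quote the resulting inequality description on $B(-\infty)$ directly; your remark that the $*$-involution from Section~\ref{subsec3.5b} intertwines the two crystal structures is exactly what is needed to ensure $\vph_i^*$ is matched correctly.
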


\begin{proof}
Let $B(\la)$ denote the crystal graph of $V(\lambda)$ with highest weight vertex $b_\la$ of weight $\la$. 
It is known \cite[Proposition 4.2]{K1} that
\[
c_{\la,\mu}^\nu = |\{ b \in B(\la) \mid \wt(b) = \nu - \mu \mbox{ and } 
\varepsilon_i(b) \le \bil{\mu,\alpha_i} \mbox{ for every } i\in I\}|.
\]
It is also known that $B(\la)$ can be realized as a subgraph of $\cB\equiv B(-\infty)$. More precisely, it follows
from \cite[Proposition 8.2]{K1} that there is a unique injective map 
$$
\iota_\la\df B(\la) \to B(-\infty)
$$
sending $b_\la$ to the lowest weight element of $B(-\infty)$ and satisfying
\[
\iota_\la \tilde{e}_i = \tilde{f}_i \iota_\la,\qquad
\varepsilon_i(b) = \varphi_i(\iota_\la(b)),\qquad
\wt(\iota_\la(b)) = \la - \wt(b),\qquad (b\in B(\la)).
\]
Moreover, we have
$$
\iota(B(\la)) = \cB_\la^*.
$$
This shows that the sets
$$
\iota_\la\left(\{ b \in B(\la) \mid \wt(b) = \nu - \mu \mbox{ and } \varepsilon_i(b) \le \bil{\mu,\alpha_i} \text{ for all } i \}\right)
$$
and
$$
\cB_{\la,\mu}^\nu = 
\{Z\in \cB_\la^* \cap \cB_\mu \mid \wt(Z) = \la + \mu - \nu\}
$$
are equal.
\end{proof}


\section{Convolution algebras}
\label{sec:convolution}


In this section, assume that $K = \C$.

\subsection{The convolution algebra $\cM(\Pi)$}
Let $\Pi = \Pi(C,D)$ and define the convolution algebra $\tF(\Pi)$
as in Section~\ref{subsec:convolution}.
For $c_{ij} \le 0$ we define
$$
\wti{\theta}_{ij} := \ad(\wti{\theta}_i)^{1-c_{ij}}(\wti{\theta}_j)
\in \tM(\Pi).
$$
Let $\Serre$ be the ideal in $\tM(\Pi)$ generated by the
functions $\ttheta_{ij}$ with $c_{ij} \le 0$.
Define
$$
\cM(\Pi) := \tM(\Pi)/\Serre.
$$
For $\br \in \N^n$ set 
$$
\Serre_\br := \Serre \cap \tM(\Pi)_\br
\text{\;\;\; and \;\;\;}
\cM(\Pi)_\br := \cM(\Pi) \cap \tM(\Pi)_\br.
$$
We get 
$$
\Serre = \bigoplus_{\br \in \N^n} \Serre_\br
\text{\;\;\; and \;\;\;}
\cM(\Pi) = \bigoplus_{\br \in \N^n} \cM(\Pi)_\br.
$$

Let $\theta_i := \ttheta_i + \Serre$ be the residue class of
$\ttheta_i$ in $\cM(\Pi)$.
It follows immediately, that we have a surjective algebra
homomorphism
$$
U(\n) \to \cM(\Pi)
$$
defined by $e_i \mapsto \theta_i$.

\subsection{Serre relations}
In contrast to \cite[Proposition~3.10]{GLS5}, the functions $\ttheta_i$ 
do not in general satisfy the Serre relations.

\begin{Lem}\label{serre}
For $\Pi = \Pi(C,D)$ assume that $c_{ij} < 0$ and 
$c_i \ge 2$ for some $i,j \in I$.
Then there exists an indecomposable locally free $\Pi$-module
$X(i,j)$
with rank vector $(1-c_{ij})\alpha_i +\alpha_j$.
\end{Lem}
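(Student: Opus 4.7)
The plan is to construct $X(i,j)$ explicitly and then verify the preprojective relations and indecomposability. Set $n := 1-c_{ij}$, $V_i := H_i^{n}$ with $H_i$-basis $a_1,\ldots,a_n$, and $V_j := H_j$ with generator $b$, so that $X(i,j) := V_i \oplus V_j$ automatically has rank vector $(1-c_{ij})\alpha_i + \alpha_j$ and is locally free.

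For the arrow maps I prescribe, for each $g = 1,\ldots,g_{ij}$ and each $k = 0,\ldots,f_{ij}-1$,
\[
M(\alpha_{ij}^{(g)})(\vep_j^k b) := \vep_i^{c_i-1}\,a_{(g-1)f_{ij}+k+1},
\]
and $M(\alpha_{ji}^{(g)})(a_n) := \vep_j^{c_j-1} b$, while $M(\alpha_{ji}^{(g)})$ vanishes on $a_k$ for $k<n$ and on all $\vep_i$-multiples. The remaining values of $M(\alpha_{ij}^{(g)})$ are then forced by (P2) to be zero, and since $g_{ij}f_{ij} = |c_{ij}| = n-1$ the indexing hits each of $a_1,\ldots,a_{n-1}$ exactly once; $a_n$ is singled out on the $M(\alpha_{ji}^{(g)})$ side.

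Verification of (P1) and (P2) is immediate since the chosen images sit in the socles of the relevant $H_?$-modules. For (P3) at vertex $j$, every composition $M(\alpha_{ji}^{(g)}) \circ M(\alpha_{ij}^{(g)})$ factors through an element of the form $\vep_i^{c_i-1} a_?$ on which $M(\alpha_{ji}^{(g)})$ vanishes, so each term is individually zero. For (P3) at vertex $i$, an analogous reduction leaves only the potentially non-zero contribution $\vep_i^{f_{ji}-1}\,M(\alpha_{ij}^{(g)})(\vep_j^{c_j-1} b)$. If $f_{ji} \ge 2$, this evaluates to $\vep_i^{f_{ji}+c_i-2} a_? = 0$ because $f_{ji}+c_i-2 \ge c_i$. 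If $f_{ji} = 1$, the symmetrizer identity $c_i f_{ij} = c_j f_{ji}$ combined with $c_i \ge 2$ gives $c_j = c_i f_{ij} \ge 2 f_{ij} > f_{ij}$, so the $\vep_j$-exponent of $\vep_j^{c_j-1} b$ is at least $f_{ij}$, and (P2)-propagation forces $M(\alpha_{ij}^{(g)})(\vep_j^{c_j-1} b) = 0$.

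Indecomposability follows by an element chase. Assume $X(i,j) = Y \oplus Z$ with $b \in Y$. Then $V_j \subseteq Y$, each non-zero element $\vep_i^{c_i-1} a_{(g-1)f_{ij}+k+1} = M(\alpha_{ij}^{(g)})(\vep_j^k b)$ sits in $Y$ forcing $a_1,\ldots,a_{n-1} \in Y$ (otherwise its socle element would lie in $Y \cap Z = 0$), and $\vep_j^{c_j-1} b = M(\alpha_{ji}^{(g)})(a_n) \in V_j \subseteq Y$ forces $a_n \in Y$. The main obstacle is the (P3) verification at vertex $i$, where the interplay of $c_i, c_j, f_{ij}, f_{ji}$ through the symmetrizer identity is exactly where the hypothesis $c_i \ge 2$ is essential.
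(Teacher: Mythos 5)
Your construction is isomorphic to the paper's (you index things by $a_1,\ldots,a_n$ and $b$ rather than by the copies $E_{1f}^{(g)}$, but it is the same module: arrow maps carry near-top elements of one side to socle elements of the other), and your verification of (P1)--(P3) is correct and considerably more explicit than the paper's, which merely asserts it. The analysis of (P3) at vertex $i$, in particular the role of $c_if_{ij}=c_jf_{ji}$ in the case $f_{ji}=1$, correctly isolates where $c_i \ge 2$ is used.

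The indecomposability argument, however, has a genuine gap. From $\vep_i^{c_i-1}a_l \in Y$ you cannot conclude $a_l \in Y$. If $a_l = y + z$ with $y\in Y$, $z\in Z$, the hypothesis only gives $\vep_i^{c_i-1}z = 0$, i.e.\ $z \in \vep_i V_i$, which is perfectly possible with $z\neq 0$ (e.g.\ $H_i^2 = H_i(e_1+\vep_i e_2)\oplus H_i e_2$ has $\vep_i e_1$ in the first summand while $e_1$ is not). The step for $a_n$ is even weaker: $M(\alpha_{ji}^{(g)})(a_n)\in V_j\subseteq Y$ is vacuously true once $b\in Y$ and says nothing about where $a_n$ sits. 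The paper sidesteps this entirely by observing that $X(i,j)$ is a tree module in the sense of Crawley-Boevey \cite{CB1}, which is automatically indecomposable. Your element chase can be repaired as follows: since $e_jZ=0$, $Z$ is contained in $V_i$ and $M(\alpha_{ji}^{(g)})$ vanishes on $Z$; since $\vep_i^{c_i-1}a_1,\ldots,\vep_i^{c_i-1}a_{n-1}$ are independent elements of $\soc(e_iY)$, the free $H_i$-module $Z$ has rank $\le 1$; if $Z = H_iz \ne 0$ with $z = \sum_l c_la_l + (\text{radical term})$, then $M(\alpha_{ji}^{(g)})(z) = c_n\vep_j^{c_j-1}b = 0$ forces $c_n = 0$, whence $\vep_i^{c_i-1}z = \sum_{l<n}c_l\vep_i^{c_i-1}a_l \in Y\cap Z = 0$, contradicting $Z\cong H_i$. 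Alternatively, set $M(\alpha_{ji}^{(g)})=0$ for $g>1$ (this does not affect (P1)--(P3)) so that the coefficient quiver becomes a tree, and invoke \cite{CB1} directly as the paper does.
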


\begin{proof}
Recall that $g_{ij} = |\gcd(c_{ij},c_{ji})|$, $f_{ij} = |c_{ij}|/g_{ij}$ and
$c_ic_{ij} = c_jc_{ji}$.
It follows that $f_{ij} \le c_j$.
Without loss of generality assume $c_{12}<0$ and $c_1 \ge 2$.
For each $1 \le f \le f_{12}$ and $1 \le g \le g_{12}$ let
$E_{1f}^{(g)}$ be a copy of $E_1$ with
basis $\{b_{1f}^{(g)},\ldots,b_{c_1f}^{(g)} \}$ such that
$$
\varepsilon_1b_{if}^{(g)} = 
\begin{cases}
b_{i-1f}^{(g)} & \text{if $i \ge 2$},\\
0 & \text{otherwise}.
\end{cases}
$$
Furthermore, let $
\{b_1,\ldots,b_{c_1} \}$ be a basis of another copy of $E_1$ 
such that
$$
\varepsilon_1b_i = 
\begin{cases}
b_{i-1} & \text{if $i \ge 2$},\\
0 & \text{otherwise}.
\end{cases}
$$
Let $a_1,\ldots,a_{c_2}$ be a basis of $E_2$ such that
$$
\varepsilon_2a_i = 
\begin{cases}
a_{i-1} & \text{if $i \ge 2$},\\
0 & \text{otherwise}.
\end{cases}
$$
For $1 \le f \le f_{12}$ and $1 \le g \le g_{12}$ define
$$
\alpha_{12}^{(g)}a_{c_2-f+1} := b_{1f}^{(g)}
$$
and 
$$
\alpha_{21}^{(g)}b_{c_1} := a_1.
$$
It is easy to check that thus defines a locally free $\Pi$-module $X(1,2)$
with
$\rk(X(1,2)) = (1-c_{12})\alpha_1 + \alpha_2$.
Note that $X(1,2)$ is a tree module in the sense of Crawley-Boevey \cite{CB1}.
In particular, $X(1,2)$ is indecomposable.
This finishes the proof.
\end{proof}

\begin{Prop}
For $\Pi = \Pi(C,D)$ the following are equivalent:
\begin{itemize}

\item[(i)]
The functions $\ttheta_1,\ldots,\ttheta_n$ satisfy the Serre relations.

\item[(ii)]
$\Serre = 0$.

\item[(iii)]
If $c_{ij} < 0$ for some $i,j \in I$, then $c_i = 1$.

\end{itemize}
\end{Prop}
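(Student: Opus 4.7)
The equivalence (i) $\iff$ (ii) is immediate: by construction $\Serre$ is the two-sided ideal of $\tM(\Pi)$ generated by the elements $\ttheta_{ij}$ for $i \ne j$ with $c_{ij} \le 0$, so $\Serre = 0$ holds precisely when every such $\ttheta_{ij}$ vanishes, which is (i).

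For (iii) $\Rightarrow$ (i), I will verify $\ttheta_{ij} = 0$ for each pair $i \ne j$ with $c_{ij} \le 0$. If $c_{ij} = 0$, there are no arrows between $i$ and $j$, so every module of rank vector $\alpha_i + \alpha_j$ is $E_i \oplus E_j$; a direct count of two-step filtrations gives $\ttheta_i * \ttheta_j = \ttheta_j * \ttheta_i$, hence $\ttheta_{ij} = [\ttheta_i,\ttheta_j] = 0$. If $c_{ij} < 0$, condition (iii) forces $c_i = 1$, and since $c_{ji} < 0$ by (C3), also $c_j = 1$. Then $E_i = S_i$ and $E_j = S_j$, the restricted Cartan matrix $C' := C|_{\{i,j\}}$ is symmetric (because $c_i c_{ij} = c_j c_{ji}$ with $c_i = c_j = 1$), and the restricted symmetrizer is the identity. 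Since $\ttheta_{ij}$ lives in degree $(1-c_{ij})\alpha_i + \alpha_j$ with support only at the vertices $\{i,j\}$, the submodule/quotient counts that define it agree with those computed in the classical symmetric preprojective algebra attached to $C'$. For the latter the Serre relations are known to hold, by \cite[Proposition~3.10]{GLS5} and the classical isomorphism $\tM(\Pi) \cong U(\n)$ recalled after Theorem~\ref{introthm:main}, so $\ttheta_{ij} = 0$.

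For the contrapositive, not~(iii) $\Rightarrow$ not~(i), assume there exist $i,j$ with $c_{ij} < 0$ and $c_i \ge 2$. The plan is to apply Lemma~\ref{serre} to obtain an indecomposable locally free tree module $X(i,j)$ of rank vector $\br := (1-c_{ij})\alpha_i + \alpha_j$, and to show $\ttheta_{ij}(X(i,j)) \ne 0$. Expanding the iterated commutator,
\[
\ttheta_{ij} \;=\; \sum_{k=0}^{1-c_{ij}} (-1)^k \binom{1-c_{ij}}{k}\, \ttheta_i^{1-c_{ij}-k} * \ttheta_j * \ttheta_i^k,
\]
each summand evaluated at $X(i,j)$ is the topological Euler characteristic of the variety of two-step chains $U \subset V \subseteq X(i,j)$ with $U \cong E_i^a$, $V/U \cong E_j$ and $X(i,j)/V \cong E_i^b$, where $a+b = 1-c_{ij}$. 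Using the explicit bases from the proof of Lemma~\ref{serre}, the unique copy of $E_j$ inside $X(i,j)$ forces the middle step of each such chain into a well-understood position relative to the two copies of $E_i$, and each flag variety reduces to a product of $H_i$-Grassmannians as in Section~\ref{ssec:GrassFT}. Their Euler characteristics are then computable via formula \eqref{eq:DimGr}.

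The main obstacle is to verify that the resulting signed sum of Grassmannian Euler characteristics does not vanish. In the symmetric case $c_i = 1$ one recovers the classical Lusztig/Ringel cancellation; the content of this implication is that the larger $H_i$-Grassmannians arising for $c_i \ge 2$ introduce terms that fail to cancel. A direct check in the type $B_2$ example of Section~\ref{sec:examples} (where $c_1 = 2$, $c_{12} = -1$ and $\br = 2\alpha_1 + \alpha_2$ is not even a positive root of $\g(C)$) already exhibits a non-trivial positive contribution and should serve as the template for the general calculation.
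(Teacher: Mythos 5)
Your proof of (i) $\Leftrightarrow$ (ii) matches the paper, and your argument for (iii) $\Rightarrow$ (i) is a valid alternative to the paper's. The paper reduces to a connected quiver and then observes that (iii) forces all $c_k = 1$, so $D$ is the identity, $C$ is symmetric, and Lusztig's result applies globally; you instead localize each relation $\ttheta_{ij}$ to the two-vertex subquiver $\{i,j\}$, observe that (iii) together with $c_ic_{ij}=c_jc_{ji}$ forces $c_i = c_j = 1$ for that pair, and invoke the classical symmetric case for the corresponding rank-two preprojective algebra. Both routes are correct; your localization is arguably a little more self-contained because it sidesteps the need to decompose $\Pi$ over connected components and also handles the $c_{ij}=0$ case by a direct one-line count.

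For the remaining direction, however, what you have written is explicitly a plan rather than a proof. You correctly identify Lemma~\ref{serre}'s tree module $X(i,j)$ as the witness and correctly expand $\ttheta_{ij}=\sum_k(-1)^k\binom{1-c_{ij}}{k}\ttheta_i^{1-c_{ij}-k}*\ttheta_j*\ttheta_i^k$, but you then state ``the main obstacle is to verify that the resulting signed sum of Grassmannian Euler characteristics does not vanish'' and defer the calculation to a type-$B_2$ template. That is exactly the step that carries all the content: without it you have not shown $\ttheta_{ij}(X(i,j))\neq 0$, and the implication is unproved. The paper declares this verification ``straightforward'' (and illustrates it in Section~\ref{sec:examples}, e.g.\ $\ttheta_{12}(X(1,2))=-7!$ for $c_{12}=-6$ and $\ttheta_{12}=-2\cdot 1_X$ for $B_2$), and it genuinely is tractable because $X(i,j)$ is a tree module with a completely explicit basis: the unique $E_j$ sits in the middle, the $|c_{ij}|$ copies of $E_i$ indexed by $(f,g)$ are submodules, and the remaining $E_i$ is a quotient, so for each $(a,b)$ with $a+b=1-c_{ij}$ one can describe exactly which two-step flags contribute and compute the Euler characteristics of the resulting $H_i$-Grassmannians from Section~\ref{ssec:GrassFT}. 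You should carry this out rather than leave it as an aspiration; as written, the proposal has a genuine gap at precisely the point where symmetrizability $c_i\ge 2$ enters.
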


\begin{proof}
It is obvious that (i) and (ii) are equivalent.

(i) $\implies$ (iii):
Assume $c_{ij} < 0$ and $c_i \ge 2$ for some $i,j \in I$.
For $X(i,j)$ as defined in the proof of Lemma~\ref{serre} it is straightforward
to check that
$$
\wti{\theta}_{ij}(X(i,j)) \not= 0.
$$
Thus $\ttheta_1,\ldots,\ttheta_n$ do not satisfy the Serre relations.

(iii) $\implies$ (i),(ii): 
Suppose (iii) holds.
We can assume that $\ov{Q}(C)$ is connected.
If $n \ge 2$, then $C$ is symmetric, and $D$ is the identity matrix. 
Thus $\Pi(C,D)$ is a classical preprojective algebra
associated with a quiver.
If $n = 1$, then
$\Pi(C,D) = K[\vep_1]/(\vep_1^{c_1})$.
In the first case, Lusztig \cite{L1} proved that $\ttheta_1,\ldots,\ttheta_n$ satisfy the Serre relations.
In the second case, $\Serre = 0$, since
there are no Serre relations.
\end{proof}

\subsection{Example}
Let $\Pi = \Pi(C,D)$ where
$$
C = \left(\bbm 2&-6\\-2&2\ebm\right)
\text{\;\;\; and \;\;\;}
D = \left(\bbm 2 &0\\0&6\ebm\right).
$$
We have
$c_1 = 2$, $c_2 = 6$,
$f_{12} = 3$ and $g_{12} = 2$.
The $\Pi$-module $X(1,2)$ 
constructed in the proof of Lemma~\ref{serre} looks as follows:
$$
\xymatrix@-1ex{
&&&&&&&&1 \ar[d]\ar[r] & 1
\\
& 2 \ar[d]_{\alpha_{12}^{(1)}} \ar@/^2ex/[dd]^>>>>>{\alpha_{12}^{(2)}} \ar[rr] && 
2 \ar[d]_{\alpha_{12}^{(1)}} \ar@/^2ex/[dd]^>>>>>{\alpha_{12}^{(2)}} \ar[rr] && 
2 \ar[d]_{\alpha_{12}^{(1)}} \ar@/^2ex/[dd]^>>>>>{\alpha_{12}^{(2)}} \ar[r] & 
2 \ar[r] & 2 \ar[r] & 2
\\
1 \ar[r] & 1 & 1 \ar[r] & 1 & 1 \ar[r] & 1
\\
1 \ar[r] & 1 & 1 \ar[r] & 1 & 1 \ar[r] & 1
}
$$
(The numbers in the picture correspond to basis vectors of $X(1,2)$
with $i$ being in $e_iX(1,2)$.
The arrows indicate how the arrows of the quiver of $\Pi$ act on the
basis vectors.)
We get
$$
\ttheta_{12}(X(1,2)) = \ad(\ttheta_1)^7(\ttheta_2)(X(1,2)) =
- 7 \ttheta_1^6\ttheta_2\ttheta_1(X(1,2)) =  -7 \cdot (6!) =
-(7!). 
$$
Thus we see that $\Serre \not= 0$.

As a smaller example, one could also take the preprojective algebra $\Pi$ of type $B_2$ with minimal symmetrizer together 
with the module $X$ displayed in Section~\ref{subsub:nonmaxsupp}.

\subsection{The support of the Serre relations}

\begin{Lem}\label{lem:strictly2}
Suppose $c_{ij} \le 0$.
Then there is no indecomposable crystal module
$M \in \nil_E(\Pi)$ with $\rk(M) = (1-c_{ij},1)$.
\end{Lem}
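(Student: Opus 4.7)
The plan is to argue by contradiction. Suppose $M$ is an indecomposable crystal module with $\rk(M) = r\alpha_i + \alpha_j$, where $r := 1 - c_{ij} \ge 1$. Because $M$ is a crystal module, both $\sub_i(M)$ and $\fac_i(M)$ are locally free $H_i$-modules, and are therefore of the form $E_i^{\varphi_i(M)}$ and $E_i^{\varphi_i^*(M)}$ respectively, compatibly with the definitions in Section~\ref{subsec:3.2}.

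First I would extract a lower bound for $\varphi_i(M) + \varphi_i^*(M)$. Combining Lemma~\ref{lem:3.7}(i),(ii) with Corollary~\ref{cor:3.8} and the nonnegativity $\dim \Ext_\Pi^1(E_i, M) \ge 0$, one gets
\[
\varphi_i(M) + \varphi_i^*(M) \;\ge\; \langle\rk(M), \alpha_i\rangle
\;=\; 2r + c_{ij} \;=\; 2(1-c_{ij}) + c_{ij} \;=\; r+1.
\]

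Next I would apply Lemma~\ref{lem:strictly0} to the canonical homomorphism $f\df \sub_i(M) \to \fac_i(M)$. If $f \ne 0$, then by Lemma~\ref{lem:strictly0}(i) the module $M$ has $E_i$ as a direct summand; since $M$ is indecomposable and $\rk(E_i) = \alpha_i \ne \rk(M)$, this is impossible. Hence $f = 0$, and Lemma~\ref{lem:strictly0}(ii) gives $\fac_i(M) \cong \fac_i(C_i(M))$. Because the crystal module $C_i(M) = M/\sub_i(M)$ is locally free with rank vector $(r - \varphi_i(M))\alpha_i + \alpha_j$, its maximal $S_i$-only quotient has $i$-rank at most $r - \varphi_i(M)$, so
\[
\varphi_i^*(M) \;=\; \varphi_i^*(C_i(M)) \;\le\; r - \varphi_i(M),
\]
which yields $\varphi_i(M) + \varphi_i^*(M) \le r$.

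The two bounds contradict each other, completing the proof. The main point is to observe that Lemma~\ref{lem:strictly0} produces an upper bound on $\varphi_i + \varphi_i^*$ that is exactly complementary to the Ext-based lower bound from Corollary~\ref{cor:3.8}; no induction, case split on $\fac_j(M)$ versus $\sub_j(M)$, or explicit analysis of the extension of $E_j$ by $E_i^r$ is needed. I do not foresee any serious obstacle.
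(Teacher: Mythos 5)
Your proof is correct, and it takes a genuinely different route from the paper's. The paper's argument works directly with the $H_1$-linear maps $M_{1,\out}$ and $M_{1,\inn}$: since $M$ is a crystal module these maps split, a rank count shows that $\Ker(M_{1,\out})$ and a complement of $\Ima(M_{1,\inn})$ together have total rank at least $2-c_{12}$ inside $M_1\cong H_1^{1-c_{12}}$, and an elementary fact about intersections of free submodules of a free $H_1$-module then yields an explicit direct summand $U\cong H_1=E_1$ of $M$, showing $M$ is decomposable. You instead combine the Hom-Ext formula (Corollary~\ref{cor:3.8} together with Lemma~\ref{lem:3.7}, which is essentially Lemma~\ref{lem:3.14} applied to a single module rather than a component) to get the lower bound $\vph_i(M)+\vph_i^*(M)\ge r+1$, and then use Lemma~\ref{lem:strictly0} plus a rank count on $C_i(M)$ to get the incompatible upper bound $\vph_i(M)+\vph_i^*(M)\le r$. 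The two approaches are logically close in content -- both exploit the freeness of images, kernels and cokernels that the crystal condition forces -- but differ in presentation: yours leans on the already-established structural lemmas (in particular Lemma~\ref{lem:strictly0}, whose proof encodes the splitting argument), which makes the argument shorter, while the paper's version is more self-contained and produces the offending direct summand explicitly rather than deriving a numerical contradiction. One small remark worth making explicit in your write-up: the inequality $\vph_i^*(C_i(M))\le r-\vph_i(M)$ uses that $C_i(M)$ is again a crystal module (hence locally free of rank $(r-\vph_i(M))\alpha_i+\alpha_j$), which is guaranteed by the recursive definition of crystal modules; this is implicit in your phrasing but should be flagged since it is the step where the full strength of the crystal hypothesis, as opposed to mere local freeness of $\sub_i(M)$ and $\fac_i(M)$, is used.
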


\begin{proof}
Without loss of generality assume $c_{12} \le 0$.
Let $\br = (1-c_{12},1)$, and let $M \in \nil_E(\Pi)$ be a crystal module with $\rk(M) = \br$.
 
We consider the maps
$$
M_1 \xrightarrow{M_{1,\out}} \wti{M}_1 \xrightarrow{M_{1,\inn}} M_1
$$
as defined in Section~\ref{subsec:species}.
The maps $M_{1,\out}$ and $M_{1,\inn}$ are $H_1$-module
homomorphisms with $\Ima(M_{1,\out}) \subseteq \Ker(M_{1,\inn})$.
Since $M$ is a crystal module, we know that $M_{1,\out}$ and $M_{1,\inn}$
are split, i.e. their images, kernels and cokernels are free $H_1$-modules
and therefore direct summands.
As $H_1$-modules, we have 
$M_1 \cong H_1^{1-c_{12}}$ and $\wti{M}_1 = {_1}H_2 \otimes_2 H_2
\cong  H_1^{-c_{12}}$.

Let $r_1 := \rk(\Ima(M_{1,\out}))$ and $r_2 := \rk(\Ima(M_{1,\inn}))$.
Since $\Ima(M_{1,\out}) \subseteq \Ker(M_{1,\inn})$, we get $r_1 + r_2 \le -c_{12}$.
Let $C$ be a submodule of $M_1$ such that
$\Ima(M_{1,\inn}) \oplus C = M_1$.
Thus $C \cong \Coker(M_{1,\inn})$.
We have $\rk(\Ker(M_{1,\out})) = (1-c_{12})-r_1$ and 
$\rk(C) = (1-c_{12})-r_2$.
Thus $\Ker(M_{1,\out}) \cap C$ contains a
submodule $U$ isomorphic to $H_1$.
(Here we use the following fact: If $V_1$ and $V_2$ are free submodules
of $H_1^m$ with $\rk(V_1) + \rk(V_2) \ge m+1$,
then $V_1 \cap V_2$ contains a free submodule $V$ with $\rk(V) = 1$.
Namely, there is a short exact sequence
$$
0 \to V_1 \cap V_2 \to V_1 \oplus V_2 \xrightarrow{f} V_1+V_2 \to 0
$$
with $f(v_1,v_2) := v_1-v_2$.
We have $\dim \tp(V_1+V_2) \le m$, since $V_1 + V_2 \subseteq H_1^m$.
The module $V_1 \oplus V_2$ is a projective $H_1$-module with 
$\dim \tp(V_1 \oplus V_2) \ge m+1$.
Thus $V_1 \cap V_2$ contains a direct summand isomorphic to $H_1$.)
It follows that $U$ is a direct summand of $M$.
Thus the $\Pi$-module $M$ is decomposable.
This finishes the proof.
\end{proof}

\begin{Cor}\label{cor:strictly3}
Suppose $c_{ij} \le 0$.
For each crystal module $M \in \Pi(\br)$ we have
$$
\ttheta_{ij}(M) = 0.
$$
\end{Cor}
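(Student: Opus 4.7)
The plan is to combine three ingredients: the grading on $\tM(\Pi)$ by rank vector, the primitivity of $\ttheta_{ij}$ in the Hopf algebra $\tM(\Pi)$, and Lemma~\ref{lem:strictly2}. Throughout, let $\br_0 := (1-c_{ij})\alpha_i + \alpha_j$.

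First I would observe that, by construction, $\ttheta_i \in \tM(\Pi)_{\alpha_i}$ for every $i$, so $\ttheta_{ij} = \ad(\ttheta_i)^{1-c_{ij}}(\ttheta_j)$ lives in the homogeneous component of rank vector $\br_0$. Consequently $\ttheta_{ij}(M) = 0$ for any module $M$ whose rank vector is not equal to $\br_0$, and it suffices to treat the case $\rk(M) = \br_0$.

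Next I would invoke the Hopf algebra structure on $\tM(\Pi)$ recalled in Section~\ref{subsec:convolution}. Since the comultiplication is determined by $\ttheta_k \mapsto \ttheta_k \otimes 1 + 1 \otimes \ttheta_k$ for all $k \in I$, each generator $\ttheta_k$ is primitive. In any Hopf algebra, the set of primitive elements is closed under the commutator bracket, so iteration of $\ad(\ttheta_i)$ preserves primitivity. Hence $\ttheta_{ij}$ is a primitive element of $\tM(\Pi)$, and by the characterization recalled in Section~\ref{subsec:convolution} its support $\supp(\ttheta_{ij})$ consists only of indecomposable $\Pi$-modules. In particular $\ttheta_{ij}(M) = 0$ whenever $M$ is decomposable.

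Putting this together with Lemma~\ref{lem:strictly2} finishes the proof. Let $M \in \Pi(\br_0)$ be a crystal module; it suffices to show $M$ is decomposable. But Lemma~\ref{lem:strictly2} asserts precisely that no indecomposable crystal module has rank vector $\br_0$, so $M$ must decompose non-trivially, and the primitivity of $\ttheta_{ij}$ yields $\ttheta_{ij}(M) = 0$. The only non-routine ingredient is Lemma~\ref{lem:strictly2} itself, which has already been established; everything else is an immediate consequence of the Hopf-algebraic setup of Section~\ref{subsec:convolution}.
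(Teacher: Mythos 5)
Your proof is correct and follows essentially the same route as the paper: primitivity of $\ttheta_{ij}$ forces its support to consist of indecomposable modules, and Lemma~\ref{lem:strictly2} rules out indecomposable crystal modules in the relevant rank vector. You are slightly more explicit than the paper in noting the homogeneity of $\ttheta_{ij}$ (which lets you reduce to $\rk(M) = (1-c_{ij})\alpha_i + \alpha_j$), but the paper uses the same fact implicitly.
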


\begin{proof}
Since $\ttheta_{ij}$ is defined as an iterated Lie bracket of the
generators $\ttheta_i$ and $\ttheta_j$, it is a primitive element in the Hopf
algebra $\tM(\Pi)$.
Thus the support of $\ttheta_{ij}$ consists of indecomposable
$\Pi$-modules.
Let $\br = (1-c_{ij},1)$, and let $M \in \Pi(\br)^\cry$.
By Lemma~\ref{lem:strictly2}, we know that $M$ is decomposable.
Thus we get $\ttheta_{ij}(M) = 0$.
\end{proof}

\begin{Cor}\label{cor:strictly4}
For $c_{ij} \le 0$ and $\br = (1-c_{ij})\alpha_i + \alpha_j$, we have
$$
\dim \supp(\ttheta_{ij}) < \dim H(\br).
$$
\end{Cor}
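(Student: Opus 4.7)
The plan is to combine the three ingredients already at hand: Corollary~\ref{cor:strictly3} (vanishing of $\ttheta_{ij}$ on every crystal module), Proposition~\ref{prop:strictly1} (maximal components are crystal components) and Theorem~\ref{thm:dimbound} (uniform dimension bound $\dim(Z)\le\dim H(\br)$ for $Z\in\Irr(\Pi(\br))$). The support of $\ttheta_{ij}$ will then be seen to avoid, up to lower-dimensional pieces, every maximal component.

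First I would observe that $\supp(\ttheta_{ij})\subseteq\Pi(\br)$. This follows by a simple induction using the definition of the convolution product: if $f,g\in\tM(\Pi)$ have $\supp(f),\supp(g)\subseteq\nil_E(\Pi)$ and $(f*g)(M)\neq 0$, then $M$ admits a submodule $U$ with both $U$ and $M/U$ being $E$-filtered, so $M$ itself is $E$-filtered. Since $\ttheta_i=1_{E_i}$ trivially has support in $\nil_E(\Pi)$, every element of $\tM(\Pi)$, and in particular $\ttheta_{ij}$, is supported in $\Pi(\br)$.

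Next I would argue component-by-component. Let $W\in\Irr(\Pi(\br))$. If $W$ is not maximal, Theorem~\ref{thm:dimbound} already gives $\dim W<\dim H(\br)$, whence $\dim\bigl(\supp(\ttheta_{ij})\cap W\bigr)<\dim H(\br)$. If instead $W$ is maximal, Proposition~\ref{prop:strictly1} provides a dense open subset $U\subseteq W$ consisting of crystal modules, and Corollary~\ref{cor:strictly3} says $\ttheta_{ij}$ vanishes on every crystal module. Hence $\supp(\ttheta_{ij})\cap W\subseteq W\setminus U$, which is a proper closed subvariety of $W$ and therefore has dimension strictly less than $\dim W=\dim H(\br)$.

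Finally, since $\Pi(\br)$ has only finitely many irreducible components, $\supp(\ttheta_{ij})$ is a finite union of constructible sets each of dimension strictly less than $\dim H(\br)$, giving the claimed inequality. I do not anticipate a genuine obstacle here; the only step that needs a word of care is the reduction $\supp(\ttheta_{ij})\subseteq\Pi(\br)$, but this is formal from the convolution formula. The content of the corollary lies entirely in the crystal-module input (Corollary~\ref{cor:strictly3}) and the density statement (Proposition~\ref{prop:strictly1}), both of which are already established.
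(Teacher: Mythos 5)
Your argument is correct and is precisely the intended deduction, which the paper leaves implicit (Corollary~\ref{cor:strictly4} is stated without its own proof, immediately after Corollary~\ref{cor:strictly3}). The three inputs you identify — $\supp(\ttheta_{ij})\subseteq\Pi(\br)$, the density of crystal modules in maximal components (Proposition~\ref{prop:strictly1}), and the uniform bound of Theorem~\ref{thm:dimbound} — combine exactly as you describe, with the component-by-component reduction handling both the maximal and non-maximal cases.
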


One can see Corollary~\ref{cor:strictly4} as a first step towards a proof
of Conjecture~\ref{mainconj}.


\section{Semicanonical bases}\label{sec:semican}


In this section, assume that $K = \C$.

\subsection{Semicanonical functions}
This section follows very closely Lusztig \cite{L2}.
Most of Lusztig's proofs translate almost literally to our more
general setup.

Let $Z \in \Irr(\Pi(\br))$.
Then for each $f \in \tM(\Pi)$ there exists a
unique $c \in \Z$ such that $f^{-1}(c) \cap Z$ contains a
dense open subset of $Z$. 
The map $f \mapsto c$ yields a linear map
$$
\rho_Z\df \tM(\Pi) \to \Z.
$$

\begin{Lem}\label{lem:Lu2.4}
Let $ Z \in \Irr(\Pi(\br))^\mx$. 
There exists some $\wti{f}_Z \in \tM(\Pi)_\br$ such that
for each $Z' \in \Irr(\Pi(\br))^\mx$ we have
$$
\rho_{Z'}(\wti{f}_Z) =
\begin{cases}
1 & \text{if $Z = Z'$},\\
0 & \text{otherwise}.
\end{cases}
$$
\end{Lem}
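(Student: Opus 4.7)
The plan is to argue by induction on the total rank $|\br| := \sum_i r_i$, constructing all functions $\wti{f}_Z$ for $Z \in \cB_\br$ simultaneously. For $|\br| = 0$ the variety $\Pi(0)$ is a point, $\cB_0$ has a single element $Z_0$, and $\wti{f}_{Z_0} := 1 \in \tM(\Pi)_0$ visibly satisfies $\rho_{Z_0}(\wti{f}_{Z_0}) = 1$. Assume the lemma is known for every $\br'$ with $|\br'| < |\br|$.

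For each $Z \in \cB_\br$ pick some $i = i(Z) \in I$ with $\vph_i^*(Z) \ge 1$; such an $i$ exists by Lemma~\ref{lusztig0}(c) since $\br \ne 0$. By the crystal operator of Section~\ref{subsec:3.3}, the component $Z^\flat := \tilde{f}_i^*(Z) \in \cB_{\br - \alpha_i}$ corresponds to $Z$ under the bijection of Lemma~\ref{lusztig2}(b) applied with $\bp = (c_i, q_1, \ldots, q_t)$ the fac-partition of $Z$ and $\bq = (q_1, \ldots, q_t)$. By the induction hypothesis, $\wti{f}_{Z^\flat}$ is already built. Form the preliminary candidate
$$
g_Z \;:=\; \ttheta_i \,*\, \wti{f}_{Z^\flat} \;\in\; \tM(\Pi)_\br.
$$
For $M \in \rep(\Pi,\br)$, the convolution formula gives
$$
g_Z(M) \;=\; \sum_{c} c\, \chi\bigl(\{\,U \subseteq M : U \cong E_i,\ \wti{f}_{Z^\flat}(M/U) = c\,\}\bigr).
$$
For $Y \in \cB_\br$ and generic $M \in Y$, Lemma~\ref{lem:aux1} identifies the set of $U \subseteq M$ with $U \cong E_i$ and $M/U$ of a prescribed isomorphism type with a fibre of the principal $G(\bU)$-bundle $p''_1$ composed with the Grassmannian bundle $p''_2$, whose fibre $\Gr^{E_i}(H_i^{\bp'})$ is smooth and irreducible. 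Combined with the Kronecker-delta property of $\wti{f}_{Z^\flat}$, one concludes that $\rho_Y(g_Z) \ne 0$ only when some stratum $Y^{i,\bp'}$ lies, under the bundle-construction bijection of Lemma~\ref{lusztig2}, over the component $(Z^\flat)^{i,\bq'}$ corresponding to $Z^\flat$. Such $Y$ all satisfy $\tilde{f}_i^*(Y) = Z^\flat$, and in particular $Z$ itself produces a nonzero diagonal value $n_{Z,Z}$ (coming from the Euler characteristic of a nonempty smooth irreducible Grassmannian in Lemma~\ref{lem:aux1}(b)).

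To extract $\wti{f}_Z$, enumerate the finite set $\cT_Z := \{ Y \in \cB_\br : \tilde{f}_i^*(Y) = Z^\flat \}$ and consider the secondary order on $\cT_Z$ given by $\vph_i^*$, decreasingly. Working from the components with maximal $\vph_i^*$ downward, we get an upper-triangular system
$$
g_Z \;=\; \sum_{Y \in \cT_Z} n_{Y,Z}\, \wti{f}_Y \;+\; \text{(terms with $\rho_\cdot = 0$)},
$$
with nonzero diagonal entries, which we invert to solve for $\wti{f}_Z$ in terms of $g_Z$ and the $\wti{f}_Y$ with $Y \in \cT_Z \setminus \{Z\}$ constructed earlier in the same round. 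The resulting $\wti{f}_Z$ lies in $\tM(\Pi)_\br$ and has the desired Kronecker-delta property. The main obstacle is the fibre-wise Euler characteristic computation: one must verify both the nonvanishing of the diagonal coefficient $n_{Z,Z}$ (via the explicit Grassmannian description in Section~\ref{ssec:GrassFT} and Lemma~\ref{prp:flag-bdl}) and the vanishing of $\rho_Y(g_Z)$ for $Y \notin \cT_Z$; once these generic-fibre identifications are made precise, the triangular inversion is routine and the finite secondary induction on $\vph_i^*$ terminates since $\vph_i^*(Y) \le r_i$.
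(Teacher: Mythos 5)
Your proposal deviates from the paper's proof in a way that introduces genuine gaps. The paper does not strip off one copy of $E_i$ at a time; it sets $p := \vph_i^*(Z)$ and forms $\wti{f} := \wti{g} * 1_{E_i^p}$ where $\wti{g}$ is the function (from the first induction hypothesis) attached to the component $\ov{Z}_1 \subseteq \Pi(\br - p\alpha_i)$ obtained by stripping off \emph{all} $p$ free summands of $\fac_i$ at once. The point of stripping all of them is that for $M \in \Pi(\br)^{i,p}$ the submodule $U$ with $M/U \cong E_i^p$ is \emph{unique} (it is $K_i(M)$), so $\wti{f}(M) = \wti{g}(U)$ exactly — no Euler characteristic of a positive-dimensional Grassmannian arises, and $\rho_Z(\wti{f}) = 1$ is immediate. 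In your version with $g_Z = \ttheta_i * \wti{f}_{Z^\flat}$, the set of $U \subseteq M$ with $M/U \cong E_i$ is a Grassmannian $\Gr^{E_i}(\fac_i(M))$ of dimension $c_i(p-1) > 0$ whenever $p \ge 2$. You assert that $Z$ produces a nonzero diagonal coefficient ``coming from the Euler characteristic of a nonempty smooth irreducible Grassmannian'', but you never verify this nonvanishing; moreover, even if $\chi(\Gr^{E_i}(H_i^p))$ is computed, the actual coefficient is a weighted Euler characteristic involving the values of $\wti{f}_{Z^\flat}$ on the whole family of submodules $U$, and $\wti{f}_{Z^\flat}$ is only controlled generically on maximal components — its values on the non-generic locus that the Grassmannian sweeps through are unknown. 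This is a real hole, and it is precisely what the paper's design avoids.

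A second gap concerns your triangularity. Your set $\cT_Z := \{Y \in \cB_\br : \tilde{f}_i^*(Y) = Z^\flat\}$ is always a singleton, namely $\{Z\}$, because $\tilde{f}_i^*$ has inverse $\tilde{e}_i^*$ on $\cB$; there is nothing to order by $\vph_i^*$ and no triangular system to invert. The set you actually need to control is the set of \emph{all} $Y \in \cB_\br$ with $\rho_Y(g_Z) \neq 0$, and that is \emph{not} contained in $\cT_Z$: $\wti{f}_{Z^\flat}$ can be nonzero on non-generic loci of many components of $\Pi(\br - \alpha_i)$, so the support of $g_Z$ is not bounded by the Kronecker property. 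The paper's replacement for your $\cT_Z$ is its observation (c): since $M/U \cong E_i^p$ forces $\fac_i(M)$ to surject (hence split) onto $E_i^p$, any $M$ in the support of $\wti{f}$ satisfies $\vph_i^*(M) \ge p$, which is what makes a \emph{descending} induction on $\vph_i^*$ work. With a single $E_i$ stripped off, the analogous observation only gives $\vph_i^*(M) \ge 1$, which is too weak to set up any induction. Finally, note a sub/fac mismatch: you write $g_Z = \ttheta_i * \wti{f}_{Z^\flat}$, which in the paper's convention sums over submodules $U \cong E_i$ and evaluates $\wti{f}_{Z^\flat}$ on $M/U$; for the $\fac$-crystal operator $\tilde{f}_i^*$ the corresponding function should be $\wti{f}_{Z^\flat} * \ttheta_i$ (as in the paper's $\wti{g} * 1_{E_i^p}$), so the factors must be switched for your formula to match the stated choice of $Z^\flat$.
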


\begin{proof}
We argue by induction on $\br = (r_1,\ldots,r_n)$. 
When $\br = 0$, the result is trivial. 
Hence we may assume that $\br \not= 0$ and that the result is known 
for all smaller rank vectors. 
(This is the \emph{first induction hypothesis}.)
For our $\br$ we fix $i \in I$ and we shall prove the following:
\begin{itemize}

\item[(a)] 
The lemma holds for any $Z \in \Irr(\Pi(\br))^\mx$ such that 
$\vph_i^*(Z) > 0$.

\end{itemize}
We argue by descending induction on $\vph_i^*(Z)$. 
Since $\vph_i^*(Z) \le r_i$,
we may assume that 
$\vph_i^*(Z) = p > 0$ and that (a) holds for any 
$\wti{Z} \in \Irr(\Pi(\br))^\mx$ 􏱰such that 
$\vph_i^*(\wti{Z}􏱰)> p$.
(Thus is the \emph{second induction hypothesis}.)
 
Note that 
$$
Z^{i,p} := Z \cap \Pi(\br)^{i,p}
$$ 
is open and dense in $Z$. 
Using the results in Section~\ref{sec:bundles}, we 
get that $Z^{i,p} \in \Irr(\Pi(\br)^{i,p})^\mx$.

By Lemma~\ref{lusztig2}, $Z^{i,p}$ corresponds to some
$Z_1 \in \Irr(\Pi(\bs)^{i,0})$
with $\bs := \br - p\alpha_i$.
$$
\xymatrix{
& Y \ar[dl]_{p'} \ar[dr]^{p''}\\
\Pi(\bs)^{i,0} \times J_0 && \Pi(\br)^{i,p}
}
$$
Let $\ov{Z}_1$􏱮 be the closure of $Z_1$ in $\Pi(\bs)$. 
Theorem~\ref{thm:dimbound} implies that  $\ov{Z}_1 \in \Irr(\Pi(\bs))^\mx$.

By the first induction hypothesis, there exists 
$\wti{g} \in \tM(\Pi)_\bs$ such that 
$\rho_{\ov{Z}_1}(\wti{g}) = 1$ and $\rho_{Z_2}(\wti{g}) = 0$ 
for any $Z_2 \in  \Irr(\Pi(\bs))^\mx \setminus \{ \ov{Z}_1 \}$.
In other words, we have
$$
\wti{g} = \wti{f}_{\ov{Z}_1}.
$$

For each $M \in \Pi(\br)^{i,p}$ there is a uniquely determined submodule
$U$ of $M$ such that $M/U \cong E_i^p$.
We obviously have $U \in \Pi(\bU)^{i,0}$ for some locally free
$\bU = (U_1,\ldots,U_n)$ with $\rk(\bU) = \bs$.
We identify $\Pi(\bU)$ and $\Pi(\bs)$ and consider $U$ as an element
in $\Pi(\bs)$.

Let 
$$
\wti{g}^{i,p}\df \Pi(\br)^{i,p} \to \Z
$$ 
be defined by 
$\wti{g}^{i,p}(M) := \wti{g}(U)$.

Let 
$$
\wti{f} :􏱰= \wti{g} * 1_{E_i^p} \in \tM(\Pi)_\br.
$$ 

From the definitions we see that
\begin{itemize}

\item[(b)] 
$\wti{f}|_{\Pi(\br)^{i,p}}􏱰 = \wti{g}^{i,p}$;

\item[(c)]
If $\wti{f}􏱰(M) \not= 0$ for some $M \in \Pi(\br)$, then 
$M \in \Pi(\br)^{i,\bp'}$ for some partition $\bp'$ with
$\bp'(c_i) \ge p$. 

\end{itemize}

Using (b) and the definitions we see that 
$\rho_Z(\wti{f}􏱰) = 1$ and $\rho_{Z'}(\wti{f}) = 0$ for all
$Z' \in \Irr(\Pi(\br))^\mx \setminus \{ Z \}$ such that 
$\vph_i^*(Z') = p$. 

Using (c), we see that $\rho_{Z'}(\wti{f}) = 0$ for all 
$Z' \in \Irr(\Pi(\br))^\mx$ such that $\vph_i^*(Z') < p$. 
By the second induction hypothesis, for all $Z' \in \Irr(\Pi(\br))^\mx$
such that $\vph_i^*(Z') > p$ we can find a function 
$\wti{f}_{Z'} \in \tM(\Pi)$ such that $\rho_{Z'}(\wti{f}_{Z'}) = 1$ 
and $\rho_{\wti{Z}}(\wti{f}_{Z'}) = 0$ for any 
$\wti{Z}􏱰 \in \Irr(\Pi(\br))^\mx \setminus \{ Z' \}$. 

Let
$$
\wti{f}_Z := \wti{f} - \sum_{Z'}􏱰 \rho_{Z'}(\wti{f})\wti{f}_{Z'}
$$
where $Z'$ runs over all irreducible components in 
$\Irr(\Pi(\br))^\mx$ with $\vph_i^*(Z') > p$.

We have $\wti{f}_Z \in \tM(\Pi)$. It is clear that $\wti{f}_Z$ satisfies the 
requirements of the lemma. 
Thus (a) is proved (assuming the first induction hypothesis). 
Now, by Lemma~\ref{lusztig0}(c) we know that any $Z  \in \Irr(\Pi(\br))^\mx$ satisfies 
$\vph_i^*(Z) > 0$ for some $i$. 
Hence the lemma holds for $Z$ (assuming the first induction hypothesis). 
This provides the induction step. 
The lemma is proved.
\end{proof}

Let us stress that the inductive construction of the maps $\wti{f}_Z$ in the proof
of Lemma~\ref{lem:Lu2.4} involves the choice of some $i$ with $\vph_i^*(Z) > 0$.

\begin{Thm}\label{thm:Lu2.6}
For each $\br \in \N^n$ we have 
$$
\dim(U(\n)_\br) = |\Irr(\Pi(\br))^\mx|.
$$
\end{Thm}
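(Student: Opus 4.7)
The plan is to derive Theorem~\ref{thm:Lu2.6} as a direct consequence of the crystal isomorphism of Theorem~\ref{thm:3.16}, combined with the classical fact that $B(-\infty)$ parametrizes a basis of $U(\n)$ compatibly with the root lattice grading. The theorem is essentially ``free'' at this point in the paper; all of the work has already been done.

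First, by Theorem~\ref{thm:3.16} we have an isomorphism of crystals $\cB \cong B(-\infty)$ that intertwines the weight functions. Since the weight function on $\cB$ was defined in Section~\ref{subsec3.5a} by sending $Z \in \Irr(\Pi(\br))^\mx$ to $\br = \sum_{i} r_i \alpha_i \in R^+ \subset P$, this isomorphism restricts to a bijection between $\cB_\br = \Irr(\Pi(\br))^\mx$ and the weight-$\br$ part $B(-\infty)_\br$. In particular,
$$
|\Irr(\Pi(\br))^\mx| \;=\; |B(-\infty)_\br|.
$$

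Second, I would invoke the standard theory of crystal bases: $B(-\infty)$ is the crystal of the global (canonical) basis of $U_q^{-}(\g)$ in the sense of Kashiwara, and hence $|B(-\infty)_\br| = \dim_{\Q(q)} U_q^{-}(\g)_\br$. By the quantum PBW theorem (equivalently, by comparing characters: both are given by Kostant's partition function applied to $\br$ with multiplicities $\dim \g_\alpha$), this dimension coincides with $\dim_\C U(\n)_\br$. Chaining these equalities,
$$
\dim U(\n)_\br \;=\; \dim_{\Q(q)} U_q^{-}(\g)_\br \;=\; |B(-\infty)_\br| \;=\; |\cB_\br| \;=\; |\Irr(\Pi(\br))^\mx|,
$$
which is the claim.

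The only possible obstacle is a careful matching of conventions for the weight function. The paper presents $B(-\infty)$ as a lowest-weight crystal with $\wt(b_-) = 0$ and weights in $R^+$, whereas it is more common to realize $B(-\infty)$ with a highest-weight vertex of weight $0$ and weights in $-R^+$. Since a sign change on weights does not affect the cardinalities of fibres, this is purely cosmetic. Otherwise the argument is a concatenation of results already available at this point in the paper (Theorem~\ref{thm:3.16}) together with well-known external facts about quantum groups and their crystal bases, and no genuine new computation is needed.
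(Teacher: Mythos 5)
Your proof is correct and follows essentially the same route as the paper: the paper's proof is a one-line appeal to Theorem~\ref{thm:3.16} together with Kashiwara's results in \cite{K2}, which is precisely what you invoke (via the crystal isomorphism intertwining weights, the fact that $B(-\infty)$ parametrizes a basis of $U_q^-(\g)$ weight by weight, and the matching of graded dimensions between $U_q^-(\g)$ and $U(\n)$). Your additional care about the weight-sign convention is a reasonable bookkeeping remark but changes nothing of substance.
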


\begin{proof}
This follows from our geometric realization of the crystal graph $B(-\infty)$ (see Theorem~\ref{thm:3.16})
combined with the ground breaking results in \cite{K2}.
\end{proof}

Recall that 
$$
\cB = \bigsqcup_{\br \in \N^n} \Irr(\Pi(\br))^\mx.
$$
Slightly rephrasing Lemma~\ref{lem:Lu2.4}, we proved the following theorem.

\begin{Thm}\label{thm:semican1}
The convolution algebra $\tM(\Pi)$ contains 
a set 
$$
\wti{\cS} := \{ \wti{f}_Z \mid Z \in \cB \}
$$ 
of constructible functions 
such that for each $Z' \in \cB$ we have
$$
\rho_{Z'}(\wti{f}_Z) =
\begin{cases}
1 & \text{if $Z = Z'$},\\
0 & \text{otherwise}.
\end{cases}
$$
\end{Thm}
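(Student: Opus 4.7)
The plan is to observe that Theorem~\ref{thm:semican1} is a direct reassembly of Lemma~\ref{lem:Lu2.4} across all rank vectors. Given any $Z \in \cB$, there is a unique $\br \in \N^n$ with $Z \in \Irr(\Pi(\br))^\mx$, and Lemma~\ref{lem:Lu2.4} produces a function $\wti{f}_Z \in \tM(\Pi)_\br$ with the Kronecker-delta property against every $Z' \in \Irr(\Pi(\br))^\mx$. Since $\wti{f}_Z$ is homogeneous of degree $\br$, its underlying constructible function vanishes identically on $\rep(\Pi,\br')$ for any $\br' \ne \br$, so $\rho_{Z'}(\wti{f}_Z) = 0$ automatically for all $Z' \in \cB$ lying over a different rank vector. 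Combined with the lemma this gives the claimed orthogonality.

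Consequently the real content lies in Lemma~\ref{lem:Lu2.4}, and this is where I would concentrate. The natural approach is a nested double induction: outer induction on the rank vector $\br$ (using any order refining termwise comparison), and inner descending induction on the invariant $\varphi_i^*(Z) = p$ after choosing some $i \in I$ with $\varphi_i^*(Z) > 0$ (such an $i$ exists by Lemma~\ref{lusztig0}(c)). The upper bound $p \le r_i$ ensures that the inner induction terminates.

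The inductive step uses the bundle machinery of Section~\ref{sec:bundles}. Given $Z$ with $\varphi_i^*(Z) = p$, the open subset $Z^{i,p} = Z \cap \Pi(\br)^{i,p}$ is dense and lies in $\Irr(\Pi(\br)^{i,p})^\mx$; Lemma~\ref{lusztig2} then identifies it with a maximal component $\ov{Z}_1 \in \Irr(\Pi(\bs))^\mx$ for $\bs = \br - p\alpha_i$. Outer induction supplies the function $\wti{g} := \wti{f}_{\ov{Z}_1} \in \tM(\Pi)_\bs$, and the candidate on the larger rank is $\wti{f} := \wti{g} * 1_{E_i^p} \in \tM(\Pi)_\br$. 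Two key properties then have to be verified: (b) on the stratum $\Pi(\br)^{i,p}$, convolution with $1_{E_i^p}$ just reads off the value of $\wti{g}$ on the uniquely determined submodule $U \subseteq M$ with $M/U \cong E_i^p$; and (c) the support of $\wti{f}$ sits inside the union of strata $\Pi(\br)^{i,\bp'}$ with $\bp'(c_i) \ge p$. Property (b) gives $\rho_Z(\wti{f}) = 1$ and ensures $\wti{f}$ is orthogonal to all other maximal components at the same $\varphi_i^*$-level, while (c) kills the levels below~$p$.

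The main obstacle — which the bundle picture is designed to resolve — is the strata above level $p$: a priori $\wti{f}$ need not vanish on components $Z' \in \Irr(\Pi(\br))^\mx$ with $\varphi_i^*(Z') > p$. Here the inner (descending) induction intervenes: the functions $\wti{f}_{Z'}$ for such $Z'$ are already available, so one simply subtracts
\[
\wti{f}_Z := \wti{f} - \sum_{Z'\,:\,\varphi_i^*(Z') > p} \rho_{Z'}(\wti{f})\,\wti{f}_{Z'}.
\]
The delicate point is that both inductions must be organized so that this subtraction is well-defined, i.e.\ the ``larger'' functions are constructed before the ``smaller'' ones at the same rank $\br$; this is exactly why the inner induction runs downward on $p$ starting from the trivial bound $p = r_i$. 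Packaging the output across all $\br$ and setting $\wti{\cS} := \{ \wti{f}_Z \mid Z \in \cB \}$ then yields Theorem~\ref{thm:semican1}.
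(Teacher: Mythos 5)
Your proposal is correct and follows essentially the same route as the paper: the paper proves Theorem~\ref{thm:semican1} by citing Lemma~\ref{lem:Lu2.4} as a slight rephrasing, and your first paragraph supplies exactly the (easy) assembly across rank vectors, while your remaining paragraphs accurately reproduce the paper's nested double induction (outer on $\br$, inner descending on $\varphi_i^*$), the bundle construction from Lemma~\ref{lusztig2}, the candidate $\wti{g}*1_{E_i^p}$, properties (b) and (c), and the final correction term.
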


Recall that $\Serre$ is the ideal in $\tM(\Pi)$ generated by the elements
$\ttheta_{ij}$ with $c_{ij} \le 0$, and that 
$$
\cM(\Pi) := \tM(\Pi)/\Serre.
$$
As mentioned in Section~\ref{subsec:convolution}, 
the convolution algebra
$\tM(\Pi)$ is a Hopf algebra with comultiplication 
$\tM(\Pi) \to \tM(\Pi) \otimes \tM(\Pi)$ defined by $\ttheta_i \mapsto \ttheta_i \otimes 1 + 1 \otimes \ttheta_i$.
Furthermore, $\tM(\Pi)$ is isomorphic to the universal enveloping
algebra $U(\cP(\tM(\Pi)))$ of the Lie algebra $\cP(\tM(\Pi))$ 
of primitive elements in $\tM(\Pi)$.

The surjective algebra homomorphism $\tM(\Pi) \to \cM(\Pi)$ defined by
$\ttheta_i \mapsto \theta_i$ yields a Hopf algebra structure on $\cM(\Pi)$
with comultiplication defined by $\theta_i \mapsto \theta_i \otimes 1 + 1 \otimes \theta_i$.

\subsection{Proof of Theorem~\ref{introthm:main}}
Let 
\begin{align*}
\cB_r &:= \Irr(\Pi(\br))^\mx,
\\
\wti{\cS}_\br &:= \wti{\cS}(C,D)_\br := \{ \wti{f}_Z \mid Z \in \cB_\br \} \subset
\tM(\Pi)_\br,
\\
\cS_\br &:= \cS(C,D)_\br := \{ f_Z \mid Z \in \cB_\br \} \subset \cM(\Pi)_\br
\text{ where } f_Z := \wti{f}_Z + \Serre.
\end{align*}
We have disjoint unions
$$
\cB = \bigcup_{\br \in \N^n} \cB_\br,
\qquad
\wti{\cS} := \wti{\cS}(C,D) = \bigcup_{\br \in \N^n} \wti{\cS}_\br,
\qquad
\cS := \cS(C,D) = \bigcup_{\br \in \N^n} \cS_\br.
$$

\begin{Thm}\label{thm:mainproof}
Assume that Conjecture~\ref{mainconj} is true.
For $\Pi = \Pi(C,D)$, $\n = \n(C)$ and $\cS = \cS(C,D)$ the following hold:
\begin{itemize}

\item[(i)]
There is a Hopf algebra isomorphism
$$
\eta_\Pi\df U(\n) \to \cM(\Pi)
$$
defined by $e_i \mapsto \theta_i$.

\item[(ii)]
Via the isomorphism $\eta_\Pi$, 
$\cS_\br$
is a $\C$-basis of $U(\n)_\br$, and
$\cS$ is a $\C$-basis of $U(\n)$.

\item[(iii)]
For $0 \not= f \in \tM(\Pi)$ the following are equivalent:
\begin{itemize}

\item[(a)]
$f \in \Serre$.

\item[(b)]
$f$ has non-maximal support.

\end{itemize}

\end{itemize}
\end{Thm}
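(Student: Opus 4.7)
The plan is to prove all three parts simultaneously via a dimension-count argument anchored in the linear independence of $\cS_\br = \{f_Z \mid Z \in \cB_\br\}$ in $\cM(\Pi)_\br$. The key input is that the semicanonical functions $\wti{f}_Z$ produced by Theorem~\ref{thm:semican1} are separated by the functionals $\rho_{Z'}$ indexed by maximal components, while Conjecture~\ref{mainconj} forces elements of $\Serre$ to be invisible to these functionals.

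First I would establish linear independence of $\cS_\br$. Suppose $\sum_{Z \in \cB_\br} a_Z \wti{f}_Z \in \Serre_\br$ for some scalars $a_Z \in \C$. By Conjecture~\ref{mainconj}, if this element is nonzero its support has dimension strictly less than $\dim H(\br)$; if it is zero this is trivially true as well. For any $Z' \in \cB_\br$, the component $Z'$ has dimension exactly $\dim H(\br)$, so the support, being of strictly smaller dimension, cannot contain a dense open subset of $Z'$. Hence $\rho_{Z'}\bigl(\sum_Z a_Z \wti{f}_Z\bigr) = 0$. Combined with $\rho_{Z'}(\wti{f}_Z) = \delta_{Z,Z'}$ from Theorem~\ref{thm:semican1}, this forces $a_{Z'} = 0$ for every $Z' \in \cB_\br$. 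Thus $\cS_\br$ consists of $|\cB_\br|$ linearly independent elements in $\cM(\Pi)_\br$.

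Next I would deduce (i) and (ii) by dimensions. The paper already provides a surjective algebra homomorphism $\eta_\Pi\df U(\n) \to \cM(\Pi)$ sending $e_i \mapsto \theta_i$, since the Serre relators lie in $\Serre$ by definition. Combined with Theorem~\ref{thm:Lu2.6} ($\dim U(\n)_\br = |\cB_\br|$) and the lower bound from the previous paragraph, we obtain
\[
|\cB_\br| \;\le\; \dim \cM(\Pi)_\br \;\le\; \dim U(\n)_\br \;=\; |\cB_\br|,
\]
forcing equality throughout. Therefore $\eta_\Pi$ is a linear isomorphism in each graded piece, hence an algebra isomorphism, proving (i); the Hopf algebra structure is preserved because $e_i$ and $\theta_i$ are both primitive and the comultiplications are determined on generators. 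Simultaneously, $\cS_\br$ is a basis of $\cM(\Pi)_\br$, and transported through $\eta_\Pi$ it becomes a basis of $U(\n)_\br$, yielding (ii).

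For (iii), the implication (a) $\Rightarrow$ (b) is precisely the content of Conjecture~\ref{mainconj}. For the converse, let $f \in \tM(\Pi)_\br$ with $\dim \supp(f) < \dim H(\br)$. Using the basis from (ii), write $f + \Serre = \sum_{Z \in \cB_\br} a_Z f_Z$ and set $g := f - \sum_Z a_Z \wti{f}_Z \in \Serre_\br$. By (a) $\Rightarrow$ (b) applied to $g$, the support of $g$ is non-maximal; together with the assumption on $f$, the sum $\sum_Z a_Z \wti{f}_Z = f - g$ must also have non-maximal support. Evaluating $\rho_{Z'}$ as in the second paragraph forces $a_{Z'} = 0$ for every $Z' \in \cB_\br$, whence $f = g \in \Serre_\br$. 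The main obstacle throughout is the conjecture itself; granting it, the argument reduces to a clean evaluation-and-sandwich-dimension-count, with the separating property of the $\rho_{Z'}$ on maximal components doing all the work.
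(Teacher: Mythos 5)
Your proposal is correct and follows essentially the same route as the paper: linear independence of $\cS_\br$ via the separating functionals $\rho_{Z'}$ together with Conjecture~\ref{mainconj}, the dimension sandwich $|\cB_\br| \le \dim\cM(\Pi)_\br \le \dim U(\n)_\br = |\cB_\br|$, and then (iii) by splitting off the $\Serre_\br$-part. The only cosmetic difference is that the paper first records the direct-sum decomposition $\tM(\Pi)_\br = \cU_\br \oplus \Serre_\br$ and argues abstractly, while you unfold the same argument via explicit $\rho_{Z'}$-evaluations; the content is identical.
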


\begin{proof}
There is a surjective algebra homomorphism 
$$
\eta_\Pi\df U(\n) \to \cM(\Pi)
$$
defined by $e_i \mapsto \theta_i$.
(Dividing $\tM(\Pi)$ by the ideal $\Serre$ forces the algebra generators
$\theta_i$ of $\cM(\Pi)$ to satisfy the Serre relations.)
It is also clear that $\eta_\Pi$ induces a surjective $K$-linear map
$$
\eta_{\Pi,\br}\df U(\n)_\br \to \cM(\Pi)_\br.
$$
As an immediate consequence of Theorem~\ref{thm:semican1}, the set
$\wti{\cS}_\br$ is linearly independent in $\tM(\Pi)_\br$.
Theorem~\ref{thm:3.16} implies that
$$
|\wti{\cS}_\br| = \dim U(\n)_\br.
$$

Assume that 
$$
f := \sum_{Z \in \cB_\br} \lambda_Z f_Z = 0
$$
for some $\lambda_Z \in K$.
It follows that
$$
\sum_{Z \in \cB_\br} \lambda_Z \wti{f}_Z \in \Serre.
$$
By our assumption that Conjecture~\ref{mainconj} holds, 
it follows that $\lambda_Z = 0$ for all $Z$.

It follows that the set $\cS_\br$ is linearly independent in $\cM(\Pi)_\br$.
So for dimension reasons, 
$$
\eta_{\Pi,\br}\df U(\n)_\br \to \cM(\Pi)_\br
$$
is an isomorphism of $\C$-vector spaces, and therefore
$\eta_\Pi$ is an algebra isomorphism.

It also follows that $\cS_\br$ is a 
$\C$-basis of $U(\n)_\br$, and $\cS$ is a $\C$-basis of $U(\n)$.
Thus we proved (ii).

As a $K$-vector space we get a direct sum decomposition
$$
\tM(\Pi)_\br = \cU_\br \oplus \Serre_\br
$$
where $\cU_\br$ is the subspace generated by $\wti{\cS}_\br$.
Each function in $\cU_\br$ has maximal support,
and by our assumption that Conjecture~\ref{mainconj} holds, each function in $\Serre_\br$ has non-maximal support.

Clearly, for  each sum $h := f + g$ with $f \in \cU_\br$
and $g \in \Serre_\br$, we have that $h$ has non-maximal support 
if and only if $f = 0$.
This finishes the proof of (iii).

The enveloping algebra $U(\n)$ is a Hopf algebra with comultiplication $U(\n) \to U(\n) \otimes U(\n)$ defined by $e_i \mapsto e_i \otimes 1 + 1 \otimes e_i$.
The algebra isomorphism 
$\eta_\Pi\df U(\n) \to \cM(\Pi)$ 
is obviously a Hopf algebra isomorphism.
This finishes the proof of (i).
\end{proof}

For $\Pi = \Pi(C,D)$ and $\n = \n(C)$
we call $\cS = \cS(C,D)$ the \emph{semicanonical basis} of $U(\n)$.
For $C$ symmetric and $D$ the identity matrix, $\cS$ coincides
with Lusztig's semicanonical basis of $U(\n)$.

\begin{Prop}\label{prop:unique}
Assume that Conjecture~\ref{mainconj} is true.
Let $\wti{\cS} = \{ \wti{f}_Z \mid Z \in \cB \}$ and
$\wti{\cG} = \{ \wti{g}_Z \mid Z \in \cB \}$
be subsets of $\tM(\Pi)$ satisfying
$$
\rho_{Z'}(\wti{f}_Z) = \rho_{Z'}(\wti{g}_Z) =
\begin{cases}
1 & \text{if $Z = Z'$},\\
0 & \text{otherwise}
\end{cases}
$$
for all $Z,Z' \in \cB$.
Then $\wti{f}_Z - \wti{g}_Z \in \Serre$.
\end{Prop}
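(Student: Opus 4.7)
The plan is to deduce Proposition~\ref{prop:unique} directly from part (iii) of Theorem~\ref{thm:mainproof}, which under Conjecture~\ref{mainconj} characterizes elements of $\Serre$ as precisely the constructible functions with non-maximal support. Thus it suffices to verify that the difference $h_Z := \wti{f}_Z - \wti{g}_Z$ has non-maximal support; then either $h_Z = 0$ (in which case the conclusion is trivial) or $h_Z \ne 0$ and Theorem~\ref{thm:mainproof}(iii) gives $h_Z \in \Serre$.

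To verify this, first note that $h_Z \in \tM(\Pi)_\br$, where $\br$ is the rank vector of $Z$. By the hypotheses on $\wti{\cS}$ and $\wti{\cG}$, linearity of $\rho_{Z'}$ yields
$$
\rho_{Z'}(h_Z) = \rho_{Z'}(\wti{f}_Z) - \rho_{Z'}(\wti{g}_Z) = 0
$$
for every $Z' \in \cB_\br = \Irr(\Pi(\br))^\mx$. Consequently, for each such $Z'$ there is a dense open subset of $Z'$ on which $h_Z$ vanishes, so $\supp(h_Z) \cap Z'$ is contained in a proper closed subset of $Z'$ and has dimension strictly less than $\dim Z' = \dim H(\br)$. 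On the other hand, any irreducible component of $\Pi(\br)$ not belonging to $\cB_\br$ also has dimension strictly less than $\dim H(\br)$ by Theorem~\ref{thm:dimbound} together with the definition of maximality. Writing $\supp(h_Z)$ as the finite union of its intersections with the irreducible components of $\Pi(\br)$ then gives $\dim \supp(h_Z) < \dim H(\br)$ (equivalently, $< \dim G(d) - q_{DC}(d/D)$ in the quiver version), i.e.\ $h_Z$ has non-maximal support.

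Applying the implication (b) $\Rightarrow$ (a) of Theorem~\ref{thm:mainproof}(iii)—the step that relies on Conjecture~\ref{mainconj}—we conclude $h_Z \in \Serre$, which is the claim. There is no real obstacle once Theorem~\ref{thm:mainproof}(iii) is available; the only subtlety is the routine passage from generic vanishing on every maximal component to the dimension bound on the entire support, which is handled by the component decomposition above.
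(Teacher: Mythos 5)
Your proof is correct and follows essentially the same route as the paper: compute that $\rho_{Z'}(\wti{f}_Z - \wti{g}_Z) = 0$ for all maximal components $Z'$, deduce that the support of the difference is non-maximal, and then apply Theorem~\ref{thm:mainproof}(iii). The only difference is that you spell out the (routine) argument that generic vanishing on each maximal component, together with Theorem~\ref{thm:dimbound} for the non-maximal components, forces the dimension bound on the whole support, while the paper states this step without elaboration.
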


\begin{proof}
By definition we have
$$
\rho_{Z'}(\wti{f}_Z-\wti{g}_Z) = 0
$$
for all $Z' \in \cB$.
This implies
$\dim \supp(\wti{f}_Z-\wti{g}_Z) < \dim H(\br)$
for all $Z \in \cB_\br$.
By Theorem~\ref{thm:mainproof}(iii) we get 
$\wti{f}_Z-\wti{g}_Z \in \Serre$.
\end{proof}

\subsection{Semicanonical bases for irreducible integrable highest
weight modules}
Let $\Pi = \Pi(C,D)$, $\g = \g(C)$, $\n = \n(C)$ and $\cB$ be defined as 
before.
Assume that Conjecture~\ref{mainconj} is true.

Recall that
for $\la \in P^+$ a dominant integral weight, 
$V(\lambda)$ denotes the irreducible integrable highest 
weight $\g$-module with highest weight $\la$. 

In view of Theorem~\ref{thm:mainproof}, we can then identify $\cM(\Pi)$ with $U(\n)$, and we consider the semicanonical basis $\cS = \cS(C,D)$ of
$\cM(\Pi)$ as a basis of $U(\n)$.

Let $\la \in P^+$ be a dominant integral weight. 
Fix a highest weight vector
$v_\la \in V(\la)$. Furthermore, let $x \mapsto x^-$ denote the algebra automorphism of $U(\g)$  defined by
\[
e_i^- := f_i,\quad f_i^- := e_i,\quad h^- := -h,\qquad (i\in I,\; h \in \h).
\]
We then have a surjective homomorphism of $U(\n)$-modules 
$$
\pi_\la\df U(\n) \to V(\la)
$$ 
defined by $x \mapsto x^-v_\la$.

\begin{Prop}
Assume that Conjecture~\ref{mainconj} is true.
For each $\lambda \in P^+$ the following hold:
\begin{itemize}

\item[(i)]  
$\pi_\la(f_Z) = 0$ if and only if $Z \not \in \cB_\la^*$.

\item[(ii)]  
$\cS_\la := \{ \pi_\la(f_Z) \mid Z \in \cB_\la^* \}$ is a basis of $V(\la)$.

\end{itemize}
\end{Prop}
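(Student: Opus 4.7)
The plan is to establish (i) in the direction $Z \notin \cB_\la^* \Rightarrow \pi_\la(f_Z) = 0$ first, and then deduce (ii) together with the converse direction of (i) by a dimension count. Throughout I identify $\cM(\Pi)$ with $U(\n)$ via the isomorphism $\eta_\Pi$ from Theorem~\ref{introthm:main}(i), so each $f_Z$ is viewed as an element of $U(\n)$. The key preliminary observation is that $\pi_\la$ is a homomorphism of left $U(\n)$-modules when $U(\n)$ acts on $V(\la)$ through the Chevalley involution (so $\theta_i = e_i$ acts by $f_i$): indeed, since $y \mapsto y^-$ is an algebra automorphism of $U(\g)$, we have $\pi_\la(yx) = (yx)^- v_\la = y^-(x^- v_\la) = y^- \pi_\la(x)$ for all $y, x \in U(\n)$. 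Since $f_i^{(p)} v_\la = 0$ whenever $p > a_i$, this gives
\[
\pi_\la(y \cdot \theta_i^{(p)}) = y^- f_i^{(p)} v_\la = 0 \qquad \text{for every } y \in U(\n) \text{ and every } p > a_i.
\]

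Now fix $Z \notin \cB_\la^*$, choose $i$ with $p := \varphi_i^*(Z) > a_i$, and set $\br := \wt(Z)$. I would argue by descending induction on $\varphi_i^*(Z)$ (for this fixed $i$, within $\cB_\br$); the base case $\varphi_i^*(Z) = r_i$ is handled identically, with no correction terms. By Proposition~\ref{prop:unique}, the class $f_Z \in U(\n)$ is independent of the choice of representative $\wti{f}_Z$, so we may apply the recursion in the proof of Lemma~\ref{lem:Lu2.4} using precisely this index $i$; this produces
\[
\wti{f}_Z = \wti{f}_{Z_1} * 1_{E_i^p} - \sum_{Z' \in \cB_\br,\ \varphi_i^*(Z') > p} c_{Z'}\, \wti{f}_{Z'}
\]
for some $Z_1 \in \cB_{\br - p\alpha_i}$ and scalars $c_{Z'}$. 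Using the equality $1_{E_i^p} = \ttheta_i^{(p)}$ of constructible functions (an immediate consequence of $\chi(\Gr_{E_i^{p-1}}(E_i^p)) = p$ together with the fact that both sides are supported on $\{E_i^p\}$), and the compatibility of $*$ with multiplication in $\cM(\Pi) \cong U(\n)$ from Theorem~\ref{introthm:main}(i), we descend to the identity $f_Z = f_{Z_1} \cdot \theta_i^{(p)} - \sum_{Z'} c_{Z'} f_{Z'}$ in $U(\n)$. The first summand is killed by $\pi_\la$ by the preliminary observation (since $p > a_i$), while each $\pi_\la(f_{Z'})$ vanishes by the descending induction hypothesis (since $\varphi_i^*(Z') > p > a_i$). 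Hence $\pi_\la(f_Z) = 0$.

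To finish, set $W := \Span_\C\{f_Z : Z \notin \cB_\la^*\} \subseteq U(\n)$. Since $\pi_\la$ annihilates $W$, it factors through $U(\n)/W$, which has basis $\{f_Z + W : Z \in \cB_\la^*\}$. By the preceding Littlewood--Richardson proposition, the crystal embedding $\iota_\la\colon B(\la) \hookrightarrow \cB$ has image $\cB_\la^*$ and satisfies $\wt(\iota_\la(b)) = \la - \wt(b)$; combined with the standard equality $\dim V(\la)_\mu = |\{b \in B(\la) : \wt(b) = \mu\}|$, this yields $|\cB_\la^* \cap \cB_\br| = \dim V(\la)_{\la - \br}$ in every weight. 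Since $\pi_\la$ is a weight-preserving surjection whose source and target have matching weight multiplicities, the induced map $U(\n)/W \to V(\la)$ is an isomorphism, which simultaneously proves (ii) and shows $\pi_\la(f_Z) \neq 0$ for every $Z \in \cB_\la^*$, completing (i). The main obstacle lies in the bookkeeping needed to convert the recursion of Lemma~\ref{lem:Lu2.4}, stated in $\tM(\Pi)$, into the displayed identity in $U(\n)$; once the elementary fact $1_{E_i^p} = \ttheta_i^{(p)}$ and the compatibility of $*$ with multiplication under $\eta_\Pi$ are secured, the remainder reduces to routine weight-multiplicity counting and the Chevalley-involution formalism.
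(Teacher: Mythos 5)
Your proof is correct, and the core engine is the same as the paper's: the key step in both is to read off from the recursion in the proof of Lemma~\ref{lem:Lu2.4} the identity $f_Z = f_{Z_1}\cdot\theta_i^{(p)} - \sum_{Z'}c_{Z'}f_{Z'}$ with $\varphi_i^*(Z') > p$, then run a descending induction on $\varphi_i^*$. Where the two arguments diverge is in how they close the loop. The paper proves both inclusions between the left ideal $\sum_i U(\n)e_i^{a_i+1}$ and $\cW_d = \Span\{f_Z \mid \varphi_i^*(Z) > a_i \text{ for some } i\}$, and then invokes the standard Verma-theoretic fact that this left ideal is exactly $\Ker(\pi_\la)$; this simultaneously yields (i) and (ii). You instead prove only the inclusion $W \subseteq \Ker(\pi_\la)$ (using that $x\mapsto x^-$ is an algebra automorphism, so $\pi_\la$ intertwines right multiplication by $\theta_i^{(p)}$ with the $f_i^{(p)}$-action killing $v_\la$), and then replace the reverse inclusion with a graded dimension count: the crystal embedding $\iota_\la$ from the Littlewood--Richardson proposition gives $|\cB_\la^*\cap\cB_\br| = \dim V(\la)_{\la-\br}$, and since $\pi_\la$ is a weight-graded surjection, equality of graded dimensions forces the induced map $U(\n)/W \to V(\la)$ to be an isomorphism. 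This is a modest but genuine streamlining: you avoid the second inclusion's descending induction at the cost of importing the weight-multiplicity fact from crystal theory, whereas the paper imports the Verma kernel fact and keeps the argument purely internal to $U(\n)$. Both external facts are standard, so the trade is essentially a matter of taste; your route has the small advantage of needing only one of the two directions of Lemma~\ref{lem:Lu2.4}'s recursion.
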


\begin{proof}
This is similar to \cite[Section 3]{L2}, so we will only sketch the argument.
It follows from the proof of Lemma~\ref{lem:Lu2.4} that 
for every $Z\in\cB$,
\[
f_Z e_i^p/p! = f_{Z'} + \sum_{Z''} \mu_{Z''} f_{Z''},
\]
where $Z' = (\tilde{e}_i^*)^p(Z)$, and the sum is over $Z''$ with $\varphi_i^*(Z'') > \varphi_i^*(Z')$.
(The function $1_{E_i^p}$ in $\cM(\Pi)$ corresponds to $e_i^p/p!$ in
$U(\n)$.)
This implies that the left ideal $U(\n)e_i^p$ is contained in the subspace spanned by
$\{f_Z \mid \varphi_i^*(Z) \ge p\}$. 
More generally, if $d = (d_i) \in \N^I$, we have
\[
\sum_{i\in I} U(\n)e_i^{d_i} \subseteq \cW_d:= \Span\{ f_Z \mid \varphi_i^*(Z) \ge d_i \mbox{ for some } i\in I \}.
\]

Conversely, consider $f_Z \in \cS$ such that $\varphi_i^*(Z) = p$. 
Using again the proof of Lemma~\ref{lem:Lu2.4}, we get that
\[
f_Z = f_{Z'}e_i^p/p! + \sum_{Z''} \nu_{Z''} f_{Z''},
\]
where $Z' = (\tilde{f}_i^*)^p(Z)$, and the sum is over $Z''$ with $\varphi_i^*(Z'') > \varphi_i^*(Z)$.
Using descending induction on $p$, it follows that  
$$
\sum_{i\in I} U(\n)e_i^{d_i} \supseteq \cW_d.
$$
Hence the left ideal $\sum_{i\in I} U(\n)e_i^{d_i}$ coincides with the subspace
$\cW_d$ spanned by a subset of $\cS$.

Now it is known that 
$$
\Ker(\pi_\la) = \sum_{i\in I} U(\n)e_i^{a_i+1}.
$$ 
Therefore $\Ker(\pi_\la) =\cW_{d}$ with $d=(a_i+1)$, that is,
$$
\Ker(\pi_\la) = \Span\{f_Z \mid Z \not \in B_\la^*\}
$$ 
and the proposition follows.
\end{proof}


\section{Examples}\label{sec:examples}


\subsection{Maximal irreducible components for the Dynkin 
cases}\label{subsec:maxcompDynkin}
Let $\Pi = \Pi(C,D) = \Pi(C,D,\Omega)$.
Let $Q = Q(C,\Omega) = (I,Q_1,s,t)$ be the full subquiver of 
$\ov{Q}(C) = (I,\ov{Q}_1,s,t)$
with arrow set
$$
Q_1 = \{ \alpha_{ij}^{(g)} \in \ov{Q}_1 \mid (i,j) \in \Omega, 1 \le g \le g_{ij} \}
\cup \{ \vep_i \mid i \in I\}. 
$$
Let $H = H(C,D,\Omega)$ be the subalgebra of $\Pi$ given by
$Q(C,\Omega)$.
Thus we have
$$
H = KQ/J
$$ 
where $KQ$ is the path algebra of $Q$ and
$J$ is the ideal defined by the following
relations:
\begin{itemize}

\item[(H1)]
For each $i$ we have the \emph{nilpotency relation}
\[
\vep_i^{c_i} = 0.
\]

\item[(H2)]
For each $(i,j) \in \Omega$ and each $1 \le g \le g_{ij}$ we have
the \emph{commutativity relation}
\[
\vep_i^{f_{ji}}\alpha_{ij}^{(g)} = \alpha_{ij}^{(g)}\vep_j^{f_{ij}}.
\]

\end{itemize}

There is an obvious embedding $\rep(H) \to \rep(\Pi)$.
Thus each $H$-module can be seen as a $\Pi$-module.
Let $TC^+\df \rep(H) \to \rep(H)$ denote the twisted Coxeter functor defined in \cite{GLS3}.

As before, 
for a dimension vector $d$ let
$\rep(H,d)$ and $\rep(\Pi,d)$ be the varieties of
representation of $H$-modules
$\Pi$-modules with dimension vector $d$, respectively.

Let $\rep_\vp(H,d) \subseteq \rep(H,d)$ and $\rep_\vp(\Pi,d) \subseteq \rep(\Pi,d)$ denote the subvarieties of 
locally free modules.

Let 
$$
\pi_H\df \rep(\Pi,d) \to \rep(H,d)
$$ 
be the obvious restriction map.

\begin{Prop}\label{fibredim1}
For each $M \in \rep_\vp(H,d)$ we have
$$
\pi_H^{-1}(M) \cong \Hom_H(M,TC^+(M)).
$$
\end{Prop}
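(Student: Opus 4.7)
The plan is to adapt Crawley-Boevey's classical fibre identification (\cite{CB2} type argument) to the symmetrizable/species setting, using the machinery developed in \cite{GLS3}. Working in the species description of $\rep(\Pi,d)$ from Section~\ref{subsec:species}, a point of $\pi_H^{-1}(M)$ consists of the $H_i$-linear maps
\[
N_{ij}\df {_iH_j} \otimes_j M_j \to M_i \quad \text{for } (i,j) \in \Omega^*,
\]
the maps $M_{ij}$ for $(i,j) \in \Omega$ being fixed by $M$ (the commutativity relations (P2) and the nilpotency (P1) are absorbed into the species formulation). So first I would identify the fibre, as a $K$-vector space, with the affine subspace
\[
\bigoplus_{(i,j) \in \Omega^*} \Hom_{H_i}({_iH_j} \otimes_j M_j, M_i)
\]
cut out by the single set of linear relations (P3), one relation per vertex $i$.

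Next I would reinterpret these relations using the adjunction/duality between ${_iH_j}$ and ${_jH_i}$ established in \cite{GLS3}. After adjunction, the collection $(N_{ij})_{(i,j) \in \Omega^*}$ is the same datum as a morphism of $H$-modules
\[
\Psi\df M \longrightarrow \bigoplus_{(j,i) \in \Omega} {_iH_j} \otimes_j M_j
\]
(or, equivalently, a morphism into a specific module built from the outgoing/incoming species data of $M$), and the relation (P3) translates into the assertion that $\Psi$ lands in the kernel of a canonical $H$-linear projection onto $M$ again. The resulting kernel is, by construction, exactly the value $TC^+(M)$ of the twisted Coxeter functor of \cite{GLS3}: this is how $TC^+$ is built from the standard sequence governing the species $H$-modules, analogous to $C^+ M$ being the cokernel of the inclusion $M \hookrightarrow \bigoplus_\alpha M_{t(\alpha)}$ in the hereditary case. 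Tracking the bimodule identifications and the signs $\sgn(i,j)$ then yields a natural $K$-linear bijection between the fibre and $\Hom_H(M, TC^+(M))$.

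Finally I would verify that this bijection is in fact an isomorphism of varieties (it is given by linear formulas in the entries of $M$), and check that everything is well-posed because $M$ is locally free — this is exactly the hypothesis under which the pieces ${_iH_j} \otimes_j M_j$ are free $H_i$-modules and the defining sequence of $TC^+(M)$ behaves well (no projective/injective pathologies).  The main obstacle will be step two: correctly matching the concrete kernel description coming from (P3) with the abstract definition of $TC^+$ in \cite{GLS3}. The twisted Coxeter functor involves both a Nakayama-type twist and the orientation-reversal $\Omega \leftrightarrow \Omega^*$, and the sign conventions $\sgn(i,j)$ in (P3) have to be identified precisely with the structural maps used to define $TC^+(M)$; a naive identification is off by automorphisms of each $M_i$, so the bijection has to be set up with care to be canonical.
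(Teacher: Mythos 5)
Your plan is essentially the proof that the paper intends: the published argument is a one-line pointer to adapting Ringel's construction \cite{R} using the species machinery of \cite{GLS3}, and what you describe — fixing $M$, reading off the fibre as the linear subspace of $\bigoplus_{(i,j)\in\Omega^*}\Hom_{H_i}({_iH_j}\otimes_j M_j, M_i)$ cut out by (P3), and then repackaging via bimodule adjunction — is exactly what "adapting \cite{R}" amounts to in this setting. You also correctly observe that the relations (P3) are linear in the new data $N_{ij}$ (because at each vertex exactly one of the two maps in each term $M_{ij}M_{ji}^\vee$ is an $N$-map), so the fibre really is a linear subspace, and you correctly flag local freeness as the hypothesis that keeps the bimodule identifications and the presentation of $TC^+(M)$ well behaved.

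One caution on the middle step, which you partly acknowledge as the "main obstacle": after adjunction the tuple $(N_{ij})_{(i,j)\in\Omega^*}$ is just a tuple of $H_j$-linear maps $M_j\to\bigoplus_{i:(j,i)\in\Omega}{_jH_i}\otimes_i M_i$; it is not automatically a morphism of $H$-modules into a fixed target, and there is no naive $H$-structure on that naive direct sum that makes it one. The $H$-linearity and the landing-in-$TC^+(M)$ condition are not two separate things — it is precisely (P3) that forces the tuple to assemble into an $H$-morphism $M\to TC^+(M)$. The cleanest way to make this come out, and the way that matches what \cite{R} and \cite{GLS3} actually give you, is to first record the tensor-algebra decomposition $\Pi\cong T_H(B)$ for the appropriate $H$-bimodule $B$ (the analogue of $\Ext^1_{KQ}(DKQ,KQ)$ from \cite{R}), from which the fibre is immediately $\Hom_H(B\otimes_H M,\,M)$ with no constraints, and only then invoke the functorial identifications from \cite{GLS3} (including $TC^+(M)\cong D\Ext^1_H(M,{_HH})\cong\tau_H(M)$ for locally free $M$, together with the Hom--Ext formula of Lemma~\ref{lem:3.7a}) to rewrite this as $\Hom_H(M,TC^+(M))$. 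If instead you insist on landing directly in the $\Hom_H(M,TC^+(M))$ form by a hands-on kernel computation, you will need the explicit presentation $TC^+(M)=\Ker(\nu P_1\to\nu P_0)$ from a locally free projective presentation $P_1\to P_0\to M\to 0$, and it is in matching the components of that kernel with the adjoints $N^\vee_{ij}$ (and the signs $\sgn(i,j)$) that all the remaining work lies. So: right approach, right awareness of where the difficulty is; you should anchor the repackaging step on the tensor-algebra structure rather than trying to guess an $H$-structure on the ambient direct sum.
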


\begin{proof}
Using  
\cite{GLS3} one can adapt the construction in \cite{R} to obtain the result.
\end{proof}

Recall from \cite{GLS3} that for all $M \in \rep_\vp(H)$ 
we have a functorial isomorphism 
$$
TC^+(M) \cong {\rm D}\Ext_H^1(M,{_H}H) \cong \tau_H(M).
$$

\begin{Prop}\label{fibredim2}
For $M \in \rep_\vp(H,d)$ we have
$$
\dim \pi_H^{-1}(\cO(M)) = \dim \rep_\vp(H,d).
$$
\end{Prop}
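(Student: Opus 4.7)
The plan is to combine $G(d)$-equivariance of $\pi_H$ with Proposition~\ref{fibredim1} and then reduce to a purely numerical identity.

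First, the restriction map $\pi_H$ is $G(d)$-equivariant, and $\cO(M)$ is a single $G(d)$-orbit, so the restricted morphism $\pi_H\df \pi_H^{-1}(\cO(M)) \to \cO(M)$ has all fibres isomorphic to $\pi_H^{-1}(M)$. Combining Proposition~\ref{fibredim1} with the functorial isomorphism $TC^+(M) \cong \tau_H(M)$ yields
\[
\dim \pi_H^{-1}(\cO(M)) = \dim \cO(M) + \dim \Hom_H(M, \tau_H M) = \dim G(d) - \dim \End_H(M) + \dim \Hom_H(M, \tau_H M).
\]
Thus the statement is equivalent to the identity
\[
\dim \rep_\vp(H,d) - \dim G(d) = \dim \Hom_H(M, \tau_H M) - \dim \End_H(M),
\]
and in particular the right-hand side must depend only on $\br = d/D$, not on the particular locally free $M$.

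For the left-hand side I would use the fibre bundle $\rep_\vp(H, d) \to \prod_{i \in I} \rep_\vp(H_i, d_i)$, noting that each $\rep_\vp(H_i, d_i)$ is a single $\GL(d_i)$-orbit of dimension $d_i^2 - c_i r_i^2$ and the fibre over the tuple of free modules $\bM$ with $\rk(\bM) = \br$ is the affine space $H(\bM)$. This gives an explicit combinatorial formula for $\dim \rep_\vp(H,d) - \dim G(d)$ in terms of $\br$. For the right-hand side I would invoke an Auslander-Reiten-type formula for the $1$-Iwanaga-Gorenstein algebra $H$: since $M$ is locally free (hence Cohen-Macaulay over $H$), AR duality expresses $\dim \Hom_H(M, \tau_H M)$ in terms of $\dim \Ext^1_H(M, M)$ up to a correction coming from a minimal projective presentation of $M$. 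Combining this with the Hom-Ext formula for $H$ (the analogue of Lemma~\ref{lem:3.7a} in the $H$-setting) one rewrites the right-hand side as a quadratic form in $\br$ that matches the combinatorial formula obtained from the fibre-bundle calculation.

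The main obstacle is verifying the AR-type identity. Although $\dim \End_H(M)$ and $\dim \Ext^1_H(M, M)$ individually depend on $M$, their combination with $\dim \Hom_H(M, \tau_H M)$ must collapse to something numerical in $\br$. This is delicate because $\tau_H$ is only well behaved on the Cohen-Macaulay / locally free subcategory and one must track the projective summands of $M$ carefully in the minimal presentation to identify the correction term with the combinatorial formula; once this identity is in hand the equality $\dim \pi_H^{-1}(\cO(M)) = \dim \rep_\vp(H,d)$ follows immediately.
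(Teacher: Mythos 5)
Your strategy is essentially the paper's: reduce to $\dim \cO(M)+\dim\Hom_H(M,\tau_H M)=\dim\rep_\vp(H,d)$ via Proposition~\ref{fibredim1}, and then close the gap with AR duality and a quadratic form in $\br=d/D$. Two remarks. First, the ``correction coming from a minimal projective presentation'' that you worry about is not there: for $M,N\in\rep_\vp(H)$ the paper (via \cite{GLS3}) has the clean AR formula $\Hom_H(M,\tau_H N)\cong D\Ext^1_H(N,M)$ on the nose (projective summands of $M$ are killed by $\tau_H$ and contribute nothing to $\Ext^1$, and stable Hom coincides with Hom here), so one gets $\dim\Hom_H(M,\tau_H M)=\dim\Ext^1_H(M,M)$ with no further bookkeeping. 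Second, the tool you want for matching the two sides is not ``the analogue of Lemma~\ref{lem:3.7a} in the $H$-setting'' (the symmetric $\Ext$-duality is special to $\Pi$), but rather the homological Euler form of $H$ on locally free modules, \cite[Proposition~4.1]{GLS3}, which gives directly
\[
\dim\End_H(M)-\dim\Ext^1_H(M,M)=\sum_{i\in I}c_ia_i^2-\sum_{(j,i)\in\Omega}c_i|c_{ij}|a_ia_j,
\]
a quantity depending only on $\br=(a_i)$. With that, the chain $\dim\cO(M)+\dim\Ext^1_H(M,M)=\dim G(d)-\dim\End_H(M)+\dim\Ext^1_H(M,M)=\dim G(d)-q_{DC}(\br)=\dim\rep_\vp(H,d)$ closes the argument, the last step being \cite[Proposition~3.1]{GLS4}. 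Your fibre-bundle computation of $\dim\rep_\vp(H,d)$ (each $\rep_\vp(H_i,d_i)$ a single $\GL(d_i)$-orbit of dimension $d_i^2-c_ir_i^2$, fibre $H(\bM)$) is a perfectly good self-contained replacement for that citation and also shows $\rep_\vp(H,d)$ is irreducible, which the statement implicitly uses.
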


\begin{proof}
The proof is based in Proposition~\ref{fibredim1}.
For $M \in \rep_\vp(H,d)$ we have
\begin{align*}
\dim \cO(M) + \dim \Hom_H(M,TC^+(M)) 
&= \dim \cO(M) + \dim \Ext_H^1(M,M)\\
&= \sum_{i \in I} d_i^2 - \dim \End_H(M) + \dim \Ext_H^1(M,M)\\
&= \sum_{i \in I} d_i^2 - \sum_{i \in I} c_ia_i^2 + \sum_{(j,i) \in \Omega}
c_i|c_{ij}|a_ia_j\\
&= \dim \rep_\vp(H,d).
\end{align*}
Here $(a_1,\ldots,a_n)$ is the rank vector of $M$
The first equality follows since $TC^+(M) \cong \tau_H(M)$ for $M \in \rep_\vp(H)$ and
by the Auslander-Reiten formulas.
The second equality is just the general formula for orbit dimensions in representation
varieties of algebras, 
the third equality holds by \cite[Proposition~4.1]{GLS3}
and the last equality follows from \cite[Proposition~3.1]{GLS4}.
The result follows.
\end{proof}

Assume now that $C$ is of Dynkin type.
We assume also that the orientation $\Omega$ is \emph{acyclic},
i.e. that
for each sequence $((i_1,i_2),(i_2,i_3),\ldots,(i_t,i_{t+1}))$ with
$t \ge 1$ and $(i_s,i_{s+1}) \in \Omega$ for all $1 \le s \le t$ we have
$i_1 \not= i_{t+1}$.

For each positive root $\alpha \in \Delta^+(C)$ there
is a (unique) indecomposable preprojective
$H$-module $M_\alpha$ with $\rk(M_\alpha) = \alpha$, see \cite{GLS3}.
For a \emph{Kostant partition}
$\nu = (n_\alpha) \in \N^{\Delta_+(C)}$ let
$$
M_\nu := \bigoplus_{\alpha \in \Delta^+(C)}
M_\alpha^{n_\alpha}
$$
be the preprojective $H$-module associated with $\nu$, and let
$$
d(\nu) := \sum_{\alpha \in \Delta^+(C)} n_\alpha \dimv(M_\alpha).
$$ 
Furthermore, set
\[
Z_\nu := \overline{\pi_H^{-1}(\cO(M_\nu))} 
\subseteq \rep(\Pi,d(\nu)). 
\]

\begin{Lem}\label{lem:componentsdynkin1}
Let $\Pi = \Pi(C,D)$ and $H = H(C,D,\Omega)$.
For each Kostant partition $\nu \in \Delta^+(C)$ we have
$$
Z_\nu \in \Irr(\nil_E(\Pi,d(\nu)))^\mx.
$$
\end{Lem}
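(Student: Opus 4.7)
The plan is to verify that $Z_\nu$ satisfies three properties: it is irreducible, it contains a dense open subset lying in $\nil_E(\Pi,d(\nu))$, and it saturates the dimension bound of Theorem~\ref{introthm:dim}. Irreducibility comes for free: the orbit $\cO(M_\nu)\subseteq \rep_\vp(H,d(\nu))$ is irreducible as a $G(d(\nu))$-orbit, and by Proposition~\ref{fibredim1} all fibres of $\pi_H$ over points of $\cO(M_\nu)$ are isomorphic to the fixed vector space $\Hom_H(M_\nu,TC^+(M_\nu))$. Hence the surjective restriction $\pi_H\df \pi_H^{-1}(\cO(M_\nu))\to \cO(M_\nu)$ has irreducible fibres of constant dimension over an irreducible base, so $\pi_H^{-1}(\cO(M_\nu))$ is irreducible and so is its closure $Z_\nu$ in $\rep(\Pi,d(\nu))$.

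Next I would verify that every $N \in \pi_H^{-1}(\cO(M_\nu))$ is $E$-filtered as a $\Pi$-module, so that a dense open subset of $Z_\nu$ actually lies in $\nil_E(\Pi,d(\nu))$. Since $M_\nu$ is a direct sum of indecomposable preprojective $H$-modules $M_\alpha$, which are locally free and admit canonical $\Pi$-module lifts built inductively from the $E_i$ via the reflection/Coxeter functor machinery of~\cite{GLS3}, the same inductive procedure produces a $\Pi$-submodule filtration of any $\Pi$-lift of $M_\nu$ with successive quotients among the $E_{i_k}$. This yields the inclusion $\pi_H^{-1}(\cO(M_\nu))\subseteq \nil_E(\Pi,d(\nu))$.

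For the dimension identity, Proposition~\ref{fibredim2} gives $\dim Z_\nu = \dim\rep_\vp(H,d(\nu))$. Writing $\br = d(\nu)/D = (r_1,\ldots,r_n)$, so that $d(\nu)_i = c_ir_i$, the formula in the proof of Proposition~\ref{fibredim2} computes this as
\[
\sum_i c_i(c_i-1)r_i^2 + \sum_{(j,i)\in\Omega} c_i|c_{ij}|r_ir_j,
\]
while on the other side $\dim G(d(\nu)) = \sum_i c_i^2 r_i^2$ and a direct expansion gives $q_{DC}(\br) = \sum_i c_i r_i^2 - \sum_{i<j}c_i|c_{ij}|r_ir_j$, so that
\[
\dim G(d(\nu)) - q_{DC}(d(\nu)/D) = \sum_i c_i(c_i-1)r_i^2 + \sum_{i<j,\,c_{ij}<0} c_i|c_{ij}|r_ir_j;
\]
the two sums over edges coincide using $c_ic_{ij} = c_jc_{ji}$. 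Hence $Z_\nu$ saturates the bound of Theorem~\ref{introthm:dim}, and being an irreducible closed subset of $\rep(\Pi,d(\nu))$ containing a dense open subset of $\nil_E(\Pi,d(\nu))$ of maximal dimension, it qualifies as an element of $\Irr(\nil_E(\Pi,d(\nu)))^\mx$.

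The main obstacle is the $E$-filtration step: as warned in Section~\ref{sec:recallpreproj}, local freeness alone does not guarantee $E$-filteredness, so this step genuinely requires the Dynkin/preprojective structure of $M_\nu$ and its $\Pi$-lifts. The remaining work is a transparent combination of Propositions~\ref{fibredim1} and~\ref{fibredim2} with bookkeeping on the quadratic form $q_{DC}$.
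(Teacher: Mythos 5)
Your overall skeleton coincides with the paper's: identify $Z_\nu$ as a closure of an irreducible fibre over the orbit of $M_\nu$, verify it has the correct dimension via Propositions~\ref{fibredim1} and \ref{fibredim2}, and check that the relevant modules are $E$-filtered. The dimension bookkeeping (expanding $\dim\rep_\vp(H,d(\nu))$, $\dim G(d(\nu))$ and $q_{DC}(d(\nu)/D)$ and matching the cross terms via $c_ic_{ij}=c_jc_{ji}$) is correct, and the irreducibility argument you supply, while the paper leaves it implicit, is sound.

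However, the central non-trivial step -- showing that every point of $\pi_H^{-1}(\cO(M_\nu))$ is $E$-filtered -- is where you have a genuine gap. You assert that because each $M_\alpha$ is $E$-filtered as an $H$-module and ``admits a canonical $\Pi$-lift built from the $E_i$ via reflection functors,'' the ``same inductive procedure'' produces an $E$-filtration of any $\Pi$-lift $(M_\nu,f)$. This does not follow: an $H$-submodule chain of $M_\nu$ need not be a chain of $\Pi$-submodules of $(M_\nu,f)$ once $f\neq 0$, and the reflection-functor statement is about the $H$-module $M_\alpha$ with its zero $\Pi$-structure, not about an arbitrary lift. The paper fills this gap with a concrete device: identify $\rep(\Pi)$ with pairs $(M,f\colon M\to TC^+(M))$, and for $M_\nu$ preprojective choose $\beta$ with $n_\beta\neq 0$ and $\Hom_H(M_\beta,\tau_H(M_\nu))=0$. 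That forces $f|_{M_\beta^{n_\beta}}=0$, so $(M_\beta^{n_\beta},0)$ is a $\Pi$-submodule of $(M_\nu,f)$ which is $E$-filtered (its $\Pi$-structure is trivial and $M_\beta$ is $E$-filtered over $H$), the quotient is again a $\Pi$-lift of a preprojective $H$-module, and one inducts. Without this choice of $\beta$, or an equivalent mechanism that actually engages the map $f$, your $E$-filteredness claim remains unproved.
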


\begin{proof}
By definition we have $Z_\nu \subseteq \rep(\Pi,d(\nu))$.
We know that
$$
\dim(Z_\nu) = \dim \rep(H,d(\nu)).
$$
It remains to show that each $X \in Z_\nu$ is $E$-filtered.

For brevity let $F := TC^+$, where $T$ is the twist functor and
$C^+$ is the Coxeter functor, see \cite{GLS3}.
We know that the category $\rep(\Pi)$ can be identified with the
category of $H$-module homomorphisms $f\df M \to F(M)$.
For $M \in \rep(H)$ we have $M \cong (0\df M \to F(M))$.
Given such an $f$ let $(M,f)$ be the corresponding $\Pi$-module.

Now assume that $M = M_\nu$ is a preprojective $H$-module.
Thus we have 
$$
M \cong \bigoplus_{\alpha \in \Delta^+(C)} M_\alpha^{n_\alpha}
$$
for some $n_\alpha \ge 0$.
There exists some $\beta$ with $n_\beta \not= 0$ such that
$\Hom_H(M_\beta,\tau_H(M)) = 0$.
It follows that $0\df M_\beta^{n_\beta} \to F(M_\beta^{n_\beta})$
is a submodule of $(0\df M \to F(M)) \cong M$ with factor module of the form
$f\df M/M_\beta^{n_\beta} \to F(M/M_\beta^{n_\beta})$.
The $\Pi$-module $0\df M_\beta^{n_\beta} \to F(M_\beta^{n_\beta})$
is $E$-filtered, since $M_\beta$ is $E$-filtered.
Now the result follows by induction.
\end{proof}

\begin{Thm}\label{thm:componentsdynkin2}
Let $\Pi = \Pi(C,D)$ with $C$ of Dynkin type.
For $Z \in \Irr(\nil_E(\Pi,d))$ the following are equivalent:
\begin{itemize}

\item[(i)]
$Z$ is maximal.

\item[(ii)]
$Z = Z_\nu$ for some Kostant partition $\nu = (n_\alpha) \in \N^{\Delta^+(C)}$ with $d(\nu) = d$.

\end{itemize}
\end{Thm}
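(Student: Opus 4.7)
The plan is to reduce the theorem to a bijection statement. The implication (ii)$\Rightarrow$(i) is already furnished by Lemma~\ref{lem:componentsdynkin1}, so the whole task is to show that the assignment $\nu \mapsto Z_\nu$ is a \emph{bijection} between Kostant partitions $\nu$ with $d(\nu)=d$ and $\Irr(\nil_E(\Pi,d))^\mx$. Once this is established, (i)$\Rightarrow$(ii) follows automatically.

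First I would verify injectivity. Let $\nu \neq \nu'$ be Kostant partitions with $d(\nu) = d(\nu') = d$. By the Dynkin-type classification of indecomposable locally free preprojective $H$-modules recalled before the theorem, the modules $\{M_\alpha\}_{\alpha \in \Delta^+(C)}$ are pairwise non-isomorphic and each has local endomorphism ring, so the Krull--Schmidt theorem gives $M_\nu \not\cong M_{\nu'}$. Consequently $\cO(M_\nu) \neq \cO(M_{\nu'})$ in $\rep_\vp(H,d)$, and hence their $\pi_H$-preimages in $\rep(\Pi,d)$ are disjoint constructible subsets. By Proposition~\ref{fibredim2} both preimages have dimension $\dim\rep_\vp(H,d)$, which by the proof of Lemma~\ref{lem:componentsdynkin1} equals $\dim H(\bM)$ with $\bM$ the species avatar of $M_\nu$. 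Therefore $Z_\nu \neq Z_{\nu'}$ as irreducible components, since they have disjoint dense open subsets.

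Next I would carry out the counting. By Theorem~\ref{thm:3.16} we have a crystal isomorphism $\cB \cong B(-\infty)$, which restricted to a fixed rank vector $\br = D^{-1}d$ gives
\[
|\Irr(\nil_E(\Pi,d))^\mx| \;=\; |B(-\infty)_\br| \;=\; \dim U(\n(C))_\br,
\]
where the last equality uses Kashiwara's theorem that $B(-\infty)$ parametrises a basis of $U(\n(C))$ (equivalently, one may invoke Theorem~\ref{thm:Lu2.6}). Since $C$ is of Dynkin type, $\g(C)$ is finite-dimensional semisimple and the PBW theorem expresses $\dim U(\n(C))_\br$ as the value of the Kostant partition function at $\br$, namely the number of tuples $(n_\alpha)_{\alpha \in \Delta^+(C)} \in \N^{\Delta^+(C)}$ with $\sum_\alpha n_\alpha\,\alpha = \br$. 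Because $\rk(M_\alpha) = \alpha$ and $d(\nu) = D\sum_\alpha n_\alpha\,\alpha$, the condition $d(\nu)=d$ is precisely $\sum_\alpha n_\alpha\,\alpha = \br$, so the Kostant partitions $\nu$ with $d(\nu)=d$ and the maximal irreducible components of $\nil_E(\Pi,d)$ are in equal (finite) number. Combined with the already-established injectivity, the map $\nu \mapsto Z_\nu$ is a bijection, proving (i)$\Rightarrow$(ii).

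The main obstacle is the injectivity step, as it genuinely uses the Dynkin-type representation theory of $H$ from the earlier papers in the series (pairwise non-isomorphism of the $M_\alpha$, and the Krull--Schmidt identification of $M_\nu$). The counting side is essentially formal given Theorem~\ref{thm:3.16} and classical PBW, and the identification of $\pi_H^{-1}(\cO(M_\nu))$ as a dense open subset of $Z_\nu$ of the correct dimension was already recorded via Propositions~\ref{fibredim1} and~\ref{fibredim2}.
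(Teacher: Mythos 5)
Your proposal is correct and follows essentially the same approach as the paper's proof: establish (ii)$\Rightarrow$(i) via Lemma~\ref{lem:componentsdynkin1}, observe that $\nu \mapsto Z_\nu$ is injective, and then compare cardinalities using $|\Irr(\nil_E(\Pi,d))^\mx| = \dim U(\n)_\br$ (Theorem~\ref{thm:3.16}, equivalently Theorem~\ref{thm:Lu2.6}). The only cosmetic difference is in how the two arguments justify the count on the Kostant-partition side: the paper cites \cite[Section~11.2]{GLS3} for the fact that the number of isomorphism classes of preprojective $H$-modules of rank vector $\br$ equals $\dim U(\n)_\br$, whereas you go through the classical PBW theorem and Kostant partition function directly — these are equivalent routes, and your Krull--Schmidt elaboration of the injectivity step (which the paper dismisses with ``clearly'') is a welcome clarification.
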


\begin{proof}
Let $M_\nu$ be a preprojective $H$-module in the sense of \cite{GLS3},
and let $\br = \rk(M_\nu)$.
By Lemma~\ref{lem:componentsdynkin1} we have
$$
Z_\nu \in \Irr(\nil_E(\Pi,d(\nu)))^\mx.
$$
For preprojective $H$-modules $M_\nu$ and $N_\mu$ we clearly have 
$Z_\nu = Z_\mu$ if and only if $M_\nu \cong M_\mu$.
By our geometric realization of $B(-\infty)$ we know that
$$
\dim U(\n)_\br = |\Irr(\nil_E(\Pi,d))^\mx|.
$$
Furthermore, 
the number of isomorphism classes of preprojective
$H$-modules $M$ with $\rk(M) = \br$ is exactly $\dim U(\n)_\br$.
This follows from \cite[Section~11.2]{GLS3}.
This finishes the proof.
\end{proof}

\subsection{Type $B_2$}
\label{sec:exampleB2}

\subsubsection{The preprojective algebra of type $B_2$}
For the whole Section~\ref{sec:exampleB2}, let
$\Pi = \Pi(C,D) = \Pi(C,D,\Omega)$ with
\[
C = \left(\bbm 2&-1\\-2&2 \ebm\right)
\text{\;\;\; and \;\;\;}
D = \left(\bbm 2&0\\0&1 \ebm\right)
\]
and $\Omega = \{ (1,2) \}$.
Set $H = H(C,D,\Omega)$ and $H^* = H(C,D,\Omega^*)$.
Thus $C$ is a Cartan matrix of Dynkin type $B_2$, and the symmetrizer $D$
is minimal.
We have $\Pi = K\ov{Q}/\ov{I}$ where
$\ov{Q} = \ov{Q}(C)$ is the quiver 
\[
\xymatrix{
1 \ar@(ul,dl)_{\vep_1} \ar@<0.5ex>[r]^{\alpha_{21}}& 2 \ar@<0.5ex>[l]^{\alpha_{12}}
}
\]
and $\ov{I}$ is generated by the set
$$
\{ \vep_1^2,\; \alpha_{12}\alpha_{21}\vep_1 + \vep_1\alpha_{12}\alpha_{21},\; -\alpha_{21}\alpha_{12}\}.
$$
Thus $\Pi$ is a finite-dimensional
special biserial algebra.
The modules and the AR-quiver of a special biserial algebra can be determined 
combinatorially, see for example \cite{BR}.
The indecomposable $\Pi$-modules are either projective-injective,
or string modules, or band modules.
The band modules are locally free, but they are not $E$-filtered.

The indecomposable projective $\Pi$-modules are shown in
Figure~\ref{Fig:projPB2}. 
(The arrows indicate when an arrow of the
algebra $\Pi$ acts with a non-zero scalar on a basis vector.)
\begin{figure}[!htb]
$$
\xymatrix@-0.7pc{ 
&1 \ar[dl]_{\alpha_{21}}\ar[dr]^{\vep_1}
&&&&
2 \ar[d]^{\alpha_{12}}
&&&&
\\
2 \ar[d]_{\alpha_{12}} && 1 \ar[d]^{\alpha_{21}} 
&&&
1 \ar[d]^{\vep_1} 
&&&&
\\
1 \ar[dr]_{\vep_1} && 2 \ar[dl]^{\alpha_{12}} 
&&&
1 \ar[d]^{\alpha_{21}}
&&&&
\\
&1 
&&&& 
2
&&&&
}
$$
\caption{
The indecomposable projective $\Pi(C,D)$-modules for type $B_2$.}
\label{Fig:projPB2}
\end{figure}
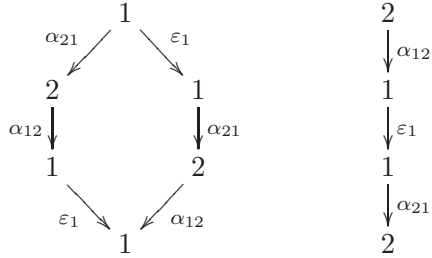

All results in Section~\ref{sec:exampleB2} can be proved by using the classification of
finite-dimensional indecomposable $\Pi$-modules.

\subsubsection{Irreducible components}\label{subsub:comp}
Up to isomorphism there are $8$ indecomposable rigid
$\Pi$-modules, namely
\begin{align*}
{P_1 = \bsm &1\\2&&1\\1&&2\\&1\esm}, &&
{P_2 = \bsm 2\\1\\1\\2\esm}, &&
{E_1 = \bsm 1\\1\esm}, && 
{E_2 = \bsm 2\esm}, 
\\
{T_1 = \bsm 1\\1\\2\esm}, && 
{T_2 = \bsm 2\\1\\1\esm} ,  &&
{T_3 = \bsm &1\\2&&1\\&&2\esm}, && 
{T_4 = \bsm 2\\1&&2\\&1\esm}.
\end{align*}
The following is a complete list of basic maximal rigid $\Pi$-modules:
\begin{align*}
P_1 \oplus P_2 \oplus E_1 \oplus T_1, && 
P_1 \oplus P_2 \oplus E_1 \oplus T_2, &&
P_1 \oplus P_2 \oplus E_2 \oplus T_3, 
\\
P_1 \oplus P_2 \oplus E_2 \oplus T_4, &&
P_1 \oplus P_2 \oplus T_1 \oplus T_3, &&
P_1 \oplus P_2 \oplus T_2 \oplus T_4.
\end{align*}
Let $R_1 \oplus R_2 \oplus R_3 \oplus R_4$ be one of these modules.
Then $R_1^{a_1} \oplus R_2^{a_2} \oplus R_3^{a_3} \oplus R_4^{a_4}$
is a rigid $\Pi$-module for all $a_1,a_2,a_3,a_4 \ge 0$, and
we obtain all rigid $\Pi$-modules in this way.

For $Z \in \Irr(\nil_E(\Pi,d))$ the following are equivalent:
\begin{itemize}

\item[(i)]
$Z$ is maximal.

\item[(ii)]
$Z = \ov{\cO(R)}$ with $R \in \rep_\vp(\Pi,d)$ rigid.

\end{itemize}

Recall that the dimension of an orbit $\cO(M)$ for 
$M \in \rep_\vp(\Pi,d)$ can be computed by the formula
$$
\dim \cO(M) = \dim G(d) - \dim \End_\Pi(M).
$$
For modules of small dimension, it is an easy exercise to compute
$\dim \End_\Pi(M)$.

We have $H = KQ/I$ where $Q = Q(C,\Omega)$ is the quiver
\[
\xymatrix{
1 \ar@(ul,dl)_{\vep_1} & 2 \ar[l]_{\alpha_{12}} 
}
\]
and $I$ is generated by $\{ \vep_1^2 \}$.
The indecomposable locally free $H$-modules are
\begin{align*}
E_1 &= {\bsm 1\\1\esm}, &
E_2 &= {\bsm 2 \esm}, &
T_2 &= {\bsm 2\\1\\1 \esm}, &
T_4 &=  {\bsm 2\\1&&2\\&1 \esm}, &
X_1 &=  {\bsm 1&&2\\&1\esm},
\end{align*}
and the  indecomposable locally free $H^*$-modules (apart from $E_1$ and $E_2$) are
\begin{align*}
T_1 &= {\bsm 1\\1\\2 \esm}, &
T_3 &= {\bsm &1\\2&&1\\&&2 \esm}, &
X_2 &= {\bsm &1\\2&&1 \esm}.
\end{align*}
Furthermore, we define 
certain indecomposable locally free $\Pi$-modules:
\begin{align*}
X\df &
{\xymatrix@-0.7pc{
& 1 \ar[r]\ar[d] & 1
\\
& 2 \ar[d]
\\
1 \ar[r] &1
}} &
M(\lambda)\df &
{\xymatrix@-1.0pc{
1 \ar[dd]\ar[dr]^{\lambda}
\\
& 2 \ar[dl]
\\
1 
}}
\end{align*}
where $\lambda \in K^*$.
The module $X$ is obviously $E$-filtered.
The modules $M(\lambda)$ are band modules sitting at the bottom 
of a $K^*$-family of $1$-tubes in the Auslander-Reiten quiver
of $\Pi$.
Note that none of the modules $X_1$, $X_2$, $X$, $M(\lambda)$ is a crystal module.

Using \cite{CBS} it is possible to determine all irreducible components
of $\nil_E(\Pi,d)$ for all $d$.
Here we just discuss one example.
Let $d = (4,1)$.
We have $\dim G(d) = 17$ and $\dim G(d) - q_{DC}(d/D) = 12$.
There are three locally free $H$-modules (up to isomorphism) 
with dimension vector $d$:
\begin{align*}
M_1 &= {\bsm 1\\1\esm \oplus \bsm 1\\1\esm \oplus \bsm2\esm}, &
M_2 &= {\bsm 1\\1\esm \oplus \bsm 2\\1\\1\esm}, &
M_3 &= {\bsm 1\\1\esm \oplus \bsm 1&&2\\&1\esm}.
\end{align*}
Denote by $Z_{M_1}$, $Z_{M_2}$, $Z_{M_3}$, respectively, the closures of the preimages of their orbits under 
$$
\pi_H\df \rep_\vp(\Pi,d) \to \rep_\vp(H,d).
$$
We have
\[
\rep_\vp(\Pi,d) = Z_{M_1} \cup Z_{M_2} \cup Z_{M_3}, 
\]
where
\begin{align*}
Z_{M_1} &= \cO(T_1\oplus E_1) \sqcup \cO(X_2\oplus E_1) \sqcup \cO(E_1^2 \oplus E_2) = \rep(H^*,d),
\\
Z_{M_2} &= \cO(T_2\oplus E_1) \sqcup \cO(X_1\oplus E_1) \sqcup 
\cO(E_1^2\oplus E_2) = \rep(H,d),
\\
Z_{M_3} &= \bigsqcup_{\la\in K*} \cO(M(\la) \oplus E_1) \sqcup \cO(X) \sqcup \cO(X_1\oplus E_1) \sqcup \cO(X_2\oplus E_1) \sqcup
\cO(E_1^2 \oplus E_2).
\end{align*}
The orbits $\cO(T_1\oplus E_1)$ and $\cO(T_2\oplus E_1)$ have dimension 12, 
and we have 
$$
Z_{M_1} = \ov{\cO(T_1 \oplus E_1)}
\text{\;\;\; and \;\;\;}
Z_{M_2} = \ov{\cO(T_2 \oplus E_1)}.
$$ 
Each orbit $\cO(M(\la)\oplus E_1)$ has dimension 11, so their union also has dimension 12, and we have
$$
Z_{M_3} = \ov{\bigsqcup_{\la\in K^*} \cO(M(\la) \oplus E_1)}.
$$
Hence
\[
\dim Z_{M_1}=\dim Z_{M_2}=\dim Z_{M_3} = \dim(\rep_\vp(H,d)) = 12. 
\]
The orbit $\cO(X)$ as dimension 11.
We have
\[
\nil_E(\Pi,d) = Z_{M_1}\cup Z_{M_2} \cup Z_{M_3}', 
\]
where
\[
Z_{M_3}' =\cO(X) \sqcup \cO(X_1\oplus E_1)\sqcup\cO(X_2\oplus E_1)\sqcup\cO(E_1^2 \oplus E_2)
\]
is an irreducible component of $\nil_E(\Pi,d)$ of non-maximal dimension 11, and we 
have
$$
Z_{M_3}' = \ov{\cO(X)}.
$$
(The fact that $X$ cannot be contained in any of the components $Z_{M_1}$
or $Z_{M_2}$ can be shown by a simple semicontinuity argument.)
 
We consider now the enveloping algebra $U(\n)$ for type $B_2$.
Then
the dimension of $U(\n)_{(2,1)}$
is two, which is perfectly in line with $\nil_E(\Pi,d)$
having exactly two maximal components.
(The rank vector $(2,1)$ corresponds to the dimension vector
$d = (4,1)$.)

\subsubsection{Semicanonical basis}
Now assume that $K = \C$.
We get 
\[
\frac{1}{2} \ttheta_2 *\ttheta_1 * \ttheta_1 = 1_{Z_{M_1}}, \qquad
\frac{1}{2} \ttheta_1 * \ttheta_1 * \ttheta_2 = 1_{Z_{M_2}}, 
\qquad
\ttheta_1* \ttheta_2 * \ttheta_1 = 
1_{Z_{M_1}} + 1_{Z_{M_2}} + 1_X,
\]
so the Serre relation is verified up to the function $1_X \in \Serre$.

The images of $\frac{1}{2} \ttheta_2 *\ttheta_1 * \ttheta_1$ and 
$\frac{1}{2} \ttheta_1 *\ttheta_1 * \ttheta_2$ in $\cM(\Pi)$ 
form the semicanonical basis $\cS_\br$ of $\cM(\Pi)_\br$,
where $\br = (2,1) = d/D$.
They evaluate to 1 at the generic point of one of the two maximal irreducible components of $\nil_E(\Pi,d)$,
and to 0 at the generic point of the other.

\subsubsection{Examples of constructible functions with non-maximal support}\label{subsub:nonmaxsupp}
Again we assume that $K = \C$.
We define indecomposable $\Pi$-modules $X$, $Y_1$ and $Y_2$ as follows:
$$
\xymatrix@-0.7pc{
X\df & 1 \ar[d]\ar[r] & 1 &&
Y_1\df& 1 \ar[r]\ar[d] & 1 &&
Y_2\df && 2 \ar[d]
\\
& 2 \ar[d] &&&
2 \ar[d]& 2 \ar[d] &&&
& 1 \ar[r]\ar[d] & 1
\\
1 \ar[r] & 1 &&&
1 \ar[r] & 1 &&&
& 2 \ar[d]
\\
&&&&&&&&1\ar[r] & 1
}
$$
An easy calculation shows that
$$
\ttheta_{12} = -2 \cdot 1_X.
$$
We have $\supp(\ttheta_{12}) = \cO(X)$.
Furthermore, one can check that 
$$
\ttheta_{12} * \ttheta_2 = -2(1_{Y_1} + 1_{Y_2} + 1_{X \oplus E_2})
$$
This implies
$$
\supp(\ttheta_{12} * \ttheta_2) = \cO(Y_1) \sqcup \cO(Y_2) \sqcup \cO(X \oplus E_2).
$$
Let $M = P_1 \oplus E_1$.
We have
$$
((\ttheta_{12} * \ttheta_2) * \ttheta_1)(M) = 
\sum_{m \in \C} m \chi(\{ U \subset M \mid M/U \cong E_1, \;
(\ttheta_{12} * \ttheta_2)(U) = m \}). 
$$
One easily sees that $M$ does not have any submodules isomorphic to 
$Y_2$ or $X \oplus E_2$.
Furthermore, one can check that we have isomorphisms of varieties
$$
\{ U \subset M \mid M/U \cong E_1,\;
(\ttheta_{12} * \ttheta_2)(U) = -2 \}
\cong
\{ U \subset M \mid M/U \cong E_1,\; U \cong Y_1 \}
\cong \C^*.
$$
Since $\chi(\C^*) = 0$, we get 
$$
((\ttheta_{12} * \ttheta_2) * \ttheta_1)(M) = 0.
$$
Note that the closure of $\cO(M)$ is a maximal irreducible component.

All three functions $\ttheta_{12}$, $\ttheta_{12} * \ttheta_2$ and 
$\ttheta_{12} * \ttheta_2 * \ttheta_1$ have non-maximal support.
However, our calculation above in a small case like $B_2$ shows that
this is a non-trivial fact which depends on the vanishing of some
Euler characteristic.

As before, we define
$$
\xymatrix@-0.7pc{
X_1\df & 2 \ar[d] &&
T_4\df & 2 \ar[d] & 2 \ar[d]
\\
1 \ar[r] & 1 &&
&1 \ar[r] & 1
}
$$
In $\tF(\Pi)$ we get
$$
1_{X_1} * 1_{E_2} =  1_{T_4} + 2 \cdot 1_{X_1 \oplus E_2}.
$$
The function $1_{X_1}$ has non-maximal support, and
$1_{E_2}$ and $1_{T_4} + 2 \cdot 1_{X_1 \oplus E_2}$ have maximal support.
(But note that $1_{X_1}$ does not belong to $\tM(\Pi)$.)
In particular, in $\tF(\Pi)$ the functions with non-maximal support
do not form an ideal.

\subsubsection{Bundle construction}\label{subsub:bundle}
We keep the notation introduced in Sections~\ref{subsub:comp}
and \ref{subsub:nonmaxsupp}.
We study the bundles
$$
\xymatrix{
& Y \ar[dl]_{p'}\ar[dr]^{p''}
\\
\Pi((2,1))^{2,(0)} \times J_0 && \Pi((2,2))^{2,(1)}.
}
$$
We have
$$
\Irr(\Pi((2,1))^{2,(0)} = Z_1 \cup Z_2
$$
where
$$
Z_1 := \ov{\cO(T_1 \oplus E_1)} \cap \Pi((2,1))^{2,(0)}
\text{\;\;\; and \;\;\;}
Z_2 := \ov{\cO(X)} \cap \Pi((2,1))^{2,(0)}.
$$
The component $Z_1$ is maximal, and $Z_2$ is non-maximal.
We have
\begin{align*}
p''(p')^{-1}(Z_1 \times J_0) &= 
\ov{\cO(P_2 \oplus E_1)} \cap \Pi((2,2))^{2,(1)} \in \Irr(\Pi((2,2))^{2,(1)}),
\\
p''(p')^{-1}(Z_2 \times J_0) &= \ov{\cO(Y_1)} \cap \Pi((2,2))^{2,(1)} \in \Irr(\Pi((2,2))^{2,(1)}).
\end{align*}
We have $\cO(Y_1) \subset \ov{\cO(P_1)}$,
thus $\ov{\cO(Y_1)}$ cannot be in $\Irr(\Pi((2,2)))$.
Furthermore, we get
\begin{align*}
\ov{\cO(P_1)} &\in \Irr(\Pi(2,2))^\mx,
\\
\ov{\cO(P_1)} \cap \Pi((2,2))^{1,(2)} &\in \Irr(\Pi((2,2))^{1,(2)})^\mx,
\\
\ov{\cO(P_1)} \cap \Pi((2,2))^{2,(1)} = \ov{\cO(Y_1)} \cap \Pi((2,2))^{2,(1)} &\in \Irr(\Pi((2,2))^{2,(1)}).
\end{align*}
Next, we study the bundles
$$
\xymatrix{
& Y \ar[dl]_{p'}\ar[dr]^{p''}
\\
\Pi((1,1))^{1,(1)} \times J_0 && \Pi((2,1))^{1,(2,1)}.
}
$$
Then $\ov{\cO(X_1)} = \ov{\cO(X_1)} \cap \Pi((1,1))^{1,(1)} \in \Irr(\Pi((1,1))^{1,(1)}$ and
$$
p''(p')^{-1}(\ov{\cO(X_1)} \times J_0) = \ov{\cO(X)} \cap \Irr(\Pi((2,1))^{1,(2,1)}).
$$
We have $\ov{\cO(X_1)} \notin \Irr(\Pi((1,1)))$ and
$\ov{\cO(X)} \in \Irr(\Pi(2,1))$.

\subsubsection{Crystal graphs and Littlewood-Richardson coefficients}
\label{subsec:LRcoeff}
In Figure~\ref{Fig:crystalB2} we display part
of the geometric crystal graph $(\cB,\tilde{e}_i) \equiv (B(-\infty),\tilde{e}_i)$ of type $B_2$.
(Each box in the figure contains a crystal module over $\Pi$.
The orbit closure of this $\Pi$-module is a maximal irreducible component.)

We have 
\begin{align*}
\alpha_1 &= 2\vpi_1 - 2\vpi_2, & \vpi_1 &= \alpha_1 + \alpha_2,
\\ 
\alpha_2 &= -\vpi_1+2\vpi_2, & \vpi_2 &= 1/2\alpha_1+\alpha_2.
\end{align*}

In Figure~\ref{Fig:crystalB2simplemoduleb} we display
the geometric crystal graph $(\cB_{\vpi_1+\vpi_2}^*,\tilde{e}_i) \equiv (B(\vpi_1+\vpi_2),\tilde{e}_i)$ of the simple representation $V(\vpi_1+\vpi_2)$
over the simple complex Lie algebra $\g$ of type $B_2$,
and we display the geometric crystal graph $(\cB_{2\vpi_2},\tilde{e}_i^*)$.

Set $\la = \vpi_1+\vpi_2$ and $\mu = 2\vpi_2$.
The possible $\nu \in P^+$ with $\la+\mu-\nu \in R^+$
are 
$$
\{ \vpi_1+3\vpi_2,\; 2\vpi_1+\vpi_2,\; 3\vpi_2,\; \vpi_1+\vpi_2,\; 
\vpi_2 \}.
$$
For $\la+\mu-\nu$ we get the elements
$$
\{ 0,\; \alpha_2,\; \alpha_1+\alpha_2,\; \alpha_1+2\alpha_2,\;
2\alpha_1+3\alpha_2 \}.
$$
The components in $\cB_\la^* \cap \cB_\mu$ have a double frame.
We get the tensor product decomposition
\begin{align*}
V(\vpi_1+\vpi_2) \otimes V(2\vpi_2) &\cong 
V(\vpi_1+3\vpi_2) \oplus 
V(2\vpi_1+\vpi_2) \oplus V(3\vpi_2)
\\
&\;\;\;\;\oplus
V(\vpi_1+\vpi_2)^2 \oplus 
V(\vpi_2).
\end{align*}
(The two copies of $V(\vpi_1+\vpi_2)$ in this decomposition come from the fact we have
two irreducible components with rank vector $\alpha_1+2\alpha_2$
in $\cB_\la^* \cap \cB_\mu$.)

\begin{figure}[!htb]
\small
\[
\xymatrix@-1.8ex{
&&&& *+[F]{\bsm 0 \esm} \ar[dl]_1\ar[dr]^2
\\
&&&*+[F]{\bsm 1\\1\esm} \ar[dl]_1\ar[d]_2 
&& *+[F]{\bsm 2\esm} \ar[d]_1\ar[drrr]^2
\\
&&*+[F]{\bsm 1\\1\esm \oplus \bsm 1\\1\esm} \ar[dl]_1\ar[d]_2
&
*+[F]{\bsm 1\\1\\2\esm} \ar[dl]_1\ar[d]_2
&&
*+[F]{\bsm 2\\1\\1\esm} \ar[d]_1\ar[dr]^2
&&&
*+[F]{\bsm 2\esm \oplus \bsm 2\esm} \ar[d]_1\ar[dr]^2
\\
&*+[F]{\bsm 1\\1\esm \oplus \bsm 1\\1\esm \oplus \bsm 1\\1\esm} &
*+[F]{\bsm 1\\1\esm \oplus \bsm 1\\1\\2\esm} &
*+[F]{\bsm &1\\2&&1\\&&2\esm} &&
*+[F]{\bsm 1\\1\esm \oplus \bsm 2\\1\\1\esm} &
*+[F]{\bsm 2\\1\\1\\2\esm}&&
*+[F]{\bsm 2\\1&&2\\&1\esm} &
*+[F]{\bsm 2\esm \oplus \bsm 2\esm \oplus \bsm 2\esm}
}
\]
\caption{The first four layers
of the geometric crystal graph $(\cB,\tilde{e}_i) \equiv 
(B(-\infty),\tilde{e}_i)$ of type $B_2$.}
\label{Fig:crystalB2}
\end{figure}

\begin{figure}[!htb]
\small
\[
\xymatrix@-1.8ex{
& \cB_{\vpi_1+\vpi_2}^* 
& *+[F=]{\bsm 0 \esm}\ar[dl]_1\ar[dr]^2 &&&
&\cB_{2\vpi_2}& *+[F=]{\bsm 0 \esm}\ar[d]^2
\\
&*+[F]{\bsm 1\\1 \esm}\ar[d]_2 && *+[F=]{\bsm 2 \esm}\ar[d]^1 &&
&& *+[F=]{\bsm 2 \esm}\ar[dl]_1\ar[dr]^2
\\
&*+[F=]{\bsm 1\\1\\2 \esm} \ar[d]_2 && 
*+[F]{\bsm 2\\1\\1 \esm}\ar[dl]_1\ar[dr]^2 &&
&*+[F=]{\bsm 1\\1\\2 \esm}\ar[d]_2 && 
*+[F]{\bsm 2 \esm \oplus \bsm 2 \esm}\ar[d]^1
\\
& *+[F=]{\bsm &1\\2&&1\\&&2 \esm}\ar[dl]_1\ar[dr]^2 & *+[F]{\bsm 1\\1 \esm \oplus \bsm 2\\1\\1 \esm}\ar[dr]^2 && *+[F=]{\bsm 2\\1\\1\\2 \esm}\ar[dl]_1
&
&*+[F=]{\bsm 2\\1\\1\\2 \esm}\ar[d]^2 && 
*+[F=]{\bsm &1\\2&&1\\&&2 \esm}\ar[d]^1
\\
*+[F]{\bsm &1\\2&&1\\1&&2\\&1 \esm}\ar[dr]^2 && *+[F]{\bsm 2 \esm \oplus \bsm &1\\2&&1\\&&2\esm}\ar[dl]_1 & *+[F]{\bsm 1\\1 \esm \oplus \bsm 2\\1\\1\\2\esm}\ar[d]^2 &&
&*+[F]{\bsm2 \esm \oplus \bsm 2\\1\\1\\2 \esm}\ar[dr]^1 &&*+[F]{\bsm 1\\1\\2 \esm \oplus \bsm 1\\1\\2\esm}\ar[dl]_2
\\
& *+[F]{\bsm 2 \esm \oplus \bsm &1\\2&&1\\1&&2\\&1\esm}\ar[d]_1 && *+[F=]{\bsm 1\\1\\2 \esm \oplus \bsm 2\\1\\1\\2\esm}\ar[d]^2 &&
&&*+[F=]{\bsm 1\\1\\2 \esm \oplus \bsm 2\\1\\1\\2\esm}\ar[d]^2
\\
& *+[F]{\bsm 2\\1\\1 \esm \oplus \bsm &1\\2&&1\\1&&2\\&1\esm}\ar[dr]^2 && *+[F]{\bsm &1\\2&&1\\&&2 \esm \oplus \bsm 2\\1\\1\\2\esm}\ar[dl]_1&&
&&*+[F]{\bsm 2\\1\\1\\2 \esm \oplus \bsm 2\\1\\1\\2 \esm}
\\
&&*+[F]{\bsm 2\\1\\1\\2 \esm \oplus \bsm &1\\2&&1\\1&&2\\&1\esm}&&
&
}
\]
\caption{The geometric crystal graphs $(\cB_{\vpi_1+\vpi_2}^*,\tilde{e}_i)$ and $(\cB_{2\vpi_2},\tilde{e}_i^*)$
for type $B_2$.}
\label{Fig:crystalB2simplemoduleb}
\end{figure}
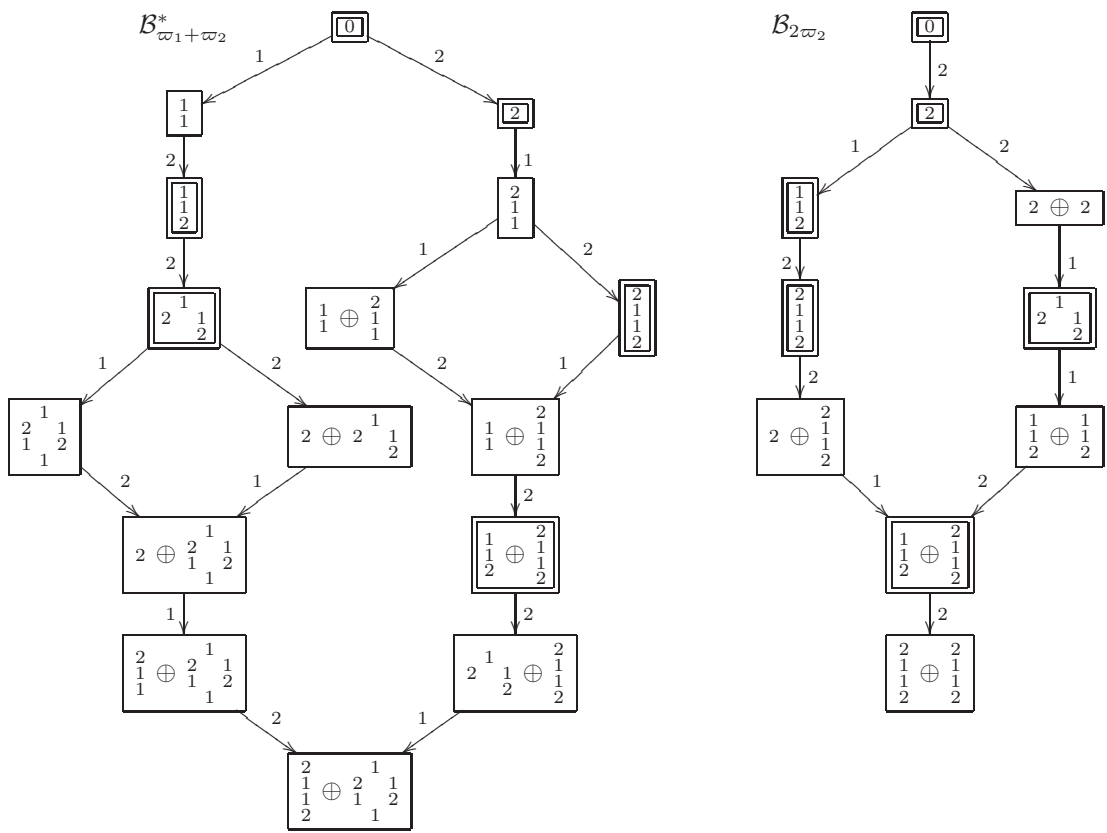

\subsection{Type $G_2$}\label{sec:exampleG2}
Let $\Pi = \Pi(C,D)$ with
\[
C = \left(\bbm 2&-1\\-3&2 \ebm\right)
\text{\;\;\; and \;\;\;}
D = \left(\bbm 3&0\\0&1 \ebm\right).
\]
Thus $C$ is a Cartan matrix of Dynkin type $G_2$, and $D$ is
minimal.
We have
$$
\Pi = K\ov{Q}/\ov{I}
$$
where $\ov{Q} = \ov{Q}(C)$ is the quiver
\[
\xymatrix{
1 \ar@(ul,dl)_{\vep_1} \ar@<0.5ex>[r]^{\alpha_{21}}& 2 \ar@<0.5ex>[l]^{\alpha_{12}}
}
\]
and $\ov{I}$ is generated by the set
$$
\{
\vep_1^3,\;
\alpha_{12}\alpha_{21}\vep_1^2  + \vep_1\alpha_{12}\alpha_{21}\vep_1+
\vep_1^2\alpha_{12}\alpha_{21}, \;
- \alpha_{21}\alpha_{12} \}.
$$

In Figure~\ref{Fig:crystalG2} we display part
of the geometric crystal graph $(\cB,\tilde{e}_i) \equiv (B(-\infty),\tilde{e}_i)$ 
of type $G_2$.
One of the components has a double frame.
This component does not have a dense orbit, but it contains a dense $K^*$-family of orbits of $\Pi$-modules $Q(\lambda)$ with $\lambda \in K^*$, 
which we define
as follows: 
\begin{align*}
Q(\la) &=
{\xymatrix@-1.0pc{
& 2\ar[dd]\ar[dl]_{\la}
\\
1 \ar[dr]
\\
& 1 \ar[dr] 
\\
&& 1 \ar[dl]
\\ 
&2 
}}
\cong
{\xymatrix@-1.0pc{
& 2\ar[dl]
\\
1 \ar[dr]
\\
& 1 \ar[dr]\ar[dd] 
\\
&& 1 \ar[dl]^\lambda
\\ 
&2 
}}
\end{align*}
Note that $Q(\la)$ is $E$-filtered. 

\begin{figure}[!htb]
\small
\[
\xymatrix@-1.8ex{
&&&& *+[F]{\bsm 0 \esm} \ar[dl]_1\ar[dr]^2
\\
&&&*+[F]{\bsm 1\\1\\1\esm} \ar[dl]_1\ar[d]_2 
&& *+[F]{\bsm 2\esm} \ar[d]_1\ar[drr]^2
\\
&&*+[F]{\bsm 1\\1\\1\esm \oplus \bsm 1\\1\\1\esm} \ar[dl]_1\ar[d]_2
&
*+[F]{\bsm 1\\1\\1\\2\esm} \ar[dl]_1\ar[d]_2
&&
*+[F]{\bsm 2\\1\\1\\1\esm} \ar[d]_1\ar[dr]^2
&&
*+[F]{\bsm 2\esm \oplus \bsm 2\esm} \ar[d]_1\ar[dr]^2
\\
&*+[F]{\bsm 1\\1\\1\esm \oplus \bsm 1\\1\\1\esm \oplus \bsm 1\\1\\1\esm} &
*+[F]{\bsm 1\\1\\1\esm \oplus \bsm 1\\1\\1\\2\esm} &
*+[F]{\bsm 1\\&1\\2&&1\\&2\esm}&&
*+[F]{\bsm 1\\1\\1\esm \oplus \bsm 2\\1\\1\\1\esm} &
*+[F=]{\bsm &2\\1\\&1\\&&1\\&2\esm}&
*+[F]{\bsm &2\\1&&2\\&1\\&&1\esm}&
*+[F]{\bsm 2\esm \oplus \bsm 2\esm \oplus \bsm 2\esm} 
}
\]
\caption{The first four layers
of the geometric crystal graph $(\cB,\tilde{e}_i) \equiv (B(-\infty),\tilde{e}_i)$ 
of type $G_2$.}
\label{Fig:crystalG2}
\end{figure}

\subsection{Type $A_2$}\label{sec:exampleA2}
Let $\Pi = \Pi(C,D)$ with 
\[
C = \left(\bbm 2&-1\\-1&2 \ebm\right).
\]
Thus $C$ is a Cartan matrix of Dynkin type $A_2$.
For the minimal symmetrizer
\[
D = \left(\bbm 1&0\\0&1 \ebm\right)
\]
each irreducible component of $\nil_E(\Pi,d)$ is maximal.
This is no longer true if $D$ is non-minimal.
From now on assume that
\[
D = \left(\bbm 2&0\\0&2 \ebm\right).
\]
Thus we have $\Pi = \Pi(C,D) = K\ov{Q}/\ov{I}$ where
$\ov{Q} = \ov{Q}(C)$ is the quiver
\[
\xymatrix{
1 \ar@(ul,dl)_{\vep_1} \ar@<0.5ex>[r]^{\alpha_{21}}& 2 \ar@<0.5ex>[l]^{\alpha_{12}}\ar@(ur,dr)^{\vep_2}.
}
\]
and $\ov{I}$ is generated by the set
$$
\{ \vep_1^2,\; \vep_2^2,\; \vep_1\alpha_{12} - \alpha_{12}\vep_2,\;
\vep_2\alpha_{21}- \alpha_{21}\vep_1,\; \alpha_{12}\alpha_{21},\; 
-\alpha_{21}\alpha_{12} \}.
$$
The preprojective algebra $\Pi$ is a finite-dimensional
special biserial algebra.

Up to isomorphism there are $4$ indecomposable rigid
$\Pi$-modules, namely
\begin{align*}
{P_1 = \bsm &1\\1&&2\\&2\esm}, &&
{P_2 = \bsm &2\\2&&1\\&1\esm}, &&
{E_1 = \bsm 1\\1\esm}, && 
{E_2 = \bsm 2\\2\esm}.
\end{align*}
Let $d = (4,2)$.
We have $\dim G(d) = 20$ and $\dim G(d) - q_{DC}(d/D) = 14$.
We define an
indecomposable locally free $\Pi$-module 
$X$ as follows:
\begin{align*}
X = & 
{\xymatrix@-1.0pc{
&&1 \ar[r]\ar[d] & 1
\\
& 2 \ar[r]\ar[d] & 2
\\
1 \ar[r] & 1
}} 
\end{align*}
The module $X$ is obviously $E$-filtered.
The variety $\nil_E(\Pi,d)$ has $3$ irreducible components, namely
$$
Z_1 := \ov{\cO(P_1 \oplus E_1)}, \qquad Z_2 := \ov{\cO(P_2 \oplus E_1)},
\qquad Z_3 := \ov{\cO(X)}.
$$
We have 
$\dim(Z_1) = \dim(Z_2) = 14$ and $\dim(Z_3) = 13$.

\bigskip
{\parindent0cm \bf Acknowledgements.}\,
We thank Peter Tingley and Vinoth Nandakumar for providing us with a preliminary version of their preprint \cite{NT}.
The second and third author thank the Mittag-Leffler Institute
for kind hospitality in February/March 2015. 
The third author thanks the SFB/Transregio TR 45 for 
financial support.
The first author thanks the Mathematical Institute of the University of Bonn for one month of hospitality in June/July 2016.


\end{document}